\newcommand{\C}{\mathbb{C}}
\newcommand{\mac}{\mathcal C}
\newcommand{\mad}{\mathcal D}
\newcommand{\mam}{\mathcal M}
 \newcommand{\man}{\mathcal N}
 \newcommand{\map}{\mathcal P}
  \newcommand{\maq}{\mathcal Q}
\newcommand{\Hom}{\mathrm{Hom}}
\newcommand{\End}{\mathrm{End}}
\newcommand{\Ob}{\mathrm{Ob}}
\newcommand{\Fun}{\mathrm{Fun}}
\newcommand{\Bimod}{\mathrm{Bimod}}
\newcommand{\ev}{\mathrm{ev}}
\newcommand{\coev}{\mathrm{coev}}
\newcommand{\inv}[0]{{-1}}
\newcommand{\oo}[0]{\otimes}
\newcommand{\id}[0]{\mathrm{id}}
\newcommand{\tr}[0]{\mathrm{tr}}
\newcommand{\arrowIn}{
\tikz \draw[-stealth, line width=1pt] (-1pt,0) -- (1pt,0);
}
\newcommand{\arrowOut}{
\tikz \draw[-stealth, line width=.5pt] (-1pt,0) -- (1pt,0);
}
\newtheorem{theorem}{Theorem}[section]
\newtheorem*{theorem*}{Theorem}
\newtheorem{example}[theorem]{Example}
\newtheorem{lemma}[theorem]{Lemma}
\newtheorem{proposition}[theorem]{Proposition}
\newtheorem{corollary}[theorem]{Corollary}
\newtheorem{definition}[theorem]{Definition}
\newtheorem{remark}[theorem]{Remark}
\newcommand*\stdthebibliography{}
\let\stdthebibliography\thebibliography
\renewcommand{\thebibliography}[1]{%
\stdthebibliography{#1}\setlength{\itemsep}{-2pt}}
\def\mytitle{State sum models with defects\\ based on spherical fusion categories}
\def\myauthors{C.~Meusburger}
\begin{document}

\begin{center}

  {\huge \mytitle}

  \vspace{.5em}

   {\large
   Catherine Meusburger\\[+1ex]
 }
 Department Mathematik \\
  Friedrich-Alexander-Universit\"at Erlangen-N\"urnberg \\
  Cauerstra\ss e 11,  91058 Erlangen, Germany\\
  Email: catherine.meusburger@math.uni-erlangen.de\\[+4ex]
  
{May 13, 2022}

  \begin{abstract}
We define a Turaev-Viro-Barrett-Westbury state sum model of  triangulated 3-manifolds with surface, line and point defects. Surface defects are oriented embedded 2d PL submanifolds and are labeled with bimodule categories  over spherical fusion categories with bimodule traces. Line and point defects form directed graphs on these surfaces and labeled with bimodule functors and bimodule natural transformations. 
The state sum is based on generalised 6j symbols  that encode the coherence isomorphisms of the defect data. We prove the triangulation independence of the state sum and show that it can be computed in terms of polygon diagrams that satisfy the cutting and gluing identities for polygon presentations of oriented surfaces. By  computing  state sums  with defect surfaces, we show that they detect the genus of a defect surface and are sensitive to its embedding. We show that  defect lines on defect surfaces  with trivial defect data define ribbon invariants for the centre of the underlying spherical fusion category. 
  \end{abstract}
\end{center}

\section*{Introduction}

Turaev-Viro-Barrett-Westbury state sums were introduced in \cite{TV, BW} and  define invariants of 3-manifolds and topological quantum field theories (TQFTs).  They are also of interest from the physics perspective, since they arise in the quantisation of  Chern-Simons and BF theories and have applications in topological quantum computing.  In particular, it was established  by Balsam and Kirillov in  \cite{BK, Kr} that they are closely related to  two  condensed matter physics models for topological quantum computers, namely  Kitaev's quantum double model \cite{Kit} and Levin-Wen models \cite{LW}.

Introducing defects in these models is  natural  in this physics context. Early work on specific line and point defects in  Turaev-Viro-Barrett-Westbury state sums by Karowski and Schrader \cite{KS} and by Barrett et al.~\cite{BMG} was motivated by the wish to introduce observables.  
In models from condensed matter physics it is also natural to study codimension one defects as domain walls. 
Introducing defects in these models amounts to labeling distinguished submanifolds  with higher categorical data that relates  the categorical 
data outside the defects. 
 The general algebraic data for defects of all codimensions in Levin-Wen models was identified  by Kitaev and Kong   \cite{KK} and coincides largely with the defect  data in this article.
 
 From the mathematics perspective the introduction of defects in Turaev-Viro-Barrett-Westbury state sums  is of interest  as a concrete model for defect TQFTs, in which the usual cobordism category is replaced with a decorated cobordism category that involves distinguished submanifolds of various codimensions. 
A general formalism and concrete examples of defect TQFTs were developed by Carqueville et al.~\cite{CRS,CRS2,CMS}. However, these  are not constructed as state sum models, but from a cobordism category. 

State sum models that extend  Turaev-Viro TQFTs and involve defect lines decorated by the centre of a spherical fusion category  were defined by Turaev and Virelizier  \cite{TVr,TVbook} and by Balsam and Kirillov in \cite{KB,B}. 
More recently, Fuchs et al.~\cite{FSS}  gave a state sum construction for a modular functor with bimodule categories over finite tensor categories and  Lee and Yetter \cite{LY} defined a triangulation independent state sum with defect surfaces. However,  the categorical defect data in \cite{LY} 
differs from  the data in the other publications and should be thought of as additional structure, as pointed out in \cite{LY}.

Thus, a Turaev-Viro-Barrett-Westbury style state sum model with defects of all codimensions labeled by bimodule categories, functors and natural transformations is still missing. 
In this article we define  such a state sum model  and prove its triangulation independence.  The model  generalises the usual Turaev-Viro-Barrett-Westbury state sum, which is obtained by choosing trivial defect data.

\medskip
{\bf State sum model with defects}

We consider three-dimensional PL manifolds $M$, possibly with boundary.  Defect surfaces  are  embedded oriented PL surfaces with compact boundary  contained in $\partial M$. Defect lines and points form directed graphs on these surfaces, whose edges may end on $\partial M$.  

Regions outside the defects are assigned spherical fusion categories.  Oriented defect surfaces are assigned  finite semisimple bimodule categories with bimodule traces, oriented defect lines bimodule functors and  defect vertices bimodule natural transformations. 

 Bimodule traces were introduced  by Schaumann in 
 \cite{G12, G14}, where it was shown that for fixed spherical fusion categories, bimodule categories with bimodule traces, bimodule functors and bimodule natural transformations form a pivotal 2-category.
The restriction to finite semisimple bimodule categories with bimodule traces is motivated by the fact that semisimplicity, summation over simple objects and the existence of traces are essential ingredients in the usual Turaev-Viro-Barrett-Westbury state sum. Although the latter can be generalised to pivotal fusion categories, see \cite{TVr,TVbook}, we stick with the original formulation in terms of spherical fusion categories  and  6j symbols and hence require bimodule traces.

The  state sum is based on a triangulation of $M$, which is required at first to intersect defect surfaces, defect lines and defect vertices transversally and generically. This means each tetrahedron  can intersect at most one defect surface,  either in a triangle, whose vertices lie on three edges incident at a common vertex, or in a quadrilateral, whose vertices lie on two pairs of opposite edges. Defect vertices must be in the interior of tetrahedra and defect edges must intersect their faces transversally in the interior. 

To define the state sum, one  labels the edges of the triangulation with simple objects, either in a spherical fusion category, if the edge does not intersect any defect surface,  or  in the bimodule category at its intersection point with a defect surface.  This assigns to each labeled tetrahedron a generalised 6j symbol that is  the 
 6j symbol for a spherical fusion category for a tetrahedron without defects. 
The state sum is then obtained by taking the product of the generalised 6j symbols of all tetrahedra  and the dimensions of the simple objects at the internal edges, summing over all  assignments of simple objects  and over bases of the morphism spaces for its triangles and rescaling by  the dimensions of the spherical fusion categories at the vertices. 

\medskip
{\bf Generalised 6j symbols}

The generalised 6j symbol for a tetrahedron depends on the simple objects assigned to its edges and the bimodule functors and natural transformation in its interior. By combining the latter with the morphisms assigned to its boundary triangles one obtains an endomorphism of a simple object in a bimodule category. The generalised 6j symbol is the trace of this endomorphism. It generalises the usual 6j symbols or F-matrices for spherical fusion categories and has a similar algebraic and geometric interpretation. 

Just as the  6j symbols for a spherical fusion category encode its associator on its simple objects, the 6j symbols for a bimodule category encode the natural isomorphisms that describe its bimodule category structure. If a tetrahedron also contains defect lines or defect vertices, these coherence isomorphisms are combined with the coherence isomorphisms for the bimodule functors at the defect lines and with the bimodule natural transformations at the defect points. 

The 6j symbols for a spherical fusion category relate the geometry of a tetrahedron to the diagrammatic calculus for a spherical fusion category. Their diagrams are 
obtained by taking the dual graph on the surface of the tetrahedron and labeling its edges and vertices with objects and morphisms in a spherical fusion category. The properties of a spherical fusion category imply that it has the symmetries of a tetrahedron. 

Similarly, generalised 6j symbols for defect tetrahedra relate the geometry of a tetrahedron with defects to a diagrammatic calculus for bimodule categories, functors and natural transformations. Their diagrams are obtained by projecting the duals of those edges and triangles that do not intersect the defect surface  on the defect surface. 
This yields a polygon with lines and vertices in its interior labeled by bimodule functors and natural transformations as well as objects and morphisms in spherical fusion categories. The diagrams also involve crossings labeled by coherence isomorphisms of bimodule categories and  bimodule functors.

We develop a diagrammatic calculus for such polygon diagrams and define their evaluation. For a diagram obtained from a tetrahedron with defects, this is  the generalised  6j symbol of the tetrahedron.
As the presence of defects reduces the symmetries of a tetrahedron, generalised 6j symbols exhibit fewer symmetries than the 6j symbols of a spherical fusion category. Nevertheless, the associated polygon diagrams can be cut and glued by summing over simple objects and bases of the morphism spaces at their boundary. 

{\bf Theorem 1: (Theorem \ref{th:polyids})}
\emph{
Evaluations of polygon diagrams are invariant under rotations of the polygon. They satisfy the usual cutting and gluing identities for polygon presentations of oriented surfaces and
an additional identity that allows one to cut and glue diagrams for spherical fusion categories in their interior.
}

\medskip
{\bf Triangulation independence}

With the cutting and gluing identities for polygon diagrams we prove that the state sum of a 3-ball bisected by a defect disc is the evaluation of the polygon diagram obtained by projecting the dual graph of the boundary triangulation on the defect disc.  This is sufficient to establish invariance under bistellar moves that involve only transversal and generic tetrahedra.

To establish  triangulation independence, we then refine the triangulations via stellar subdivisions, finite sequences of bistellar moves that involve only generic and transversal tetrahedra. The state sums can then be computed separately for neighbourhoods of the defect surfaces and for the regions between them. The latter are usual Turaev-Viro-Barrett-Westbury state sums.  The former are given by polygon diagrams, but  with additional summations over  boundary labels  that glue their sides pairwise. 
Combining them yields

{\bf Theorem 2: (Theorem \ref{th:fulltopinv})}
\emph{
If two transversal and generic triangulations of a 3-manifold with defect surfaces, defect lines and defect points agree at the boundary, their state sums are equal.}

This allows one to extend the definition of the state sum to triangulations that are non-generic or non-transversal in the interior of a 3-manifold via generic transversal subdivisions and establishes full triangulation independence for manifolds without boundaries. 

\medskip
{\bf Examples} 

To illustrate the formalism and the properties of the state sum we treat a number of examples.  We compute the state sum of a defect sphere  in a 3-ball for general categorical data. We also compute state sums with defect surfaces  of genus $g\geq 1$, but only for the simple categorical data underlying  Dijkgraaf-Witten models \cite{DW}. Here, we consider spherical fusion categories $\mathrm{Vec}_G$ and $\mathrm{Vec}_{G'}$ of graded vector spaces over finite groups $G, G'$, equipped with trivial cocycles, and bimodule categories defined by finite transitive $G\times G'^{op}$-sets, also equipped with the trivial cocycle. 
For this data, it is simple to 
compute the state sums of a 3-manifold $M=[0,1]\times\Sigma$ and of a 3-ball with a defect surface $\Sigma$ of general genus in the interior. This shows that the state sum detects properties of the bimodule categories labeling the defect surface as well as its  genus. 

We also compute the state sum for a 3-sphere with an embedded torus labeled by a trivial $G$-set and show that it gives the number of conjugacy classes of group homomorphisms from the fundamental group of its complement into $G$. This shows that the state sum is sensitive to the embedding of the surface. 

Our last  examples are framed links or ribbon links. They are  realised as defect graphs on trivial defect surfaces that are  labeled by a spherical fusion category $\mac$ as a bimodule category over itself. In this case,  bimodule functors labeling  defect lines and bimodule natural transformations labeling defect vertices correspond to objects and morphisms of the categorical centre $\mathcal Z(\mac)$. 

 In this setting, ribbons or framed links 
 can be realised in two ways. The first is a ribbon diagram on a defect surface, in which line segments are labeled by objects of $\mathcal Z(\mac)$ and crossings are viewed as defect vertices labeled by braidings. The second is an embedded ribbon link  labeled  by objects of $\mathcal Z(\mac)$ without  vertices. 
 
 We show that in both cases, the state sum is the product of the evaluation of the ribbon link with the usual Turaev-Viro-Barrett-Westbury state sum for the manifold without  defects.
This reproduces the results by Turaev and Virelizier \cite{TVr, TVbook} on  ribbon links in graph TQFTs.

 \medskip
 {\bf Structure of the article}
 
In Section \ref{sec:catbackground} we introduce the categorical data for the state sum model and summarise the relevant background from the literature. 

Section \ref{sec:diagrams} develops the diagrammatic calculus underlying generalised 6j symbols. After recalling diagrams for spherical fusion categories and  pivotal 2-categories, we introduce mixed diagrams, in which diagrams for spherical fusion categories overlap with diagrams 
labeled by bimodule functors and  natural transformations. 
By labeling   segments and line endpoints at their boundaries with objects and morphisms in bimodule categories we then obtain the polygon diagrams that are the building blocks of the state sum. 

In Section \ref{sec:projincl} we show that these polygon diagrams satisfy the usual  cutting and gluing identities for polygon presentations of surfaces.

The state sum of a triangulated 3-manifold with defect surfaces, defect lines and defect points is introduced in Section \ref{sec:statesum}.  After discussing the labeling of defects and  of the triangulation with algebraic data, we define the generalised 6j symbol of a tetrahedron with defects. 
We then introduce  the state sum, which  reduces to the usual Turaev-Viro Barrett-Westbury state sum  in the absence of defects, and show that it can be computed by gluing 3-manifolds with defects. 

In Section \ref{sec:topinvariance} we establish the triangulation independence of the state sum. We first summarise some background from PL topology and  introduce certain neighbourhoods of defect surfaces required for our constructions. We then prove that the state sum of a 3-ball bisected by a defect disc is the evaluation of the polygon diagram obtained by projecting the dual graph of the boundary triangulation on the disc.  This implies  that the state sum is invariant under bistellar moves that involve only generic transversal tetrahedra. We then prove triangulation independence by refining the triangulation  and decomposing it into neighbourhoods of defect surfaces and  3-manifolds without defects. 

In Section \ref{sec:examples} we compute examples and discuss the results.

\section{Categorical data}
\label{sec:catbackground}

\subsection{Spherical fusion categories}
We work over $\C$ and follow the conventions of Etingof et al.~in \cite{EGNO}. 
A {\bf multitensor category} is a  locally finite, $\C$-linear abelian rigid monoidal category  $\mac$ with a tensor product $\oo:\mac\times\mac\to \mac$  that is bilinear on the morphisms.  A {\bf tensor category} is an  indecomposable multitensor category
 with $\End_\mac(e)\cong \C$, where $e$ is the tensor unit.
A {\bf (multi)fusion category} is a finite semisimple (multi)tensor category.  

For all categories $\mac$ we denote by $\mac^{op}$ the opposite category and for all monoidal categories $\mac$ by $\mac^{rev}$ the category $\mac$ with the opposite monoidal structure. We define a {\bf left rigid} monoidal category as a category in which every object $x$ has a left dual $x^*$  and evaluation and coevaluation morphisms
\begin{align}\label{eq:coevl}
&\ev^L_x: x^*\oo x\to e, & &\coev^L_x: e\to x\oo x^*
\end{align}
that satisfy the identities 
\begin{align}\label{eq:snake}
(1_x\oo\ev^L_x)\circ (\coev^L_x\oo 1_x)=1_x\qquad (\ev^L_x\oo 1_{x^*})\circ (1_{x^*}\oo \coev^L_x)=1_{x^*},
\end{align}
 up to associators and unit constraints. 
We denote by $*: \mac\to\mac^{op,rev}$ the induced monoidal functor, see the  end of Section 2.10 in \cite{EGNO}. 
A {\bf pivotal} monoidal category $\mac$ is a left rigid monoidal category $\mac$ with a monoidal isomorphism  $\omega:**\Rightarrow\id_\mac$. 
Every pivotal category is right rigid with  
\begin{align}\label{eq:coevr}
&\ev^R_x=\ev^L_{x^*}\circ (\omega^\inv_x\oo 1_{x*}): x\oo x^*\to e, & &\coev^R_x=(1_{x^*}\oo\omega)\circ \coev^L_{x^*}: e\to x^*\oo x,
\end{align}
and the left and right trace of a morphism $\alpha\in \End_\mac(x)$ are given by
\begin{align}
&\tr^L(\alpha)=\ev^L_x\circ (1_{x^*}\oo \alpha)\circ \coev^R_x, & &\tr^R(\alpha)=\ev^R_x\circ (\alpha\oo 1_{x^*})\circ \coev^L_x.
\end{align}
A pivotal category $\mac$ is {\bf spherical} if $\tr^L(\alpha)=\tr^R(\alpha)$ for all endomorphisms $\alpha$ in $\mac$. In this case we  write $\tr(\alpha)=\tr^L(\alpha)=\tr^R(\alpha)$ and $\dim(x)=\tr(1_x)=\tr(1_{x^*})=\dim(x^*)$ for all $\alpha\in \End_\mac(x)$ and $x\in\Ob\mac$. 
The {\bf dimension} of a spherical fusion category $\mac$ is 
\begin{align}
\label{eq:catdimension}
\dim(\mac)=\sum_{i\in I} \dim(i)^2,
\end{align}
where $I$ is a set of representatives of the isomorphism classes of simple objects. 

\begin{example}\label{ex:vectgomega}\cite[Ex.~2.3.6]{EGNO}\\ Let $G$ be a finite group and $\omega: G\times G\times G\to \C^\times$ a normalised  3-cocycle. 
 The fusion category
$\mathrm{Vec}_G^\omega$ has
\begin{compactitem}
\item  as objects finite-dimensional $G$-graded vector spaces $V=\oplus_{g\in G} V_g$,
\item  as morphisms linear maps $f: V\to W$ with $f(V_g)\subset W_g$ for all $g\in G$,
\item  simple objects  $\delta^g$ for $g\in G$ with $(\delta^g)_g=\C$ and $(\delta^g)_h=0$ for $g\neq h$, 
\item  the  tensor product $V\oo W=\oplus_{g\in G}(\oplus_{hk=g} V_h\oo W_k)$, 
\item  the associator  given by $a_{\delta^g,\delta^h,\delta^k}=\omega(g,h,k) \id_{\delta^{ghk}}$ on the simple objects,  

\item dual objects $V^*=\oplus_{g\in G} V_{g^\inv}^*$ with the  usual (co)evaluation for finite-dimensional vector spaces,
\item pivotal structures that  are in bijection with characters $\kappa: G\to\C^\times$. 
 \end{compactitem}
A pivotal structure is spherical iff $\kappa(g)\in \{1,-1\}$ for all $g\in G$.
In particular, the trivial character $\kappa\equiv 1$ determines a spherical structure on  $\mathrm{Vec}_G^\omega$, its {\bf standard spherical structure}.
\end{example}

\subsection{(Bi)module categories,  functors and natural transformations}

In this section, we summarise background on (bi)module categories, (bi)module functors and (bi)module natural transformations from \cite[Ch.~7]{EGNO}.

\begin{definition}\label{def:modulecat}
Let $\mac, \mad$ be multitensor categories with associators  $a:\oo(\oo\times\id)\Rightarrow\oo(\id\times\oo)$ 
and left and right unit constraints  $l: e\oo \id\Rightarrow \id$,  $r:\id \oo e\Rightarrow \id $.
\begin{compactenum}
\item  A {\bf $\mac$-(left) module category} is a locally finite $\C$-linear abelian category $\mam$ together with
\begin{compactitem}
\item a functor $\rhd:\mac\times\mam\to\mam$ that is $\C$-bilinear on the morphisms and exact in the first variable, 
\item  natural isomorphisms 
$c: \rhd(\oo\times\id_\mam)\Rightarrow \rhd(\id_\mac\times\rhd)$ and $\gamma: e\rhd -\Rightarrow \id_\mam$
\end{compactitem}
such that  the following diagrams commute for all  $x,y,z\in\Ob \mac$ and $m\in\Ob \mam$:
\begin{align}\label{eq:pentagoncdef}
&\xymatrix{
((x\oo y)\oo z)\rhd m \ar[r]^{c_{x\oo y, z,m}} \ar[d]_{a_{x,y,z}\rhd 1_m}& (x\oo y)\rhd (z\rhd m) \ar[r]^{c_{x,y,z\rhd m}} & x\rhd(y\rhd(z\rhd m))\\
(x\oo(y\oo z)) \rhd m \ar[r]_{c_{x,y\oo z, m}}& x\rhd ((y\oo z)\rhd m)\ar[ru]_{\quad1_x\rhd c_{y,z,m}}
}\\
&\xymatrix{ (x\oo e)\rhd m \ar[rr]^{c_{x,e,m}} \ar[rd]_{r_X\rhd 1_m}  & & x\rhd (e\rhd m) \ar[ld]^{1_x\rhd \gamma_m}\\
&x\rhd m.
}\label{eq:trianglec}
\end{align}

\item A {\bf $\mad$-right module category} is  a  locally finite $\C$-linear abelian category $\mam$ together with 
\begin{compactitem}
\item a functor $\lhd:\mam\times \mad\to\mam$ that is $\C$-bilinear on the morphisms and exact in the second variable, 
\item  natural isomorphisms 
$d: \lhd(\lhd \times\id_\mad)\Rightarrow \lhd(\id_\mam\times\oo)$ and $\delta: -\lhd e\Rightarrow \id_\mam$
\end{compactitem}
that make the following diagrams commute for all  $x,y,z\in\Ob \mad$ and $m\in\Ob \mam$
\begin{align}\label{eq:pentagonddef}
&\xymatrix{
((m\lhd x)\lhd y)\lhd z \ar[r]^{d_{m\lhd x, y,z}} \ar[d]_{d_{m,x,y}\lhd 1_z}& (m\lhd x)\lhd (y\oo z) \ar[r]^{d_{m,x,y\oo z}} & m\lhd(x\oo(y\oo z))\\
(m\lhd(x\oo y)) \lhd z \ar[r]_{d_{m,x\oo y,z}}& m\lhd ((x\oo y)\oo z)\ar[ru]_{\quad 1_m\lhd a_{x,y,z}}
}\\
&\xymatrix{ m\lhd (e\oo x) \ar[rr]^{d_{m,e,x}} \ar[rd]_{1_m\lhd l_X}  & & (m\lhd e)\lhd x \ar[ld]^{\delta_m\lhd 1_x}\\
&m\lhd x.
}\label{eq:triangled}
\end{align}

\item A  {\bf $(\mac,\mad)$-bimodule category}  is a   locally finite $\C$-linear abelian  category $\mam$  with
\begin{compactitem}
\item a $\mac$-left module category structure $(\rhd,c,\gamma)$, 
\item a $\mad$-right module category structure $(\lhd, d,\delta)$, 
\item a natural isomorphism
 $b: \lhd(\rhd\times\id_\mad)\Rightarrow \rhd(\id_\mac\times\lhd)$
 \end{compactitem}
such that  the following diagrams commute for all  $x,y\in\Ob \mac$, $u,v\in\Ob \mad$ and $m\in\Ob \mam$
\begin{align}\label{eq:pentagonbdef}
&\xymatrix{
((x\oo y)\rhd m)\lhd u \ar[r]^{b_{x\oo y, m,u}} \ar[d]_{c_{x,y,m}\lhd 1_u}& (x\oo y)\rhd (m\lhd u) \ar[r]^{c_{x,y,m\lhd u}} & x\rhd(y\rhd(m\lhd u))\\
 (x\rhd(y\rhd m))\lhd u \ar[r]_{b_{x, y\rhd m, u}}& x\rhd ((y\rhd m)\lhd u)\ar[ru]_{\quad 1_x\rhd b_{y,m,u}}
}\\
&\xymatrix{
((x\rhd m)\lhd u)\lhd v \ar[r]^{d_{x\rhd m,u,v}} \ar[d]_{b_{x,m,u}\lhd 1_v}& (x\rhd m)\lhd (u\oo v) \ar[r]^{b_{x,m,u\oo v}} & x\rhd(m\lhd(u\oo v))\\
 (x\rhd(m\lhd u))\lhd v \ar[r]_{b_{x, m\lhd u, v}}& x\rhd ((m\lhd u)\lhd v).\ar[ru]_{\quad 1_x\rhd d_{y,m,u}}
}
\end{align}
\end{compactenum}
\end{definition}

A $\mad$-right module category can be defined more succinctly as a $\mad^{rev}$-module category and a $(\mac,\mad)$-bimodule category as a $\mac\boxtimes \mad^{rev}$-module category, where $\boxtimes$ is the Deligne product.  We expanded the definitions to set up  notation and  introduce the relevant diagrams.

\begin{remark}\label{rem:macmodule} \cite[Prop.~7.1.3]{EGNO}\\
 $\mac$-module category structures on $\mam$ correspond bijectively to 
monoidal functors $F:\mac\to \End(\mam)$. The monoidal functor defined by a $\mac$-module structure assigns to an object $x$ in $\mac$ the endofunctor $x\rhd -:\mam\to\mam$ and to a morphism $\alpha:x\to y$ in $\mac$ the natural transformation $\alpha\rhd-:x\rhd-\Rightarrow y\rhd -$.  The monoidal structure of $F$ is given by the natural isomorphisms $\gamma: e\rhd-\Rightarrow\id_\mam$ and $c_{x,y,-}: (x\oo y)\rhd -\Rightarrow x\rhd (y\rhd -)$. 
 \end{remark}

\begin{example}\label{ex:bimodulecategory} 
 Every  fusion category $\mac$ is a $(\mac,\mac)$-bimodule category with $\rhd=\lhd=\oo:\mac\times\mac\to\mac$ and  coherence isomorphisms $c_{x,y,z}=d_{x,y,z}=b_{x,y,z}=a_{x,y,z}: (x\oo y)\oo z\to x\oo (y\oo z)$, $\gamma_x=l_x: e\oo x\to x$ and $\delta_x=r_x: x\oo e\to x$.
 \end{example}

\begin{example}\label{ex:vecmodule}
 Any locally finite $\C$-linear abelian category $\mam$ is a bimodule category over the fusion category $\mathrm{Vect}_\C$ of finite-dimensional complex vector spaces.  This  $\mathrm{Vect}_\C$-module category structure is unique up to bimodule equivalence (cf.~Definition \ref{def:modulefunc}).
\end{example}

\begin{example}\label{ex:dualcat}
For  any $(\mac,\mad)$-bimodule category $\mam$ over pivotal fusion categories $\mac,\mad$ the category $\mam^{op}$ is a $(\mad,\mac)$-bimodule category with 
\begin{align}\label{eq:opcat}
&\lhd^{op}=\rhd (*\times  \id_\mam)\tau: \mam\times\mac\to\mam & &\rhd^{op}=\lhd (\id_\mam\times *)\tau: \mad\times \mam\to\mam,
\end{align}
where $*: \mac\to\mac^{op,rev}$ is the duality functor  and  $\tau$ the flip functor that exchanges the two factors in a cartesian product. 
The coherence isomorphisms are given by $c^{op}_{x,y,m}=d_{m,y^*,x^*}$, $d^{op}_{x,y,m}=c_{y^*,x^*, m}$, $b^{op}_{x,m,y}=b_{y^*,m,x^*}$.
We call it the {\bf opposite bimodule category}  $\mam^\#$.
\end{example}

\begin{example} \label{ex:modvectgomega} \cite[Ex.~7.4.10]{EGNO}\\ Indecomposable semisimple module categories $\mam$  over  the fusion category $\mathrm{Vec}_G:=\mathrm{Vec}_G^{\omega\equiv 1}$, 
up to equivalence, correspond to pairs $(X, \psi)$ of a  finite transitive $G$-set $X\cong G/L$ and an element $\psi\in H^2(L, \C^{\times})$.
\begin{compactitem}
\item The objects of $\mam$ are $X$-graded vector spaces.
\item The simple objects of $\mam$ are of the form $\delta^x$ for $x\in X$ . 

\item The action functor $\rhd: \mathrm{Vec}_G\times\mam\to\mam$ is given by $\delta^g\rhd \delta^x=\delta^{g\rhd x}$ on the simple objects. 

\item The natural isomorphism $c:\rhd(\oo\times \id)\Rightarrow \rhd(\id\times\rhd)$ is given by $\psi\in H^2(L,\C^\times)$.
\end{compactitem}
\end{example}

\begin{definition}\label{def:modulefunc}
Let $\mac,\mad$ be multitensor categories.  
\begin{compactenum}
\item  A {\bf $\mac$-module functor} $F:\mam\to\man$ between $\mac$-module categories $\mam,\man$ is a $\C$-linear right exact functor $F:\mam\to\man$ together with a natural isomorphism
$s: F\rhd \Rightarrow \rhd(\id_\mac\times F)$ that satisfies the pentagon and the triangle axiom: for all $x,y\in\Ob\mac$ and $m\in\Ob\mam$ the following diagrams commute
\begin{align}\label{eq:pentagoncfunc}
&\xymatrix{ F((x\oo y)\rhd m) \ar[d]_{F(c_{x,y,m})}\ar[r]^{s_{x\oo y, m}} & (x\oo y)\rhd F(m) \ar[r]^{c_{x,y, F(m)}} & x\rhd (y\rhd F(m))\\
F(x\rhd(y\rhd m)) \ar[r]_{s_{x. y\rhd m}} & x\rhd F(y\rhd m)\ar[ru]_{\;\; 1_x\rhd s_{y,m}}
}\\
&\xymatrix{ F(e\rhd m)\ar[rd]_{F(\gamma_m)} \ar[rr]^{s_{e,m}} & & e\rhd F(m)\ar[ld]^{\gamma_{F(m)}}\\
& F(m).
}\label{eq:trianglecfunc}
\end{align}

\item A {\bf $\mad$-right module functor} $F:\mam\to\man$ between $\mad$-right module categories $\mam,\man$ is a $\C$-linear right exact functor $F:\mam\to\man$ together with a natural isomorphism
$t: \lhd(F\times\id_\mad)\Rightarrow F\lhd$ that satisfies the pentagon and the triangle axiom: for all $x,y\in\Ob\mad$ and $m\in\Ob\mam$ the following diagrams commute
\begin{align}\label{eq:pentagondfunc}
&\xymatrix{ (F(m)\lhd x)\lhd y \ar[d]_{t_{m,x}\lhd 1_y}\ar[r]^{d_{F(m),x, y}} &  F(m)\lhd(x\oo y) \ar[r]^{t_{m, x\oo y}} & F(m\lhd (x\oo y))\\
F(m\lhd x)\lhd y \ar[r]_{t_{m\lhd x, y}} & F((m\lhd x)\lhd y) \ar[ru]_{\;\; F(d_{m,x,y})}
}\\
&\xymatrix{ F(m)\lhd e \ar[rd]_{\delta_{F(m)}} \ar[rr]^{t_{m,e}} & & F(m\lhd e)\ar[ld]^{F(\delta_m)}\\
& F(m).
}\label{eq:triangledfunc}
\end{align}

\item A  {\bf $(\mac,\mad)$-bimodule functor}  between $(\mac,\mad)$-bimodule categories $\mam,\man$ is a $\C$-linear right exact functor $F:\mam\to\man$ with a $\mac$-module functor structure $s$ and a $\mad$-right module functor structure $t$ that satisfy the hexagon axiom: for all $x\in\Ob\mac$, $y\in\Ob\mad$ and $m\in\Ob\mam$ the following diagram commutes
\begin{align}
\label{eq:hexa}
\xymatrix{
F(x\rhd m)\lhd y\ar[d]_{s_{x,m}\lhd 1_y} \ar[r]^{t_{x\rhd m, y}} & F((x\rhd m)\lhd y) \ar[r]^{F(b_{x,m,y})}& F(x\rhd (m\lhd y))\ar[d]^{s_{x, m\lhd y}}\\
(x\rhd F(m))\lhd y \ar[r]_{\;b_{x, F(m),y}} & x\rhd (F(m)\lhd y)  \ar[r]_{1_x\rhd t_{m,y}} & x\rhd F(m\lhd y).
}
\end{align}
\end{compactenum}
An {\bf equivalence of (bi)module categories} is a (bi)module functor that is an equivalence of categories.
\end{definition}

\begin{definition}\label{def:modnat} Let $\mac,\mad$ be multitensor categories.
\begin{compactenum}
\item A $\mac$-module natural transformation $\nu: F\Rightarrow G$ for $\mac$-module functors $F,G: \mam\to\man$ is a natural transformation  such that the
following diagram commutes for all $x\in\Ob\mac$ and $m\in\Ob\mam$
\begin{align}
\label{eq:cnat}
\xymatrix{ F(x\rhd m)\ar[d]_{s^F_{x,m}} \ar[r]^{\nu_{x\rhd m}} & G(x\rhd m)\ar[d]^{s^G_{x,m}}\\
x\rhd F(m) \ar[r]_{1_x\rhd \nu_m} & x\rhd G(m).
}
\end{align}
\item A $\mad$-right module natural transformation $\nu: F\Rightarrow G$ for $\mad$-right module functors $F,G: \mam\to\man$ is a natural transformation  such that the
following diagram commutes for all $x\in\Ob\mad$ and $m\in\Ob\mam$
\begin{align}
\label{eq:dnat}
\xymatrix{ F(m)\lhd x \ar[d]_{t^F_{m,x}} \ar[r]^{\nu_{m}\lhd 1_x} & G( m)\lhd x\ar[d]^{t^G_{m,x}}\\
F(m\lhd x) \ar[r]_{\nu_{m\lhd x}} & G(m\lhd x).
}
\end{align}
\item A $(\mac,\mad)$-bimodule natural transformation $\nu: F\rightarrow G$ between $(\mac,\mad)$-bimodule functors $F,G: \mam\to\man$ is a natural transformation $\nu: F\Rightarrow G$  that is a $\mac$-module and a $\mad$-right module natural transformation.
\end{compactenum}
\end{definition}

For $\mac$-module categories $\mam,\man$ we denote by $\Fun_\mac(\mam,\man)$ the category of $\mac$-module functors $F:\mam\to\man$ and $\mac$-module natural transformations between them and write $\End_\mac(\mam)=\Fun_\mac(\mam,\mam)$. Analogously, for $(\mac,\mad)$-bimodule categories $\mam,\man$, we write
$\Fun_{(\mac,\mad)}(\mam,\man)$ for the category of $(\mac,\mad)$-bimodule functors $F:\mam\to\man$ and bimodule natural transformations between them and $\End_{(\mac,\mad)}(\mam)=\Fun_{(\mac,\mad)}(\mam,\mam)$.

Composites of (bi)module functors inherit a  (bi)module functor  structure.  If $G:\mam\to\man$ and $F:\man\to\map$ are $(\mac,\mad)$-bimodule functors, then the  $(\mac,\mad)$-bimodule functor structure for $FG:\mam\to\map$ is given by 
\begin{align}\label{eq:bimodcomp}
&s^{FG}=s^F (\id_\mac\times G)\circ Fs^G: FG\rhd \Rightarrow \rhd(\id_\mac\times FG)\\
& t^{FG}=Ft^G\circ t^F (G\times\id_\mad): \lhd(FG\times\id_\mad)\Rightarrow FG\lhd.\nonumber
\end{align}
For all  bimodule functors  $F,F':\man\to\map$, $G,G':\mam\to\man$   and bimodule natural transformations $\nu: F\Rightarrow F'$ and $\mu: G\Rightarrow G'$  the natural transformations $\nu G: FG\Rightarrow F'G$ and $F\mu: FG\Rightarrow FG'$ are bimodule natural transformations with respect to this bimodule functor structure. As composites of bimodule natural transformations are  also bimodule natural transformations, it follows that  $(\mac,\mad)$-bimodule categories, $(\mac,\mad)$-bimodule functors and $(\mac,\mad)$-bimodule natural transformations form a 2-category.

The following examples show how the data of a (bi)module category can be recovered as coherence data of  (bi)module functors.
They are the categorical counterparts of standard constructions for modules over rings.

\begin{example}\label{ex:modulefuncs} Let $\mac,\mad$ be  multitensor categories with (right) centres $\mathcal Z(\mac)$, $\mathcal Z(\mad)$ and $\mam,\man$ $(\mac,\mad)$-bimodule categories. 
\begin{enumerate}
\item  For any $x\in\Ob \mad$ the functor
$F=-\lhd x: \mam\to\mam$  is a $\mac$-module functor with coherence isomorphisms $s_{c,m}=b_{c,m,x}: (c\rhd m)\lhd x\to c\rhd (m\lhd x)$. Any morphism  $\alpha\in \Hom_\mad(x,y)$ defines a $\mac$-module natural transformation $\nu=-\lhd \alpha: -\lhd x\Rightarrow -\lhd y$. This defines a functor $\mad\to \End_\mac(\mam)$.

\item If $x\in \Ob \mathcal Z(\mad)$ with half-braidings $\sigma_{d,x}: d\oo x\to x\oo d$,  the functor $F$ from 1.~becomes  a $(\mac,\mad)$-bimodule functor with $t_{m,d}= d_{m,d,x}^\inv\circ (1_m\lhd \sigma_{x,d}^\inv)\circ d_{m,x,d}: (m\lhd x)\lhd d\to (m\lhd d)\lhd x$.
For $\alpha\in \Hom_{\mathcal Z(\mad)}(x,y)$ the morphisms $\nu_m=1\lhd \alpha: m\lhd x\to m\lhd y$ define a $(\mac,\mad)$-bimodule natural transformation $\nu: -\lhd x\Rightarrow -\lhd y$. 
This defines a functor $\mathcal Z(\mad)\to \End_{(\mac,\mad)}(\mam)$.

\item Analogously, for  $x\in \Ob\mac$  $F=x\rhd -: \mam\to\mam$    with $t_{m,d}=b_{x,m,d}: (x\rhd m)\lhd d\to x\rhd (m\lhd d)$ is a $\mad$-right module functor. Any morphism  $\alpha\in \Hom_\mac(x,y)$  defines a $\mad$-right module natural transformation $\nu=\alpha\rhd -: x\rhd -\Rightarrow y\rhd -$. This defines a functor $\mac\to \End_{\mad^{rev}}(\mam)$.

\item If  $x\in\Ob \mathcal Z(\mac)$ with half-braidings $\sigma_{c,x}: c\oo x\to x\oo c$, the functor $F$ from 3.~becomes a  $(\mac,\mad)$-bimodule functor with $s_{c,m}=c_{c,x,m}\circ (\sigma^\inv_{c,x}\rhd 1_m)\circ c_{x,c,m}^\inv: x\rhd (c\rhd m)\to c\rhd (x\rhd m)$. 
For any morphism $\alpha\in \Hom_{\mathcal Z(\mac)}(x,y)$ the morphisms $\nu_m=\alpha\rhd 1_m: x\rhd m\to y\rhd m$ define a $\mac$-module natural transformation $\nu: x\rhd-\Rightarrow y\rhd -$. This defines a functor $\mathcal Z(\mac)\to \mathrm{End}_{(\mac,\mad)}(\mam)$. 

\item For any $\mac$-module category $\mam$ and $m\in\Ob \mam$, the functor $F=-\rhd m:\mac\to\mam$ is a $\mac$-module functor with $s_{x,y}=c_{x,y,m}: (x\oo y)\rhd m\to x\rhd (y\rhd m)$. Any morphism $\alpha\in \Hom_\mam(m,m')$ defines a $\mac$-module natural transformation $\nu=-\rhd \alpha: -\rhd m\Rightarrow -\rhd m'$. 

If $\mam$ is a $(\mac,\mad)$-bimodule category, this defines a $(\mac,\mad)$-bimodule equivalence $\mam\to \Fun_\mac(\mac,\mam)$ with respect to the $(\mac,\mad)$-bimodule structure on $\Fun_\mac(\mac,\mam)$  given by $(c\rhd F)(x)=F(x\oo c)$ and $(F\lhd d)(x)=F(x)\lhd d$ for $c,x\in\Ob\mac$, $d\in\Ob\mad$.

\item For any $\mad$-right module category $\mam$ and $m\in\Ob \mam$, the functor $F=m\lhd -:\mad\to\mam$ is a $\mad$-right module functor with $t_{x,y}=d_{x,y,m}: (m\lhd x)\lhd y\to m\lhd (x\oo y)$. Any morphism $\alpha\in \Hom_\mam(m,m')$ defines a $\mac$-module natural transformation $\nu= \alpha\lhd -:  m\lhd - \Rightarrow  m'\lhd -$. 

If $\mam$ is a $(\mac,\mad)$-bimodule category, this defines a $(\mac,\mad)$-bimodule equivalence $\mam\to \Fun_{\mad^{rev}}(\mad,\mam)$ with respect to the $(\mac,\mad)$-bimodule structure on $\Fun_{\mad^{rev}}(\mad,\mam)$  given by $(c\rhd F)(x)=c\rhd F(x)$ and $(F\lhd d)(x)=F(d\oo x)$ for $c\in\Ob\mac$, $d,x\in\Ob\mad$.

\item If $\mathcal C,\mathcal D$ are pivotal fusion categories,  any $(\mac,\mad)$-bimodule functor $F: \mam\to\man$ defines a $(\mad,\mac)$-bimodule functor $F^\#:\mam^\#\to\man^\#$ with coherence isomorphisms  $s^{F^\#}_{d,m}=t^F_{m,d^*}$ and $t^{F^\#}_{m,c}=s^F_{c^*,m}$. 
 Any $(\mac,\mad)$-bimodule natural transformation $\nu: F\Rightarrow G$ induces a $(\mad,\mac)$-bimodule natural transformation $\nu^\#: G^\#\Rightarrow F^\#$ with  $\nu^\#_m=\nu_m$.

\item For any $(\mac,\mad)$-bimodule functor $F:\mam\to\man$, the morphisms $s_{c,m}: F(c\rhd m)\to c\rhd F(m)$ define a $\mad$-right module natural isomorphism $s_c: F(c\rhd -)\Rightarrow c\rhd F(-)$. Analogously, the morphisms $t_{m,d}: F(m)\lhd d\to F(m\lhd d)$ define a $\mac$-module natural transformation $t_d: F(-)\lhd d\Rightarrow F(-\lhd d)$. This follows  from the  hexagon identity \eqref{eq:hexa} for a bimodule functor and the  module functor structure  \eqref{eq:bimodcomp} for a composite functor.

\end{enumerate}
\end{example}

If $\mac$ is a finite multitensor category as a bimodule category over itself, then  all bimodule endofunctors of $\mac$ are naturally isomorphic to functors of the  form $F=c\oo -:\mac\to\mac$ with an object $c\in \mathcal Z(\mac)$, as in Example \ref{ex:modulefuncs}, 4.~and all bimodule natural transformations between them are of the form $\nu=f\oo -: c\oo -\Rightarrow c'\oo -$ with a morphism $f: c\to c'$ in $\mathcal Z(\mac)$. This identifies $\End_\mac(\mac)$ with the  centre $\mathcal Z(\mac)$.

\begin{example}\label{ex:center} \cite[Prop.~7.13.8.]{EGNO} Let $\mac$ be a finite multitensor category, viewed as a $(\mac,\mac)$-bimodule category. 
Then  the monoidal category $\End_\mac(\mac)$ is canonically monoidally isomorphic to the centre $\mathcal Z(\mac)$.
\end{example}

\subsection{Finite semisimple (bi)module categories over pivotal fusion categories}
\label{subsec:semisimple}
We focus on {\em finite semisimple} $(\mac,\mad)$-bimodule categories  over {\em fusion categories} $\mac$, $\mad$ and on $(\mac,\mad)$-bimodule functors and  natural transformations between them. In this case
all categories $\Fun_{(\mac,\mad)}(\mam,\man)$ are finite $\C$-linear abelian categories  \cite[Prop.~7.11.6]{EGNO} and  semisimple, see \cite[Th.~2.16]{ENO} by Etingof et al. 

Moreover, the  2-category  $\Bimod(\mac,\mad)$  of finite semisimple $(\mac,\mad)$-bimodule categories $(\mac,\mad)$-bimodule functors and $(\mac,\mad)$-bimodule natural transformations  is equipped with duals. Any $(\mac,\mad)$-bimodule functor $F:\mam\to\man$ between finite semisimple $(\mac,\mad)$-bimodule categories
has a left adjoint $F^l:\man\to\mam$ and a right adjoint $F^r:\man\to\mam$, and  these adjoints are also $(\mac,\mad)$-bimodule functors.  The left and right  duals of $F$  are  given by its left and right adjoint and  the units and counits of the adjunction.

This follows, because module functors between finite semisimple module categories are always exact, as finite semisimple module categories are exact and module functors between exact module categories are exact \cite[Prop.~7.6.9]{EGNO}. Any finite abelian $\C$-linear  category is equivalent to the category  of modules over some finite-dimensional semisimple $\C$-algebra \cite[Def.~1.8.5]{EGNO}. Any right exact $\C$-linear functor between such categories  is $\otimes$-representable \cite[Prop.~1.8.10]{EGNO}  and hence has a left adjoint, namely the corresponding Hom-functor. Analogously,  left exactness implies the existence of a right adjoint.

 It follows that category $\End_\mac(\mam)$ is a finite multifusion category. It is a fusion category if and only if $\mam$ is {\em indecomposable}, that is, not the direct sum of non-trivial module categories,  see \cite[Sec.~7.3.6]{EGNO} and  the remark after \cite[Def.~7.2.12]{EGNO}. 
 The category $\mam$ is an $\End_\mac(\mam)$-module category with the action functor that applies endofunctors and natural transformations to objects and morphisms in $\mam$.

We also require  that the fusion categories $\mac$ and $\mad$ are {\em pivotal}. This has implications for the opposite bimodule category from Example  \ref{ex:dualcat}. 
 A $(\mad,\mac)$-bimodule structure on $\mam^{op}$ can be defined analogously to Example  \ref{ex:dualcat} for any fusion category $\mam$. However,  the category $\mam^{\#\#}$ is  in general not equivalent to the bimodule category $\mam$, and one needs to distinguish the $(\mad,\mac)$-bimodule category structures on $\mam^{op}$ that are induced by left and right duals, see the results by Douglas et al.~in \cite[Sec.~2.4]{DSS}. As shown in \cite{G14}, the pivotal structures of $\mac$ and $\mad$ relate their left and right duals and  eliminate this ambiguity. This justifies the name {\em opposite bimodule category}.

\begin{proposition}\label{ex:oppositebimod} \cite{G14, DSS}

 Let $\mac,\mad$ be pivotal fusion categories  and $\mam,\man$ finite semisimple $(\mac,\mad)$-bimodule categories.
\begin{compactenum}

\item  There are equivalences  $\Omega: \mam^{\#\#}\to \mam$ of $(\mac,\mad)$-bimodule categories with $F\Omega=\Omega F^{\#\#}$ for all $(\mac,\mad)$-bimodule functors $F:\mam\to\man$.

\item There are equivalences of  $(\mac,\mad)$-bimodule categories $\mam^\#\to \Fun_\mac(\mam,\mac)$ and $\mam^\#\to \Fun_{\mad^{rev}}(\mam,\mad)$.
\end{compactenum}
\end{proposition}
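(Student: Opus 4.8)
\emph{Part~(1).} Iterating Example~\ref{ex:dualcat} twice, $\mam^{\#\#}$ has underlying category $\mam$, actions $c\rhd m=c^{**}\rhd m$ and $m\lhd d=m\lhd d^{**}$ (where $\rhd,\lhd$ on the right are the actions of $\mam$), and coherence isomorphisms $c,d,b$ those of $\mam$ with their $\mac$- and $\mad$-arguments reindexed along the double-dual functors. The plan is to take $\Omega$ to be the identity functor on the underlying category $\mam$, equipped with the module structures $s^{\Omega}_{c,m}=\omega^{\mac}_c\rhd 1_m: c^{**}\rhd m\to c\rhd m$ and $t^{\Omega}_{m,d}=1_m\lhd(\omega^{\mad}_d)^\inv: m\lhd d\to m\lhd d^{**}$, where $\omega^{\mac}: **\Rightarrow\id_\mac$ and $\omega^{\mad}: **\Rightarrow\id_\mad$ are the pivotal isomorphisms. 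Verifying that $\Omega$ is a bimodule functor means checking the pentagon \eqref{eq:pentagoncfunc}, the triangle \eqref{eq:trianglecfunc}, their right-module analogues \eqref{eq:pentagondfunc} and \eqref{eq:triangledfunc}, and the hexagon \eqref{eq:hexa}; after inserting the reindexed coherence data of $\mam^{\#\#}$, each of these collapses to an instance of the axioms \eqref{eq:pentagoncdef}--\eqref{eq:pentagonbdef} for $\mam$ together with the fact that $\omega^{\mac}$ and $\omega^{\mad}$ are \emph{monoidal} natural isomorphisms. This is the only point where pivotality is used, and it is precisely what removes the left/right-dual ambiguity discussed before the statement. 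Since $\Omega$ is the identity on objects and fully faithful, it is an equivalence. Finally, applying the construction $F\mapsto F^\#$ of Example~\ref{ex:modulefuncs} twice gives $s^{F^{\#\#}}_{c,m}=s^F_{c^{**},m}$ and $t^{F^{\#\#}}_{m,d}=t^F_{m,d^{**}}$, while $F^{\#\#}$ has the same underlying functor as $F$; by the composition rule \eqref{eq:bimodcomp}, the asserted identity $F\Omega=\Omega F^{\#\#}$ of bimodule functors then reduces to naturality of $s^F$ and $t^F$, evaluated on the components of $\omega^{\mac}$ and $\omega^{\mad}$.

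\emph{Part~(2).} For $\mam^\#\to\Fun_\mac(\mam,\mac)$ I would use the internal Hom $\underline{\Hom}: \mam^{op}\times\mam\to\mac$ of the module category $\mam$ over $\mac$, characterised by $\Hom_\mac(c,\underline{\Hom}(m,n))\cong\Hom_\mam(c\rhd m,n)$; it exists since $\mam$ is finite and semisimple, and by construction $\underline{\Hom}(m,-)$ is the right adjoint of $-\rhd m:\mac\to\mam$. An adjunction computation gives $\underline{\Hom}(m,c\rhd n)\cong c\oo\underline{\Hom}(m,n)$, so $\underline{\Hom}(m,-):\mam\to\mac$ carries a $\mac$-module functor structure, and $m\mapsto\underline{\Hom}(m,-)$ defines a functor $\mam^{op}\to\Fun_\mac(\mam,\mac)$. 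It is fully faithful by the internal-Hom adjunction, and essentially surjective because every $\mac$-module functor $G:\mam\to\mac$ has a left adjoint that, being a $\mac$-module functor $\mac\to\mam$, is isomorphic to $-\rhd m$ for some $m\in\Ob\mam$ by the equivalence $\mam\simeq\Fun_\mac(\mac,\mam)$ of Example~\ref{ex:modulefuncs}, so $G\cong\underline{\Hom}(m,-)$. To transport the bimodule structure one uses that $-\lhd d^*$ is simultaneously a left and a right adjoint of $-\lhd d$ on $\mam$ (here pivotality of $\mad$ enters) and, likewise, $-\oo c^*$ of $-\oo c$ on $\mac$, yielding natural isomorphisms $\underline{\Hom}(m,-)\circ(-\lhd d)\cong\underline{\Hom}(m\lhd d^*,-)$ and $\underline{\Hom}(m,-)\lhd c\cong\underline{\Hom}(c^*\rhd m,-)$ that match the $\#$-actions $d\rhd^\# m=m\lhd d^*$ and $m\lhd^\# c=c^*\rhd m$ of Example~\ref{ex:dualcat}; compatibility of these isomorphisms with the bimodule coherences is then routine. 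The second equivalence $\mam^\#\to\Fun_{\mad^{rev}}(\mam,\mad)$ follows by the same argument with the roles of the left $\mac$- and right $\mad$-actions interchanged, or by applying the first to $\mam^\#$ and using part~(1). Equivalently, all of part~(2) can be obtained from the equivalence $\mam\simeq\Fun_\mac(\mac,\mam)$ of Example~\ref{ex:modulefuncs} by passing to adjoints in $\Bimod(\mac,\mad)$, which gives an equivalence $\Fun_\mac(\mac,\mam)^{op}\to\Fun_\mac(\mam,\mac)$ since in the finite semisimple setting module functors are exact with invertible adjunction units and counits (Section~\ref{subsec:semisimple}).

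\emph{Main obstacle.} The conceptual content is modest; the real work is bookkeeping. The crux is the coherence verification for $\Omega$ in~(1) and the compatibility of the internal-Hom equivalence with both bimodule structures in~(2): in each case one must use the \emph{monoidality} of the pivotal isomorphisms $\omega^{\mac},\omega^{\mad}$ exactly where the naive left/right-dual identifications fail to be canonical. These verifications are the substance of \cite{G14} (see also \cite{DSS}), to which I would refer for the complete argument.
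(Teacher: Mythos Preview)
Your proposal is correct and matches the paper's proof almost exactly: for part~(1) you take $\Omega=\id_\mam$ with coherence data $s^\Omega_{c,m}=\omega^{\mac}_c\rhd 1_m$ and $t^\Omega_{m,d}=1_m\lhd(\omega^{\mad}_d)^{-1}$, precisely as the paper does (citing \cite{G14}), and you derive $F\Omega=\Omega F^{\#\#}$ from \eqref{eq:bimodcomp} and naturality, again as in the paper. For part~(2) the paper also uses adjoints of the action functors $-\rhd m$ and $m\lhd-$ (citing \cite[Prop.~2.4.9]{DSS}); the only cosmetic difference is that you phrase it via the \emph{right} adjoint (the internal Hom) while the paper speaks of \emph{left} adjoints, but in the pivotal finite semisimple setting these are canonically identified via part~(1), so this is the same argument.
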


\begin{proof}
Claim 1.~is shown in \cite{G14}. As a functor $\Omega=\id_\mam$,  and its coherence isomorphisms are given by the pivots $\omega:**\Rightarrow \id$   as $s_{c,m}=\omega_c\rhd 1_m: c^{**}\rhd m \to c\rhd m$ and $t_{m,d}=1_m\lhd \omega^{\inv}_d: m\lhd d\to m\lhd d^{**}$.  The identity  $F\Omega=\Omega F^{\#\#}$ follows directly from the definition of $F^\#$ in Example \ref{ex:modulefuncs}, 7.~and formula \eqref{eq:bimodcomp} for the coherence data of a composite bimodule functor. 

Claim 2.~is essentially \cite[Prop.~2.4.9]{DSS} for  module categories over pivotal fusion categories. It follows by combining \cite[Prop.~2.4.9]{DSS} with 1.  The  $(\mad,\mac)$-bimodule structure on $\Fun_\mac(\mam,\mac)$  is given by  $(d\rhd F)(m)=F(m\lhd d)$ and $(F\lhd c)(m)=F(m)\oo c$ and  the one on $\Fun_{\mad^{rev}}(\mam,\mad)$ by   $(d\rhd F)(m)=d\oo F(m)$ and $(F\lhd c)(m)=F(c\rhd m)$ for all $c\in\Ob\mac$, $m\in\Ob\mam$ and $d\in\Ob\mad$. The equivalences are given by the left adjoints of the functors $-\rhd m: \mac\to\mam$  and $m\lhd-:\mad\to\mam$ from Example \ref{ex:modulefuncs}, 5.~and 6. 
\end{proof}

\subsection{Bimodule categories with bimodule traces}

(Bi)module traces on (bi)module categories over fusion categories were first  introduced by Schaumann in \cite{G12} and investigated further in  \cite{G14}.
We summarise (bi)module traces for finite semisimple (bi)module categories over pivotal fusion categories from \cite{G12,G14}. 
To keep notation simple, we focus on left module categories and identify $(\mac,\mad)$-bimodule categories with $\mac\boxtimes\mad^{rev}$-left module categories.

\begin{definition}\label{def:moduletrace} \cite[Def.~3.7]{G12}

Let $\mac$ be a pivotal multifusion category and $\mam$  a finite semisimple $\mac$-module category.\\
A {\bf trace} on $\mam$ is a collection of morphisms $\theta_m: \End_\mam(m)\to \C$ indexed by objects $m\in\Ob\mam$ that satisfy
\begin{compactenum}
\item {\bf cyclicity:} $\theta_m(\beta\circ \alpha)=\theta_{m'}(\alpha\circ \beta)$ for all $\alpha\in \Hom_\mam(m,m')$ and $\beta\in \Hom_\mam(m',m)$.
\item {\bf non-degeneracy:} for all $m,m'\in\Ob\mam$ the following bilinear maps are non-degenerate.
$$
\Hom_\mam(m',m)\times \Hom_\mam(m,m')\to \C,\quad (\beta,\alpha)\mapsto \theta_m(\beta\circ \alpha)
$$
\end{compactenum}
A trace  on $\mam$ is called a {\bf $\mac$-module trace} if $\theta_{c\rhd m}(\alpha)=\theta_{m}(\tr^\mac(\alpha))$ for all $\alpha\in \End_\mam(c\rhd m)$, where $\tr^\mac(\alpha)$  is the partial trace of $\alpha$ with respect to $\mac$
$$
\tr^\mac(\alpha)=(\ev^L_c\rhd 1_m)\circ c^\inv_{c^*,c,m}\circ (1_{c^*}\rhd \alpha)\circ c_{c^*,c,m}\circ (\coev^R_{c}\rhd 1_m): m\to m.
$$
\end{definition}

Analogously, a $\mad$-right module trace on a $\mad$-right module category $\mam$ is defined as a $\mad^{rev}$-module trace and a
$(\mac,\mad)$-bimodule trace on a $(\mac,\mad)$-bimodule category $\mam$ as a $\mac\boxtimes \mad^{rev}$-module trace. 

As for spherical categories,  the {\bf dimension} of an object $m$ in a (bi)module category $\mam$ with a (bi)module trace is $\dim(m)=\theta_m(1_m)$ and  for any set $I$ of representatives of the isomorphism classes of simple objects 
\begin{align}
\dim(\mam)=\sum_{m\in I} \dim(m)^2.
\end{align}
If $\mathcal C$ is spherical, the condition that $\theta$ is a $\mac$-module trace  implies for all $c\in \Ob\mac$  and $m\in\Ob\mam$
\begin{align}\label{eq:dimensionsmodule}
\dim(c\rhd m)=\dim(c)\dim(m).
\end{align}
It is shown in \cite[Prop.~4.4]{G12} that a $\mac$-module trace on an {\em indecomposable} finite semisimple module category $\mam$ is unique up to rescaling $\theta_m\to z\theta_m$ with $z\in\C^\times$. The existence of a $\mac$-module trace on a $\mac$-module category $\mam$ is a condition on  the dimensions of simple objects of $\mac$ and $\mam$, see \cite[Sec.~5]{G12}.

\begin{example} \label{ex:sphtriv}
 A pivotal fusion category $\mac$ as a $(\mac,\mac)$-bimodule category over itself has a bimodule trace if and only if $\mac$ is spherical. 
\end{example}

\begin{example}\label{ex:opptrace}
Each  $(\mac,\mad)$-bimodule trace on a finite semisimple $(\mac,\mad)$-bimodule category $\mam$ defines a $(\mad,\mac)$-bimodule trace on the opposite bimodule category $\mam^\#$ from Example \ref{ex:dualcat}.
\end{example}

\begin{example}\cite[Ex.~3.13]{G12} \label{ex:tracevect}An indecomposable module category over the pivotal fusion category $(\mathrm{Vec}_G, \kappa)$,  given by a pair $(G/L,\psi)$ as in Example \ref{ex:modvectgomega}, admits a module trace iff $\kappa\vert_{L}\equiv 1$.
Any indecomposable semisimple module category over $\mathrm{Vec}_G$ with the standard spherical structure admits a module trace.
\end{example}

(Bi)module traces on (bi)module categories equip (bi)module functors between them with additional structure. It is shown in \cite[Th.~4.5]{G12}  that for module categories $\mam$ and $\man$  over a pivotal fusion category $\mac$  with  module traces, any module functor $F:\mam\to\man$ is naturally isomorphic to its double left adjoint $F^{ll}: \mam\to\man$. This  defines a pivotal structure in the sense of \cite[Def.~A.12]{G14}, namely  a natural 2-isomorphism $\omega: \id\Rightarrow **$, where $*$ is the contravariant 2-functor that assigns each module category to itself,  each functor to its left dual and each module natural transformation $\nu: F\Rightarrow G$ to the induced natural transformation $\nu^*: G^l\Rightarrow F^l$.

\begin{theorem}\label{th:pivot}\cite[Th.~4.5]{G12}, \cite[Th.~5.8]{G14} 
\begin{enumerate}
\item For all pivotal fusion categories $\mac$ the 2-category $\text{Mod}^{\,\theta}(\mac)$ of $\mac$-module categories with $\mac$-module traces, $\mac$-module functors between them  and $\mac$-module natural transformations is pivotal.

\item For all  pivotal fusion categories $\mac,\mad$ the 2-category  $\Bimod^\theta(\mac,\mad)$ of  $(\mac,\mad)$-bimodule categories with bimodule traces, $(\mac,\mad)$-bimodule functors between them and $(\mac,\mad)$-bimodule transformations is pivotal. 
\end{enumerate}
\end{theorem}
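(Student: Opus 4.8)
The plan is to obtain both parts from one construction: module traces make every module functor canonically isomorphic to its double left adjoint, and this family of isomorphisms \emph{is} the pivotal structure; the bimodule case then reduces to the module case over a Deligne product.

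For part~1, recall from Section~\ref{subsec:semisimple} that $\text{Mod}^{\,\theta}(\mac)$ is a 2-category with duals: any $\mac$-module functor $F\colon\mam\to\man$ between finite semisimple module categories is exact, hence has left and right adjoints $F^l,F^r\colon\man\to\mam$, which are again $\mac$-module functors, with module natural transformations as units and counits. This supplies the contravariant 2-functor $*$ that fixes objects, sends $F\mapsto F^l$ and $\nu\colon F\Rightarrow G$ to $\nu^*\colon G^l\Rightarrow F^l$, so that $**$ sends $F$ to $F^{ll}$. To build the 2-isomorphism $\omega\colon\id\Rightarrow **$, fix module traces $\theta^\mam,\theta^\man$ and, for $m\in\Ob\mam$ and $n\in\Ob\man$, compose the natural isomorphisms
\begin{align*}
\Hom_\man(F(m),n)\;\cong\;\Hom_\mam(m,F^l(n))&\;\cong\;\Hom_\mam(F^l(n),m)^*\\
&\;\cong\;\Hom_\man(n,F^{ll}(m))^*\;\cong\;\Hom_\man(F^{ll}(m),n),
\end{align*}
using respectively the adjunction $F\dashv F^l$, non-degeneracy of $\theta^\mam$, the adjunction $F^l\dashv F^{ll}$, and non-degeneracy of $\theta^\man$. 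By the Yoneda lemma this yields a natural isomorphism $\omega_F\colon F\Rightarrow F^{ll}$.

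It then remains to verify that $\omega$ is a pivotal structure in the sense of \cite[Def.~A.12]{G14}: each $\omega_F$ is an $\mac$-module natural transformation; the assignment $F\mapsto\omega_F$ is 2-natural, i.e.\ $\omega_G\circ\nu=\nu^{ll}\circ\omega_F$ for $\nu\colon F\Rightarrow G$; and it is compatible with identities and horizontal composition, $\omega_{\id_\mam}=\id$ and $\omega_{GF}=\omega_G\ast\omega_F$ under the canonical identification $(GF)^{ll}\cong G^{ll}F^{ll}$ (for fixed choices of adjoints), together with the defining pivotality coherence. Module-naturality of $\omega_F$ is the single step that genuinely uses the module-trace condition $\theta_{c\rhd m}(\alpha)=\theta_m(\tr^\mac(\alpha))$ of Definition~\ref{def:moduletrace} beyond mere non-degeneracy, combined with the module-functor structure on the adjoints; the remaining identities follow by tracking the four isomorphisms above through cyclicity of the traces and the triangle identities of the adjunctions. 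This is \cite[Th.~4.5]{G12}, and the main (though essentially routine) obstacle is precisely the verification of these coherence axioms, in particular compatibility with horizontal composition.

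For part~2 no new work is needed: by the conventions of Section~\ref{subsec:semisimple}, a $(\mac,\mad)$-bimodule category with a bimodule trace is precisely a $\mac\boxtimes\mad^{rev}$-module category with a module trace, and $(\mac,\mad)$-bimodule functors and bimodule natural transformations are exactly $\mac\boxtimes\mad^{rev}$-module functors and module natural transformations. Since $\mac,\mad$ are pivotal fusion categories, so is $\mac\boxtimes\mad^{rev}$ — the Deligne product of fusion categories is fusion, reversing the monoidal structure preserves pivotality, and the pivotal structure is $\omega^\mac\boxtimes\omega^\mad$. Hence $\Bimod^\theta(\mac,\mad)$ coincides with $\text{Mod}^{\,\theta}(\mac\boxtimes\mad^{rev})$ as a 2-category with duals, and part~1 applies verbatim; this is \cite[Th.~5.8]{G14}. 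Equivalently, one may rerun the argument of part~1 directly, using the partial traces with respect to $\mac$ and to $\mad$ separately and Proposition~\ref{ex:oppositebimod} to handle the opposite bimodule categories.
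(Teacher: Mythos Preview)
Your proposal is correct and matches the approach the paper itself sketches. The paper does not give a self-contained proof of this theorem---it is stated as a citation of \cite[Th.~4.5]{G12} and \cite[Th.~5.8]{G14}---but immediately after the statement it records exactly your chain of isomorphisms as the defining condition \eqref{eq:pivcond} on $\omega^F$, and explicitly says this ``encodes the chain of natural isomorphisms that defines $\omega^F$ in equation (4.14) in \cite{G12}.'' Your reduction of part~2 to part~1 via $\Bimod^\theta(\mac,\mad)\simeq\mathrm{Mod}^\theta(\mac\boxtimes\mad^{rev})$ is also the standard route and is consistent with the conventions set up in the paper before Definition~\ref{def:moduletrace}.
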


In particular, this yields  pivotal structures  on the finite multifusion  categories  $\End_\mac(\mam)$. If the underlying fusion category $\mac$ is spherical, these pivotal structures become spherical.  If $\mam$ is also indecomposable, then  $\End_\mac(\mam)$ is a spherical fusion category. An earlier proof of this was given by Mueger in \cite[Th.~5.16]{Mue}.

\begin{corollary} \label{cor:endmpiv} \cite[Prop.~5.10]{G12}  For any spherical fusion category $\mac$ and any  $\mac$-module category $\mam$ with a $\mac$-module trace,
$\End_\mac(\mam)$ is  spherical. If $\mam$ is indecomposable, $\End_\mac(\mam)$ is a spherical fusion category.
\end{corollary}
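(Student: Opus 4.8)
The plan is to reduce the assertion to an equality of left and right quantum dimensions, and then to trace the difference between the two back to the difference between the left and right partial $\mac$-traces, which vanishes because $\mac$ is spherical. First I would record the reductions. By the discussion preceding the statement, $\mathcal E:=\End_\mac(\mam)$ is a finite multifusion category, and it is a fusion category exactly when $\mam$ is indecomposable; hence the second assertion follows once the first is proved. By Theorem \ref{th:pivot}(1) the $2$-category $\text{Mod}^{\,\theta}(\mac)$ is pivotal, which equips the monoidal category $\mathcal E$ with a pivotal structure $\omega\colon\id\Rightarrow **$ whose component at a module endofunctor $F$ is the canonical isomorphism $F\xrightarrow{\sim}F^{ll}$ determined by the module trace $\theta$, as in \cite[Th.~4.5]{G12}. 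It therefore remains to show that this pivotal structure is spherical, i.e.\ that $\tr^L_{\mathcal E}(\nu)=\tr^R_{\mathcal E}(\nu)$ for all $F\in\Ob\mathcal E$ and $\nu\in\End_{\mathcal E}(F)$. Since $\mathcal E$ is semisimple it suffices to treat simple $F$, where $\End_{\mathcal E}(F)$ is a division algebra over $\C$, hence $\C$; so the claim reduces to the scalar identity $\dim_L(F)=\dim_R(F)$.

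For the computation I would fix a set $I$ of representatives of the isomorphism classes of simple objects of $\mam$. By the discussion in Section~\ref{subsec:semisimple} together with \cite[Th.~4.5]{G12}, the module trace $\theta$ supplies $F$ with a left adjoint $F^l$ and a right adjoint $F^r$, both $\mac$-module functors, whose adjunction units and counits are $\mac$-module natural transformations; these provide the left and right dual data of $F$ in $\mathcal E$, and $\omega$ identifies $F^l\cong F^r$. One then evaluates the endomorphisms $\dim_L(F)=\ev^L_F\circ\coev^R_F$ and $\dim_R(F)=\ev^R_F\circ\coev^L_F$ of $\id_\mam$ at a simple object $m\in I$; each becomes an element of $\End_\mam(m)=\C$ assembled from the components at $m$ of the adjunction units and counits, of $\omega$, and of the module-functor isomorphism $s^F$. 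Using the defining property $\theta_{c\rhd m}(\alpha)=\theta_m(\tr^\mac(\alpha))$ of a module trace, each of these scalars can be rewritten as an expression built from $\theta$, $s^F$, and the duality data of $\mac$ --- the expression for $\dim_L(F)$ involving $\ev^L$ and $\coev^R$ in $\mac$, and the one for $\dim_R(F)$ involving $\ev^R$ and $\coev^L$ in $\mac$.

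Since $\mac$ is spherical one has $\tr^L=\tr^R$ for all endomorphisms in $\mac$, so the left partial $\mac$-trace appearing in the module-trace axiom agrees with the right one, and the two expressions for $\dim_L(F)$ and $\dim_R(F)$ coincide; hence $\mathcal E$ is spherical. I expect the main obstacle to lie in the second step: making Schaumann's pivotal $2$-isomorphism $\omega$ and the left adjoint of a module functor explicit enough that the discrepancy between the left and right dimensions in $\mathcal E$ is pinned exactly to the discrepancy between the left and right partial $\mac$-traces in the module-trace axiom of Definition \ref{def:moduletrace}. This bookkeeping is precisely the content of \cite[Prop.~5.10]{G12} (building on \cite[Th.~4.5]{G12}), to which I would appeal for the remaining details.
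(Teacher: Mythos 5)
The paper gives no proof of this corollary of its own: it is imported directly from \cite[Prop.~5.10]{G12}, with the pivotal structure on $\End_\mac(\mam)$ supplied by Theorem \ref{th:pivot} and the multifusion/fusion dichotomy by the discussion in Section \ref{subsec:semisimple}, which is exactly the route you take. Your preliminary reductions (fusion case follows from the multifusion case via indecomposability; sphericality reduces to $\dim_L(F)=\dim_R(F)$ for simple module endofunctors $F$) are correct, and the remaining computation is deferred to precisely the reference the paper itself cites, so the two approaches essentially coincide.
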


For any module category $\mam$ over a fusion category $\mac$ with a $\mac$-module trace, the multifusion category $\End_\mac(\mam)$ acts on $\mam$. This  action functor is given by the evaluation of endofunctors  and natural transformations between them on objects and morphisms of $\mam$. If $\mac$ is pivotal, then $\End_\mac(\mam)$ is pivotal and the module trace of $\mam$ also satisfies the module trace condition for the $\End_\mac(\mam)$-module category structure.

\begin{corollary}\label{lem:macmodule}\cite[Cor.~4.6]{G12} Let $\mac$ be a pivotal fusion category and  $\mam$ be a $\mac$-module category with a $\mac$-module trace $\theta$. Then $\theta$ is also an $\End_\mac(\mam)$-module trace.  
\end{corollary}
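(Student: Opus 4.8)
The plan is to unwind the definition of an $\End_\mac(\mam)$-module trace and to reduce it to the cyclicity and non-degeneracy of $\theta$ together with the explicit construction of the pivotal structure on $\End_\mac(\mam)$ from \cite{G12}. Write $\mathcal E=\End_\mac(\mam)$; by Corollary \ref{cor:endmpiv} this is a spherical multifusion category, hence in particular pivotal, which is what Definition \ref{def:moduletrace} requires. Moreover $\mam$ carries the $\mathcal E$-module structure given by the evaluation action, for which $F\rhd m=F(m)$. The first step is to observe that the coherence isomorphisms $c,\gamma$ of this $\mathcal E$-module structure are identities, since composition of module functors is strictly associative and unital. Hence for $F\in\Ob\mathcal E$, $m\in\Ob\mam$ and $\alpha\in\End_\mam(F(m))$ the partial trace of Definition \ref{def:moduletrace} collapses to
\begin{align*}
\tr^{\mathcal E}(\alpha)=(\ev^L_F)_m\circ F^l(\alpha)\circ(\coev^R_F)_m\colon m\to m,
\end{align*}
where $F^l$ is the left dual of $F$ in $\mathcal E$, that is, its left adjoint (this exists because every module endofunctor of the finite semisimple category $\mam$ is exact), and $\ev^L_F,\coev^R_F$ are the (co)evaluations of the pivotal structure on $\mathcal E$ from Theorem \ref{th:pivot}. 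The target is then the single identity $\theta_{F(m)}(\alpha)=\theta_m(\tr^{\mathcal E}(\alpha))$ for all $F,m,\alpha$.

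Next I would make the right-hand side explicit. The evaluation $\ev^L_F\colon F^l\circ F\Rightarrow\id_\mam$ is the counit of the adjunction $F^l\dashv F$, and by \eqref{eq:coevr} one has $\coev^R_F=(1_{F^l}\oo\omega_F)\circ\coev^L_{F^l}$, where $\coev^L_{F^l}\colon\id_\mam\Rightarrow F^l\circ F^{ll}$ is the unit of $F^{ll}\dashv F^l$ and $\omega_F\colon F^{ll}\Rightarrow F$ is (the inverse of) the pivot $F\cong F^{ll}$ of \cite[Th.~4.5]{G12}. Evaluating at $m$ this gives
\begin{align*}
\tr^{\mathcal E}(\alpha)=(\ev^L_F)_m\circ F^l\big(\alpha\circ(\omega_F)_m\big)\circ(\coev^L_{F^l})_m.
\end{align*}
The crucial input is then the description of $\omega_F$ in terms of $\theta$: in \cite{G12} the natural isomorphism $F\cong F^{ll}$ is characterised by its compatibility with the module traces on the source and target of $F$, which for $F\colon\mam\to\mam$ amounts to
\begin{align*}
\theta_m\big((\ev^L_F)_m\circ F^l(\beta)\circ(\coev^L_{F^l})_m\big)=\theta_{F(m)}\big(\beta\circ(\omega_F)_m^{\inv}\big)\qquad\text{for all }\beta\colon F^{ll}(m)\to F(m).
\end{align*}
Applying this with $\beta=\alpha\circ(\omega_F)_m$ yields $\theta_m(\tr^{\mathcal E}(\alpha))=\theta_{F(m)}(\alpha)$, which is the claim.

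In summary the steps, in order, are: (i) check strictness of the evaluation $\mathcal E$-module structure, so that $\tr^{\mathcal E}$ takes the displayed form; (ii) extract from the proof of \cite[Th.~4.5]{G12} the characterisation of the pivot $\omega_F$ as the $\theta$-compatibility identity above, using cyclicity and non-degeneracy of $\theta$ on $\mam$ together with the zigzag identities for the adjunctions $F^l\dashv F$ and $F^{ll}\dashv F^l$ and the module-functor structures of $F$ and of $F^l$; and (iii) substitute. I expect step (ii) to be the main obstacle: it requires unpacking Schaumann's construction of the isomorphism $F\cong F^{ll}$ and a diagram chase through the units, counits and module-functor coherence data to see that this isomorphism is pinned down by exactly the trace-compatibility needed here. (When $\mam$ is indecomposable one might alternatively try to invoke uniqueness of module traces up to scaling, \cite[Prop.~4.4]{G12}, but fixing the relative normalisations still reduces to essentially the same computation.)
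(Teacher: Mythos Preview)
Your proposal is correct and follows essentially the same route as the paper. The paper does not give a standalone proof of this corollary (it cites Schaumann), but immediately afterwards proves the more general Corollary~\ref{cor:functortrace} and remarks that it specialises to Corollary~\ref{lem:macmodule} for endofunctors; that proof uses the characterisation~\eqref{eq:pivcond} of the pivot $\omega^F$ with the specific choice $n=F(m)$, $\beta=\eta'^F_m=F^l(\omega^F_m)\circ\eta^{F^l}_m$. Your ``$\theta$-compatibility identity'' is equivalent to~\eqref{eq:pivcond}: passing between them uses cyclicity of $\theta$ and the bijection $\Hom_\mam(m,F^l(n))\cong\Hom_\man(F^{ll}(m),n)$ from the adjunction $F^{ll}\dashv F^l$ (your $\beta$ is $\alpha\circ\tilde\beta$ for the paper's $\alpha,\beta$, with $\tilde\beta=\epsilon^{F^l}_n\circ F^{ll}(\beta)$). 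Your final substitution $\beta=\alpha\circ(\omega_F)_m$ then amounts precisely to the paper's choice of $\beta=\eta'^F_m$ in~\eqref{eq:pivcond}.
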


More explicitly, the pivotal structure from Theorem \ref{th:pivot} and Corollaries \ref{cor:endmpiv} and \ref{lem:macmodule} is given as follows. If we denote by $F^l: \mam\to\mam$ the left adjoint of a $\mac$-module functor $F:\mam\to\mam$, by $\eta^F: \id_\man\Rightarrow FF^l$ the unit and  by $\epsilon^F:F^lF\Rightarrow \id_\mam$ the counit of this adjunction,
 then the component morphisms
 $\omega^F: F^{ll}\Rightarrow F$ of the pivotal structure are characterised by the condition
\begin{align}
\label{eq:pivcond}
\theta^\mam_{F^l(n)}(\beta\circ\epsilon^F_m\circ F^l(\alpha))=\theta^\man_n (\epsilon^{F^l}_n\circ F^{ll}(\beta)\circ\omega^{F\inv}_m \circ \alpha)
\end{align}
for all $\alpha\in \Hom_\man(n, F(m))$ and $\beta\in \Hom_\mam(m, F^l(n))$. This condition encodes the chain of natural isomorphisms that defines $\omega^F$  in equation (4.14) in \cite{G12}.

Condition \eqref{eq:pivcond} implies an identity that  generalises Corollary \ref{lem:macmodule} to $\mac$-module functors that are not endofunctors. 
It is obtained from the (co)evaluations for the right duals  induced by  $\omega^F: F^{ll}\Rightarrow F$
\begin{align}\label{eq	:etaprime}\eta'^{F}=F^l\omega^F\circ \eta^{F^l}:\id_\mam\Rightarrow F^lF\qquad\qquad \epsilon'^F= \epsilon^{F^l}\circ \omega^{F\inv}F^l:FF^l \Rightarrow \id_\man.
\end{align}

\begin{corollary}\label{cor:functortrace} Let $\mac$ be a pivotal fusion category and $\mam$, $\man$ $\mac$-module categories with $\mac$-module traces. Then any $\mac$-module functor $F:\mam\to\man$ satisfies
\begin{align}\label{eq:traceconds}
&\theta^\mam_m(\epsilon^{F}_m\circ F^l(\alpha)\circ \eta'^F_m)=\theta^\man_{F(m)}(\alpha)
\end{align}
for all $m\in\Ob\mam$, $n\in\Ob\man$ and morphisms $\alpha: F(m)\to F(m)$.
\end{corollary}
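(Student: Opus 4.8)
The plan is to obtain \eqref{eq:traceconds} as a special case of the characterisation \eqref{eq:pivcond} of the pivotal structure $\omega^F: F^{ll}\Rightarrow F$, by substituting a suitable morphism for the free variable $\beta$ in \eqref{eq:pivcond} and then collapsing the resulting right-hand side with the zigzag identities for the adjunctions $F^l\dashv F$ and $F^{ll}\dashv F^l$. (Note that \eqref{eq:pivcond}, as the notation $\theta^\man$, $\epsilon^{F^l}_n$, $F^{ll}(\beta)$ already indicates, is valid for an arbitrary $\mac$-module functor $F:\mam\to\man$, not only for endofunctors.)

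Concretely, I would apply \eqref{eq:pivcond} with $n:=F(m)$ and $\beta:=\eta'^F_m\in\Hom_\mam\bigl(m,F^lF(m)\bigr)=\Hom_\mam\bigl(m,F^l(n)\bigr)$, keeping the given $\alpha:F(m)\to F(m)$ in the role of ``$\alpha$''. Its left-hand side then reads $\theta^\mam_{F^lF(m)}\bigl(\eta'^F_m\circ\epsilon^F_m\circ F^l(\alpha)\bigr)$, and cyclicity of the module trace $\theta^\mam$ (Definition \ref{def:moduletrace}), applied to $\eta'^F_m:m\to F^lF(m)$ and $\epsilon^F_m\circ F^l(\alpha):F^lF(m)\to m$, rewrites this as $\theta^\mam_m\bigl(\epsilon^F_m\circ F^l(\alpha)\circ\eta'^F_m\bigr)$, which is the left-hand side of \eqref{eq:traceconds}.

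The only step carrying content is to identify the right-hand side of \eqref{eq:pivcond} under this substitution with $\theta^\man_{F(m)}(\alpha)$. Writing out $\eta'^F_m=F^l(\omega^F_m)\circ\eta^{F^l}_m$, so that $F^{ll}(\eta'^F_m)=(F^{ll}F^l)(\omega^F_m)\circ F^{ll}(\eta^{F^l}_m)$, the argument of $\theta^\man_{F(m)}$ becomes
\[
\epsilon^{F^l}_{F(m)}\circ(F^{ll}F^l)(\omega^F_m)\circ F^{ll}(\eta^{F^l}_m)\circ\omega^{F\inv}_m\circ\alpha .
\]
Naturality of the counit $\epsilon^{F^l}:F^{ll}F^l\Rightarrow\id_\man$ at the morphism $\omega^F_m:F^{ll}(m)\to F(m)$ turns this into $\omega^F_m\circ\epsilon^{F^l}_{F^{ll}(m)}\circ F^{ll}(\eta^{F^l}_m)\circ\omega^{F\inv}_m\circ\alpha$, and the triangle identity $\epsilon^{F^l}_{F^{ll}(m)}\circ F^{ll}(\eta^{F^l}_m)=\id_{F^{ll}(m)}$ for $F^{ll}\dashv F^l$ collapses it to $\omega^F_m\circ\omega^{F\inv}_m\circ\alpha=\alpha$. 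Hence the right-hand side of \eqref{eq:pivcond} under the substitution is $\theta^\man_{F(m)}(\alpha)$, and combining this with the previous paragraph proves \eqref{eq:traceconds}.

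I do not anticipate a serious obstacle: the whole argument hinges on the single observation that the choice $\beta=\eta'^F_m$ makes the operator $\epsilon^{F^l}_{F(m)}\circ F^{ll}(\eta'^F_m)\circ\omega^{F\inv}_m$ on the right-hand side of \eqref{eq:pivcond} equal to $\id_{F(m)}$; the rest is bookkeeping of the two adjunctions and of the direction of $\omega^F$. A conceptually cleaner, equivalent route is to observe that $\eta'^F$ and $\epsilon'^F$ are the unit and counit of an adjunction $F\dashv F^l$, so that $F^l$ is a two-sided adjoint of $F$ and \eqref{eq:traceconds} is simply the trace-compatibility of $F^l$ read through this second adjunction; but the direct substitution into \eqref{eq:pivcond} is the shortest path.
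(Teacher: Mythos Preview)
Your proof is correct and follows exactly the approach of the paper: substitute $n=F(m)$ and $\beta=\eta'^F_m$ into \eqref{eq:pivcond}, use cyclicity of $\theta^\mam$ for the left-hand side, and collapse the right-hand side via naturality and the triangle identity for $F^{ll}\dashv F^l$. You have in fact spelled out the simplification of the right-hand side in more detail than the paper does.
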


\begin{proof}
This follows by setting $n=F(m)$ and $\beta=\eta'^F_m=F^l(\omega^F_m)\circ \eta^{F^l}_m: m\to F^lF(m)$ in \eqref{eq:pivcond}. By cyclicity of $\theta^\mam$, the left-hand side of  \eqref{eq:pivcond} is  equal to the left-hand side of \eqref{eq:traceconds}. The right-hand side of \eqref{eq:pivcond} can  be simplified using the naturality of  $\omega^F$ and the for the defining identities for the unit and counit of an adjunction and yields the right-hand side of \eqref{eq:traceconds}. 
\end{proof}

Note that for an endofunctor $F:\mam\to\mam$, condition \eqref{eq:traceconds} is just the partial trace condition from Definition \ref{def:moduletrace} for the  $\End_\mac(\mam)$-module structure on $\mam$ and  Corollary \ref{cor:functortrace} reduces to Corollary \ref{lem:macmodule}.

\section{Diagrammatic calculus}
\label{sec:diagrams}

\subsection{Diagrams for spherical fusion categories}
We use the usual graphical calculus for monoidal categories, suppressing the tensor unit and  coherence data. Morphisms in diagrams are composed from top to bottom and tensor products  from the right to the left.  

We also use the usual diagrammatic notation for the duals. If the categories are pivotal, we draw oriented lines for the objects, where a line that is directed downwards and labeled $x^*$ is replaced by a line that is directed upwards and labeled $x$. Lines without arrows are assumed to be  oriented downwards. The diagrams for the (co)evaluations from \eqref{eq:coevl} and \eqref{eq:coevr} are
 \begin{align}\label{pic:unitcounitsphere}
&\begin{tikzpicture}[scale=.3]
\draw[line width=.5pt, ] (0,0)  .. controls (0,3) and (-3,3) .. (-3,0) node[sloped, pos=0.15, allow upside down]{\arrowIn} node[anchor=north]{$x$};
\end{tikzpicture}
&
&\begin{tikzpicture}[scale=.3]
\draw[line width=.5pt,] (0,3)  .. controls (0,0) and (-3,0) .. (-3,3) (0,3)node[anchor=south]{$x$} node[sloped, pos=0.9, allow upside down]{\arrowIn} ;
\end{tikzpicture}
&
&\begin{tikzpicture}[scale=.3]
\draw[line width=.5pt,] (0,0) .. controls (0,3) and (3,3) .. (3,0) node[anchor=north]{$x$} node[sloped, pos=0.15, allow upside down]{\arrowIn};
\end{tikzpicture}
&
&\begin{tikzpicture}[scale=.3]
\draw[line width=.5pt, ] (0,3)node[anchor=south]{$x$}   .. controls (0,0) and (3,0) .. (3,3) node[sloped, pos=0.9, allow upside down]{\arrowIn} ;
\end{tikzpicture}\\
&\coev^L_x: e\to x\oo x^* & &\ev^L_x: x^*\oo x\to e & &\coev^R_x: e\to x^*\oo x & &\ev^R_x: x\oo x^*\to e, \nonumber
\end{align}
and the defining conditions \eqref{eq:snake} on the left duals read 
\begin{align}\label{pic:snake}
&\begin{tikzpicture}[scale=.3]
\begin{scope}[shift={(-8,0)}]
\draw[line width=.5pt,stealth-] (0,0)  .. controls (0,3) and (3,3) .. (3,0) ;
\draw[line width=.5pt,stealth-] (3,0) .. controls (3,-3) and (6,-3) .. (6,0) ;
\draw[line width=.5pt] (0,0)--(0,-3);
\draw[line width=.5pt,stealth-] (6,0)--(6,3) node[anchor=south]{$x$};
\end{scope}
\node at (0,0){$=$};
\begin{scope}[shift={(2,0)}]
\draw[line width=.5pt,stealth-] (0,0)--(0,3) node[anchor=south]{$x$};
\draw[line width=.5pt] (0,-3)--(0,0);
\end{scope}
\end{tikzpicture}
&
&\begin{tikzpicture}[scale=.3]
\begin{scope}[shift={(-5,0)}]
\draw[line width=.5pt,stealth-] (0,0)  .. controls (0,3) and (3,3) .. (3,0) ;
\draw[line width=.5pt,stealth-] (-3,0) .. controls (-3,-3) and (0,-3) .. (0,0) ;
\draw[line width=.5pt] (-3,0)--(-3,3) node[anchor=south]{$x$};
\draw[line width=.5pt,stealth-] (3,0)--(3,-3) ;
\end{scope}
\node at (0,0){$=$};
\begin{scope}[shift={(2,0)}]
\draw[line width=.5pt,-stealth] (0,-3)--(0,0);
\draw[line width=.5pt] (0,0)--(0,3) node[anchor=south]{$x$};
\end{scope}
\end{tikzpicture}
\end{align}
Pivotality  is expressed in the diagram
\begin{align}\label{pic:pivotalfus}
\begin{tikzpicture}[scale=.25]
\begin{scope}[shift={(-8,0)}]
\draw[line width=.5pt] (0,0) .. controls (0,2) and (-2,2).. (-2,0);
\draw[line width=.5pt, -stealth] (0,0) .. controls (0,-2) and (2,-2).. (2,0);
\draw[line width=.5pt,stealth-] (-2,0) .. controls (-2,-6) and (4,-6).. (4,0);
\draw[line width=.5pt,-stealth] (2,0) .. controls (2,6) and (-4,6).. (-4,0);
\draw[line width=.5pt,-stealth] (4,4)node[anchor=south]{$x$}--(4,0);
\draw[line width=.5pt] (-4,0)--(-4,-4) node[anchor=north]{$y$};
\draw[fill=black] (0,0) circle(.25) node[anchor=west]{$\,\nu$};
\end{scope}
\node(0,0){$=$};
\draw[line width=.5pt] (4,4) node[anchor=south]{$x$}--(4,-4) node[anchor=north]{$y$};
\draw[fill=black] (4,0) circle(.25) node[anchor=west]{$\,\nu$};
\end{tikzpicture}
\end{align}
for all morphisms $\nu:x\to y$, and  a pivotal category is spherical iff for all $\alpha: x\to x$
\begin{align}\label{eq:tracesph}
\begin{tikzpicture}[scale=.3]
\begin{scope}[shift={(-5,0)}]
\draw[line width=.5pt, ->,>=stealth] (-2,2) node[anchor=east]{$x$}--(-2,-2) node[anchor=east]{$x$};
\draw[line width=.5pt, <-,>=stealth] (2,2) --(2,-2) node[midway,anchor=west]{$x$};
\draw[fill=black] (-2,0)  circle (.2) node[anchor=east]{$\alpha$};
\draw[line width=.5pt, ->,>=stealth] (-2,-2) .. controls (-2,-4) and (2,-4)..(2,-2);
\draw[line width=.5pt, ->,>=stealth] (2,2) .. controls (2,4) and (-2,4)..(-2,2);
\node at (0,-4)[anchor=north]{$\tr^R(\alpha)$};
\end{scope}
\node at (0,0){$=$};
\begin{scope}[shift={(5,0)}]
\draw[line width=.5pt, <-,>=stealth] (-2,2)--(-2,-2) node[midway,anchor=east]{$x$}  ;
\draw[line width=.5pt, ->,>=stealth] (2,2)  node[anchor=west]{$x$} --(2,-2) node[anchor=west]{$x$} ;
\draw[fill=black] (2,0) node[anchor=west]{$\alpha$}  circle (.2);
\draw[line width=.5pt, <-,>=stealth] (-2,-2) .. controls (-2,-4) and (2,-4)..(2,-2);
\draw[line width=.5pt, <-,>=stealth] (2,2) .. controls (2,4) and (-2,4)..(-2,2);
\node at (0,-4)[anchor=north]{$\tr^L(\alpha)$};
\end{scope}
\end{tikzpicture}
\end{align}

\subsection{Diagrams for (bi)module categories, functors and natural transformations}

In the following, we  use  diagrammatic calculus for 2-categories and apply it  to the 2-categories Cat of (small) categories, functors and natural transformations and to the 2-categories
$\Bimod^\theta(\mac,\mad)$ and $\mathrm{Mod}^\theta(\mac)$ from Theorem \ref{th:pivot}. 
The diagrammatic calculus goes back to Joyal and Street \cite{JS}  in the smooth setting, and was also studied by Barrett et al.~\cite{BMS} in the PL setting. We summarise the description from \cite{BMS}.

\subsubsection{Diagrams for 2-categories}

A {\bf 2-category diagram} for a 2-category $\mac$  is a direct generalisation of a diagram for a monoidal category. It is a PL stratification $\emptyset\subset X^0\subset X^1\subset X^2=[0,1]^2$ of  the unit square. Elements of $X^0$ are called {\bf vertices}, connected components of $X^1\setminus X^0$ {\bf lines} and connected components of $X^2\setminus X^1$ {\bf regions} of the diagram.  Lines may end at the top and bottom of the square only and all vertices are  in the interior. A diagram is called  {\bf progressive}, if its horizontal projection is regular, i.e.~its restriction to each line is a PL isomorphism onto the image. It is called {\bf generic}, if no two vertices are at the same height.  In pictures, we often omit the  unit square that borders  the diagram for better legibility.

 Regions in the diagram are labeled by objects of $\mac$, lines by 1-morphisms and vertices by 2-morphisms. Diagrams are read from top to bottom, and can be composed horizontally if the top of one diagram matches the bottom of the other. This corresponds to the vertical composition of 2-morphisms. Horizontal composition is from the right to the left. This is analogous to the usual calculus for monoidal categories, up to the fact that regions of the diagrams are labeled with objects, lines with 1-morphisms and vertices with 2-morphisms. 
 
A  diagram whose right-hand side is labeled by an object $\mathcal A$, whose left-hand side by an object $\mathcal B$ and with   lines at its upper and lower boundary labeled with 1-morphisms $F_1,...,F_n$  and  $G_1,...,G_m$,  from the right to the left, describes a 2-morphism $\nu: F_n\cdots F_1\Rightarrow G_m\cdots G_1$ between 1-morphisms $F_n\cdots F_1:\mathcal A\to\mathcal B$ and $G_m\cdots G_1:\mathcal A\to\mathcal B$.
This 2-morphism is obtained by first  horizontally composing the 2-morphisms at the vertices in the diagram with the 1-morphisms on the lines to the left and right and then composing the resulting 2-morphisms  in their vertical order. 

\begin{example}\label{ex:2catdiagram} The diagram 
\begin{align}\label{pic:diagram}
&\begin{tikzpicture}[scale=.3, baseline=(current bounding box.center)]
\draw[line width=1pt, color=black, style=dashed, ] (0,3)node[anchor=south]{$F$}--(0,-1); 
\draw[line width=1pt, color=black, style=dashed,] (0,-1)--(-2,-3) node[anchor=north]{$G$}; 
\draw[line width=1pt, color=black, style=dashed, ] (0,-1)--(2,-3) node[anchor=north]{$H$};
 \draw[line width=1pt, color=black, style=dashed, ] (4,3) node[anchor=south]{$K$} --(4,1.2) ;
 \draw[line width=1pt, color=black, style=dashed, ] (4,1)--(4,-3) node[anchor=north]{$L$}; 
 \draw[fill=black] (0,-1) circle (.2) node[anchor=east]{$\rho$};
  \draw[fill=black] (4,1) circle (.2) node[anchor=east]{$\sigma$};
  \node at (6,0)[anchor=west, color=blue]{$\mathcal M$};
    \node at (2.5,0)[anchor=east, color=violet]{$\mathcal P$};
      \node at (0,-2.5)[color=cyan ]{$\mathcal Q$};
        \node at (-2,0)[anchor=east, color=red]{$\mathcal N$};
\end{tikzpicture}
\end{align}
for the 2-category Cat with functors  $K,L:\mathcal M\to\mathcal P$, $F:\mathcal P\to\mathcal N$, $H:\mathcal P\to\mathcal Q$ and $G:\mathcal Q\to\mathcal N$ and natural transformations $\sigma: K\Rightarrow L$ and $\rho: F\Rightarrow GH$ describes the natural transformation $(\rho  L)\circ (F\sigma): FK\Rightarrow GHL$ with component morphisms
$\rho_{L(m)}  \circ  F(\sigma_m): FK(m)\to FL(m)\to GHL(m)$. 
\end{example}

It is shown in \cite[Th.~1.12]{JS} in the smooth context, see also \cite[Th.~2.12]{BMS} for the PL setting, that 2-category diagrams represent the same 2-morphisms if they are related by PL isotopies that start at the identity and define a one-parameter family of isomorphisms of diagrams. An isomorphism of diagrams is a PL homeomorphism  that is the identity on the boundary of the diagram  and such that the homeomorphism and its inverse preserve all skeleta of the diagram. 
This invariance is  a consequence of the identity
\begin{align}
\begin{tikzpicture}[scale=.3, baseline=(current bounding box.center)]
\begin{scope}[shift={(-9,0)}]
\draw[line width=1pt, color=black, style=dashed] (0,3)node[anchor=south]{$F$}--(0,-3) node[anchor=north]{$G$};
\draw[line width=1pt, color=black, style=dashed] (3,3)node[anchor=south]{$H$}--(3,-3) node[anchor=north]{$K$};
\draw[fill=black] (0,1) circle (.2) node[anchor=east]{$\nu\;$};
\draw[fill=black] (3,-1) circle (.2) node[anchor=west]{$\;\mu$};
\node  at (6,0) [color=blue]{$\mathcal A$};
\node  at (1.5,0) [color=violet]{$\mathcal B$};
\node  at (-3,0) [color=red]{$\mathcal C$};
\end{scope}
\node at (0,0){$=$};
\begin{scope}[shift={(6,0)}]
\draw[line width=1pt, color=black, style=dashed] (0,3)node[anchor=south]{$F$}--(0,-3) node[anchor=north]{$G$};
\draw[line width=1pt, color=black, style=dashed] (3,3)node[anchor=south]{$H$}--(3,-3) node[anchor=north]{$K$};
\draw[fill=black] (0,-1) circle (.2) node[anchor=east]{$\nu\;$};
\draw[fill=black] (3,1) circle (.2) node[anchor=west]{$\;\mu$};
\node  at (6,0) [color=blue]{$\mathcal A$};
\node  at (1.5,0) [color=violet]{$\mathcal B$};
\node  at (-3,0) [color=red]{$\mathcal C$};
\end{scope}
\node at (0,-6) {$(G\mu)\circ (\nu H)=(\nu K)\circ(F\mu)$.};
\end{tikzpicture}
\end{align}
for all 1-morphisms $F,G:\mathcal B\to\mathcal C$ and $H,K:\mathcal A\to\mathcal B$ and 2-morphisms  $\nu: F\Rightarrow G$ and $\mu: H\Rightarrow K$.
It allows one to  associate 2-morphisms to non-generic diagrams by perturbing them to generic ones. 
In the 2-category Cat this identity expresses the naturality of natural transformations. 

\subsubsection{Diagrams for pivotal 2-categories}

If the 2-category under consideration 
 is a pivotal 2-category in the sense \cite[Def.~A.12]{G14} or, more strictly, a planar 2-category in the sense of \cite[Def.~3.2.1]{BMS}, we use the diagrams for planar 2-categories introduced in \cite[Def.~3.5]{BMS}. The main examples are  $\mathrm{Mod}^\theta(\mac)$ and $\Bimod^\theta(\mac,\mad)$ from Theorem \ref{th:pivot}. 
 
 The  diagrams for pivotal 2-categories are essentially the diagrams  for a pivotal monoidal category.
The only difference is that regions of the diagram are labeled with objects, lines with 1-morphisms and vertices with 2-morphisms. 
As for a pivotal category, the pivotal structure implies that left duals are also right duals and that double duals of a 1-morphism $F$ are 2-isomorphic to $F$.
We thus denote the duals of a 1-morphism $F$ by a dashed line  labeled $F$ with an arrow that points upwards. Lines without arrows are assumed to be oriented downwards.
The (co)evaluations of the duals, which  generalise \eqref{eq:coevl} and \eqref{eq:coevr}, are then given by the diagrams
\begin{align}\label{pic:unitcounit}
&\begin{tikzpicture}[scale=.3]
\draw[line width=1pt, style=dashed] (0,0)  .. controls (0,3) and (-3,3) .. (-3,0) node[sloped, pos=0.15, allow upside down]{\arrowIn} node[anchor=north]{$F$};
\node  at (-1.5,0) [color=blue, anchor=south]{$\mathcal A$};
\node  at (1,0) [color=red, anchor=south]{$\mathcal B$};
\end{tikzpicture}
&
&\begin{tikzpicture}[scale=.3]
\draw[line width=1pt, style=dashed] (0,3)  .. controls (0,0) and (-3,0) .. (-3,3) (0,3)node[anchor=south]{$F$} node[sloped, pos=0.9, allow upside down]{\arrowIn} ;
\node  at (1,3) [color=blue, anchor=north]{$\mathcal A$};
\node  at (-1.5,3) [color=red, anchor=north]{$\mathcal B$};
\end{tikzpicture}
&
&\begin{tikzpicture}[scale=.3]
\draw[line width=1pt, style=dashed] (0,0) .. controls (0,3) and (3,3) .. (3,0) node[anchor=north]{$F$} node[sloped, pos=0.15, allow upside down]{\arrowIn};
\node  at (4,0) [color=blue, anchor=south]{$\mathcal A$};
\node  at (1.5,0) [color=red, anchor=south]{$\mathcal B$};
\end{tikzpicture}
&
&\begin{tikzpicture}[scale=.3]
\draw[line width=1pt, style=dashed] (0,3)node[anchor=south]{$F$}   .. controls (0,0) and (3,0) .. (3,3) node[sloped, pos=0.9, allow upside down]{\arrowIn} ;
\node  at (1.5,3) [color=blue, anchor=north]{$\mathcal A$};
\node  at (4,3) [color=red, anchor=north]{$\mathcal B$};
\end{tikzpicture}
\\
&\eta^F: \id_{\mathcal B}\Rightarrow FF^l &
&\epsilon^F: F^lF\Rightarrow \id_{\mathcal A}
&
&\eta'^F:  \id_{\mathcal A} \Rightarrow F^lF &
&\epsilon'^F:FF^l\Rightarrow \id_{\mathcal B}\nonumber 
\end{align}
that generalise \eqref{pic:unitcounitsphere}. The defining conditions on the left (co)evaluations, which  generalise \eqref{eq:snake}, read
\begin{align}\label{pic:adjunction}
&\begin{tikzpicture}[scale=.3]
\begin{scope}[shift={(-8,0)}]
\draw[line width=1pt, style=dashed,] (0,0)  .. controls (0,3) and (3,3) .. (3,0);
\draw[line width=1pt, style=dashed,-stealth] (6,0) .. controls (6,-3) and (3,-3) .. (3,0) ;
\draw[line width=1pt, style=dashed] (0,0)--(0,-3)  node[sloped, pos=0.5, allow upside down]{\arrowIn};
\draw[line width=1pt, style=dashed] (6,3) node[anchor=south]{$F$}--(6,0) node[sloped, pos=0.5, allow upside down]{\arrowIn};
\node  at (.5,-2) [color=blue, anchor=west]{$\mathcal A$};
\node  at (-.5,-2) [color=red, anchor=east]{$\mathcal B$};
\end{scope}
\node at (0,0){$=$};
\begin{scope}[shift={(2,0)}]
\draw[line width=1pt, style=dashed] (0,3) node[anchor=south]{$F$}--(0,-3) node[sloped, pos=0.5, allow upside down]{\arrowIn};
\node  at (.5,-2) [color=blue, anchor=west]{$\mathcal A$};
\node  at (-.5,-2) [color=red, anchor=east]{$\mathcal B$};
\end{scope}
\end{tikzpicture}
&
&\begin{tikzpicture}[scale=.3]
\begin{scope}[shift={(-5,0)}]
\draw[line width=1pt, style=dashed, stealth-] (0,0)  .. controls (0,3) and (3,3) .. (3,0) ;
\draw[line width=1pt, style=dashed] (-3,0) .. controls (-3,-3) and (0,-3) .. (0,0) ;
\draw[line width=1pt, style=dashed] (-3,0)--(-3,3) node[anchor=south]{$F$} node[sloped, pos=0.5, allow upside down]{\arrowIn};
\draw[line width=1pt, style=dashed] (3,-3)--(3,0) node[sloped, pos=0.5, allow upside down]{\arrowIn};
\node  at (-3.5,2) [color=blue, anchor=east]{$\mathcal A$};
\node  at (-2.5,2) [color=red, anchor=west]{$\mathcal B$};
\end{scope}
\node at (0,0){$=$};
\begin{scope}[shift={(2,0)}]
\draw[line width=1pt, style=dashed] (0,-3)--(0,3) node[anchor=south]{$F$}  node[sloped, pos=0.5, allow upside down]{\arrowIn};
\node  at (-.5,-2) [color=blue, anchor=east]{$\mathcal A$};
\node  at (.5,-2) [color=red, anchor=west]{$\mathcal B$};
\end{scope}
\end{tikzpicture}
\end{align}
 and the pivotality condition that generalises \eqref{pic:pivotalfus} is expressed in the diagrammatic identity
\begin{align}\label{pic:pivotal}
\begin{tikzpicture}[scale=.25]
\begin{scope}[shift={(-8,0)}]
\draw[line width=1pt, dashed] (0,0) .. controls (0,2) and (-2,2).. (-2,0);
\draw[line width=1pt, dashed, -stealth] (0,0) .. controls (0,-2) and (2,-2).. (2,0);
\draw[line width=1pt, dashed,stealth-] (-2,0) .. controls (-2,-6) and (4,-6).. (4,0);
\draw[line width=1pt, dashed,-stealth] (2,0) .. controls (2,6) and (-4,6).. (-4,0);
\draw[line width=1pt, dashed,-stealth] (4,4)node[anchor=south]{$F$}--(4,0);
\draw[line width=1pt, dashed] (-4,0)--(-4,-4) node[anchor=north]{$G$};
\draw[fill=black] (0,0) circle(.25) node[anchor=west]{$\,\nu$};
\node at (5,3)[anchor=west, color=blue]{$\mathcal A$};
\node at (-5,-3)[anchor=east, color=red]{$\mathcal B$};
\end{scope}
\node(0,0){$=$};
\draw[line width=1pt, dashed] (4,4) node[anchor=south]{$F$}--(4,-4) node[anchor=north]{$G$};
\draw[fill=black] (4,0) circle(.25) node[anchor=west]{$\,\nu$};
\node at (3,3)[anchor=east, color=red]{$\mathcal B$};
\node at (5,3)[anchor=west, color=blue]{$\mathcal A$};
\end{tikzpicture}
\end{align}

In the diagrammatic calculus the pivotal structure allows one to drop the requirement that the diagrams are {\em progressive}, i.e.~that the horizontal projection of the diagram is regular. 
Instead, singular points of the projection are labeled with (co)evaluations.  Although we draw them as smooth maxima and minima for diagrammatic convenience, they represent   vertices in a PL diagram. While a diagram with maxima and minima is never progressive itself, it stands for the progressive diagram obtained by replacing maxima and minima with vertices labeled by (co)evaluations, see \cite[Sec.~3.2]{BMS}.

This enlarges the class of diagrams that represent the same 2-morphism. 
It is shown in  \cite[Th.~3.9]{BMS} that  planar 2-category diagrams related by orientation preserving PL homeomorphisms that preserve the unit square  represent the same 2-morphism if their labels are transformed accordingly. This means that the objects labeling the regions and the 1-morphisms labeling lines of the diagrams must coincide, while the 2-morphisms at the vertices are composed with the (co)evaluations of the 1-morphisms at the incident lines. 

 Composing a
2-morphism $\nu: F_{n}\cdots F_1\Rightarrow G_m\cdots G_1$ with the (co)evaluations for $F_n$ and $G_1$ from \eqref{pic:unitcounit} transforms it  into  2-morphisms $\nu': F_{n-1}\cdots F_1 \Rightarrow F_{n}^l G_m\cdots G_1$ 
 and $\nu'': F_{n}\cdots F_1 G^l_1\Rightarrow G_m\cdots G_2$.
\begin{align}
&\begin{tikzpicture}[scale=.25]
\begin{scope}[shift={(-9,0)}]
\draw[line width=1pt, dashed] (0,0)--(-5,5) node[anchor=south]{$F_n$};
\draw[line width=1pt, dashed] (0,0)--(-1,5) node[anchor=south]{$F_{n-1}$};
\node at (1,4){$\ldots$};
\draw[line width=1pt, dashed] (0,0)--(5,5) node[anchor=south]{$F_1$};
\draw[line width=1pt, dashed] (0,0)--(-5,-5) node[anchor=north]{$G_m$};
\node at (-1,-4){$\ldots$};
\draw[line width=1pt, dashed] (0,0)--(1,-5) node[anchor=north]{$G_{2}$};
\draw[line width=1pt, dashed] (0,0)--(5,-5) node[anchor=north]{$G_1$};
\draw[fill=black] (0,0) circle (.25) node[anchor=east]{$\nu\;$};
\node at (3,0) [color=blue, anchor= west]{$\mathcal A$};
\node at (-3.5,4.5) [color=violet, anchor= west]{$\mathcal B$};
\node at (-3,0) [color=red, anchor=east]{$\mathcal C$};
\node at (3.5,-4.5) [color=cyan, anchor=east]{$\mathcal D$};
\end{scope}
\begin{scope}[shift={(11,0)}]
\draw[line width=1pt, dashed] (0,0) .. controls (-1,3) and (-2,2) .. (-4,0);
\draw[line width=1pt, dashed] (-9,-5) node[anchor=north]{$F_n$}--(-4,0) node[sloped, pos=0.5, allow upside down]{\arrowIn};
\draw[line width=1pt, dashed] (0,0)--(-1,5) node[anchor=south]{$F_{n-1}$};
\node at (1,4){$\ldots$};
\draw[line width=1pt, dashed] (0,0)--(5,5) node[anchor=south]{$F_1$};
\draw[line width=1pt, dashed] (0,0)--(-5,-5) node[anchor=north]{$G_m$};
\node at (-1,-4){$\ldots$};
\draw[line width=1pt, dashed] (0,0)--(1,-5) node[anchor=north]{$G_{2}$};
\draw[line width=1pt, dashed] (0,0)--(5,-5) node[anchor=north]{$G_1$};
\draw[fill=black] (0,0) circle (.25) node[anchor=east]{$\nu\;$};
\node at (3,0) [color=blue, anchor= west]{$\mathcal A$};
\node at (-7,0) [color=violet, anchor= west]{$\mathcal B$};
\node at (-6.5,-5) [color=red, anchor=south]{$\;\;\mathcal C$};
\node at (3.5,-4.5) [color=cyan, anchor=east]{$\mathcal D$};
\end{scope}
\draw[line width=1pt, -stealth] (-1,0)--(1,0);
\end{tikzpicture}\nonumber\\
\intertext{}
&\begin{tikzpicture}[scale=.25]
\begin{scope}[shift={(-9,0)}]
\draw[line width=1pt, dashed] (0,0)--(-5,5) node[anchor=south]{$F_n$};
\draw[line width=1pt, dashed] (0,0)--(-1,5) node[anchor=south]{$F_{n-1}$};
\node at (1,4){$\ldots$};
\draw[line width=1pt, dashed] (0,0)--(5,5) node[anchor=south]{$F_1$};
\draw[line width=1pt, dashed] (0,0)--(-5,-5) node[anchor=north]{$G_m$};
\node at (-1,-4){$\ldots$};
\draw[line width=1pt, dashed] (0,0)--(1,-5) node[anchor=north]{$G_{2}$};
\draw[line width=1pt, dashed] (0,0)--(5,-5) node[anchor=north]{$G_1$};
\draw[fill=black] (0,0) circle (.25) node[anchor=east]{$\nu\;$};
\node at (3,0) [color=blue, anchor= west]{$\mathcal A$};
\node at (-3.5,4.5) [color=violet, anchor= west]{$\mathcal B$};
\node at (-3,0) [color=red, anchor=east]{$\mathcal C$};
\node at (3.5,-4.5) [color=cyan, anchor=east]{$\mathcal D$};
\end{scope}
\draw[line width=1pt, -stealth] (-1,0)--(1,0);
\begin{scope}[shift={(11,0)}]
\draw[line width=1pt, dashed] (0,0)--(-5,5) node[anchor=south]{$F_n$};
\draw[line width=1pt, dashed] (0,0)--(-1,5) node[anchor=south]{$F_{n-1}$};
\node at (1,4){$\ldots$};
\draw[line width=1pt, dashed] (0,0)--(5,5) node[anchor=south]{$F_1$};
\draw[line width=1pt, dashed] (0,0)--(-5,-5) node[anchor=north]{$G_m$};
\node at (-1,-4){$\ldots$};
\draw[line width=1pt, dashed] (0,0)--(1,-5) node[anchor=north]{$G_{2}$};
\draw[line width=1pt, dashed] (0,0) .. controls (1,-3) and (2,-2) .. (4,0);
\draw[line width=1pt, dashed] (4,0)--(9,5) node[anchor=south]{$G_1$}  node[sloped, pos=0.5, allow upside down]{\arrowIn};
\draw[fill=black] (0,0) circle (.25) node[anchor=east]{$\nu\;$};
\node at (6,5) [color=blue, anchor= north]{$\mathcal A\;\;$};
\node at (-3.5,4.5) [color=violet, anchor= west]{$\mathcal B$};
\node at (-3,0) [color=red, anchor=east]{$\mathcal C$};
\node at (6,0) [color=cyan, anchor=west]{$\mathcal D$};
\end{scope}
\end{tikzpicture}
\label{fig:updown}
\end{align}

The pivotal 2-category structure ensures that  
 all diagrams that are related by the two moves  in \eqref{fig:updown}  and have the same edge labels  represent the same 2-morphism.

\begin{definition}\label{def:cyclictrans} Let $\mac$ be a 2-category. 
 A {\bf cyclic transformation} of a 2-morphism $\nu: F_{n}\cdots F_1\Rightarrow G_m\cdots G_1$ is a 2-morphism obtained by applying a finite sequence of the moves in \eqref{fig:updown}. 
Two  2-morphisms $\nu$ and $\nu'$ are called {\bf cyclically equivalent} if they are related by a cyclic transformation. 
\end{definition}

In pictures, we sometimes suppress the change in the labeling and the units and counits of the adjunctions and use the label $\nu$ for a cyclic equivalence class of 2-morphisms.

\subsubsection{Mixed diagrams}
\label{subsec:bimod}

The diagrams for pivotal 2-categories in the previous section can be used to describe morphisms in the pivotal 2-categories
$\Bimod^\theta(\mac,\mad)$ and $\mathrm{Mod}^\theta(\mac)$ from Theorem \ref{th:pivot}.  In this case, regions are labeled with (bi)module categories with (bi)module traces, lines with (bi)module functors and vertices with (bi)module natural transformations. The duals are given by adjoint functors and the pivot is the natural isomorphism between a bimodule functor and its double adjoint. 

To describe the interaction of the data in $\Bimod^\theta(\mac,\mad)$ with data from $\mac$ and $\mad$, we generalise pivotal 2-category diagrams. We consider  diagrams with additional lines and vertices labeled with data from $\mac$ and $\mad$. These diagrams may involve  crossings of  lines labeled by $\mac$ and $\mad$ as well as  crossings of such lines with lines labeled by $(\mac,\mad)$-bimodule functors.

\begin{definition} \label{def:mixeddiag}Let $\mac,\mad$ be spherical fusion categories. A {\bf mixed diagram} for $\Bimod^\theta(\mac,\mad)$ is a  diagram  obtained by superimposing a diagram $D$ for $\Bimod^\theta(\mac,\mad)$ with diagrams $D_\mac$ for $\mac$ and $D_\mad$ for $\mad^{rev}$. It is called {\bf generic} if
\begin{compactitem}
\item no vertex of any diagram is on a line or vertex of any of the others,
\item each point is on at most two lines from distinct diagrams,
\item  each point  in the interior that is on lines of two distinct diagrams is a transversal crossing.
\end{compactitem}
\end{definition}

While each diagram for $\Bimod^\theta(\mac,\mad)$ describes a 2-morphism in $\Bimod^\theta(\mac,\mad)$, a mixed diagram for $\Bimod^\theta(\mac,\mad)$ can be viewed as a 2-category diagram for Cat.

For this recall from Remark \ref{rem:macmodule} that each $\mac$-module category $\mam$ defines a monoidal functor $F:\mac \to \End(\mam)$ that assigns to  $c\in \Ob\mac $  the   functor $c\rhd-:\mam\to \mam$ and to a morphism $\alpha: c\to c'$ the  natural transformation $\alpha\rhd-:c\rhd-\Rightarrow c'\rhd-$. Its monoidal structure  is given by the 
coherence isomorphisms  from Definition \ref{def:modulecat}. 

The monoidal functor $F:\mac\to \End(\mam)$  preserves duals.
The adjoint of  $c\rhd -:\mam\to\mam$ is  $c^*\rhd-:\mam\to \mam$  with the (co)units  of the adjunction given by the  (co)evaluations  from \eqref{eq:coevl} and \eqref{eq:coevr}, up to coherence data:
\begin{align*}
&\eta^{c\rhd -}=c_{c,c^*,-}\circ (\coev^L_c\rhd -)
& &\epsilon^{c\rhd -}=(\ev^L_c\rhd -)\circ c^\inv_{c^*,c,-}
\\
&\eta'^{c\rhd -}= c_{c^*,c,-} \circ (\coev^R_c\rhd -)
& &\epsilon'^{c\rhd-}=(\mathrm{ev}^R_{c}\rhd -)\circ c^\inv_{c,c^*,-}.
\end{align*}
The  pentagon relation \eqref{eq:pentagoncdef}   and triangle relation \eqref{eq:trianglec} allow one to suppress bracketings and action of the unit object in expressions involving multiple action functors. They lead to  strictification and coherence theorems for module categories that generalise the ones for monoidal categories, see  \cite[Rem.~7.2.4]{EGNO}.

One can therefore interpret diagrams for $\mac$  as  2-category diagrams for Cat, whose regions are labeled by a $\mac$-module category $\mam$ and whose lines by action functors $c\rhd -$ for $c\in \Ob\mac$. Vertices are labeled by natural transformations that are composites of natural transformations  $\alpha\rhd -$ for morphisms $\alpha$ in $\mac$   with the coherence isomorphisms $c$ and $\gamma$.
The result is a natural transformation between the endofunctors given by the objects at the top and bottom of the diagrams. It is  unique up to coherence data.

Analogous statements hold for $\mad$-right module categories $\mam$. The only difference is that the action is a right action and one has an associated monoidal functor $F:\mad^{rev}\to \End(\mam)$, whose coherence data is given by the natural isomorphisms $d_{-,i,j}: (-\lhd i)\lhd j\Rightarrow -\lhd(i\oo j)$ and  $\delta: -\lhd  e \Rightarrow \id_\mam$  from Definition \ref{def:modulecat}.

In this way, one can  interpret any mixed diagram for $\Bimod^\theta(\mac,\mad)$ \emph{without crossings} as a 2-category diagram  for Cat in which the objects and morphisms from $\mac$ and $\mad$ are replaced by the associated action functors and natural transformations for the bimodule category labeling their region in the diagram. 

Crossings of  lines labeled by   $c\in \Ob \mac$ and $d\in \Ob \mad$ and crossings of such lines with lines labeled by (bi)module functors $F$ also have a direct interpretation. The former correspond to the natural isomorphisms $b_{c,-,d}$   from Definition \ref{def:modulecat} and the latter  to the natural isomorphisms $s^F$ and $t^F$ from Definition \ref{def:modulefunc}. We represent them as over- and undercrossings.

\begin{definition} Let $D'$ be a generic mixed diagram for $\Bimod^\theta(\mac,\mad)$ that superimposes  diagrams $D$, $D_\mac$ and  $D_\mad$. The 2-category diagram associated to $D'$  is the diagram obtained by 
\begin{itemize}
\item replacing  labels $c\in\Ob\mac$, $d\in \Ob\mad$ in a region labeled   $\mam$ with  $c\rhd-:\mam\to\mam$, $-\lhd d:\mam\to \mam$,

\item replacing morphism labels $\alpha$ and $\beta$ from $\mac$ and $\mad$ by  natural transformations  composed of   $\alpha\rhd -$ and $-\lhd\beta$ and the coherence isomorphisms $c,\gamma$ and $d,\delta$ from Definition \ref{def:modulecat},

\item transforming crossings of a line from 
 $D_\mac$ with a line from $D$ or  $D_\mad$  into overcrossings of the line from $D_\mac$ and crossings of a line from $D$ with a line from $D_\mad$ into overcrossings of the line from $D$, 
 
 \item labeling the crossing points  with natural transformations as follows:
\begin{compactitem}
\item crossings of  a line  labeled by $c\in \Ob\mac$ with  a line labeled by $d\in\Ob\mad$ by the natural isomorphism 
$b_{c,-,d}: (c\rhd -)\lhd d\Rightarrow c\rhd(-\lhd d)$ and its inverse  from Definition \ref{def:modulecat} 
\begin{align}\label{pic:bimodule}
&\begin{tikzpicture}[scale=.3, baseline=(current bounding box.center)]
\draw[line width=.5pt, color=gray,] (-2,2) node[anchor=south]{$d$}--(2,-2);
\draw[color=white, fill=white] (0,0) circle (.5);
\draw[line width=.5pt,color=black,] (2,2) node[anchor=south]{$c$}--(-2,-2);
\end{tikzpicture}
&
&\begin{tikzpicture}[scale=.3,baseline=(current bounding box.center)]
\draw[line width=.5pt, color=gray] (2,2) node[anchor=south]{$d$}--(-2,-2);
\draw[color=white, fill=white] (0,0) circle (.5);
\draw[line width=.5pt,color=black,] (-2,2) node[anchor=south]{$c$}--(2,-2);
\end{tikzpicture}
\\
&b_{c,-,d}: (c\rhd -)\lhd d\Rightarrow c\rhd(-\lhd d)
&
&b^\inv_{c,-,d}:  c\rhd(-\lhd d)\Rightarrow (c\rhd -)\lhd d,
 \nonumber
\end{align}

\item crossings of a line labeled by $c\in\Ob\mac$ with  a line from $D$ labeled $F$ by the natural isomorphism 
$s^F: F(c\rhd -)\Rightarrow c\rhd F(-)$ and its inverse from Definition \ref{def:modulefunc} 
\begin{align}\label{pic:modulefunc}
&\begin{tikzpicture}[scale=.3, baseline=(current bounding box.center)]
\draw[line width=1pt, style=dashed, color=black,] (-2,2) node[anchor=south]{$F$}--(2,-2);
\draw[color=white, fill=white] (0,0) circle (.5);
\draw[line width=.5pt, color=black,] (2,2) node[anchor=south]{$c$}--(-2,-2);
\end{tikzpicture}
&
&\begin{tikzpicture}[scale=.3, baseline=(current bounding box.center)]
\draw[line width=1pt, style=dashed, color=black,] (2,2) node[anchor=south]{$F$}--(-2,-2);
\draw[color=white, fill=white] (0,0) circle (.5);
\draw[line width=.5pt, color=black,] (-2,2) node[anchor=south]{$c$}--(2,-2);
\end{tikzpicture}
\\
 &s^F_{c,-}: F(c\rhd -)\Rightarrow c\rhd F(-)
& &s^{F\,\inv}_{c,-}:  c\rhd F(-)\Rightarrow F(c\rhd -),\nonumber
\end{align}
\item crossings of a line  labeled by $d\in\Ob\mad$ with  a line from $D$ labeled $F$ by the natural isomorphism 
$t^F: F( -)\lhd d\Rightarrow F(-\lhd d)$ and its inverse from Definition \ref{def:modulefunc} 
\begin{align}\label{pic:modulefuncinv}
&\begin{tikzpicture}[scale=.3, baseline=(current bounding box.center)]
\draw[line width=.5pt, color=gray,] (-2,2) node[anchor=south]{$d$}--(2,-2);
\draw[color=white, fill=white] (0,0) circle (.5);
\draw[line width=1pt, style=dashed, color=black,] (2,2) node[anchor=south]{$F$}--(-2,-2);
\end{tikzpicture}
&
&\begin{tikzpicture}[scale=.3, baseline=(current bounding box.center)]
\draw[line width=.5pt, color=gray,] (2,2) node[anchor=south]{$d$}--(-2,-2);
\draw[color=white, fill=white] (0,0) circle (.5);
\draw[line width=1pt, style=dashed, color=black,] (-2,2) node[anchor=south]{$F$}--(2,-2);
\end{tikzpicture}\\
 &t^F_{-,d}: F(-)\lhd d\Rightarrow F(-\lhd d) 
& &t^{F\;\inv}_{-,d}:  F(-\lhd d)\Rightarrow F(-)\lhd d.\nonumber
\end{align}
\end{compactitem}
\end{itemize}
\end{definition}
In this way, every generic mixed diagram defines a 
2-category diagram for Cat that represents a unique natural transformation between the composites of the functors at the  top and  bottom of the diagram. 

\begin{definition} The {\bf evaluation} of a generic mixed diagram for $\Bimod^\theta(\mac,\mad)$ is the natural transformation represented by the associated 2-category diagram.
\end{definition}

The properties of the coherence isomorphisms associated to crossing points allow one to define the evaluation of non-generic  mixed diagrams. 
The invertibility of the isomorphisms $b$, $s^F$ and $t^F$ translates into diagrammatic identities that are an analogue of the Reidemeister 2 move for a braid diagram
\begin{align}\label{pic:rm2}
&\begin{tikzpicture}[scale=.15]
\begin{scope}[shift={(-4,0)}]
\draw[line width=.5pt, color=gray] (-2,2) node[anchor=south]{$d$}--(2,-2)--(-2,-6);
\draw[color=white, fill=white] (0,0) circle (.5);
\draw[color=white, fill=white] (0,-4) circle (.5);
\draw[line width=.5pt, color=black] (2,2) node[anchor=south]{$c$}--(-2,-2)--(2,-6);
\end{scope}
\node at (0,-2){$=$};
\begin{scope}[shift={(4,0)}]
\draw[line width=.5pt, color=gray] (-2,2) node[anchor=south]{$d$}--(-2,-6);
\draw[line width=.5pt, color=black] (2,2) node[anchor=south]{$c$}--(2,-6);
\end{scope}
\end{tikzpicture}\quad
\begin{tikzpicture}[scale=.15]
\begin{scope}[shift={(-4,0)}]
\draw[line width=.5pt, color=gray] (2,2) node[anchor=south]{$d$}--(-2,-2)--(2,-6);
\draw[color=white, fill=white] (0,0) circle (.5);
\draw[color=white, fill=white] (0,-4) circle (.5);
\draw[line width=.5pt, color=black] (-2,2) node[anchor=south]{$c$}--(2,-2)--(-2,-6);
\end{scope}
\node at (0,-2){$=$};
\begin{scope}[shift={(4,0)}]
\draw[line width=.5pt, color=gray] (2,2) node[anchor=south]{$d$}--(2,-6);
\draw[line width=.5pt, color=black] (-2,2) node[anchor=south]{$c$}--(-2,-6);
\end{scope}
\end{tikzpicture}\quad
\begin{tikzpicture}[scale=.15]
\begin{scope}[shift={(-4,0)}]
\draw[line width=1pt, dashed] (-2,2) node[anchor=south]{$F$}--(2,-2)--(-2,-6);
\draw[color=white, fill=white] (0,0) circle (.5);
\draw[color=white, fill=white] (0,-4) circle (.5);
\draw[line width=.5pt, color=black] (2,2) node[anchor=south]{$c$}--(-2,-2)--(2,-6);
\end{scope}
\node at (0,-2){$=$};
\begin{scope}[shift={(4,0)}]
\draw[line width=1pt, dashed] (-2,2) node[anchor=south]{$F$}--(-2,-6);
\draw[line width=.5pt, color=black] (2,2) node[anchor=south]{$c$}--(2,-6);
\end{scope}
\end{tikzpicture}\quad
\begin{tikzpicture}[scale=.15]
\begin{scope}[shift={(-4,0)}]
\draw[line width=1pt, dashed] (2,2) node[anchor=south]{$F$}--(-2,-2)--(2,-6);
\draw[color=white, fill=white] (0,0) circle (.5);
\draw[color=white, fill=white] (0,-4) circle (.5);
\draw[line width=.5pt, color=black] (-2,2) node[anchor=south]{$c$}--(2,-2)--(-2,-6);
\end{scope}
\node at (0,-2){$=$};
\begin{scope}[shift={(4,0)}]
\draw[line width=1pt, dashed] (2,2) node[anchor=south]{$F$}--(2,-6);
\draw[line width=.5pt, color=black] (-2,2) node[anchor=south]{$c$}--(-2,-6);
\end{scope}
\end{tikzpicture}
\quad \begin{tikzpicture}[scale=.15]
\begin{scope}[shift={(-4,0)}]
\draw[line width=.5pt, color=gray] (-2,2) node[anchor=south]{$d$}--(2,-2)--(-2,-6);
\draw[color=white, fill=white] (0,0) circle (.5);
\draw[color=white, fill=white] (0,-4) circle (.5);
\draw[line width=1pt, dashed] (2,2) node[anchor=south]{$F$}--(-2,-2)--(2,-6);
\end{scope}
\node at (0,-2){$=$};
\begin{scope}[shift={(4,0)}]
\draw[line width=.5pt, color=gray] (-2,2) node[anchor=south]{$d$}--(-2,-6);
\draw[line width=1pt, dashed] (2,2) node[anchor=south]{$F$}--(2,-6);
\end{scope}
\end{tikzpicture}
\quad
\begin{tikzpicture}[scale=.15]
\begin{scope}[shift={(-4,0)}]
\draw[line width=.5pt, color=gray] (2,2) node[anchor=south]{$d$}--(-2,-2)--(2,-6);
\draw[color=white, fill=white] (0,0) circle (.5);
\draw[color=white, fill=white] (0,-4) circle (.5);
\draw[line width=1pt, dashed] (-2,2) node[anchor=south]{$F$}--(2,-2)--(-2,-6);
\end{scope}
\node at (0,-2){$=$};
\begin{scope}[shift={(4,0)}]
\draw[line width=.5pt, color=gray] (2,2) node[anchor=south]{$d$}--(2,-6);
\draw[line width=1pt, dashed] (-2,2) node[anchor=south]{$F$}--(-2,-6);
\end{scope}
\end{tikzpicture}
\end{align}
Similarly, the hexagon relation \eqref{eq:hexa} for a $(\mac,\mad)$-bimodule functor  translates into a diagrammatic identity that is an analogue of the Reidemeister 3 move for a braid diagram
 \begin{align}\label{pic:hexagon}
 &\begin{tikzpicture}[scale=.3, baseline=(current bounding box.center)]
 \begin{scope}[shift={(-4,0)}]
  \draw[line width=.5pt,  color=gray,] (0,2) node[anchor=south]{$d$}--(-2,-4);
    \draw[color=white, fill=white] (-1,0) circle (.7);
     \draw[line width=1pt, style=dashed, color=black] (-2,2) node[anchor=south]{$F$}--(2,-4);
         \draw[color=white, fill=white] (.9,-1.7) circle (.7);
    \draw[line width=.5pt,  color=black] (2,2) node[anchor=south]{$c$}--(0,-4);
    \end{scope}
    \node at (0,-1)[anchor=west]{$=$};
     \begin{scope}[shift={(6,0)}]
  \draw[line width=.5pt,  color=gray,] (0,2) node[anchor=south]{$d$}--(2,-1);
      \draw[line width=.5pt,  color=gray] (2,-1) --(-2,-4);
         \draw[color=white, fill=white] (1,1) circle (.5);
         \draw[color=white, fill=white] (-1,-3) circle (.5);
              \draw[color=white, fill=white] (.5,-2) circle (.7);
   \draw[line width=1pt, style=dashed, color=black] (-2,2) node[anchor=south]{$F$}--(2,-4);
     \draw[color=white, fill=white] (-1,0) circle (.7);
    \draw[line width=.5pt,  color=black,] (2,2) node[anchor=south]{$c$}--(-2,-1);
        \draw[line width=.5pt,  color=black] (-2,-1) --(0,-4);
    \end{scope}
 \end{tikzpicture}
\end{align}

The naturality conditions from Definitions \ref{def:modulecat} and \ref{def:modulefunc} and the conditions on a bimodule natural transformation from  \eqref{eq:cnat} and \eqref{eq:dnat} 
 allow one to slide vertices under or over crossings. 
The naturality of the isomorphisms $b_{c,d,m}: (c\rhd m)\lhd d\to c\rhd (m\lhd d)$ in $c$ and $d$ is expressed as
\begin{align}\label{pic:natb}
&\begin{tikzpicture}[scale=.3, baseline=(current bounding box.center)]
\begin{scope}[shift={(-4,0)}]
\draw[line width=.5pt, color=gray,] (-2,2) node[anchor=south]{$d$}--(2,-2);
\draw[color=white, fill=white] (0,0) circle (.5);
\draw[line width=.5pt,color=black,] (2,2) node[anchor=south]{$c$}--(-2,-2) node[anchor=north]{$c'$};
\draw[fill=black] (1,1) circle (.15) node[anchor=west]{$\;\alpha$};
\end{scope}
\node at (0,0){$=$};
\begin{scope}[shift={(4,0)}]
\draw[line width=.5pt, color=gray,] (-2,2) node[anchor=south]{$d$}--(2,-2);
\draw[color=white, fill=white] (0,0) circle (.5);
\draw[line width=.5pt,color=black,] (2,2) node[anchor=south]{$c$}--(-2,-2) node[anchor=north]{$c'$};
\draw[fill=black] (-1,-1) circle (.15) node[anchor=east]{$\alpha\;$};
\end{scope}
\end{tikzpicture}
&
&\begin{tikzpicture}[scale=.3, baseline=(current bounding box.center)]
\begin{scope}[shift={(-4,0)}]
\draw[line width=.5pt, color=gray,] (-2,2) node[anchor=south]{$d$}--(2,-2) node[anchor=north]{$d'$};
\draw[color=white, fill=white] (0,0) circle (.5);
\draw[line width=.5pt,color=black,] (2,2) node[anchor=south]{$c$}--(-2,-2) ;
\draw[fill=gray, color=gray] (-1,1) circle (.15) node[anchor=east]{$\beta\;$};
\end{scope}
\node at (0,0){$=$};
\begin{scope}[shift={(4,0)}]
\draw[line width=.5pt, color=gray,] (-2,2) node[anchor=south]{$d$}--(2,-2) node[anchor=north]{$d'$};
\draw[color=white, fill=white] (0,0) circle (.5);
\draw[line width=.5pt,color=black,] (2,2) node[anchor=south]{$c$}--(-2,-2) ;
\draw[fill=gray, color=gray] (1,-1) circle (.15) node[anchor=west]{$\;\beta$};
\end{scope}
\end{tikzpicture}
\end{align}
The naturality of  $s^F_{c,m}: F(c\rhd m)\to c\rhd F(m)$ and  $t^F_{m,d}: F(m)\lhd d\to  F(m\lhd d)$ in $c$ and $d$ implies
\begin{align}\label{pic:natfuncconst}
&\begin{tikzpicture}[scale=.3, baseline=(current bounding box.center)]
\begin{scope}[shift={(-4,0)}]
\draw[line width=1pt, style=dashed] (-2,2) node[anchor=south]{$F$}--(2,-2);
\draw[color=white, fill=white] (0,0) circle (.5);
\draw[line width=.5pt,color=black,] (2,2) node[anchor=south]{$c$}--(-2,-2) node[anchor=north]{$c'$};
\draw[fill=black] (1,1) circle (.15) node[anchor=west]{$\;\alpha$};
\end{scope}
\node at (0,0){$=$};
\begin{scope}[shift={(4,0)}]
\draw[line width=1pt, style=dashed] (-2,2) node[anchor=south]{$F$}--(2,-2);
\draw[color=white, fill=white] (0,0) circle (.5);
\draw[line width=.5pt,color=black,] (2,2) node[anchor=south]{$c$}--(-2,-2) node[anchor=north]{$c'$};
\draw[fill=black] (-1,-1) circle (.15) node[anchor=east]{$\alpha\;$};
\end{scope}
\end{tikzpicture}
&
&\begin{tikzpicture}[scale=.3, baseline=(current bounding box.center)]
\begin{scope}[shift={(-4,0)}]
\draw[line width=.5pt, color=gray,] (-2,2) node[anchor=south]{$d$}--(2,-2) node[anchor=north]{$d'$};
\draw[color=white, fill=white] (0,0) circle (.5);
\draw[line width=1pt,style=dashed] (2,2) node[anchor=south]{$F$}--(-2,-2) ;
\draw[fill=gray, color=gray] (-1,1) circle (.15) node[anchor=east]{$\beta\;$};
\end{scope}
\node at (0,0){$=$};
\begin{scope}[shift={(4,0)}]
\draw[line width=.5pt, color=gray,] (-2,2) node[anchor=south]{$d$}--(2,-2) node[anchor=north]{$d'$};
\draw[color=white, fill=white] (0,0) circle (.5);
\draw[line width=1pt,style=dashed] (2,2) node[anchor=south]{$F$}--(-2,-2) ;
\draw[fill=gray, color=gray] (1,-1) circle (.15) node[anchor=west]{$\;\beta$};
\end{scope}
\end{tikzpicture}
\end{align}
The defining conditions \eqref{eq:cnat} and \eqref{eq:dnat} on a $\mac$-module and $\mad$-right module natural transformation $\nu: F\Rightarrow G$ 
give the diagrammatic identities
 \begin{align}\label{pic:bimodulenat}
 &\begin{tikzpicture}[scale=.3, baseline=(current bounding box.center)]
\begin{scope}[shift={(-4,0)}]
\draw[line width=1pt, style=dashed] (-2,2) node[anchor=south]{$F$}--(2,-2) node[anchor=north]{$G$};
\draw[color=white, fill=white] (0,0) circle (.5);
\draw[line width=.5pt,] (2,2) node[anchor=south]{$c$}--(-2,-2) ;
\draw[fill=black, color=black] (-1,1) circle (.2) node[anchor=east]{$\nu\;$};
\end{scope}
\node at (0,0){$=$};
\begin{scope}[shift={(4,0)}]
\draw[line width=1pt, style=dashed] (-2,2) node[anchor=south]{$F$}--(2,-2) node[anchor=north]{$G$};
\draw[color=white, fill=white] (0,0) circle (.5);
\draw[line width=.5pt] (2,2) node[anchor=south]{$c$}--(-2,-2) ;
\draw[fill=black, color=black] (1,-1) circle (.2) node[anchor=west]{$\;\nu$};
\end{scope}
\end{tikzpicture}
&
&\begin{tikzpicture}[scale=.3, baseline=(current bounding box.center)]
\begin{scope}[shift={(-4,0)}]
\draw[line width=.5pt, color=gray] (-2,2) node[anchor=south]{$d$}--(2,-2);
\draw[color=white, fill=white] (0,0) circle (.5);
\draw[line width=1pt, style=dashed] (2,2) node[anchor=south]{$F$}--(-2,-2) node[anchor=north]{$G$};
\draw[fill=black] (1,1) circle (.2) node[anchor=west]{$\;\nu$};
\end{scope}
\node at (0,0){$=$};
\begin{scope}[shift={(4,0)}]
\draw[line width=.5pt, color=gray] (-2,2) node[anchor=south]{$d$}--(2,-2);
\draw[color=white, fill=white] (0,0) circle (.5);
\draw[line width=1pt, style=dashed] (2,2) node[anchor=south]{$F$}--(-2,-2) node[anchor=north]{$G$};
\draw[fill=black] (-1,-1) circle (.2) node[anchor=east]{$\nu\;$};
\end{scope}
\end{tikzpicture}
 \end{align}
In particular, identities \eqref{pic:natb} and \eqref{pic:natfuncconst} can be applied to the (co)evaluations for the duals in $\mac$ and $\mad$. This  allows one to slide maxima and minima of lines labeled with objects in $\mac$ and $\mad$ over or under lines labeled with bimodule functors. Similarly,  the (co)evaluations for  1-morphisms in $\Bimod^\theta(\mac,\mad)$ are
 $(\mac,\mad)$-bimodule natural transformations and hence satisfy identity \eqref{pic:bimodulenat}. This allows one to slide maxima and minima of lines labeled with bimodule functors over or under lines labeled with objects  from $\mac$ or $\mad$. 
 
Note  that the diagrammatic identities involving only lines  and vertices labeled by $\mac$ and $\mad$ are special cases of the ones involving data from $\Bimod^\theta(\mac,\mad)$. 
The diagrams \eqref{pic:bimodule} for the natural isomorphism $b_{c,-,d}: (c\rhd -)\lhd d\Rightarrow c\rhd(-\lhd d)$ are just the diagrams \eqref{pic:modulefunc} and \eqref{pic:modulefuncinv} for the $\mac$-module functor $-\lhd d:\mam\to\mam$ and the $\mad$-right module functor $c\rhd -:\mam\to\mam$. The identities \eqref{pic:natb}
are a special case of \eqref{pic:natfuncconst} and  \eqref{pic:bimodulenat}. 

The diagrammatic identities \eqref{pic:rm2} to \eqref{pic:bimodulenat} allow one to define the evaluation of non-generic  mixed diagrams.  
Any point of a mixed diagram that lies on more than two lines from distinct diagrams is on three lines, one from each diagram $D, D_\mac$ and $D_\mad$. By slightly displacing the lines  one can  transform such a triple point into multiple crossings of two lines. The diagrammatic identities for the crossings and their inverses and the identity \eqref{pic:hexagon} ensures that all diagrams obtained in this way have the same evaluation. 

Similarly, if there is a point on two lines from different diagrams that is not a transversal crossing, then by slightly displacing the lines one can either create a double crossing or ensure that the lines do not meet. Identities \eqref{pic:rm2}  ensure that all resulting diagrams have the same evaluations. 
Any  vertex  on  two lines from different diagrams can be displaced slightly from the crossing. 
Identities \eqref{pic:natb}, \eqref{pic:natfuncconst} and \eqref{pic:bimodulenat} ensure that all ways of doing so yield  diagrams with the same evaluation.

\begin{definition} The evaluation of a  mixed diagram for $\Bimod^\theta(\mac,\mad)$ is defined as the evaluation of any generic  mixed diagram obtained by slightly displacing the lines and vertices of the diagram.
\end{definition}

\subsection{Polygon diagrams}

\subsubsection{Action on category diagrams}

We also use the diagrammatic calculus to describe   evaluation functors $\mathrm{ev}: \Fun(\mam,\man)\times \mam\to \man$ that send a pair $(F,m)$ of a functor $F:\mam\to\man$ and an object $m\in\Ob\mam$ to the object $F(m)$ and a pair $(\nu,\alpha)$ of a natural transformation $\nu: F\Rightarrow G$ and a morphism $\alpha:m\to m'$ to the morphism
$\nu_{m'}\circ F(\alpha)=G(\alpha)\circ \nu_m: F(m)\to G(m')$. 

For this, we describe objects  in $\mam$ by a thick  vertical line and morphisms between them by  vertices on this line. 
Composition of morphisms is from the top to the bottom, and identity morphisms are omitted from the diagrams. Thus, a line whose top end is labeled by an object $m$ in $\mam$ and whose bottom end by an object $m'$ in $\mam$ describes a morphism $\alpha:m\to m'$ in $\mam$. 

The diagram for the evaluation $\mathrm{ev}(\nu,\alpha): F(m)\to G(m')$ of a natural transformation $\nu: F\Rightarrow G$ in $\Fun(\mam,\man)$ on 
a  morphism $\alpha:m\to m'$ in $\mam$  is obtained by placing the  diagram for $\alpha$  
to the right of  a 2-category diagram  for  $\nu$, such that no vertex of the diagram for $\nu$ is on the same height as a vertex on the line for $\alpha$.
 
The morphism $\mathrm{ev}(\nu,\alpha)$  is obtained by horizontally projecting the diagram for $\nu$ on the line for $\alpha$.  All functors labeling dashed lines to the left of a line segment or vertex on the latter  are applied to the associated objects and morphisms in $\mam$.  Natural transformations labeling the vertices  in the diagram for $\nu$ are composed with all functors labeling lines to their left and right  and then evaluated  on the object of $\mam$ to their right. The resulting morphisms in $\man$ are then composed in order of their height.

\begin{example} The diagram 
\begin{align}\label{pic:evaluation}
&\begin{tikzpicture}[scale=.4, baseline=(current bounding box.center)]
\draw[line width=1pt, color=black, style=dashed, ] (0,3)node[anchor=south]{$F$}--(0,-1);
\draw[line width=1pt, color=black, style=dashed,] (0,-1)--(-2,-3) node[anchor=north]{$G$}; 
\draw[line width=1pt, color=black, style=dashed, ] (0,-1)--(2,-3) node[anchor=north]{$H$};
 \draw[line width=1pt, color=black, style=dashed, ] (4,3) node[anchor=south]{$K$} --(4,1.2) ;
 \draw[line width=1pt, color=black, style=dashed, ] (4,1)--(4,-3) node[anchor=north]{$L$}; 
 \draw[fill=black] (0,-1) circle (.2) node[anchor=east]{$\rho$};
  \draw[fill=black] (4,1) circle (.2) node[anchor=east]{$\sigma$};
 \node at (6,0)[anchor=west, color=blue]{$\mathcal M$};
\node at (2.5,0)[anchor=east, color=violet]{$\mathcal P$};
\node at (0,-2.5)[color=cyan ]{$\mathcal Q$};
 \node at (-2,0)[anchor=east, color=red]{$\mathcal N$};
\draw[line width=1.5pt, color=blue] (8,3)node[anchor=south]{$m$}--(8,-3) node[anchor=north]{$m''$} node[midway, anchor=west]{$m'$};
\draw[color=blue, fill=blue] (8,2) circle (.2) node[anchor=west]{$\phi$};
\draw[color=blue, fill=blue] (8,-2) circle (.2) node[anchor=west]{$\psi$};        
\end{tikzpicture}
\end{align}
describes the evaluation $\mathrm{ev}(\nu, \alpha): FK(m)\to GHL(m'')$ of  $\nu=(\rho L)\circ (F\sigma): FK\Rightarrow GHL$ from diagram \eqref{pic:diagram} on the morphism $\alpha=\psi\circ\phi: m\to m'\to m''$ in $\mam$. 
It is  the  composite 
\begin{align*}
&\mathrm{ev}(\nu,\alpha): FK(m)\xrightarrow{FK(\phi)}FK(m')\xrightarrow{F(\sigma_{m'})} FL(m')\xrightarrow{\rho_{L(m')}} GHL(m')\xrightarrow{GHL(\psi)} GHL(m'').
\end{align*}
\end{example}

Note that the evaluation depends only on the natural transformation represented by the diagram on the left  and on the morphism represented by the line on the right, but not on the relative position of vertices in the two diagrams. This follows from  naturality and corresponds to the  identity
\begin{align}\label{pic:naturality}
&\begin{tikzpicture}[scale=.3]
\begin{scope}[shift={(-7,0)}]
\draw[line width=1pt, color=black, style=dashed] (0,3)node[anchor=south]{$K$}--(0,-3) node[anchor=north]{$L$};
\draw[line width=1.5pt, color=blue] (3,3)node[anchor=south]{$m$}--(3,-3) node[anchor=north]{$m'$};
\draw[fill=black] (0,1) circle (.2) node[anchor=east]{$\sigma\;$};
\draw[fill=blue, color=blue] (3,-1) circle (.2) node[anchor=west]{$\;\phi$};
\node  at (1.5,0) [color=blue]{$\mathcal M$};
\node  at (-3,0) [color=violet]{$\mathcal P$};
\end{scope}
\node at (0,0){$=$};
\begin{scope}[shift={(6,0)}]
\draw[line width=1pt, color=black, style=dashed] (0,3)node[anchor=south]{$K$}--(0,-3) node[anchor=north]{$L$};
\draw[line width=1.5pt, color=blue] (3,3)node[anchor=south]{$m$}--(3,-3) node[anchor=north]{$m'$};
\draw[fill=black] (0,-1) circle (.2) node[anchor=east]{$\sigma\;$};
\draw[fill=blue, color=blue] (3,1) circle (.2) node[anchor=west]{$\;\phi$};
\node  at (1.5,0) [color=blue]{$\mathcal M$};
\node  at (-3,0) [color=violet]{$\mathcal P$};
\end{scope}
\node at (0,-7){$L(\phi)\circ \sigma_m=\sigma_{m'}\circ K(\phi)$};
\end{tikzpicture}
\end{align}
which allows one to move vertices in the diagram for  $\Fun(\mam,\man)$ above or below vertices on the line for $\mam$. One can thus drop the requirement that  the  former are at different heights from the latter. Any diagram that violates this condition can be deformed into one that satisfies it by slightly  displacing the vertices on the line on the right. As all ways of doing so yield the same morphism, its evaluation is defined by these deformations.

We also use diagrammatic calculus to describe the composites of morphisms of the form $\mathrm{ev}(\nu,\alpha)$  with general morphisms in $\man$. The resulting diagrams consist of a multicoloured thick line on the right and a diagram for $\Fun(\mam,\man)$ to the left whose lines may end either at the top or bottom of the diagram for $\Fun(\mam,\man)$ or on vertices on the thick line on the right. For instance, the following diagrams describe morphisms $\tau: J(m)\to n$ and $\omega: n\to J(m)$ in $\man$ for a functor $J:\mam\to\man$
\begin{align}\label{pic:mixedmorphism}
&\begin{tikzpicture}[scale=.3]
\draw[line width=1pt, style=dashed] (-4,4)node[anchor=south]{$J$}--(0,0);
\draw[line width=1.5pt, color=blue] (0,4) node[anchor=south]{$m$}--(0,0);
\draw[line width=1.5pt, color=red] (0,0)--(0,-4) node[anchor=north]{$n$};
\draw[color=red, fill=red] (0,0) circle (.2) node[anchor=west]{$\;\tau$};
\node at (-1.5,3.5) [color=blue]{$\mathcal M$};
\node at (-3,1) [color=red]{$\mathcal N$};
\end{tikzpicture}
&
&\begin{tikzpicture}[scale=.3]
\draw[line width=1pt, style=dashed] (-4,-4)node[anchor=north]{$J$}--(0,0);
\draw[line width=1.5pt, color=blue] (0,-4) node[anchor=north]{$m$}--(0,0);
\draw[line width=1.5pt, color=red] (0,0)--(0,4) node[anchor=south]{$n$};
\draw[color=red, fill=red] (0,0) circle (.2) node[anchor=west]{$\;\omega$};
\node at (-1.5,-3.5) [color=blue]{$\mathcal M$};
\node at (-3,-1) [color=red]{$\mathcal N$};
\end{tikzpicture}
\end{align}
In this diagram, the dashed line may also stand for a composite  functor and  may be replaced by several dashed lines that  end in a common vertex of the line on the right. 
For $\mac$-module or $\mad$-right module categories $\mam$ and $\man$, any dashed line  may be replaced by a thin black or grey line labeled with an object of $\mac$ or $\mad$ that stands for an endofunctor $c\rhd -$ or $-\lhd d$. 

\subsubsection{Bordered diagrams}

Using the diagrammatic calculus from the previous sections, we  define  a generalisation of 2-category diagrams for Cat, in which  boundary segments and endpoints of lines at the boundary are also labeled with categorical data. We also allow multiple lines  to end in a common point on a boundary.

\begin{definition}\label{def:bordereddiag} Let  $D$ be a 2-category diagram for Cat. 
 A {\bf bordered diagram} $D'$ for $D$ is a diagram obtained by  labeling  boundary segments and endpoints of lines at the boundary of $D$ as follows:
\begin{compactitem}
\item each boundary segment  with an object of the adjacent category in $D$,
\item an endpoint of lines labeled with functors $F_1,...,F_n$, from right to left,  between a
 segment labeled by  $a\in \Ob\mathcal A$ on the right and a segment labeled by $b\in\Ob\mathcal B$ on the left  with a morphism $\alpha: b\to F_n\cdots F_1(a)$ if it is on the top and with a morphism $\alpha: F_n\cdots F_1(a)\to b$ if it is on the  bottom of  $D$.
\end{compactitem}
\begin{align*}
&\begin{tikzpicture}[scale=.3]
\draw[line width=1.5pt, color=red] (-3,0)node[anchor=east]{$b$}--(0,0);
\draw[line width=1.5pt, color=blue] (3,0) node[anchor=west]{$a$}--(0,0);
\draw[line width=1pt, dashed] (0,0)--(-2,-4) node[anchor=north]{$F_3$};
\draw[line width=1pt, dashed] (0,0)--(0,-4) node[anchor=north]{$F_2$};
\draw[line width=1pt, dashed] (0,0)--(2,-4) node[anchor=north]{$F_1$};
\node at (3,-2)[color=blue, anchor=west]{$\mathcal A$};
\node at (-3,-2)[color=red, anchor=east]{$\mathcal B$};
\draw[color=red, fill=red] (0,0) circle(.2) node[anchor=south]{$\alpha$};
\node at (0,3){$\alpha: b\to F_3F_2F_1(a)$};
\end{tikzpicture}
&
&\begin{tikzpicture}[scale=.3]
\draw[line width=1.5pt, color=red] (-3,0)node[anchor=east]{$b$}--(0,0);
\draw[line width=1.5pt, color=blue] (3,0) node[anchor=west]{$a$}--(0,0);
\draw[line width=1pt, dashed] (0,0)--(-2,4) node[anchor=south]{$F_3$};
\draw[line width=1pt, dashed] (0,0)--(0,4) node[anchor=south]{$F_2$};
\draw[line width=1pt, dashed] (0,0)--(2,4) node[anchor=south]{$F_1$};
\node at (3,2)[color=blue, anchor=west]{$\mathcal A$};
\node at (-3,2)[color=red, anchor=east]{$\mathcal B$};
\draw[color=red, fill=red] (0,0) circle(.2) node[anchor=north]{$\alpha$};
\node at (0,-3){$\alpha: F_3F_2F_1(a)\to b$};
\end{tikzpicture}
\end{align*}
\end{definition}

\begin{definition} Let $D$ be a 2-category diagram that represents a natural transformation $\nu: F\Rightarrow G$ and $D'$ a bordered diagram for $D$ labeled with objects $n\in \Ob\man$ on the left and $m\in\Ob\mam$ on the right.

The {\bf evaluation} of $D'$  is the morphism 
$\ev(D')=b\circ \nu_m\circ t:n\to n$, where $t: n\to F(m)$ and $b: G(m)\to n$ are morphisms  assigned to the top and bottom of $D'$ as follows:
\begin{compactitem}
\item take the image of each morphism at the top or bottom of $D$ under the composite of all functors labeling endpoints of lines to its left,
\item compose the resulting morphisms from the left to the right at the top and from the right to the left at the bottom of $D$. 
\end{compactitem}
\end{definition}

The evaluation of   a bordered diagram  $D'$ for a 2-category diagram  $D$ is represented diagrammatically by cutting its boundary square on the left-hand side and straightening its boundary to a vertical line. The naturality condition \eqref{pic:naturality} allows one to move vertices of $D$ freely with respect to boundary vertices.

\begin{example}
A bordered diagram $D'$ for  the  diagram  $D$ in \eqref{pic:diagram} and its  evaluation are given by
\begin{align}\label{pic:circeval}
&\begin{tikzpicture}[scale=.5, baseline=(current bounding box.center)]
\draw[line width=1pt] (7,3)--(-3,3)--(-3,-3)--(7,-3)--(7,3);
\draw[line width=1.5pt, color=blue] (4,3)--(7,3)--(7,-3)--(4,-3);
\node at (7,0)[color=blue, anchor=west]{$m$};
\draw[line width=1.5pt, color=violet] (0,3)--(4,3);
\node at (2,3)[color=violet, anchor=south]{$p$};
\draw[line width=1.5pt, color=violet] (2,-3)--(4,-3);
\node at (3,-3)[color=violet, anchor=north]{$p'$};
\draw[line width=1.5pt, color=red] (0,3)--(-3,3)--(-3,-3)--(-2,-3);
\node at (-3,0)[color=red, anchor=east]{$n$};
\draw[line width=1.5pt, color=cyan ] (-2,-3)--(2,-3);
\node at (0,-3)[color=cyan , anchor=north]{$q$};
\draw[line width=1pt, color=black, style=dashed, ] (0,3)--(0,-1);
\node at (0,3) [color=black, anchor=north west]{$F$};
\draw[line width=1pt, color=black, style=dashed,] (0,-1)--(-2,-3);
\node at (-2,-2.5)[color=black, anchor=south]{$G$}; 
\draw[line width=1pt, color=black, style=dashed, ] (0,-1)--(2,-3);
\node at (2,-2.5) [color=black, anchor=south]{$H$};
 \draw[line width=1pt, color=black, style=dashed, ] (4,3) --(4,1.2) ;
 \node at (4,3) [color=black, anchor=north west] {$K$};
 \draw[line width=1pt, color=black, style=dashed, ] (4,1)--(4,-3);
 \node at (4,-3)  [color=black, anchor=south west]{$L$}; 
 \draw[fill=black] (0,-1) circle (.15) node[anchor=east]{$\rho$};
  \draw[fill=black] (4,1) circle (.15) node[anchor=east]{$\sigma$};
  \node at (5,0)[anchor=west, color=blue]{$\mathcal M$};
  \node at (2.5,0)[anchor=east, color=violet]{$\mathcal P$};
  \node at (0,-2.5)[color=cyan ]{$\mathcal Q$};
  \node at (-1,0)[anchor=east, color=red]{$\mathcal N$};
  \draw[fill=red, color=red] (0,3) circle (.15);
  \node at (0,3.2)[color=red,anchor=south]{$\alpha$};
  \draw[fill=violet, color=violet] (4,3) circle (.15) ;
    \node at (4,3.2)[color=violet, anchor=south]{$\beta$};
        \draw[fill=violet, color=violet] (4,-3) circle (.15);
          \node at (4,-3.2)[color=violet, anchor=north]{$\gamma$};
         \draw[fill=cyan , color=cyan ] (2,-3) circle (.15) ;
           \node at (2,-3.2)[color=cyan ,anchor=north]{$\delta$};
          \draw[fill=red, color=red] (-2,-3) circle (.15) ;
            \node at (-2,-3.2)[anchor=north, color=red]{$\epsilon$};
\end{tikzpicture}
&
&\begin{tikzpicture}[scale=.5, baseline=(current bounding box.center)]
\draw[line width=1.5pt, color=red] (0,5)--(0,3) node[midway, anchor=west]{$n$};
\draw[line width=1pt, style=dashed] (0,3) node[anchor=west, color=red]{$\alpha$}--(-5,-1.5)node[anchor=east]{$\rho$} node[midway, anchor=south east]{$F$};
\draw[line width=1.5pt, color=violet] (0,3)--(0,1) node[midway, anchor=west]{$p$};
\draw[line width=1.5pt, color=blue] (0,1)--(0,-1) node[midway, anchor=west]{$m$};
\draw[line width=1pt, style=dashed] (0,1)node[anchor=west, color=violet]{$\beta$}--(-1,0) node[anchor=east]{$\sigma$} node[midway, anchor=south]{$K\;$};
\draw[line width=1pt, style=dashed] (0,-3)node[anchor=west, color=cyan ]{$\delta$}--(-5,-1.5) node[midway, anchor=south]{$H$};
\draw[line width=1pt, style=dashed] (0,-1)node[anchor=west, color=violet]{$\gamma$} --(-1,0) node[midway, anchor=north]{$L\;$};
\draw[line width=1.5pt, color=violet] (0,-1)--(0,-3) node[midway, anchor=west]{$p'$};
\draw[line width=1pt, style=dashed] (0,-5) node[anchor=west, color=red]{$\epsilon$}--(-5,-1.5) node[midway, anchor=north east]{$G$};
\draw[line width=1.5pt, color=cyan ] (0,-3)--(0,-5) node[midway, anchor=west]{$q$};
\draw[line width=1.5pt, color=red] (0,-5)--(0,-7) node[midway, anchor=west]{$n$};
\draw[fill=black] (-5,-1.5) circle (.15);
\draw[fill=black] (-1,0) circle (.15);
\draw[fill=red, color=red] (0,3) circle (.15);
\draw[fill=violet, color=violet] (0,1) circle (.15);
\draw[fill=violet, color=violet] (0,-1) circle (.15);
\draw[fill=cyan, color=cyan] (0,-3) circle (.15);
\draw[fill=red, color=red] (0,-5) circle (.15);
\end{tikzpicture}
\end{align}
with morphisms  
$\alpha: n\to F(p)$, $\beta: p\to K(m)$, $\gamma:L(m)\to p'$, $\delta: H(p')\to q$ and $\epsilon: G(q)\to n$. 

 Its  evaluation is  the morphism $b\circ \nu_m\circ t: n\to n$, where $\nu=(\rho L)\circ (F\sigma):FK\Rightarrow GHL$ is the natural transformation represented by $D$, $t=F(\beta)\circ \alpha: n \to  FK(m)$ and $b=\epsilon\circ (G\delta)\circ (GH\gamma): GHL(m)\to n$. By \eqref{pic:naturality}, this coincides with the morphism represented by the diagram on the right, which reads
\begin{align*}
n\xrightarrow{\alpha} F(p)\xrightarrow{F(\beta)} FK(m )\xrightarrow{F(\sigma_m)} FL(m)\xrightarrow{\gamma} F(p')\xrightarrow{\rho_{p'}} GH(p') \xrightarrow{G(\delta)} G(q)\xrightarrow{\epsilon} n.
\end{align*}
\end{example}

\subsubsection{Polygon diagrams}
\label{subsec:polygondiags}

By definition, the evaluation of a bordered diagram is an endomorphism of the object labeling the left-hand side of the diagram.  
For bordered diagrams that involve data from the pivotal 2-category  $\Bimod^\theta(\mac,\mad)$ one can take the bimodule trace of this morphism to obtain a complex number.

\begin{definition} Let $D$ be a bordered diagram labeled with data from $\Bimod^\theta(\mac,\mad)$. The {\bf cyclic evaluation} of $D$ is the trace of its evaluation. 
\end{definition}

The cyclic evaluation of a bordered diagram has better invariance properties than the evaluation of a bordered diagram. This is most easily seen by using a diagrammatic calculus for bimodule traces, which is essentially the calculus from \cite{G12}. 
If a finite semisimple category $\mam$ is equipped with a trace $\theta$ as in Definition \ref{def:moduletrace}, we denote the associated morphisms $\theta_m: \End_\mam(m)\to\C$ by horizontal lines at the upper and lower ends of the lines representing  endomorphisms of $m$. 
The cyclicity condition  from Definition \ref{def:moduletrace} then takes the form
\begin{align}\label{pic:cyclic trace}
\begin{tikzpicture}[scale=.3]
\begin{scope}[shift={(-4,0)}]
\draw[line width=1.5pt, color=blue] (0,3) --(0,-3);
\draw[line width=1.5pt, color=blue] (-1,3)--(1,3) node[anchor=west]{$m$};
\draw[line width=1.5pt, color=blue] (-1,-3)--(1,-3)node[anchor=west]{$m$};
\draw[fill=blue, color=blue] (0,1.5) circle (.2) node[anchor=east]{$\alpha\;$};
\draw[fill=blue, color=blue] (0,-1.5) circle (.2) node[anchor=east]{$\beta\;$};
\node (0,0)[anchor=west, color=blue]{$m'$};
\end{scope}
\node at (0,0){$=$};
\begin{scope}[shift={(4,0)}]
\draw[line width=1.5pt, color=blue] (0,3) --(0,-3);
\draw[line width=1.5pt, color=blue] (-1,3)--(1,3) node[anchor=west]{$m'$};
\draw[line width=1.5pt, color=blue] (-1,-3)--(1,-3)node[anchor=west]{$m'$};
\draw[fill=blue, color=blue] (0,1.5) circle (.2) node[anchor=east]{$\beta\;$};
\draw[fill=blue, color=blue] (0,-1.5) circle (.2) node[anchor=east]{$\alpha\;$};
\node (0,0)[anchor=west, color=blue]{$m$};
\end{scope}
\end{tikzpicture}
\end{align}
and the compatibility conditions between the trace and the $\mac$-module or $\mad$-right module structure read
\begin{align}\label{pic:cmoduletrace}
&\begin{tikzpicture}[scale=.35]
\begin{scope}[shift={(-3,0)}]
\draw[line width=1.5pt, color=blue] (0,3) node[anchor=north west]{$m$} --(0,-3) node[anchor=south west]{$m$};
\draw[line width=1.5pt, color=blue] (-2.5,3)--(1,3);
\draw[line width=1.5pt, color=blue] (-2.5,-3)--(1,-3);
\draw[line width=.5pt, color=black] (-2,3)node[anchor=north east]{$c$} --(0,0);
\draw[line width=.5pt, color=black] (-2,-3) node[anchor=south east]{$c$} --(0,0);
\draw[fill=blue, color=blue] (0,0) circle (.2) node[anchor=west]{$\;\alpha$};
\end{scope}
\node at (0,0) {$=$};
\begin{scope}[shift={(5,0)}]
\draw[line width=1.5pt, color=blue] (0,3) node[anchor=north west]{$m$} --(0,-3) node[anchor=south west]{$m$};
\draw[line width=1.5pt, color=blue] (-1,3)--(1,3);
\draw[line width=1.5pt, color=blue] (-1,-3)--(1,-3);
\draw[line width=.5pt] plot [smooth, tension=0.6] coordinates 
      {(0,0)(-1,2)(-2,1.5) (-2.2,0)(-2,-1.5) (-1,-2)(0,0)};
\draw[line width=.5pt, ->,>=stealth]  (-2.2,-.1)--(-2.2,.1) node[anchor=east]{$c$};    
\draw[fill=blue, color=blue] (0,0) circle (.2) node[anchor=west]{$\;\alpha$};
\end{scope}
\end{tikzpicture}
&
&\begin{tikzpicture}[scale=.35]
\begin{scope}[shift={(-3,0)}]
\draw[line width=1.5pt, color=blue] (0,3) node[anchor=north west]{$m$} --(0,-3) node[anchor=south west]{$m$};
\draw[line width=1.5pt, color=blue] (-2.5,3)--(1,3);
\draw[line width=1.5pt, color=blue] (-2.5,-3)--(1,-3);
\draw[line width=.5pt, color=gray] (-2,3)node[anchor=north east]{$d$} --(0,0);
\draw[line width=.5pt, color=gray] (-2,-3) node[anchor=south east]{$d$} --(0,0);
\draw[fill=blue, color=blue] (0,0) circle (.2) node[anchor=west]{$\;\alpha$};
\end{scope}
\node at (0,0) {$=$};
\begin{scope}[shift={(5,0)}]
\draw[line width=1.5pt, color=blue] (0,3) node[anchor=north west]{$m$} --(0,-3) node[anchor=south west]{$m$};
\draw[line width=1.5pt, color=blue] (-1,3)--(1,3);
\draw[line width=1.5pt, color=blue] (-1,-3)--(1,-3);
\draw[line width=.5pt, color=gray] plot [smooth, tension=0.6] coordinates 
      {(0,0)(-1,2)(-2,1.5) (-2.2,0)(-2,-1.5) (-1,-2)(0,0)};
\draw[line width=.5pt, ->,>=stealth, color=gray]  (-2.2,-.1)--(-2.2,.1) node[anchor=east]{$d$};    
\draw[fill=blue, color=blue] (0,0) circle (.2) node[anchor=west]{$\;\alpha$};
\end{scope}
\end{tikzpicture}
\end{align}
For a spherical fusion category $\mathcal C$ as a module or right module category  over itself, we use both, the diagrams from \eqref{eq:tracesph}
and the diagrams
for module traces. In the latter we draw thin black or grey lines instead of thick coloured lines for edges labeled by $\mathcal C$.

By  Theorem \ref{th:pivot} and Corollaries \ref{cor:endmpiv} to \ref{cor:functortrace},  (bi)module traces  induce the pivotal 2-category structures on the 2-categories $\mathrm{Mod}^\theta(\mac)$ and $\Bimod^\theta(\mac,\mad)$.  The 
defining condition \eqref{eq:pivcond} on the
component morphisms $\omega^F: F^{ll}\Rightarrow F$ of the pivots  and identity  \eqref{eq:traceconds} are  expressed diagrammatically as

\begin{align}\label{eq:functortrpivot}
&\begin{tikzpicture}[scale=.35]
\begin{scope}[shift={(-3,0)}]
\draw[line width=1.5pt, color=blue] (1,5)--(-3,5);
\draw[line width=1.5pt, color=blue] (1,-5)--(-3,-5);
\draw[line width=1.5pt, color=red] (0,5)--(0,2) node[midway, anchor=west]{$n$};
\draw[line width=1.5pt, color=red] (0,-5)--(0,-2) node[midway, anchor=west]{$n$};
\draw[line width=1.5pt, color=blue] (0,-2)--(0,2) node[midway,anchor=west] {$m$};
\draw[line width=1pt, style=dashed,-stealth] (0,2)..controls (-.5,0) and (-2,0).. (-2,2) node[anchor=east]{$F$};
\draw[fill=red, color=red] (0,2) circle (.2) node[anchor=west]{$\;\alpha$};
\draw[fill=blue, color=blue] (0,-2) circle (.2) node[anchor=west]{$\;\beta$};
\draw[line width=1pt, style=dashed] (-2,2)--(-2,5) node[midway, anchor=west, color=red]{$\man$} node[midway, anchor=east, color=blue]{$\mam$}; 
\draw[line width=1pt, style=dashed] (-2,-5) node[anchor=south east]{$F$} --(0,-2)  node[sloped, pos=0.5, allow upside down]{\arrowIn}; 
\end{scope}
\node at (0,0) {$\stackrel{\eqref{eq:pivcond}}=$};
\begin{scope}[shift={(5,0)}]
\draw[line width=1.5pt, color=red] (1,5)--(-3,5);
\draw[line width=1.5pt, color=red] (1,-5)--(-3,-5);
\draw[line width=1.5pt, color=red] (0,5)--(0,2) node[midway, anchor=west]{$n$};
\draw[line width=1.5pt, color=red] (0,-5)--(0,-2) node[midway, anchor=west]{$n$};
\draw[line width=1.5pt, color=blue] (0,-2)--(0,2) node[midway,anchor=west] {$m$};
\draw[fill=red, color=red] (0,2) circle (.2) node[anchor=west]{$\;\alpha$};
\draw[fill=blue, color=blue] (0,-2) circle (.2) node[anchor=west]{$\;\beta$};
\draw[line width=1pt, style=dashed,-stealth] plot [smooth, tension=0.6] coordinates 
      {(0,2)(-1,0)(-2,-2.3) (-1.3,-3.5)(-.2,-2.2)};
      \node at (-1,0)[anchor=east]{$F$};
      \node at (-1.2,-2)[color=blue]{$\mam$};
         \node at (-3,-2)[color=red]{$\man$};
\end{scope}
\end{tikzpicture}\qquad
&
&\qquad \begin{tikzpicture}[scale=.5]
\begin{scope}[shift={(-2.5,0)}]
\draw[line width=1.5pt, color=blue] (0,3) node[anchor=north west]{$m$} --(0,-3) node[anchor=south west]{$m$};
\draw[line width=1.5pt, color=red] (-2.5,3)--(1,3);
\draw[line width=1.5pt, color=red] (-2.5,-3)--(1,-3);
\draw[line width=1pt, style=dashed] (-2,3)node[anchor=north east]{$F$} --(0,0) ;
\draw[line width=1pt, style=dashed] (-2,-3) node[anchor=south east]{$F$} --(0,0);
\draw[fill=blue, color=red] (0,0) circle (.15) node[anchor=west]{$\;\alpha$};
\node at (-1.4,2.8)[anchor=north west, color=blue]{$\mam$};
\node at (-1,0)[anchor=east, color=red]{$\man$};
\end{scope}
\node at (0,0) {$\stackrel{\eqref{eq:traceconds}}=$};
\begin{scope}[shift={(4.5,0)}]
\draw[line width=1.5pt, color=blue] (0,3) node[anchor=north west]{$m$} --(0,-3) node[anchor=south west]{$m$};
\draw[line width=1.5pt, color=blue] (-1,3)--(1,3);
\draw[line width=1.5pt, color=blue] (-1,-3)--(1,-3);
\draw[line width=1pt, style=dashed] plot [smooth, tension=0.6] coordinates 
      {(0,0)(-1,2)(-2,1.5) (-2.2,0)(-2,-1.5) (-1,-2)(0,0)};
\draw[line width=1pt, ->,>=stealth]  (-2.2,-.1)--(-2.2,.1) node[anchor=east]{$F$};    
\draw[fill=red, color=red] (0,0) circle (.15) node[anchor=west]{$\;\alpha$};
\node at (-.5,0)[anchor=east, color=red]{$\man$};
\node at (-3,3)[anchor=north west, color=blue]{$\mam$};
\end{scope}
\end{tikzpicture}
\end{align}

By definition, the evaluation of a bordered diagram $D'$ for a 2-category diagram $D$ depends on the natural transformation represented by $D$ and on the data labeling the boundary of $D'$. In contrast, the \emph{cyclic evaluation} of a bordered diagram for $\Bimod^\theta(\mac,\mad)$ depends only on the cyclic equivalence class of the morphism represented by $D$, if the data at the boundary of $D'$ is transformed accordingly.

Cyclic transformations of diagrams induce cyclic transformations of bordered diagrams. These  modify the underlying natural transformation, move the object labels at the boundary and modify the morphisms at the boundary by moving them  and composing them with the (co)evaluations from \eqref{pic:unitcounit}:
\begin{align}
\label{eq:movemorph1}
&\begin{tikzpicture}[scale=.35, baseline=(current bounding box.center)]
\draw[line width=1.5pt, color=red] (0,3)--(-5,3)--(-5,-3)--(0,-3);
\draw[line width=1.5pt, color=blue] (0,3)--(3,3) node[anchor=west]{$m$};
\draw[line width=1.5pt, color=violet] (0,-3)--(3,-3) node[anchor=west]{$n$};
\draw[line width=1pt, style=dashed] (0,3)--(0,0) node[sloped, pos=0.5, allow upside down]{\arrowIn} node[pos=.5, anchor=east]{$F$};
\draw[line width=1pt, style=dashed] (0,0)--(0,-3) node[sloped, pos=0.5, allow upside down]{\arrowIn} node[pos=.5, anchor=east]{$H$};
\draw[line width=1pt, style=dashed] (0,0)--(1,1);
\draw[line width=1pt, style=dashed] (0,0)--(2,1);
\draw[line width=1pt, style=dashed] (0,0)--(2,-1);
\draw[line width=1pt, style=dashed] (0,0)--(1,-1);
\draw[color=red, fill=red](0,3) circle (.17) node[anchor=south]{$\alpha$};
\draw[color=red, fill=red](0,-3) circle (.17) node[anchor=north]{$\beta$};
\draw[color=black, fill=black](0,0) circle (.17) node[anchor=east]{$\nu$};
\node at (-5,0)[anchor=east, color=red]{$p$};
\end{tikzpicture}
&
&
\longrightarrow
&
&
\begin{tikzpicture}[scale=.35, baseline=(current bounding box.center)]
\draw[line width=1.5pt, color=red] (-3,-3)--(0,-3);
\draw[line width=1.5pt, color=blue] (-3,-3)--(-5,-3)--(-5,3)--(3,3) node[anchor=west]{$m$};
\draw[line width=1.5pt, color=violet] (0,-3)--(3,-3) node[anchor=west]{$n$};
\draw[line width=1pt, style=dashed]  (0,0) .. controls (0,2) and (-3,2)..(-3,0);
\draw[line width=1pt, style=dashed] (-3,-3)--(-3,0) node[sloped, pos=0.5, allow upside down]{\arrowIn} node[pos=.5, anchor=west]{$F$};
\draw[line width=1pt, style=dashed] (0,0)--(0,-3) node[sloped, pos=0.5, allow upside down]{\arrowIn} node[pos=.5, anchor=east]{$H$};
\draw[line width=1pt, style=dashed] (0,0)--(1,1);
\draw[line width=1pt, style=dashed] (0,0)--(2,1);
\draw[line width=1pt, style=dashed] (0,0)--(2,-1);
\draw[line width=1pt, style=dashed] (0,0)--(1,-1);
\draw[color=blue, fill=blue](-3,-3) circle (.17) node[anchor=north]{$\alpha^d$};
\draw[color=red, fill=red](0,-3) circle (.17) node[anchor=north]{$\beta$};
\draw[color=black, fill=black](0,0) circle (.17) node[anchor=east]{$\nu$};
\node at (-5,0)[anchor=east, color=blue]{$m$};
\node at (-1.5,-3)[anchor=north, color=red]{$p$};
\end{tikzpicture}
\\
&\begin{tikzpicture}[scale=.3]
\draw[line width=1.5pt, color=red] (-3,0) node[anchor=south east]{$p$}--(0,0);
\draw[line width=1.5pt, color=blue] (3,0) node[anchor=south west]{$m$}--(0,0);
\draw[line width=1pt, style=dashed] (0,0) --(0,-4)  node[sloped, pos=0.5, allow upside down]{\arrowIn} node[midway, anchor=west]{$F$}; 
\draw[color=red, fill=red] (0,0) circle (.2) node[anchor=south]{$\alpha$};
\begin{scope}[shift={(9,0)}]
\draw[line width=1.5pt, color=red] (0,3) node[anchor=south]{$p$}--(0,0);
\draw[line width=1.5pt, color=blue] (0,-3) node[anchor=north]{$m$}--(0,0);
\draw[line width=1pt, style=dashed] (0,0) --(-3,-3)  node[anchor=north]{$F$}; 
\draw[color=red, fill=red] (0,0) circle (.2) node[anchor=west]{$\alpha$};
\end{scope}
\end{tikzpicture}
&
&
&
&\begin{tikzpicture}[scale=.3]
\draw[line width=1.5pt, color=red] (-3,0) node[anchor=north east]{$p$}--(0,0);
\draw[line width=1.5pt, color=blue] (3,0) node[anchor=north west]{$m$}--(0,0);
\draw[line width=1pt, style=dashed] (0,0) --(0,4)  node[sloped, pos=0.5, allow upside down]{\arrowIn} node[midway, anchor=west]{$F$}; 
\draw[color=blue, fill=blue] (0,0) circle (.2) node[anchor=north]{$\alpha^d$};
\begin{scope}[shift={(10,0)}]
\draw[line width=1.5pt, color=red] (0,3) node[anchor=south]{$p$}--(0,0);
\draw[line width=1.5pt, color=blue] (0,-3) node[anchor=north]{$m$}--(0,0);
\draw[line width=1pt, style=dashed] (-3,0)  --(-3,3) node[anchor=south]{$F$}  node[sloped, pos=0.5, allow upside down]{\arrowIn} ;
\draw[line width=1pt, style=dashed] (-3,0) ..controls (-2.5,-2) and (-.5,-2).. (0,0);
\draw[color=red, fill=red] (0,0) circle (.2) node[anchor=west]{$\alpha$};
\end{scope}
\end{tikzpicture}\nonumber\\
&\alpha: p\to F(m)
&
&
&
&\alpha^d=\epsilon^F_m\circ F^l(\alpha): F^l(p)\to m\nonumber
\end{align}

\begin{align}
\label{eq:movemorph2}
&\begin{tikzpicture}[scale=.35, baseline=(current bounding box.center)]
\draw[line width=1.5pt, color=violet] (0,3)--(5,3)--(5,-3)--(0,-3);
\draw[line width=1.5pt, color=blue] (0,3)--(-3,3) node[anchor=east]{$m$};
\draw[line width=1.5pt, color=red] (0,-3)--(-3,-3) node[anchor=east]{$p$};
\draw[line width=1pt, style=dashed] (0,3)--(0,0) node[sloped, pos=0.5, allow upside down]{\arrowIn} node[pos=.5, anchor=west]{$K$};
\draw[line width=1pt, style=dashed] (0,0)--(0,-3) node[sloped, pos=0.5, allow upside down]{\arrowIn} node[pos=.5, anchor=west]{$G$};
\draw[line width=1pt, style=dashed] (0,0)--(-1,1);
\draw[line width=1pt, style=dashed] (0,0)--(-2,1);
\draw[line width=1pt, style=dashed] (0,0)--(-2,-1);
\draw[line width=1pt, style=dashed] (0,0)--(-1,-1);
\draw[color=blue, fill=blue](0,3) circle (.17) node[anchor=south]{$\delta$};
\draw[color=red, fill=red](0,-3) circle (.17) node[anchor=north]{$\gamma$};
\draw[color=black, fill=black](0,0) circle (.17) node[anchor=west]{$\nu$};
\node at (5,0)[anchor=west, color=violet]{$n$};
\end{tikzpicture}
&
&\longrightarrow
&
&
\begin{tikzpicture}[scale=.35, baseline=(current bounding box.center)]
\draw[line width=1.5pt, color=violet] (0,3)--(3,3);
\draw[line width=1.5pt, color=blue] (0,3)--(-3,3) node[anchor=east]{$m$};
\draw[line width=1.5pt, color=red] (-3,-3) node[anchor=east]{$p$}--(5,-3)--(5,3)--(3,3);
\draw[line width=1pt, style=dashed] (0,3)--(0,0) node[sloped, pos=0.5, allow upside down]{\arrowIn} node[pos=.5, anchor=west]{$K$};
\draw[line width=1pt, style=dashed] (0,0) .. controls (0,-2) and (3,-2).. (3,0);
\draw[line width=1pt, style=dashed] (3,0)--(3,3) node[sloped, pos=0.5, allow upside down]{\arrowIn} node[pos=.5, anchor=west]{$G$};
\draw[line width=1pt, style=dashed] (0,0)--(-1,1);
\draw[line width=1pt, style=dashed] (0,0)--(-2,1);
\draw[line width=1pt, style=dashed] (0,0)--(-2,-1);
\draw[line width=1pt, style=dashed] (0,0)--(-1,-1);
\draw[color=blue, fill=blue](0,3) circle (.17) node[anchor=south]{$\delta$};
\draw[color=violet, fill=violet](3,3) circle (.17) node[anchor=south]{$\gamma^u$};
\draw[color=black, fill=black](0,0) circle (.17) node[anchor=west]{$\nu$};
\node at (5,0)[anchor=west, color=red]{$p$};
\node at (1.5,3)[anchor=south, color=violet]{$n$};
\end{tikzpicture}
\\
&\begin{tikzpicture}[scale=.3]
\draw[line width=1.5pt, color=red] (-3,0) node[anchor=south east]{$p$}--(0,0);
\draw[line width=1.5pt, color=violet] (3,0) node[anchor=south west]{$n$}--(0,0);
\draw[line width=1pt, style=dashed] (0,4) --(0,0)  node[sloped, pos=0.5, allow upside down]{\arrowIn} node[midway, anchor=west]{$G$}; 
\draw[color=red, fill=red] (0,0) circle (.2) node[anchor=north]{$\gamma$};
\begin{scope}[shift={(9,0)}]
\draw[line width=1.5pt, color=violet] (0,3) node[anchor=south]{$n$}--(0,0);
\draw[line width=1.5pt, color=red] (0,-3) node[anchor=north]{$p$}--(0,0);
\draw[line width=1pt, style=dashed] (0,0) --(-3,3)  node[anchor=south]{$G$}; 
\draw[color=red, fill=red] (0,0) circle (.2) node[anchor=west]{$\gamma$};
\end{scope}
\end{tikzpicture}
&
&
&
&\begin{tikzpicture}[scale=.3]
\draw[line width=1.5pt, color=violet] (-3,0) node[anchor=south east]{$n$}--(0,0);
\draw[line width=1.5pt, color=red] (3,0) node[anchor=south west]{$p$}--(0,0);
\draw[line width=1pt, style=dashed] (0,-4) --(0,0)  node[sloped, pos=0.5, allow upside down]{\arrowIn} node[midway, anchor=west]{$G$}; 
\draw[color=violet, fill=violet] (0,0) circle (.2) node[anchor=south]{$\gamma^u$};
\begin{scope}[shift={(10,0)}]
\draw[line width=1.5pt, color=violet] (0,3) node[anchor=south]{$n$}--(0,0);
\draw[line width=1.5pt, color=red] (0,-3) node[anchor=north]{$p$}--(0,0);
\draw[line width=1pt, style=dashed] (-3,-3) node[anchor=north]{$G$} --(-3,0)  node[sloped, pos=0.5, allow upside down]{\arrowIn} ;
\draw[line width=1pt, style=dashed] (-3,0) ..controls (-2.5,2) and (-.5,2).. (0,0);
\draw[color=red, fill=red] (0,0) circle (.2) node[anchor=west]{$\gamma$};
\end{scope}
\end{tikzpicture}\nonumber\\
&\gamma: G(n)\to p
&
&
&
&\gamma^u= G^l(\gamma)\circ \eta'^G_n: n\to G^l(p).\nonumber\\
\nonumber
\end{align}
Identity \eqref{pic:adjunction}  ensures that  moving an endpoint from the upper (lower) to the lower (upper) side of a diagram and then up (down) again  yields its  original morphism label: $(\alpha^u)^d=\alpha$ and $(\gamma^u)^d=\gamma$. The pivotal structure depicted in  \eqref{pic:pivotal} ensures that this does not change the bimodule natural transformation labeling a vertex. Transforming the boundary morphisms in this way extends  cyclic transformations of diagrams to cyclic transformations of bordered diagrams, as shown in Figure \ref{fig:cyclicperm}.

 \begin{figure}
\begin{tikzpicture}[scale=.4, baseline=(current bounding box.center)]
\draw[line width=1.5pt, color=red] (0,3)--(-2.75,3) node[anchor=south]{$p$}--(-5.5,3) --(-5.5,-5)--(-3,-5) ;
\draw[line width=1.5pt, color=blue] (0,3)--(2.75, 3) node[anchor=south]{$m$}  --(5.5,3)--(5.5,-5)--(3,-5) ;
\draw[line width=1.5pt, color=violet] (-3,-5)--(3,-5) node[midway,anchor=north]{$n$};
\draw[line width=1pt, style=dashed] (0,3) node[anchor=north west]{$F$}--(0,-1) node[sloped, pos=0.5, allow upside down]{\arrowIn};
\draw[line width=1pt, style=dashed] (0,-1)--(-3,-5)  node[midway,anchor=south east]{$G$} node[sloped, pos=0.5, allow upside down]{\arrowIn};
\draw[line width=1pt, style=dashed] (0,-1) --(3,-5) node[midway,anchor=south west]{$H$} node[sloped, pos=0.5, allow upside down]{\arrowIn};
\draw[fill=black] (0,-1) circle (.15) node[anchor=east]{$\nu\;$};
\draw[fill=red, color=red] (0,3) circle (.15);
\node at (0,3.4)[anchor=south, color=red]{$\alpha$};
\draw[fill=violet, color=violet] (3,-5) circle (.15);
\node at (3,-5.2)[anchor=north, color=violet]{$\beta$};
\draw[fill=red, color=red] (-3,-5) circle (.15);
\node at (-3,-5.2)[anchor=north, color=red]{$\gamma$};
\node at (-3,1)[anchor=west, color=red]{$\mathcal P$};
\node at (3,1)[anchor=east, color=blue]{$\mathcal M$};
\node at (0,-3)[anchor=north, color=violet]{$\mathcal N$};
\end{tikzpicture}\quad
\begin{tikzpicture}[scale=.4, baseline=(current bounding box.center)]
\node at (0,5.4) [anchor=south, color=blue]{$m$};
\draw[line width=1.5pt, color=blue] (0,5)--(-6,5)--(-6,1)  --(-6,-3) --(-4,-3) ;
\draw[line width=1.5pt, color=blue] (0,5)--(5,5)--(5,1)  --(5,-3)--(3.5,-3)--(2,-3) ;
\draw[line width=1.5pt, color=red] (-2,-3)--(-4,-3) node[midway,anchor=north]{$p$};
\draw[line width=1.5pt, color=violet] (-2,-3)--(2,-3) node[midway,anchor=north]{$n$};
\draw[line width=1pt, style=dashed]  (0,0).. controls (0,3) and (-4,3).. (-4,0);
\draw[line width=1pt, style=dashed] (-4,-3)--(-4,0) node[anchor=east]{$F$} node[sloped, pos=0.9, allow upside down]{\arrowIn};
\draw[line width=1pt, style=dashed] (0,0)--(-2,-3)  node[midway,anchor=south east]{$G$} node[sloped, pos=0.5, allow upside down]{\arrowIn};
\draw[line width=1pt, style=dashed] (0,0) --(2,-3) node[midway,anchor=south west]{$H$} node[sloped, pos=0.5, allow upside down]{\arrowIn};
\draw[fill=black] (0,0) circle (.15) node[anchor=east]{$\nu\;$};
\draw[fill=red, color=red] (-4,-3) circle (.15);
\node at (-4,-3.2)[anchor=north, color=red]{$\alpha^d$};
\draw[fill=violet, color=violet] (2,-3) circle (.15);
\node at (2,-3.2)[anchor=north, color=violet]{$\beta$};
\draw[fill=red, color=red] (-2,-3) circle (.15);
\node at (-2,-3.2)[anchor=north, color=red]{$\gamma$};
\node at (-3,1)[anchor=west, color=red]{$\mathcal P$};
\node at (0,3)[anchor=east, color=blue]{$\mathcal M$};
\node at (0,-2.5)[anchor=south, color=violet]{$\mathcal N$};
\end{tikzpicture}\quad
\begin{tikzpicture}[scale=.4, baseline=(current bounding box.center)]
\node at (0,5.2) [anchor=south, color=blue]{$m$};
\node at (4.5,5.2) [anchor=south, color=violet]{$n$};
\draw[line width=1.5pt, color=blue] (2,5)--(-6,5)--(-6,1)  --(-6,-3) --(-4,-3) ;
\draw[line width=1.5pt, color=red] (-2,-3)--(-4,-3) node[midway,anchor=north]{$p$};
\draw[line width=1.5pt, color=violet] (2,5)-- (4,5) --(5,5)--(5,-3)-- (-2,-3)--(2,-3) ;
\draw[line width=1pt, style=dashed]  (0,0).. controls (0,3) and (-4,3).. (-4,0);
\draw[line width=1pt, style=dashed] (-4,-3)--(-4,0) node[anchor=east]{$F$} node[sloped, pos=0.9, allow upside down]{\arrowIn};
\draw[line width=1pt, style=dashed] (0,0)--(-2,-3)  node[midway,anchor=south east]{$G$} node[sloped, pos=0.5, allow upside down]{\arrowIn};
\draw[line width=1pt, style=dashed] (0,0).. controls (0,-2) and (2,-2)..(2,0);
\draw[line width=1pt, style=dashed] (2,0)--(2,5) node[anchor=north west]{$H$} node[sloped, pos=0.5, allow upside down]{\arrowIn};
\draw[fill=black] (0,0) circle (.15) node[anchor=east]{$\nu\;$};
\draw[fill=red, color=red] (-4,-3) circle (.15);
\node at (-4,-3.2)[anchor=north, color=red]{$\alpha^d$};
\draw[fill=violet, color=violet] (2,5) circle (.15);
\node at (2,5)[anchor=south, color=violet]{$\beta^u$};
\draw[fill=red, color=red] (-2,-3) circle (.15);
\node at (-2,-3.2)[anchor=north, color=red]{$\gamma$};
\node at (-3,1)[anchor=west, color=red]{$\mathcal P$};
\node at (0,3)[anchor=east, color=blue]{$\mathcal M$};
\node at (5,0)[anchor=east, color=violet]{$\mathcal N$};
\end{tikzpicture}\quad
\begin{tikzpicture}[scale=.4, baseline=(current bounding box.center)]
\node at (0,5.2) [anchor=south, color=violet]{$n$};
\draw[line width=1.5pt, color=blue]  (-6,-3)--(-4,-3) node[midway, anchor=north]{$m$} ;
\draw[line width=1.5pt, color=violet] (-6,-3)--(-8,-3)--(-8,5)--(3,5)--(3,-3)--(2,-3) ;
\draw[line width=1.5pt, color=red] (-2,-3)--(-4,-3) node[midway,anchor=north]{$p$};
\draw[line width=1.5pt, color=violet] (-2,-3)--(2,-3);
\draw[line width=1pt, style=dashed]  (0,0).. controls (0,3) and (-4,3).. (-4,0);
\draw[line width=1pt, style=dashed] (-4,-3)--(-4,0) node[anchor=east]{$F$} node[sloped, pos=0.9, allow upside down]{\arrowIn};
\draw[line width=1pt, style=dashed] (0,0)--(-2,-3)  node[midway,anchor=south east]{$G$} node[sloped, pos=0.5, allow upside down]{\arrowIn};
\draw[line width=1pt, style=dashed] (0,0) .. controls (0,-2) and (2,-2)..(2,0) node[sloped, pos=0.95, allow upside down]{\arrowIn};
\draw[line width=1pt, style=dashed] (2,0).. controls (2,5) and (-6,5) ..(-6,0);
\draw[line width=1pt, style=dashed] (-6,0)--(-6,-3) node[midway, anchor=east]{$H$};
\draw[fill=black] (0,0) circle (.15) node[anchor=east]{$\nu\;$};
\draw[fill=red, color=red] (-4,-3) circle (.15);
\node at (-4,-3.2)[anchor=north, color=red]{$\alpha^d$};
\draw[fill=violet, color=violet] (-6,-3) circle (.15);
\node at (-6,-3.2)[anchor=north, color=violet]{$\beta$};
\draw[fill=red, color=red] (-2,-3) circle (.15);
\node at (-2,-3.2)[anchor=north, color=red]{$\gamma$};
\node at (-3,1)[anchor=west, color=red]{$\mathcal P$};
\node at (-5,-2)[anchor=south, color=blue]{$\mathcal M$};
\node at (-6.5,4)[anchor=north, color=violet]{$\mathcal N$};
\end{tikzpicture}\quad
\begin{tikzpicture}[scale=.4, baseline=(current bounding box.center)]
\node at (1,-3) [anchor=north, color=red]{$p$};
\draw[line width=1.5pt, color=blue]  (-6,-3)--(-4,-3) node[midway, anchor=north]{$m$} ;
\draw[line width=1.5pt, color=red] (2,5)--(3,5)--(3,-3)--(-4,-3);
\draw[line width=1.5pt, color=violet] (-6,-3)--(-8,-3)  -- (-8,5)--(2,5);
\draw[line width=1pt, style=dashed]  (-2,0).. controls (-2,2) and (-4,2).. (-4,0);
\draw[line width=1pt, style=dashed] (-4,-3)--(-4,0) node[anchor=east]{$F$} node[sloped, pos=0.9, allow upside down]{\arrowIn};
\draw[line width=1pt, style=dashed] (-2,0).. controls (-3,-3) and (2,-3) .. (2,0);
\draw[line width=1pt, style=dashed] (2,0)--(2,5) node[anchor=north east] {$G$} node[sloped, pos=0.9, allow upside down]{\arrowIn};
\draw[line width=1pt, style=dashed] (-2,0) .. controls (-2,-2) and (0,-2)..(0,0) node[sloped, pos=0.95, allow upside down]{\arrowIn};
\draw[line width=1pt, style=dashed] (0,0).. controls (0,4) and (-6,4) ..(-6,0);
\draw[line width=1pt, style=dashed] (-6,0)--(-6,-3) node[midway, anchor=east]{$H$};
\node at (0,5)[anchor=south, color=violet]{$n$};
\draw[fill=black] (-2,0) circle (.15) node[anchor=east]{$\nu\;$};
\draw[fill=red, color=red] (-4,-3) circle (.15);
\node at (-4,-3.2)[anchor=north, color=red]{$\alpha^d$};
\draw[fill=violet, color=violet] (-6,-3) circle (.15);
\node at (-6,-3.2)[anchor=north, color=violet]{$\beta$};
\draw[fill=red, color=red] (2,5) circle (.15);
\node at (2, 5.2)[anchor=south, color=red]{$\gamma^u$};
\node at (-3,-2)[color=red]{$\mathcal P$};
\node at (-5,-2)[anchor=south, color=blue]{$\mathcal M$};
\node at (-6.2,0)[anchor=east, color=violet]{$\mathcal N$};
\end{tikzpicture}\quad
\begin{tikzpicture}[scale=.4, baseline=(current bounding box.center)]
\node at (1,-3) [anchor=north, color=red]{$p$};
\draw[line width=1.5pt, color=blue]  (-6,-3)--(-4,-3) node[midway, anchor=north]{$m$} ;
\draw[line width=1.5pt, color=red] (-6,-3)--(-10,-3)--(-10,5)--(3,5)--(3,-3)--(2,-3) --(-4,-3);
\draw[line width=1.5pt, color=violet] (-6,-3)--(-8,-3) node[midway,anchor=north]{$n$};
\draw[line width=1pt, style=dashed]  (-2,0).. controls (-2,2) and (-4,2).. (-4,0);
\draw[line width=1pt, style=dashed] (-4,-3)--(-4,0) node[anchor=east]{$F$} node[sloped, pos=0.9, allow upside down]{\arrowIn};
\draw[line width=1pt, style=dashed, -stealth] (-2,0).. controls (-3,-3) and (2,-3) .. (2,0);
\draw[line width=1pt, style=dashed, -stealth] (2,0).. controls (2,6) and (-8,6) .. (-8,0);
\draw[line width=1pt, style=dashed] (-8,0)--(-8,-3) node[anchor=south east] {$G$};
\draw[line width=1pt, style=dashed] (-2,0) .. controls (-2,-2) and (0,-2)..(0,0) node[sloped, pos=0.95, allow upside down]{\arrowIn};
\draw[line width=1pt, style=dashed] (0,0).. controls (0,4) and (-6,4) ..(-6,0);
\draw[line width=1pt, style=dashed] (-6,0)--(-6,-3) node[midway, anchor=east]{$H$};
\draw[fill=black] (-2,0) circle (.15) node[anchor=east]{$\nu\;$};
\draw[fill=red, color=red] (-4,-3) circle (.15);
\node at (-4,-3.2)[anchor=north, color=red]{$\alpha^d$};
\draw[fill=violet, color=violet] (-6,-3) circle (.15);
\node at (-6,-3.2)[anchor=north, color=violet]{$\beta$};
\draw[fill=red, color=red] (-8,-3) circle (.15);
\node at (-8,-3.2)[anchor=north, color=red]{$\gamma$};
\node at (-3,-2)[color=red]{$\mathcal P$};
\node at (-5,-2)[anchor=south, color=blue]{$\mathcal M$};
\node at (-6.2,0)[anchor=east, color=violet]{$\mathcal N$};
\end{tikzpicture}
\caption{Some cyclic transformations of a bordered diagram.}
\label{fig:cyclicperm}
\end{figure}
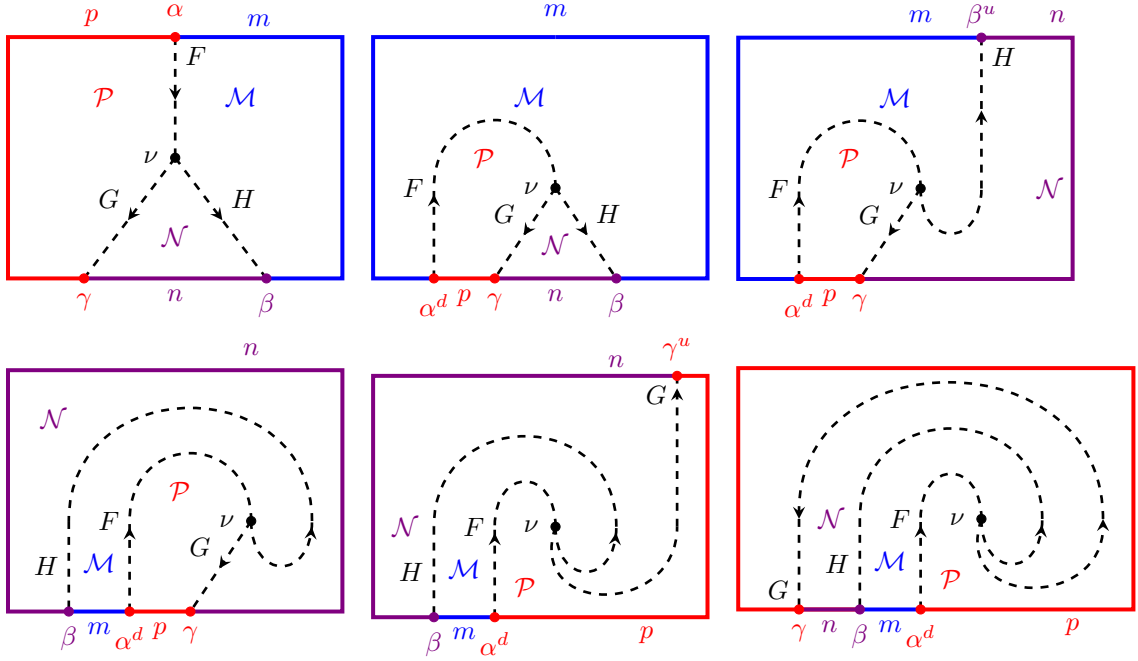

\begin{lemma}\label{lem:cyclicinv} Let $\mac,\mad$ be spherical fusion categories and $D$, $D'$  bordered diagrams for  $\Bimod^\theta(\mac,\mad)$ that are related by cyclic transformations.  Then the cyclic evaluations of  $D$ and $D'$ are equal.
\end{lemma}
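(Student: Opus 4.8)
The plan is to reduce the statement to a single elementary cyclic transformation and then verify that one by a diagram chase using the identities already established.

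First I would observe that, by Definition~\ref{def:cyclictrans}, any cyclic transformation of the underlying $2$-category diagram is a finite composite of the two elementary moves in \eqref{fig:updown}, and that on bordered diagrams these induce precisely the moves \eqref{eq:movemorph1} and \eqref{eq:movemorph2}, illustrated in Figure~\ref{fig:cyclicperm}: the object label on the left-hand side of the square is moved around the boundary, the underlying $2$-morphism $\nu$ is replaced by a cyclically equivalent one, and one boundary morphism $\alpha: p\to F(m)$ is replaced by $\alpha^d=\epsilon^F_m\circ F^l(\alpha)$, respectively a boundary morphism $\gamma: G(n)\to p$ by $\gamma^u=G^l(\gamma)\circ\eta'^G_n$. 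Since the cyclic evaluation is the trace of the evaluation and the two moves are mutually inverse — this uses $(\alpha^u)^d=\alpha$ and $(\gamma^u)^d=\gamma$, which follow from the snake identities \eqref{pic:adjunction}, together with the pivotality \eqref{pic:pivotal} to see that the $2$-morphism at the interior vertex is genuinely unchanged — it suffices to prove that a single move of type \eqref{eq:movemorph1} preserves the cyclic evaluation; the move \eqref{eq:movemorph2} is symmetric (top and bottom, left and right exchanged), and the general case then follows by induction on the number of elementary moves.

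Second I would treat this one case. Represent the cyclic evaluation of the bordered diagram by the closed picture obtained by straightening the left-hand boundary segment to a vertical line and capping it at the top and bottom with the bimodule trace $\theta$, as in \eqref{pic:cyclic trace}. The move \eqref{eq:movemorph1} drags the boundary vertex $\alpha$, and with it the functor line $F$ attached to it, around the corner of the square and past the trace cap. The key input is the functor--trace identity \eqref{eq:traceconds} of Corollary~\ref{cor:functortrace}, in its diagrammatic form \eqref{eq:functortrpivot}: it rewrites a $\theta^{\man}_{F(m)}$-cap with the line $F$ running through it as a $\theta^{\mam}_{m}$-cap together with the morphism $\epsilon^F_m\circ F^l(-)\circ\eta'^F_m$. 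Here the new cap sits on the new left-hand object $m$, the factor $\epsilon^F_m\circ F^l(-)$ is exactly the prescribed replacement $\alpha\mapsto\alpha^d$, and the residual $\eta'^F_m$ together with the (co)evaluations of $F$ created while pulling the line around is removed by the snake identities \eqref{pic:adjunction}; the cyclicity of the trace \eqref{pic:cyclic trace} is used to bring the cap into the position required by \eqref{eq:movemorph1}. Pivotality \eqref{pic:pivotal} again guarantees that only the cyclic representative of $\nu$ changes, so the two closed pictures evaluate to the same complex number.

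The part I expect to be the main obstacle is the bookkeeping in this last chase: one must keep careful track of which adjoint is used — it is the left adjoint $F^l$, equipped with the right-dual units and counits $\eta'^F$, $\epsilon'^F$ induced by the pivot — and of the handedness of \eqref{eq:traceconds}, so that the auxiliary (co)evaluations really cancel and one lands exactly on the boundary data prescribed by \eqref{eq:movemorph1}. There are also two routine case distinctions: whether the endpoint being moved lies between two interior strata or next to another boundary vertex, and whether the line $F$ being slid is a genuine $(\mac,\mad)$-bimodule functor or an action functor $c\rhd-$ or $-\lhd d$ attached to an object of $\mac$ or $\mad$; by the discussion in Section~\ref{subsec:bimod} the latter is a special case of the former, so no additional argument is needed there.
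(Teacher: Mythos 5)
Your treatment of the move \eqref{eq:movemorph1} is essentially the paper's: close the left-hand boundary with the bimodule trace, use cyclicity \eqref{pic:cyclic trace} and the compatibility of the trace with the pivotal 2-category structure, then cancel the auxiliary (co)evaluations with the snake identities \eqref{pic:adjunction}. That you invoke \eqref{eq:traceconds} plus snakes where the paper uses the first identity in \eqref{eq:functortrpivot} (i.e.\ \eqref{eq:pivcond}) is only a different factorisation of the same diagram chase and is fine.

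The genuine problem is your reduction of \eqref{eq:movemorph2} to \eqref{eq:movemorph1} ``by symmetry (top and bottom, left and right exchanged)''. The cyclic evaluation is not left--right symmetric: by definition it is the trace of the evaluation, and the evaluation is an endomorphism of the object labelling the \emph{left} boundary only. The move \eqref{eq:movemorph1} carries a boundary vertex around the left corner and therefore changes the object being traced over (from $p$ to $m$), which is precisely why the pivotal identities are needed there; the move \eqref{eq:movemorph2} carries a vertex around the right corner, leaves the left boundary object untouched, and in fact leaves the evaluation morphism itself unchanged. So there is no mirror symmetry to appeal to, and the two moves are not ``mutually inverse'' either: the inverse of \eqref{eq:movemorph1} is the upward move on the same (left) side, which is what $(\alpha^u)^d=\alpha$ expresses, not \eqref{eq:movemorph2}. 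Fortunately the case you skipped is the easy one: as in the paper, a single application of the snake identity \eqref{pic:adjunction} shows that the evaluation, and hence a fortiori its trace, is preserved under \eqref{eq:movemorph2}. With that case argued separately rather than dismissed by a nonexistent symmetry, your proof is complete and coincides with the paper's.
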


\begin{proof}
For the cyclic transformation in \eqref{eq:movemorph1} this follows from the cyclic invariance of the  trace in \eqref{pic:cyclic trace} and its compatibility conditions  with the pivotal 2-category structure of  $\Bimod^\theta(\mac,\mad)$ from  \eqref{eq:pivcond}, \eqref{eq:traceconds}  and \eqref{eq:functortrpivot}:

\begin{align*}
\begin{tikzpicture}[scale=.4]
\draw[line width=1.5pt, color=red] (0,3)--(0,5)node[midway,anchor=west]{$p$};
\draw[line width=1.5pt, color=red] (0,-3)--(0,-5) node[midway,anchor=west]{$p$};
\draw[line width=1.5pt, color=red] (-.5,5)--(.5,5) ;
\draw[line width=1.5pt, color=red] (-.5,-5)--(.5,-5) ;
\draw[line width=1.5pt, color=blue] (0,3)--(0,1) node[midway, anchor=west]{$m$};
\draw[line width=1.5pt, color=violet] (0,-3)--(0,-1) node[midway, anchor=west]{$n$};
\draw[line width=1pt, style=dashed] (0,3)--(-3,0) node[sloped, pos=0.5, allow upside down]{\arrowIn} node[pos=.5, anchor=south east]{$F$};
\draw[line width=1pt, style=dashed] (-3,0)--(0,-3) node[sloped, pos=0.5, allow upside down]{\arrowIn} node[pos=.5, anchor=north east]{$H$};
\draw[line width=1pt, style=dashed] (-3,0)--(-1,1);
\draw[line width=1pt, style=dashed] (-3,0)--(-1,.5);
\draw[line width=1pt, style=dashed] (-3,0)--(-1,-.5);
\draw[line width=1pt, style=dashed] (-3,0)--(-1,-1);
\node at (0,0) {$\ldots$};
\draw[color=red, fill=red] (0,3) circle (.15) node[anchor=west]{$\alpha$};
\draw[color=red, fill=red] (0,-3) circle (.15) node[anchor=west]{$\beta$};
\draw[color=black, fill=black] (-3,0) circle (.15) node[anchor=east]{$\nu$};
\node at (2,0) {$\stackrel{\eqref{pic:cyclic trace}}=$};
\begin{scope}[shift={(7,0)}]
\draw[line width=1.5pt, color=blue] (0,3)--(0,5) node[midway, anchor=west]{$m$};
\draw[line width=1.5pt, color=blue] (0,-3)--(0,-5) node[midway, anchor=west]{$m$};
\draw[line width=1.5pt, color=red] (-2.5,5)--(.5,5);
\draw[line width=1.5pt, color=red] (-2.5,-5)--(.5,-5);
\draw[line width=1.5pt, color=violet] (0,1)--(0,-1) node[midway, anchor=west]{$n$};
\draw[line width=1.5pt, color=red] (0,-3)--(0,-1) node[midway, anchor=west]{$p$};
\draw[line width=1pt, style=dashed] (-2,5)--(-3,2) node[sloped, pos=0.5, allow upside down]{\arrowIn} node[pos=.5, anchor=east]{$F$};
\draw[line width=1pt, style=dashed] (0,-3)--(-2,-5) node[sloped, pos=0.5, allow upside down]{\arrowIn} node[pos=.5, anchor=east]{$F$};
\draw[line width=1pt, style=dashed] (-3,2)--(0,-1) node[sloped, pos=0.5, allow upside down]{\arrowIn} node[pos=.5, anchor=north east]{$H$};
\draw[line width=1pt, style=dashed] (-3,2)--(-1,3);
\draw[line width=1pt, style=dashed] (-3,2)--(-1,2.5);
\draw[line width=1pt, style=dashed] (-3,2)--(-1,1.5);
\draw[line width=1pt, style=dashed] (-3,2)--(-1,1);
\node at (0,2) {$\ldots$};
\draw[color=red, fill=red] (0,-1) circle (.15) node[anchor=west]{$\beta$};
\draw[color=red, fill=red] (0,-3) circle (.15) node[anchor=west]{$\alpha$};
\draw[color=black, fill=black] (-3,2) circle (.15) node[anchor=east]{$\nu$};
\node at (2,0) {$\stackrel{\eqref{eq:functortrpivot}}=$};
\end{scope}
\begin{scope}[shift={(15,0)}]
\draw[line width=1.5pt, color=blue] (0,3)--(0,5) node[midway, anchor=west]{$m$};
\draw[line width=1.5pt, color=blue] (0,-3)--(0,-5) node[midway, anchor=west]{$m$};
\draw[line width=1.5pt, color=blue] (-.5,5)--(.5,5);
\draw[line width=1.5pt, color=blue] (-.5,-5)--(.5,-5);
\draw[line width=1.5pt, color=violet] (0,1)--(0,-1) node[midway, anchor=west]{$n$};
\draw[line width=1.5pt, color=red] (0,-3)--(0,-1) node[midway, anchor=west]{$p$};
\draw[line width=1pt, style=dashed] (-2,2).. controls (-2,4) and (-4,4)..(-4,2);
\draw[line width=1pt, style=dashed] (0,-3).. controls (-1,-4) and (-4,-4)..(-4,-2);
\draw[line width=1pt, style=dashed] (-4,-2)--(-4,2) node[sloped, pos=0.5, allow upside down]{\arrowIn} node[pos=.5, anchor=north east]{$F$};
\draw[line width=1pt, style=dashed] (-2,2)--(0,-1) node[sloped, pos=0.5, allow upside down]{\arrowIn} node[pos=.5, anchor=north east]{$H$};
\draw[line width=1pt, style=dashed] (-2,2)--(-1,3);
\draw[line width=1pt, style=dashed] (-2,2)--(-1,2.5);
\draw[line width=1pt, style=dashed] (-2,2)--(-1,1.5);
\draw[line width=1pt, style=dashed] (-2,2)--(-1,1);
\node at (0,2) {$\ldots$};
\draw[color=red, fill=red] (0,-1) circle (.15) node[anchor=west]{$\beta$};
\draw[color=red, fill=red] (0,-3) circle (.15) node[anchor=west]{$\alpha$};
\draw[color=black, fill=black] (-2,2) circle (.15) node[anchor=east]{$\nu$};
\end{scope}
\end{tikzpicture}
\end{align*}
For the cyclic transformation in \eqref{eq:movemorph2} it is a direct consequence of identity \eqref{pic:adjunction}:
\begin{align*}
\begin{tikzpicture}[scale=.4]
\draw[color=blue, line width=1.5pt] (0,2)--(0,5) node[midway, anchor=west]{$m$};
\draw[color=violet, line width=1.5pt] (0,2)--(0,-2) node[midway, anchor=west]{$n$};
\draw[color=red, line width=1.5pt] (0,-2)--(0,-5) node[midway, anchor=west]{$p$};
\draw[line width=1pt, style=dashed] (0,2)--(-3,0) node[sloped, pos=0.5, allow upside down]{\arrowIn} node[pos=.5, anchor=south]{$K\;\;$};
\draw[line width=1pt, style=dashed] (-3,0)-- (0,-2) node[sloped, pos=0.5, allow upside down]{\arrowIn} node[pos=.5, anchor=north]{$G$};
\draw[line width=1pt, style=dashed] (-3,0)--(-2.2,1.5);
\draw[line width=1pt, style=dashed] (-3,0)--(-2.2,2.5);
\draw[line width=1pt, style=dashed] (-3,0)--(-2.2,-1.5);
\draw[line width=1pt, style=dashed] (-3,0)--(-2.2,-2.5);
\draw[color=blue, fill=blue] (0,2) circle (.15) node[anchor=west]{$\delta$};
\draw[color=red, fill=red] (0,-2) circle (.15) node[anchor=west]{$\gamma$};
\draw[color=black, fill=black] (-3,0) circle (.15) node[anchor=east]{$\nu$};
\node at (3,0){$\stackrel{\eqref{pic:adjunction}}=$};
\begin{scope}[shift={(9,0)}]
\draw[color=blue, line width=1.5pt] (0,2)--(0,5) node[midway, anchor=west]{$m$};
\draw[color=violet, line width=1.5pt] (0,2)--(0,-2) node[midway, anchor=west]{$n$};
\draw[color=red, line width=1.5pt] (0,-2)--(0,-5) node[midway, anchor=west]{$p$};
\draw[line width=1pt, style=dashed] (0,2)--(-3,0) node[sloped, pos=0.5, allow upside down]{\arrowIn} node[pos=.5, anchor=south]{$K\;\;$};
\draw[line width=1pt, style=dashed] (0,-2) .. controls (-1, 4) and (-1,-4)..(-3,0) node[sloped, pos=0.5, allow upside down]{\arrowIn} node[pos=.5, anchor=east]{$G$};
\draw[line width=1pt, style=dashed] (-3,0)--(-2.2,1.5);
\draw[line width=1pt, style=dashed] (-3,0)--(-2.2,2.5);
\draw[line width=1pt, style=dashed] (-3,0)--(-2.2,-1.5);
\draw[line width=1pt, style=dashed] (-3,0)--(-2.2,-2.5);
\draw[color=blue, fill=blue] (0,2) circle (.15) node[anchor=west]{$\delta$};
\draw[color=red, fill=red] (0,-2) circle (.15) node[anchor=west]{$\gamma$};
\draw[color=black, fill=black] (-3,0) circle (.15) node[anchor=east]{$\nu$};
\end{scope}
\end{tikzpicture}
\end{align*}
\end{proof}

Lemma \ref{lem:cyclicinv} shows that the cyclic evaluation of a bordered diagram depends only on its cyclic equivalence class. We  therefore represent cyclic equivalence classes of bordered diagrams by discs with diagrams for $\Bimod^\theta(\mac,\mad)$  in their interior and endpoints of  lines  at the boundary. 
Interior lines are  oriented, and the object labels at the boundary are assigned to the segments between 
their endpoints. Vertices in the interior are labeled with cyclic equivalence classes of bimodule natural transformations. To the line endpoints at the boundary we
assign elements of the corresponding morphism spaces at the top of a bordered diagram, if the line is incoming at the boundary,  and the one for  the bottom of a bordered diagram, if it is outgoing.
In the following, we often represent these diagrams by polygons and hence call them polygon diagrams. 

\begin{definition} \label{def:polygon} $\quad$
\begin{enumerate}
\item A {\bf polygon diagram} for $\Bimod^\theta(\mac,\mad)$ is a cyclic equivalence class of bordered diagrams, represented by a polygon with a diagram for $\Bimod^\theta(\mac,\mad)$ in the interior such that all diagram lines are oriented and end in internal vertices or at the sides of the polygon.
\item The {\bf evaluation} $\mathrm{ev}(D)$ of a polygon diagram $D$ is the cyclic evaluation of the associated equivalence class of bordered diagrams.
\end{enumerate}
\end{definition}

 It is  clear from Lemma \ref{lem:cyclicinv} that the labeling and the evaluation of a polygon diagram is invariant under rotations of the polygon. Also, one may add additional sides without line endpoints and move endpoints of  lines  from a side  to an adjacent side, if the labels are adjusted accordingly.  
 For instance, some polygon diagrams for the diagram from Figure \ref{fig:cyclicperm} are  given by
\begin{align*}
&\begin{tikzpicture}[scale=.3, baseline=(current bounding box.center)]
\draw[line width=1pt, style=dashed] (0,3)--(0,0) node[midway, anchor=east]{$F$} node[sloped, pos=0.5, allow upside down]{\arrowIn};
\draw[line width=1pt, style=dashed] (0,0)--(-2,-2) node[midway, anchor=south east]{$G$} node[sloped, pos=0.5, allow upside down]{\arrowIn};
\draw[line width=1pt, style=dashed] (0,0)--(2,-2) node[midway, anchor=south west]{$H$} node[sloped, pos=0.5, allow upside down]{\arrowIn};
\draw[line width=1.5pt, color=violet] (0,-4.5)--(-2,-2); 
\draw[line width=1.5pt, color=violet] (0,-4.5) node[anchor=north]{$n$}--(2,-2); 
\draw[line width=1.5pt, color=red] (-2,-2)--(-6,3) node[anchor=south east] {$p$}; 
\draw[line width=1.5pt, color=red] (0,3)--(-6,3); 
\draw[line width=1.5pt, color=blue] (2,-2)--(6,3)node[anchor=south west]{$m$}; 
\draw[line width=1.5pt, color=blue] (0,3)--(6,3); 
\draw[fill=black] (0,0) circle (.2) node[anchor=east]{$\nu$};
\node at (-4,2)[anchor=west, color=red]{$\mathcal P$};
\node at (4,2)[anchor=east, color=blue]{$\mathcal M$};
\node at (0,-2)[anchor=north, color=violet]{$\mathcal N$};
\draw[fill=red, color=red] (0,3) circle (.2);
\node at (0,3.2) [anchor=south, color=red]{$\alpha$};
\draw[fill=violet, color=violet] (2,-2) circle (.2) node[anchor=north west]{$\beta$};
\draw[fill=red, color=red] (-2,-2) circle (.2) node[anchor=north east]{$\gamma$};
\end{tikzpicture}
=
\begin{tikzpicture}[scale=.3, baseline=(current bounding box.center)]
\draw[line width=1pt, style=dashed] (0,0)--(0,3) node[midway, anchor=east]{$G$} node[sloped, pos=0.5, allow upside down]{\arrowIn};
\draw[line width=1pt, style=dashed] (0,0)--(-2,-2) node[midway, anchor=south east]{$H$} node[sloped, pos=0.5, allow upside down]{\arrowIn};
\draw[line width=1pt, style=dashed] (2,-2)--(0,0) node[midway, anchor=south west]{$F$} node[sloped, pos=0.5, allow upside down]{\arrowIn};
\draw[line width=1.5pt, color=blue] (0,-4.5)--(-2,-2); 
\draw[line width=1.5pt, color=blue] (0,-4.5) node[anchor=north]{$m$}--(2,-2); 
\draw[line width=1.5pt, color=violet] (-2,-2)--(-6,3) node[anchor=south east] {$n$}; 
\draw[line width=1.5pt, color=violet] (0,3)--(-6,3); 
\draw[line width=1.5pt, color=red] (2,-2)--(6,3)node[anchor=south west]{$p$}; 
\draw[line width=1.5pt, color=red] (0,3)--(6,3); 
\draw[fill=black] (0,0) circle (.2) node[anchor=east]{$\nu$};
\node at (-4,2)[anchor=west, color=violet]{$\mathcal N$};
\node at (4,2)[anchor=east, color=red]{$\mathcal P$};
\node at (0,-2)[anchor=north, color=blue]{$\mathcal M$};
\draw[fill=red, color=red] (0,3) circle (.2);
\node at (0,3.2) [anchor=south, color=red]{$\gamma$};
\draw[fill=violet, color=red] (2,-2) circle (.2) node[anchor=north west]{$\alpha$};
\draw[fill=red, color=violet] (-2,-2) circle (.2) node[anchor=north east]{$\beta$};
\end{tikzpicture}
=
\begin{tikzpicture}[scale=.3, baseline=(current bounding box.center)]
\draw[line width=1pt, style=dashed] (0,0)--(0,3) node[midway, anchor=east]{$H$} node[sloped, pos=0.5, allow upside down]{\arrowIn};
\draw[line width=1pt, style=dashed] (-2,-2)--(0,0) node[midway, anchor=south east]{$F$} node[sloped, pos=0.5, allow upside down]{\arrowIn};
\draw[line width=1pt, style=dashed] (0,0)--(2,-2) node[midway, anchor=south west]{$G$} node[sloped, pos=0.5, allow upside down]{\arrowIn};
\draw[line width=1.5pt, color=red] (0,-4.5)--(-2,-2); 
\draw[line width=1.5pt, color=red] (0,-4.5) node[anchor=north]{$p$}--(2,-2); 
\draw[line width=1.5pt, color=blue] (-2,-2)--(-6,3) node[anchor=south east] {$m$}; 
\draw[line width=1.5pt, color=blue] (0,3)--(-6,3); 
\draw[line width=1.5pt, color=violet] (2,-2)--(6,3)node[anchor=south west]{$n$}; 
\draw[line width=1.5pt, color=violet] (0,3)--(6,3); 
\draw[fill=black] (0,0) circle (.2) node[anchor=east]{$\nu$};
\node at (-4,2)[anchor=west, color=blue]{$\mathcal M$};
\node at (4,2)[anchor=east, color=violet]{$\mathcal N$};
\node at (0,-2)[anchor=north, color=red]{$\mathcal P$};
\draw[fill=red, color=violet] (0,3) circle (.2);
\node at (0,3.2) [anchor=south, color=violet]{$\beta$};
\draw[fill=violet, color=red] (2,-2) circle (.2) node[anchor=north west]{$\gamma$};
\draw[fill=red, color=red] (-2,-2) circle (.2) node[anchor=north east]{$\alpha$};
\end{tikzpicture}
\end{align*}
and also by
\begin{align*}
&\begin{tikzpicture}[scale=.3, baseline=(current bounding box.center)]
\draw[line width=1pt, style=dashed] (0,-.75)--(0,3) node[midway, anchor=east]{$H$} node[sloped, pos=0.5, allow upside down]{\arrowIn};
\draw[line width=1pt, style=dashed] (0,-.75)--(6,-.75) node[midway, anchor=north]{$G$} node[sloped, pos=0.5, allow upside down]{\arrowIn};
\draw[line width=1pt, style=dashed] (-6,-.75)--(0,-.75) node[midway, anchor=north]{$F$} node[sloped, pos=0.5, allow upside down]{\arrowIn};
\draw[fill=black] (0,-.75) circle (.2) node[anchor=north]{$\nu$};
\draw[line width=1.5pt, color=blue] (-6,-.75)--(-6,3) node[anchor=south east]{$m$}--(0,3); 
\draw[line width=1.5pt, color=violet] (0,3)--(6,3)node[anchor=south west]{$n$}--(6,-.75); 
\draw[line width=1.5pt, color=red] (6,-.75)--(6,-4.5)node[anchor=north west]{$p$}--(-6,-4.5)node[anchor=north east]{$p$}--(-6,-.75); 
\node at (-3.5,2) [anchor=north, color=blue]{$\mam$};
\node at (3.5,2) [anchor=north, color=violet]{$\man$};
\node at (0,-3.5) [anchor=south, color=red]{$\map$};
\draw[fill=red, color=red] (-6,-.75) circle (.2) node[anchor=east]{$\alpha\;$};
\draw[fill=red, color=red] (6,-.75) circle (.2) node[anchor=west]{$\;\gamma$};
\draw[fill=red, color=violet] (0,3) circle (.2) node[anchor=south]{$\beta$};
\end{tikzpicture}
=
\begin{tikzpicture}[scale=.3, baseline=(current bounding box.center)]
\draw[line width=1pt, style=dashed] (0,-.75)--(0,3) node[midway, anchor=east]{$H$} node[sloped, pos=0.5, allow upside down]{\arrowIn};
\draw[line width=1pt, style=dashed] (0,-.75)--(0,-4.5) node[midway, anchor=east]{$G$} node[sloped, pos=0.5, allow upside down]{\arrowIn};
\draw[line width=1pt, style=dashed] (-6,-.75)--(0,-.75) node[midway, anchor=north]{$F$} node[sloped, pos=0.5, allow upside down]{\arrowIn};
\draw[fill=black] (0,-.75) circle (.2) node[anchor=west]{$\nu$};
\draw[line width=1.5pt, color=blue] (-6,-.75)--(-6,3) node[anchor=south east]{$m$}--(0,3); 
\draw[line width=1.5pt, color=violet] (0,3)--(6,3)node[anchor=south west]{$n$}--(6,-4.5) node[anchor=north west]{$n$}--(0,-4.5); 
\draw[line width=1.5pt, color=red] (0,-4.5)--(-6,-4.5)node[anchor=north east]{$p$}--(-6,-.75); 
\node at (-3.5,2) [anchor=north, color=blue]{$\mam$};
\node at (3.5,-.75) [anchor=west, color=violet]{$\man$};
\node at (-3.5,-2.5) [anchor=north, color=red]{$\map$};
\draw[fill=red, color=red] (-6,-.75) circle (.2) node[anchor=east]{$\alpha\;$};
\draw[fill=red, color=red] (0,-4.5) circle (.2) node[anchor=north]{$\gamma$};
\draw[fill=red, color=violet] (0,3) circle (.2) node[anchor=south]{$\beta$};
\end{tikzpicture}
=
\begin{tikzpicture}[scale=.3, baseline=(current bounding box.center)]
\draw[line width=1pt, style=dashed] (0,-.75)--(0,3) node[midway, anchor=east]{$H$} node[sloped, pos=0.5, allow upside down]{\arrowIn};
\draw[line width=1pt, style=dashed] (0,-.75)--(6,-.75) node[midway, anchor=north]{$G$} node[sloped, pos=0.5, allow upside down]{\arrowIn};
\draw[line width=1pt, style=dashed] (0,-4.5)--(0,-.75) node[midway, anchor=east]{$F$} node[sloped, pos=0.5, allow upside down]{\arrowIn};
\draw[fill=black] (0,-.75) circle (.2) node[anchor=east]{$\nu$};
\draw[line width=1.5pt, color=blue] (0,-4.5)--(-6,-4.5)node[anchor=north east]{$m$}--(-6,3) node[anchor=south east]{$m$}--(0,3); 
\draw[line width=1.5pt, color=violet] (0,3)--(6,3)node[anchor=south west]{$n$}--(6,-.75); 
\draw[line width=1.5pt, color=red] (6,-.75)--(6,-4.5)node[anchor=north west]{$p$}--(0,-4.5); 
\node at (-3.5,-.75) [anchor=east, color=blue]{$\mam$};
\node at (3.5,2) [anchor=north, color=violet]{$\man$};
\node at (3.5,-3.8) [anchor=south, color=red]{$\map$};
\draw[fill=red, color=red] (0,-4.5) circle (.2) node[anchor=north]{$\alpha$};
\draw[fill=red, color=red] (6,-.75) circle (.2) node[anchor=west]{$\;\gamma$};
\draw[fill=red, color=violet] (0,3) circle (.2) node[anchor=south]{$\beta$};
\end{tikzpicture}
\end{align*}

Note  that any diagram for a spherical fusion category $\mac$ that describes the trace of an endomorphism $\phi: c\to c$ is a polygon diagram for $\Bimod^{\theta}(\mac,\mac)$ with  cyclic evaluation $\tr(\phi)$. In this case, all regions of the diagram are labeled with $\mac$ and all lines and vertices  by data from $\mac$.

We also consider  mirror images of  polygon diagrams. If the orientation of the lines is kept fixed, there is no canonical data labeling such a mirror image. This would reverse the direction of a bimodule natural transformation labeling a vertex, which is feasible only if it is an isomorphism. However, taking a mirror image of a polygon diagram $D$ for $\Bimod^{\theta}(\mac,\mad)$ and reversing the orientation of all lines yields a polygon diagram $D^{\#}$ for $\Bimod^{\theta}(\mad,\mac)$.  This is obtained by replacing bimodule categories, functors and natural transformations with their opposites from  Examples \ref{ex:modvectgomega} and  \ref{ex:modulefuncs}, 7.  Objects and morphisms at the boundary of $D$ are kept fixed, but  a morphism $\alpha: a\to b$ in $\mam$ is  interpreted as a morphism $\alpha: b\to a$ in $\mam^\#$.

\begin{definition}\label{def:oppoly} Let $D$ be a polygon diagram  labeled by $\Bimod^{\theta}(\mac,\mad)$. 
 The {\bf opposite polygon diagram} $D^\#$ is the  polygon diagram by $\Bimod^{\theta}(\mad,\mac)$ obtained as follows:
 \begin{compactitem}
\item  taking the mirror image of $D$, 
\item reversing the orientation of each line in $D$, 
\item replacing each  $(\mac,\mad)$-bimodule category $\mam$ in $D$  by the 
$(\mad,\mac)$-bimodule category $\mam^\#$, 
\item replacing each bimodule functor $F:\mam\to\man$ in $D$ by  $F^\#:\mam^\#\to\man^\#$,
\item replacing  each bimodule natural transformation $\nu: F\Rightarrow G$ in $D$ by  $\nu^\#: G^\#\Rightarrow F^\#$,
\item  keeping the boundary data fixed.
\end{compactitem}
\end{definition}

Geometrically, $D^\#$ is the polygon diagram obtained by looking at $D$ from the back, behind the plane of the drawing. The orientation reversal for lines  is  necessary, because 
a functor labeling a line goes from the category on the left of the line to the category on the right, viewed in the direction of its orientation. Looking at the diagram from the back exchanges left and right and hence reverses the orientation of the line.

\begin{corollary}\label{rem:polyback}
For any polygon diagram $D$ one has $\mathrm{ev}(D)=\mathrm{ev}(D^\#)$.
\end{corollary}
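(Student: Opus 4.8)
The plan is to reduce the statement to two facts: the invariance of the evaluation of a closed diagram under mirror reflection in a pivotal $2$-category, and the compatibility of bimodule traces with the passage to opposite bimodule categories. By Lemma~\ref{lem:cyclicinv} I may fix generic bordered-diagram representatives of the cyclic classes $D$ and $D^\#$; mirror reflection preserves genericity and transversality of crossings, so the operation of Definition~\ref{def:oppoly} carries a generic bordered diagram for $D$ to a generic one for $D^\#$.

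The core of the argument is a local dictionary between the building blocks of a mixed bordered diagram and those of its reflection: a vertex labelled by a bimodule natural transformation, a crossing labelled by one of $b_{c,-,d}^{\pm1}$, $(s^F)^{\pm1}$ or $(t^F)^{\pm1}$ (the six diagrams \eqref{pic:bimodule}--\eqref{pic:modulefuncinv}), a maximum or minimum labelled by a (co)evaluation from \eqref{pic:unitcounit} or in $\mac$ or $\mad$, a boundary segment labelled by an object, and a boundary endpoint labelled by a morphism. Under reflection the regions to the left and right of every line are exchanged, which is exactly why Definition~\ref{def:oppoly} reverses all orientations; the content to check is that after relabelling by opposite bimodule categories, functors and natural transformations each block goes to the prescribed block of $D^\#$. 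This is where the identifications $s^{F^\#}_{d,m}=t^F_{m,d^*}$, $t^{F^\#}_{m,c}=s^F_{c^*,m}$ (Example~\ref{ex:modulefuncs}, 7.), $c^{op}_{x,y,m}=d_{m,y^*,x^*}$, $d^{op}_{x,y,m}=c_{y^*,x^*,m}$, $b^{op}_{x,m,y}=b_{y^*,m,x^*}$ (Example~\ref{ex:dualcat}) and $\nu^\#_m=\nu_m$ enter: they make a reflected $\mac$-over-$F$ crossing in $D$ coincide, after orientation reversal and relabelling, with an $F^\#$-over-$\mac$ crossing in $D^\#$ for the $(\mad,\mac)$-bimodule category $\mam^\#$, and likewise for the $\mad$-over-$F$ and $\mac$-over-$\mad$ crossings and for the cups and caps via \eqref{pic:adjunction} and \eqref{eq:opcat}.

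Given this dictionary, I would show that reflection leaves the cyclic evaluation unchanged. On the pure $\Bimod^\theta(\mac,\mad)$-data this is the $2$-categorical analogue of sphericality: a closed diagram in a pivotal $2$-category and its mirror image — with $1$-morphisms replaced by adjoints, $2$-morphisms transformed by the pivot \eqref{pic:pivotal}, and units and counits exchanged via \eqref{pic:adjunction} — represent the same scalar, which uses only Theorem~\ref{th:pivot}. For the overlapping $\mac$- and $\mad$-data one uses the sphericality of $\mac$ and $\mad$, see \eqref{eq:tracesph}, to reflect the corresponding subdiagrams, and the Reidemeister-type identities \eqref{pic:rm2}, \eqref{pic:hexagon} together with the naturality identities \eqref{pic:natb}--\eqref{pic:bimodulenat} to pull crossings and vertices through the reflection. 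Finally the horizontal lines implementing the bimodule trace on the left side of the bordered diagram go over, under reflection, into the lines implementing the bimodule trace $\theta^\#$ on $\mam^\#$ from Example~\ref{ex:opptrace}, and the compatibilities \eqref{pic:cmoduletrace} for $\theta$ with the $\mac$- and $\mad$-actions become the corresponding compatibilities for $\theta^\#$ with the $\mad$- and $\mac$-actions. Combining these, the reflected relabelled bordered diagram has the same cyclic evaluation as the original, which is the assertion.

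The step I expect to be the main obstacle is the local dictionary: because the $2$-category diagram associated to a mixed diagram is built with an over/under convention depending on which of $D$, $D_\mac$, $D_\mad$ a strand comes from, mirror reflection alone is not a symmetry, and one must verify crossing type by crossing type that it is compensated exactly by the passage to the opposite coherence data $b^{op}$, $s^{F^\#}$, $t^{F^\#}$ and by the reversal of the direction of each bimodule natural transformation. Once the dictionary is established, invariance of the closed evaluation follows formally from sphericality of $\mac$ and $\mad$ and the pivotal $2$-category structure of $\Bimod^\theta(\mac,\mad)$.
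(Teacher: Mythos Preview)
Your proposal is correct and follows essentially the same approach as the paper: both establish that the mirror-reflected diagram with reversed orientations and $\#$-relabelled data computes the same trace, relying on the pivotal structure via \eqref{pic:adjunction} and \eqref{pic:pivotal} and the induced bimodule trace on the opposite category from Example~\ref{ex:opptrace}. The paper's argument is somewhat more global---it observes directly that mirroring and orientation reversal send composites $F_1\cdots F_n$ to $(F_1^\#\cdots F_n^\#)^l$ and natural transformations to cyclic transforms of their $\#$-versions---whereas you work through a local dictionary for each crossing type; but since the action functors $c\rhd-$ and $-\lhd d$ are themselves bimodule functors, your crossing-by-crossing verification is subsumed in the paper's treatment, and the two arguments coincide.
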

\begin{proof}
Taking the mirror image, reversing each line and replacing  line labels $F:\mam\to\man$ by $F^\#:\mam^\#\to \man^\#$ replaces a functor $F_1\cdots F_n: \mam\to\man$   by the  functor $(F_n^\#)^l\cdots (F_1^\#)^l=(F_1^\#\cdots F_n^\#)^l: \man^\#\to\mam^\#$. A bimodule natural transformation $\nu: F_1\cdots F_n\Rightarrow G_1\cdots G_m$ in $D$ is replaced by  a natural transformation from $(F_1^\#\cdots F_n^\#)^l$ to $(G_1^\#\cdots G_n^\#)^l$ in $D'$, which is a cyclic transform of $\nu^\#: G_1^\#\cdots G_m^\#\Rightarrow F_1^\#\cdots F_n^\#$. Each morphism $\alpha\in \Hom_\man(n,F(m))$ in $D$ is replaced by a  morphism  $\alpha\in \Hom_{\mam^\#}(m, (F^\#)^l(n))=\Hom_\mam((F^\#)^l(n), m)$ which is related to $\alpha$ via the (co)unit of the adjunction $(F^\#)^l\dashv F^\#$. Identities \eqref{pic:adjunction} and \eqref{pic:pivotal} and the induced bimodule trace on the opposite category (cf.~Example \ref{ex:opptrace}) then ensure that the evaluations of $D$ and $D^\#$ are equal.
\end{proof}

\section{Gluing polygon diagrams}
\label{sec:projincl}

In this section we describe how polygon diagrams for the pivotal 2-category $\Bimod^\theta(\mac,\mad)$  can be glued by summing over simple objects and bases of the morphism spaces at their boundary. Taking the product of the evaluations of  two diagrams and performing these summations  yields the evaluation of the glued diagram.

\subsection{Projections and inclusions for simple objects}
Let $\mam$ a finite semisimple  $\C$-linear category, equipped with a trace $\theta$ in the sense of Definition \ref{def:moduletrace} and let $I$ be a set of representatives of the isomorphism classes of its simple objects. 
Then  every object $x$ in $\mam$ can be expressed as a direct sum $x\cong \oplus_{m\in I} m^{\oplus n_{xm}}$ with multiplicities $n_{xm}=\dim_\C \Hom_\mam(m,x)\in\mathbb N_0$. 
One can choose bases
 $\{p^\alpha_{xm}\}$ of $\Hom_\mathcal M(x,m)$ and  $\{j^\alpha_{xm}\}$ of $\Hom_\mam(m,x)$ with $m\in I$ and $\alpha\in\{1,..., n_{xm}\}$
such that 
\begin{align}\label{eq:projincl}
p^\beta_{xm'} \circ j^\alpha_{xm}=\frac{\delta_{\alpha\beta} \delta_{mm'}}{\dim(m)}1_m \qquad\qquad 1_x=\sum_{m\in I}\sum_{\alpha=1}^{n_{xm}} \dim(m) \;\; j^\alpha_{xm}\circ p^\alpha_{xm}
\end{align}
for all $m,m'\in I$, where $\dim(m)=\theta_m(1_m)$. 
Their normalisation is fixed by \eqref{eq:projincl}  such that  
\begin{align}
\theta_m(p^\beta_{xm}\circ j^\alpha_{xm})=\delta_{\alpha\beta},
\end{align}
and this implies with the cyclicity of the trace 
\begin{align}\label{eq:dimformula}
\sum_{m\in I} \dim(m) \dim_{\C}\Hom_\mam(m,x)=\dim (x).
\end{align}
The identities \eqref{eq:projincl}  imply for all morphisms $\phi: m\to x$ and $\psi:x\to m$
\begin{align}\label{eq:traceidnd}
\phi=\sum_{\alpha=1}^{n_{xm}} \theta_m(p^\alpha_{xm}\circ \phi)\; j^\alpha_{xm}\qquad  \psi=\sum_{\alpha=1}^{n_{ym}} \theta_m(\psi\circ j^\alpha_{xm})\; p^\alpha_{xm}
\end{align}
and for all morphisms $\chi: x\to y$
 \begin{align}\label{eq:traceidnd2}
\chi=\sum_{m\in I}\sum_{\alpha=1}^{n_{xm}}\sum_{\beta=1}^{n_{ym}} \dim(m)\,\theta_m(p^\beta_{ym}\circ \chi\circ j^\alpha_{xm})\;  j^\beta_{ym} \circ p^\alpha_{xm}.
\end{align}
These identities are represented  by the diagrams
\begin{align}\label{pic:traceidnd}
&\begin{tikzpicture}[scale=.3]
\node at (-3,0) [anchor=west] {$\sum_\alpha$}; 
\draw[line width=1.5pt,color=blue] (0,4)--(0,-4) node[midway, anchor=west]{$x$};
\draw[line width=1.5pt,color=blue] (-.5,4)--(.5,4) node[anchor=west]{$m$};
\draw[line width=1.5pt,color=blue] (-.5,-4)--(.5,-4)node[anchor=west]{$m$}; 
\draw[color=blue, fill=blue] (0,2) circle (.2) node[anchor=west]{$\phi$};
\draw[color=blue, fill=blue] (0,-2) circle (.2) node[anchor=west]{$\alpha$};
\draw[color=blue, line width=1.5pt] (3,4) node[anchor=west]{$m$}--(3,-4) node[anchor=west]{$x$};
\draw[color=blue, fill=blue] (3,0) circle (.2) node[anchor=west]{$\alpha$};
\node at (6,0){$\stackrel{\eqref{eq:traceidnd}}=$};
\draw[color=blue, line width=1.5pt] (8,4) node[anchor=west]{$m$}--(8,-4) node[anchor=west]{$x$};
\draw[color=blue, fill=blue] (8,0) circle (.2) node[anchor=west]{$\phi$};
\end{tikzpicture}
&
&\begin{tikzpicture}[scale=.3]
\node at (-3,0) [anchor=west] {$\sum_\alpha$}; 
\draw[line width=1.5pt,color=blue] (0,4)--(0,-4) node[midway, anchor=west]{$x$};
\draw[line width=1.5pt,color=blue] (-.5,4)--(.5,4) node[anchor=west]{$m$};
\draw[line width=1.5pt,color=blue] (-.5,-4)--(.5,-4)node[anchor=west]{$m$}; 
\draw[color=blue, fill=blue] (0,2) circle (.2) node[anchor=west]{$\alpha$};
\draw[color=blue, fill=blue] (0,-2) circle (.2) node[anchor=west]{$\psi$};
\draw[color=blue, line width=1.5pt] (3,4) node[anchor=west]{$x$}--(3,-4) node[anchor=west]{$m$};
\draw[color=blue, fill=blue] (3,0) circle (.2) node[anchor=west]{$\alpha$};
\node at (6,0){$\stackrel{\eqref{eq:traceidnd}}=$};
\draw[color=blue, line width=1.5pt] (8,4) node[anchor=west]{$x$}--(8,-4) node[anchor=west]{$m$};
\draw[color=blue, fill=blue] (8,0) circle (.2) node[anchor=west]{$\psi$};
\end{tikzpicture}
&
&\begin{tikzpicture}[scale=.3]
\node at (-11,0) [anchor=west] {$\sum_m\sum_{\alpha,\beta}\dim(m)$}; 
\draw[line width=1.5pt,color=blue] (0,4)--(0,-4);
\node at (0, 1.5)[anchor=west, color=blue]{$x$};
\node at (0, -1.5)[anchor=west, color=blue]{$y$};
\draw[line width=1.5pt,color=blue] (-.5,4)--(.5,4) node[anchor=west]{$m$};
\draw[line width=1.5pt,color=blue] (-.5,-4)--(.5,-4)node[anchor=west]{$m$}; 
\draw[color=blue, fill=blue] (0,3) circle (.2) node[anchor=west]{$\alpha$};
\draw[color=blue, fill=blue] (0,0) circle (.2) node[anchor=west]{$\chi$};
\draw[color=blue, fill=blue] (0,-3) circle (.2) node[anchor=west]{$\beta$};
\draw[color=blue, line width=1.5pt] (3,4) node[anchor=west]{$x$}--(3,-4) node[anchor=west]{$y$} node[midway, anchor=west]{$m$};
\draw[color=blue, fill=blue] (3,2) circle (.2) node[anchor=west]{$\alpha$};
\draw[color=blue, fill=blue] (3,-2) circle (.2) node[anchor=west]{$\beta$};
\node at (6,0){$\stackrel{\eqref{eq:traceidnd2}}=$};
\draw[color=blue, line width=1.5pt] (8,4) node[anchor=west]{$x$}--(8,-4) node[anchor=west]{$y$};
\draw[color=blue, fill=blue] (8,0) circle (.2) node[anchor=west]{$\chi$};
\end{tikzpicture}
\end{align}

If $\mam$ and $\man$ are bimodule categories with bimodule traces over spherical  fusion categories $\mac$ and $\mad$ and $F:\mam\to\man$ is a $\C$-linear functor, then one can characterise $F$ by decomposing the images of simple objects $m\in I_\mam$ into simple objects $n\in I_\man$. We write $p^\alpha_{Fmn}=p^\alpha_{F(m)n}: F(m)\to n$
and $j^\alpha_{Fmn}=j^\alpha_{F(m)n}: n\to F(m)$ for the associated projection and inclusion morphisms satisfying \eqref{eq:projincl} and denote them by
\begin{align}\label{pic:projincl}
&\begin{tikzpicture}[scale=.3]
\draw[line width=1.5pt, color=blue] (0,2) node[anchor=south]{$m$}--(0,0);
\draw[line width=1.5pt, color=violet] (0,0)-- (0,-2) node[anchor=north]{$n$};
\draw[line width=1pt, style=dashed] (-2,2) node[anchor=south]{$F$}--(0,0);
\draw[color=violet, fill=violet] (0,0) circle(.2) node[anchor=west]{$\;\alpha$};
\end{tikzpicture}
&\begin{tikzpicture}[scale=.3]
\draw[line width=1.5pt, color=blue] (0,-2) node[anchor=north]{$m$}--(0,0);
\draw[line width=1.5pt, color=violet] (0,0)-- (0,2) node[anchor=south]{$n$};
\draw[line width=1pt, style=dashed] (-2,-2) node[anchor=north]{$F$}--(0,0);
\draw[color=violet, fill=violet] (0,0) circle(.2) node[anchor=west]{$\;\alpha$};
\end{tikzpicture}
\end{align}
such that the  identities \eqref{eq:projincl} are given by
\begin{align}\label{pic:semisimpleids}
&\begin{tikzpicture}[scale=.3]
\begin{scope}[shift={(-3,0)}]
\draw[line width=1.5pt, color=violet] (0,3) node[anchor=south]{$n$}--(0,2);
\draw[line width=1pt, style=dashed] (0,2)..controls (-2,0).. (0,-2) node[midway,anchor=east]{$F$};
\draw[line width=1.5pt, color=blue] (0,2)--(0,-2) node[midway,anchor=west]{$m$};
\draw[line width=1.5pt, color=violet] (0,-3) node[anchor=north]{$n'$}--(0,-2);
\draw[fill=violet, color=violet] (0,2) circle (.2) node[anchor=west]{$\;\alpha$};
\draw[fill=violet, color=violet] (0,-2) circle (.2) node[anchor=west]{$\;\beta$};
\end{scope}
\node at (0,0) [anchor=west] {$=\frac{\delta_{\alpha\beta}\delta_{nn'}}{\dim(n)}$};
\draw[line width=1.5pt, color=violet] (6,3) node[anchor=south]{$n$}--(6,-3);
\end{tikzpicture}
&
&\begin{tikzpicture}[scale=.3]
\begin{scope}[shift={(-3,0)}]
\node at (-9,0)[anchor=west] {$\sum_{n,\alpha}\dim(n)$};
\draw[line width=1pt, style=dashed] (-2,3) node[anchor=south]{$F$}--(0,1);
\draw[line width=1pt, style=dashed] (-2,-3) node[anchor=north]{$F$}--(0,-1);
\draw[line width=1.5pt, color=blue] (0,3) node[anchor=south]{$m$}--(0,1);
\draw[line width=1.5pt, color=blue] (0,-3) node[anchor=north]{$m$}--(0,-1);
\draw[line width=1.5pt, color=violet] (0,1) --(0,-1) node[midway,anchor=west]{$n$};
\draw[fill=violet, color=violet] (0,1) circle (.2) node[anchor=south west]{$\;\alpha$};
\draw[fill=violet, color=violet] (0,-1) circle (.2) node[anchor=north west]{$\;\alpha$};
\end{scope}
\node at (0,0){$=$};
\begin{scope}[shift={(4,0)}]
\draw[line width=1pt, style=dashed] (-2,3) node[anchor=south]{$F$}--(-2,-3);
\draw[line width=1.5pt, color=blue] (0,3) node[anchor=south]{$m$}--(0,-3);
\end{scope}
\end{tikzpicture}
\end{align}
For functors that are 1-morphisms in the pivotal 2-categories $\Bimod^\theta(\mac,\mad)$ and $\mathrm{Mod}^\theta(\mac)$ from Theorem \ref{th:pivot} and hence have left and right adjoints, the morphisms $p^\alpha_{Fmn}$ and $j^\alpha_{Fmn}$ from \eqref{pic:projincl} can also be modified by composing them with the units and counits of the adjunctions. 
By composing them with the component morphisms of  $\eta'^{F}:\id_\mam\Rightarrow F^lF$ and $\epsilon^F:F^lF \Rightarrow \id_\man$,  introduced before Corollary \ref{cor:functortrace},  one obtains 
\begin{align}
&\begin{tikzpicture}[scale=.3]
\draw[line width=1.5pt, color=violet] (0,0)--(0,3) node[anchor=south]{$n$}; 
\draw[line width=1.5pt, color=blue] (0,0)--(0,-3) node[anchor=north]{$m$}; 
\draw[line width=1pt, dashed] (0,0).. controls (0,-2) and (-2,-2)..(-2,0); 
\draw[line width=1pt, dashed] (-2,0)--(-2,3) node[sloped, pos=.5, allow upside down]{\arrowIn} node[pos=.5,anchor=east]{$F$}; 
\draw[color=violet, fill=violet] (0,0) circle (.2) node[anchor=west]{$\alpha$};
\end{tikzpicture}
&
&\begin{tikzpicture}[scale=.3]
\draw[line width=1.5pt, color=blue] (0,0)--(0,3) node[anchor=south]{$m$}; 
\draw[line width=1.5pt, color=violet] (0,0)--(0,-3) node[anchor=north]{$n$}; 
\draw[line width=1pt, dashed] (0,0).. controls (0,2) and (-2,2)..(-2,0); 
\draw[line width=1pt, dashed] (-2,-3)--(-2,0) node[sloped, pos=.5, allow upside down]{\arrowIn} node[pos=.5,anchor=east]{$F$}; 
\draw[color=violet, fill=violet] (0,0) circle (.2) node[anchor=west]{$\alpha$};
\end{tikzpicture}\\
&
\pi^\alpha_{F^lnm}: F^l(n)\to m
&
&\iota^\alpha_{F^lnm}: m\to F^l(n)\nonumber
\end{align}
These morphisms satisfy identities analogous to \eqref{pic:semisimpleids}. For all $m,m'\in I_\mam$ and $n\in I_\man$  one has
\begin{align}\label{eq:genorthcheck}
&\begin{tikzpicture}[scale=.3]
\draw[line width=1.5pt, color=blue] (0,1)--(0,4) node[anchor=south]{$m$};
\draw[line width=1.5pt, color=blue] (0,-1)--(0,-4) node[anchor=north]{$m'$};
\draw[line width=1.5pt, color=violet] (0,1)--(0,-1) node[pos=.5, anchor=west]{$n$};
\draw[line width=1pt, dashed] (0,1).. controls (0,3) and (-2,3).. (-2,1);
\draw[line width=1pt, dashed] (-2,-1)--(-2,1) node[sloped, pos=.5, allow upside down]{\arrowIn} node[pos=.5, anchor=east]{$F$};
\draw[line width=1pt, dashed] (0,-1).. controls (0,-3) and (-2,-3).. (-2,-1) ;
\draw[color=violet, fill=violet] (0,1) circle (.2) node[anchor=south west]{$\,\alpha$};
\draw[color=violet, fill=violet] (0,-1) circle (.2) node[anchor=north west]{$\,\beta$};
\node at (2,0)[anchor=west]{$=\frac{\delta_{\alpha\beta}\delta_{mm'}}{\dim(m)}$};
\draw[line width=1.5pt, color=blue] (9,-4)--(9,4) node[anchor=south]{$m$};
\end{tikzpicture}
&
&\begin{tikzpicture}[scale=.3]
\node at (-2,0)[anchor=east]{$\sum_{m,\alpha}\dim(m)$};
\draw[line width=1.5pt, color=violet] (0,2)--(0,4) node[anchor=south]{$n$};
\draw[line width=1.5pt, color=violet] (0,-2)--(0,-4) node[anchor=north]{$n$};
\draw[line width=1.5pt, color=blue] (0,2)--(0,-2) node[pos=.5, anchor=west]{$m$};
\draw[line width=1pt, dashed] (0,2).. controls (0,0) and (-2,0).. (-2,2);
\draw[line width=1pt, dashed] (-2,2)--(-2,4) node[sloped, pos=.5, allow upside down]{\arrowIn} node[anchor=south]{$F$};
\draw[line width=1pt, dashed] (0,-2).. controls (0,0) and (-2,0).. (-2,-2) ;
\draw[line width=1pt, dashed] (-2,-4)node[anchor=north]{$F$} --(-2,-2) node[sloped, pos=.5, allow upside down]{\arrowIn} ;
\draw[color=violet, fill=violet] (0,2) circle (.2) node[anchor=west]{$\,\alpha$};
\draw[color=violet, fill=violet] (0,-2) circle (.2) node[anchor=west]{$\,\alpha$};
\node at (3,0){$=$};
\draw[line width=1pt, dashed] (5,-4)--(5,4) node[sloped, pos=.5, allow upside down]{\arrowIn} node[anchor=south]{$F$};
\draw[line width=1.5pt, color=violet] (7,-4)--(7,4) node[anchor=south]{$n$};
\end{tikzpicture}
\end{align}
The first identity  follows directly from the fact that $m,m'\in I_\mam$ are simple, by taking the trace and then using its cyclicity, the second identity in \eqref{eq:functortrpivot} and the first identity in \eqref{eq:projincl} and \eqref{pic:semisimpleids}. For the second identity note that  the morphisms $\pi^\alpha_{F^lnm}$ and $\iota^\alpha_{F^lnm}$ can be expressed as linear combinations
\begin{align}\label{eq:lincombi}
\pi^{\alpha}_{F^lnm}=\sum_{\beta} M^{\alpha\beta} p^\beta_{F^lmn}\qquad \iota^{\alpha}_{F^lnm}=\sum_{\beta} N^{\alpha\beta} j^\beta_{F^lmn}
\end{align}
with invertible complex matrices $M=(M^{\alpha\beta})$ and $N=(N^{\alpha\beta})$.
 The first identity in \eqref{eq:genorthcheck} implies $M^T=N^\inv$.  Inserting \eqref{eq:lincombi} into the left-hand side of the second equation in \eqref{eq:genorthcheck}, together with the identity $M^T=N^\inv$ and the second identity in \eqref{eq:projincl} and \eqref{pic:semisimpleids} then yields the second identity in \eqref{eq:genorthcheck}.
This implies for all $n\in I_\man$
\begin{align}\label{eq:traceout}
\begin{tikzpicture}[scale=.3]
\node at (-6,0)[anchor=east]{$\sum_{m,\alpha}\dim(m)$};
\draw[line width=1.5pt, color=violet] (0,2)--(0,4) node[anchor=south]{$n$};
\draw[line width=1.5pt, color=violet] (0,-2)--(0,-4) node[anchor=north]{$n$};
\draw[line width=1.5pt, color=blue] (0,2)--(0,-2) node[pos=.5, anchor=west]{$m$};
\draw[line width=1pt, dashed] (0,2)--(-4,0) node[pos=.5, anchor=south]{$F$};
\draw[line width=1pt, dashed] (0,-2)--(-4,0) node[pos=.5, anchor=north]{$F$};
\draw[line width=1pt, dashed] (-4,4)node[anchor=south]{$G$} --(-4,-4) node[anchor=north]{$H$};
\draw[fill=black] (-4,0) circle (.2) node[anchor=east]{$\nu$};
\draw[fill=violet, color=violet] (0,2) circle (.2) node[anchor=west]{$\,\alpha$};
\draw[fill=violet, color=violet] (0,-2) circle (.2) node[anchor=west]{$\,\alpha$};
\node at (2,0)[anchor=west]{$=$};
\begin{scope}[shift={(18,0)}]
\node at (-6,0)[anchor=east]{$\sum_{m,\alpha}\dim(m)$};
\draw[line width=1.5pt, color=violet] (0,2)--(0,4) node[anchor=south]{$n$};
\draw[line width=1.5pt, color=violet] (0,-2)--(0,-4) node[anchor=north]{$n$};
\draw[line width=1.5pt, color=blue] (0,-2)--(0,2) node[midway, anchor=west]{$m$};
\draw[line width=1pt, dashed] (-4,4)node[anchor=south]{$G$} --(-4,-4) node[anchor=north]{$H$};
\draw[line width=1pt, dashed] (0,2).. controls (-1,-2) and (-3,4)..(-4,0) node[sloped, pos=.5, allow upside down]{\arrowIn} node[pos=.5,anchor=south]{$F$};
\draw[line width=1pt, dashed] (0,-2).. controls (-1,2) and (-3,-4)..(-4,0) node[sloped, pos=.5, allow upside down]{\arrowIn} node[pos=.5,anchor=north]{$F$};
\draw[fill=black] (-4,0) circle (.2) node[anchor=east]{$\nu$};
\draw[fill=violet, color=violet] (0,2) circle (.2) node[anchor=west]{$\,\alpha$};
\draw[fill=violet, color=violet] (0,-2) circle (.2) node[anchor=west]{$\,\alpha$};
\end{scope}
\node at (22,0){$\stackrel{\eqref{eq:genorthcheck}}=$};
\begin{scope}[shift={(30,0)}]
\draw[line width=1.5pt, color=violet] (0,-4)--(0,4) node[anchor=south]{$n$};
\draw[line width=1pt, dashed] (-4,4)node[anchor=south]{$G$} --(-4,-4) node[anchor=north]{$H$};
\draw[line width=1pt, dashed] (-4,0).. controls (-4,2) and (-2,2)..(-2,0);
\draw[line width=1pt, dashed, -stealth] (-4,0).. controls (-4,-2) and (-2,-2)..(-2,0) node[anchor=west]{$F$};
\draw[fill=black] (-4,0) circle (.2) node[anchor=east]{$\nu$};
\end{scope}
\end{tikzpicture}
\end{align}

In particular, identities \eqref{pic:semisimpleids}, \eqref{eq:genorthcheck} and \eqref{eq:traceout} hold for endofunctors  
 $F=c\rhd -: \mam\to \mam$ or $F=-\lhd d:\mam\to\mam$ for a $(\mac,\mad)$-bimodule category $\mam$. In this case, we use thin black or grey lines labeled $c$ or $d$ instead of dashed lines,  and the upward arrows denote the duals in $\mac$ and $\mad$.  Identities \eqref{pic:semisimpleids}, together with \eqref{eq:catdimension},  \eqref{eq:dimensionsmodule} and  \eqref{eq:dimformula},  then imply for all simple objects $n\in I_\mam$
\begin{align}\label{eq:2gonformula}
\begin{tikzpicture}[scale=.25]
\begin{scope}[shift={(-3,0)}]
\node at (-10,0) {$\sum_{i,m, \alpha}\dim(i)\dim(m)$};
\draw[line width=1.5pt, color=blue] (0,4) node[anchor=south]{$n$}--(0,-4) node[anchor=north]{$n$} node[midway, anchor=west]{$m$};
\draw[line width=.5pt, color=black] (0,2).. controls (-2,0)..(0,-2) node[midway, anchor=east]{$i$};
\draw[color=blue, fill=blue] (0,2) circle (.2) node[anchor=west]{$\alpha$}; 
\draw[color=blue, fill=blue] (0,-2) circle (.2) node[anchor=west]{$\alpha$}; 
\end{scope}
\node at (0,0){$=$};
\begin{scope}[shift={(7,0)}]
\node at (-3,0) {$\dim(\mac)$};
\draw[line width=1.5pt, color=blue] (0,4) node[anchor=south]{$n$}--(0,-4) node[anchor=north]{$n$}; 
\end{scope}
\end{tikzpicture}
\end{align}

\subsection{Gluing identities for polygon diagrams}

We consider the pivotal 2-category $\Bimod^\theta(\mac,\mad)$ for spherical fusion categories $\mac,\mad$.
We fix sets  $I_\mac$, $I_\mad$  and $I_\mam$ of representatives of the isomorphism classes of simple objects in $\mac$, $\mad$ and in each bimodule category  
$\mam$.  For each object $x$ in $\mac$, $\mad$ or $\mam$ and each simple object $i$ in the set of representatives, we choose projection morphisms $p^\alpha_{xi}: x\to i$ and inclusion morphisms $j^\alpha_{xi}:i\to x$ that satisfy  \eqref{eq:projincl}. 

Then we can glue and cut  polygon diagrams by summing  over the simple objects at their boundary segments and over bases of the morphism spaces at their boundary vertices.

\begin{theorem} \label{th:polyids}
The cyclic evaluations of the polygon diagrams for $\Bimod^\theta(\mac,\mad)$ satisfy:

\medskip
\begin{compactenum}
\item {\bf Gluing sides:} for all objects $m\in I_\mam$, $n\in I_\man$
\begin{align}\label{pic:gluepoly}
&\begin{tikzpicture}[scale=.3]
\node at (-22,0) {$\sum_{\alpha}$};
\begin{scope}[shift={(-15,0)}]
\draw[draw=none, fill=gray, fill opacity=.2] (-30:5)--(30:5)--(90:5)--(150:5)--(210:5)--(270:5)--(330:5);
\draw[line width=1.5pt, color=blue] (0:4.35)--(30:5) node[anchor=south]{$\;m$};
\draw[line width=1.5pt, color=blue] (30:5)--(60:4.35);
\draw[line width=1.5pt, color=red] (-30:5)--(-60:4.35);
\draw[line width=1.5pt, color=red] (0:4.35)--(-30:5) node[anchor=north]{$\;n$};
\draw[line width=1pt, dashed] (1,0) node[anchor=east]{$F$}--(0:4.35) node[sloped, pos=0.5, allow upside down]{\arrowIn};
\draw[color=red, fill=red] (0:4.35) circle (.2) node[anchor=west]{$\alpha$};
\node at (3,.5)[anchor=south, color=blue]{$\mam$};
\node at (3,-.5)[anchor=north, color=red]{$\man$};
\draw[draw=none, fill=gray, fill opacity=.2] (8,-2.5)--(8,2.5)--(13,2.5)--(13,-2.5)--(8,-2.5);
\draw[color=blue, line width=1.5pt] (8,0)--(8,2.5) node[anchor=south]{$m$}--(10.5,2.5);
\draw[color=red, line width=1.5pt] (8,0)--(8,-2.5) node[anchor=north]{$n$}--(10.5,-2.5);
\draw[line width=1pt, dashed] (8,0) --(10.5,0) node[anchor=west]{$F$} node[sloped, pos=0.7, allow upside down]{\arrowIn};
\draw[color=red, fill=red] (8,0) circle (.2) node[anchor=east]{$\alpha$};
\node at (9.5,.5)[anchor=south, color=blue]{$\mam$};
\node at (9.5,-.5)[anchor=north, color=red]{$\man$};
\end{scope}
\node at (0,0){$=$};
\begin{scope}[shift={(7,0)}]
\draw[draw=none, fill=gray, fill opacity=.2] (-30:5)--(30:5)--(90:5)--(150:5)--(210:5)--(270:5)--(330:5);
\draw[draw=none, fill=gray, fill opacity=.2] (4.33,-2.5)--(4.33,2.5)--(9.35,2.5)--(9.35,-2.5)--(9.35,-2.5);
\draw[line width=1.5pt, color=blue] (30:5) node[anchor=south]{$\;m$} --(60:4.35);
\draw[line width=1.5pt, color=blue] (4.34,2.5)--(6.84,2.5);
\draw[line width=1.5pt, color=red] (-30:5) node[anchor=north]{$\;n$} --(-60:4.35);
\draw[line width=1.5pt, color=red] (4.34,-2.5)--(6.84,-2.5);
\draw[line width=1pt, dashed] (1,0) node[anchor=east]{$F$}--(6,0) node[sloped, pos=0.5, allow upside down]{\arrowIn};
\node at (3.5,.5)[anchor=south, color=blue]{$\mam$};
\node at (3.5,-.5)[anchor=north, color=red]{$\man$};
\end{scope}
\end{tikzpicture}
\end{align}

\bigskip
\item {\bf Gluing around a vertex:} for all objects $m\in I_\mam$
\begin{align}
\label{pic:closepoly}
&\begin{tikzpicture}[scale=.3]
\node at (-17,0) {$\sum_{n,\alpha} \dim(n)$};
\begin{scope}[shift={(-7,0)}]
\draw[draw=none, fill=gray, fill opacity=.2] (0,0)--(30:5)--(90:5)--(150:5)--(210:5)--(270:5)--(330:5)--(0,0);
\draw[line width=1.5pt, color=red] (30:2.5)--(0,0) node[anchor=west]{$\;n$}--(-30:2.5);
\draw[line width=1.5pt, color=blue] (30:2.5)--(30:5) node[anchor=west]{$m$}--(60:4.35);
\draw[line width=1.5pt, color=blue] (-30:2.5)--(-30:5)node[anchor=west]{$m$} --(-60:4.35);
\draw[line width=1pt, dashed] (0,2.5) node[anchor=east]{$F$}--(30:2.5) node[sloped, pos=0.5, allow upside down]{\arrowIn};
\draw[line width=1pt, dashed] (-30:2.5)--(0,-2.5) node[anchor=east]{$F$} node[sloped, pos=0.5, allow upside down]{\arrowIn};
\draw[fill=blue, color=red] (30:2.5) circle (.2) node[anchor=west]{$\;\alpha$};
\draw[fill=blue, color=red] (-30:2.5) circle (.2) node[anchor=west]{$\;\alpha$};
\node at (2,1.8)[anchor=south, color=blue]{$\mam$};
\node at (2,-1.8)[anchor=north, color=blue]{$\mam$};
\node at (-.5,0)[anchor=east, color=red]{$\man$};
\end{scope}
\node at (0,0){$=$};
\begin{scope}[shift={(7,0)}]
\draw[draw=none, fill=gray, fill opacity=.2] (0:5)--(72:5)--(144:5)--(216:5)--(288:5)--(0:5);
\draw[line width=1.5pt, color=blue] (-36:4)--(0:5) node[anchor=west]{$m$}--(36:4);
\draw[line width=1pt, dashed] (50:2)  .. controls (0:2)..(-50:2) node[anchor=north east]{$F$} node[sloped, pos=1, allow upside down]{\arrowIn};
\node at (2,0)[anchor=west, color=blue]{$\mam$};
\node at (1,0)[anchor=east, color=red]{$\man$};
\end{scope}
\end{tikzpicture}
\end{align}

\bigskip
\item {\bf Gluing a 2-gon:} for all objects $m,n\in I_\mam$, $x\in \Ob\mac$ or $x\in \Ob\mad$ and all morphisms $\phi: x\to x$ 
\begin{align}\label{pic:2gon} 
\begin{tikzpicture}[scale=.2]
\node at (-14,0) {$\sum_{m,n,\alpha}\dim(m)\dim(n)$};
\draw [blue,line width=1.5pt,domain=0:90] plot ({0.1*\x}, {5*sin(\x)});
\draw [blue,line width=1.5pt,domain=0:90] plot ({0.1*\x}, {-5*sin(\x)});
\draw [blue,line width=1.5pt,domain=0:90] plot ({0.1*\x+9}, {5*sin(\x+90)});
\draw [blue,line width=1.5pt,domain=0:90] plot ({0.1*\x+9}, {-5*sin(\x+90)});
\draw[line width=.5pt, color=black] (9,5)--(9,0) node[midway, anchor=west]{$x$} node[sloped, pos=.5, allow upside down]{\arrowOut} ;
\draw[line width=.5pt, color=black] (9,0)--(9,-5) node[midway, anchor=west]{$x$} node[sloped, pos=.5, allow upside down]{\arrowOut};
\draw[fill=blue, color=blue] (9,5) circle (.3) node[anchor=south]{$\alpha$};
\draw[fill=blue, color=blue] (9,-5) circle (.3) node[anchor=north]{$\alpha$};
\draw[fill=black] (9,0) circle (.3) node[anchor=west]{$\phi$};
\node at (3,0)[anchor=west, color=blue]{$\mam$};
\node at (0,0)[anchor=east, color=blue]{$m$};
\node at (18,0)[anchor=west, color=blue]{$n$};
\node at (22,0)[anchor=west]{$=\dim(\mam)$};
\draw[line width=.5pt] (37,3)--(37,-3) node[sloped, pos=.1, allow upside down]{\arrowOut} node[sloped, pos=1, allow upside down]{\arrowOut};
\draw[line width=.5pt] (33,-3)--(33,3) node[sloped, pos=.5, allow upside down]{\arrowOut};
\draw[line width=.5pt] (33,3).. controls (33,5) and (37,5).. (37,3);
\draw[line width=.5pt] (33,-3).. controls (33,-5) and (37,-5).. (37,-3);
\draw[fill=black] (37,0) circle (.3) node[anchor=west]{$\phi$};
\node at (37,3)[anchor=west]{$x$};
\node at (37,-3)[anchor=west]{$x$};
\end{tikzpicture}
\end{align}

\pagebreak

\item {\bf Inserting a diagram for $\mac$ or $\mad$:} \\ for all objects $i\in I_\mac$, $x\in \Ob\mac$ or $i\in I_\mad$, $x\in \Ob\mad$ and morphisms $\phi: i\to x$ and $\psi: x\to i$
\begin{align}\label{pic:insert}
&\begin{tikzpicture}[scale=.26]
\node at (-10,0){$\sum_{\alpha}$};
\begin{scope}[shift={(-4,0)}]
\draw[draw=none, fill=gray, fill opacity=.2] (-30:5)--(30:5)--(90:5)--(150:5)--(210:5)--(270:5)--(330:5);
\draw[color=black, fill=white] (0,0) circle (3);
\draw[line width=.5pt, color=black] (0,3)--(0,0) node[midway, anchor=west]{$i$} node[sloped, pos=.5, allow upside down]{\arrowOut};
\draw[line width=.5pt, color=black] (0,0)--(0,-3) node[midway, anchor=west]{$x$}  node[sloped, pos=.5, allow upside down]{\arrowOut};
\draw[color=black, fill=black] (0,0) circle (.2) node[anchor=east]{$\alpha$};
\end{scope}
\begin{scope}[shift={(4,0)}]
\draw[line width=.5pt, stealth-] (0,3) node[anchor=south east]{$i$}.. controls (0,5) and (3,5) .. (3,3);
\draw[line width=.5pt] (0,-3)node[anchor=north east]{$i$}.. controls (0,-5) and (3,-5) .. (3,-3);
\draw[line width=.5pt, -stealth] (3,-3)--(3,0);
\draw[line width=.5pt,] (3,3)--(3,0);
\draw[line width=.5pt,-stealth] (0,3)--(0,0) node[anchor=east]{$x$};
\draw[line width=.5pt,-stealth] (0,0)--(0,-3);
\draw[color=black, fill=black] (0,2) circle (.2) node[anchor=east]{$\phi$};
\draw[color=black, fill=black] (0,-2) circle (.2) node[anchor=east]{$\alpha$};
\end{scope}
\node at (9,0){$=$};
\begin{scope}[shift={(16,0)}]
\draw[draw=none, fill=gray, fill opacity=.2] (-30:5)--(30:5)--(90:5)--(150:5)--(210:5)--(270:5)--(330:5);
\draw[color=black, fill=white] (0,0) circle (3);
\draw[line width=.5pt, color=black] (0,3)--(0,0) node[midway, anchor=west]{$i$} node[sloped, pos=.5, allow upside down]{\arrowOut};
\draw[line width=.5pt, color=black] (0,0)--(0,-3) node[midway, anchor=west]{$x$}  node[sloped, pos=.5, allow upside down]{\arrowOut};
\draw[color=black, fill=black] (0,0) circle (.2) node[anchor=east]{$\phi$};
\end{scope}
\end{tikzpicture}\nonumber\\
&\begin{tikzpicture}[scale=.26]
\node at (-10,0){$\sum_{\alpha}$};
\begin{scope}[shift={(-4,0)}]
\draw[draw=none, fill=gray, fill opacity=.2] (-30:5)--(30:5)--(90:5)--(150:5)--(210:5)--(270:5)--(330:5);
\draw[color=black, fill=white] (0,0) circle (3);
\draw[line width=.5pt, color=black] (0,3)--(0,0) node[midway, anchor=west]{$x$} node[sloped, pos=.5, allow upside down]{\arrowOut};
\draw[line width=.5pt, color=black] (0,0)--(0,-3) node[midway, anchor=west]{$i$}  node[sloped, pos=.5, allow upside down]{\arrowOut};
\draw[color=black, fill=black] (0,0) circle (.2) node[anchor=east]{$\alpha$};
\end{scope}
\begin{scope}[shift={(4,0)}]
\draw[line width=.5pt, stealth-] (0,3) node[anchor=south east]{$i$}.. controls (0,5) and (3,5) .. (3,3);
\draw[line width=.5pt] (0,-3)node[anchor=north east]{$i$}.. controls (0,-5) and (3,-5) .. (3,-3);
\draw[line width=.5pt, -stealth] (3,-3)--(3,0);
\draw[line width=.5pt,] (3,3)--(3,0);
\draw[line width=.5pt,-stealth] (0,3)--(0,0) node[anchor=east]{$x$};
\draw[line width=.5pt,-stealth] (0,0)--(0,-3);
\draw[color=black, fill=black] (0,2) circle (.2) node[anchor=east]{$\alpha$};
\draw[color=black, fill=black] (0,-2) circle (.2) node[anchor=east]{$\psi$};
\end{scope}
\node at (9,0){$=$};
\begin{scope}[shift={(16,0)}]
\draw[draw=none, fill=gray, fill opacity=.2] (-30:5)--(30:5)--(90:5)--(150:5)--(210:5)--(270:5)--(330:5);
\draw[color=black, fill=white] (0,0) circle (3);
\draw[line width=.5pt, color=black] (0,3)--(0,0) node[midway, anchor=west]{$x$} node[sloped, pos=.5, allow upside down]{\arrowOut};
\draw[line width=.5pt, color=black] (0,0)--(0,-3) node[midway, anchor=west]{$i$}  node[sloped, pos=.5, allow upside down]{\arrowOut};
\draw[color=black, fill=black] (0,0) circle (.2) node[anchor=east]{$\psi$};
\end{scope}
\end{tikzpicture}
\end{align}
\end{compactenum}
\end{theorem}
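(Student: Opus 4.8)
The plan is to unfold Definition \ref{def:polygon}: a polygon diagram is a cyclic equivalence class of bordered diagrams, and its evaluation is the trace of the evaluation of any representative bordered diagram, which by Lemma \ref{lem:cyclicinv} is well defined. All four identities will be proved by the same scheme. For each polygon entering an identity I would first use Lemma \ref{lem:cyclicinv} to pass to a bordered-diagram representative in which the side (resp.\ interior vertex) being cut or glued is brought into standard position at the left of the diagram, so that the corresponding line ends freely; then expand the trace of the resulting composite endomorphism using the completeness and orthogonality relations for projections and inclusions \eqref{eq:projincl}, \eqref{pic:semisimpleids}, \eqref{eq:genorthcheck} and \eqref{eq:traceout}; and finally re-fold the result into polygon diagrams and read off the claim. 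Rotational invariance and the freedom to add empty sides or slide endpoints to adjacent sides, noted after Definition \ref{def:polygon}, are immediate from Lemma \ref{lem:cyclicinv} and will be used freely.

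Concretely I expect the four parts to reduce as follows. \textbf{Gluing sides} \eqref{pic:gluepoly} is the polygon-diagram form of the second identity in \eqref{pic:semisimpleids} (equivalently of \eqref{eq:traceidnd}): resolving the object carried by the $F$-line at the glued side into simple summands and using the normalisation \eqref{eq:projincl} turns the evaluation of the glued diagram into the stated summed product, with no dimension weight precisely because of that normalisation. \textbf{Gluing around a vertex} \eqref{pic:closepoly} follows in the same way from \eqref{eq:traceout} --- equivalently from the second orthogonality relation in \eqref{eq:genorthcheck} --- applied to the lines emanating from the closed-up vertex; since these lines wrap around the vertex one is forced to use the adjoint-twisted projections $\pi^\alpha_{F^l n m}$ and $\iota^\alpha_{F^l n m}$, and the weight $\dim(n)$ is exactly the one in \eqref{eq:genorthcheck}. \textbf{Gluing a $2$-gon} \eqref{pic:2gon} follows from the $2$-gon formula \eqref{eq:2gonformula}: after using \eqref{pic:semisimpleids} and cyclicity of the trace to carry out the sums over $m$ and $\alpha$ one is left with $\sum_{n\in I_\mam}\dim(n)\,\theta_{x\rhd n}(\phi\rhd 1_n)$ (or its right-module analogue), and the module-trace condition of Definition \ref{def:moduletrace} together with \eqref{eq:dimensionsmodule} rewrites this as $\dim(\mam)\,\tr(\phi)$, the evaluation of the right-hand diagram. \textbf{Inserting a diagram for $\mac$ or $\mad$} \eqref{pic:insert} is the polygon form of \eqref{eq:traceidnd}: substituting $\phi=\sum_\alpha\theta_i(p^\alpha_{xi}\circ\phi)\,j^\alpha_{xi}$ (resp.\ $\psi=\sum_\alpha\theta_i(\psi\circ j^\alpha_{xi})\,p^\alpha_{xi}$) for the morphism on the $\mac$- or $\mad$-line produces the left-hand side, the small $2$-gon factor being exactly the scalar $\theta_i(p^\alpha_{xi}\circ\phi)$.

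The algebra above is routine given the orthogonality and completeness relations recalled above; the hard part will be the bookkeeping that makes these applications legitimate. I would need to track how the morphism labels at the boundary are transported when Lemma \ref{lem:cyclicinv} moves a side into standard position, i.e.\ through the operations $\alpha\mapsto\alpha^{d}$ and $\gamma\mapsto\gamma^{u}$ of \eqref{eq:movemorph1}--\eqref{eq:movemorph2}, and check --- using \eqref{pic:adjunction} and the pivotality identity \eqref{pic:pivotal} --- that after a full cycle they return to their original form, so that in parts 1 and 2 the two sides being glued really do carry a basis and its trace-dual basis in the correct normalisation. I would also need to verify that the dimension weights $\dim(m)$, $\dim(n)$ generated by \eqref{eq:projincl}, \eqref{pic:semisimpleids} and \eqref{eq:genorthcheck} match the weights in the statement; this is where the compatibility \eqref{eq:functortrpivot} of the bimodule traces with the pivotal $2$-category structure of Theorem \ref{th:pivot} enters, and I expect the delicate point to be checking that the adjoint-twisted projections used to close up a vertex in part 2 interact with that pivotal structure exactly as \eqref{eq:genorthcheck} and \eqref{eq:traceout} require.
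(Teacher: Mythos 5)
Your overall scheme is exactly the paper's: represent each polygon by a convenient bordered diagram (Lemma \ref{lem:cyclicinv}), absorb the undrawn data into a single morphism, and apply the orthogonality/completeness relations for the chosen projections and inclusions. For parts 1, 3 and 4 you also cite the right identities: \eqref{pic:gluepoly} and \eqref{pic:insert} are indeed \eqref{eq:traceidnd}/\eqref{pic:semisimpleids} (in \eqref{pic:insert} with the trace of $\mac$ or $\mad$, the closed $2$-gon giving the scalar coefficient), and \eqref{pic:2gon} is done in the paper just as you describe: cyclicity \eqref{pic:cyclic trace}, then \eqref{pic:semisimpleids} to remove the $m,\alpha$ sum, then the module-trace condition \eqref{pic:cmoduletrace} to pull the $x$-loop off the $n$-line, giving the factor $\dim(\mam)$.

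The one step that would not go through as you describe is part 2. You claim the wrap-around ``forces'' the adjoint-twisted projections $\pi^\alpha_{F^lnm},\iota^\alpha_{F^lnm}$ and the relations \eqref{eq:genorthcheck}, \eqref{eq:traceout}, but these do not fit the configuration of \eqref{pic:closepoly}: there the fixed boundary label is $m\in I_\mam$, the summed simple is $n\in I_\man$ with weight $\dim(n)$, and the two $\alpha$-vertices are the ordinary morphisms $p^\alpha_{Fmn}\colon F(m)\to n$ and $j^\alpha_{Fmn}\colon n\to F(m)$ from \eqref{pic:projincl}; by rotation invariance of the polygon you may always place the notch being closed away from the trace closure, so no units or counits of the adjunction enter at all, and the second (completeness) identity in \eqref{pic:semisimpleids}, namely $\sum_{n,\alpha}\dim(n)\,j^\alpha_{Fmn}\circ p^\alpha_{Fmn}=1_{F(m)}$, finishes the proof directly --- this is what the paper does. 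By contrast, the completeness in \eqref{eq:genorthcheck} sums over simples $m\in I_\mam$ with weight $\dim(m)$ (the wrong set of simples and the wrong weight for \eqref{pic:closepoly}), and \eqref{eq:traceout} applies only when both $F$-legs attach to a single interior vertex and close into a loop, which is not the geometry of the right-hand side of \eqref{pic:closepoly}, where the $F$-line merely turns the corner between two adjacent sides. So the ``delicate point'' you flag is in fact a misidentification of the key relation; the fix is to drop the adjoint-twisted machinery in part 2 and use \eqref{pic:semisimpleids} as in part 1, after which your argument coincides with the paper's.
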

Here, the polygon diagrams on each side of a equation stand for the product of their cyclic evaluations.  It is assumed that all parts of the diagrams  that are not drawn coincide on the left-hand side and right-hand side    and that the object and morphism labels occur nowhere else in the diagrams. The vertex label $\alpha$ stands for the chosen projection and inclusion morphisms. The summations are over simple objects in the sets of representatives and over bases of the morphism spaces.  The functor $F$ in \eqref{pic:gluepoly} and \eqref{pic:closepoly} may  be a composite functor consisting of several lines or  an action functor $c\rhd -$ or $-\lhd d$.

\begin{proof} Identity  \eqref{pic:gluepoly} follows from \eqref{eq:traceidnd}. 
The data at the boundaries and  in the interiors of the first and second diagram in \eqref{pic:gluepoly} that is not drawn combines into morphisms $\mu:n\to F(m)$ and $\nu: F(m)\to n$, respectively. The  diagram on the right-hand side of \eqref{pic:gluepoly}  is then given by the trace of $\nu\circ\mu$. Equation \eqref{eq:traceidnd}
 implies that the cyclic evaluations on the left-hand side and right-hand side  of \eqref{pic:gluepoly} are given by
\begin{align*}
\begin{tikzpicture}[scale=.3]
\node at (-8,0) {$\sum_{\alpha}$};
\begin{scope}[shift={(-3,0)}]
\draw[line width=1.5pt, color=red] (-.7,4)--(.7,4) node[anchor=west]{$n$};
\draw[line width=1.5pt, color=red] (-.7,-4)--(.7,-4) node[anchor=west]{$n$};
\draw[line width=1.5pt, color=red] (0,4)--(0,2);
\draw[line width=1pt, dashed] (0,2).. controls (-2,0).. (0,-2) node[midway, anchor=east]{$F$};
\draw[line width=1.5pt, color=blue] (0,2)--(0,-2) node[midway, anchor=west]{$m$};
\draw[line width=1.5pt, color=red] (0,-4)--(0,-2);
\draw[color=red, fill=red] (0,2) circle (.2) node[anchor=west]{$\mu$};
\draw[color=red, fill=red] (0,-2) circle (.2) node[anchor=west]{$\alpha$};
\end{scope}
\begin{scope}[shift={(3,0)}]
\draw[line width=1.5pt, color=red] (-.7,4)--(.7,4) node[anchor=west]{$n$};
\draw[line width=1.5pt, color=red] (-.7,-4)--(.7,-4) node[anchor=west]{$n$};
\draw[line width=1.5pt, color=red] (0,4)--(0,2);
\draw[line width=1pt, dashed] (0,2).. controls (-2,0).. (0,-2) node[midway, anchor=east]{$F$};
\draw[line width=1.5pt, color=blue] (0,2)--(0,-2) node[midway, anchor=west]{$m$};
\draw[line width=1.5pt, color=red] (0,-4)--(0,-2);
\draw[color=red, fill=red] (0,2) circle (.2) node[anchor=west]{$\alpha$};
\draw[color=red, fill=red] (0,-2) circle (.2) node[anchor=west]{$\nu$};
\end{scope}
\node at (7,0) {$\stackrel{\eqref{pic:semisimpleids}}=$};
\begin{scope}[shift={(12,0)}]
\draw[line width=1.5pt, color=red] (-.7,4)--(.7,4) node[anchor=west]{$n$};
\draw[line width=1.5pt, color=red] (-.7,-4)--(.7,-4) node[anchor=west]{$n$};
\draw[line width=1.5pt, color=red] (0,4)--(0,2);
\draw[line width=1pt, dashed] (0,2).. controls (-2,0).. (0,-2) node[midway, anchor=east]{$F$};
\draw[line width=1.5pt, color=blue] (0,2)--(0,-2) node[midway, anchor=west]{$m$};
\draw[line width=1.5pt, color=red] (0,-4)--(0,-2);
\draw[color=red, fill=red] (0,2) circle (.2) node[anchor=west]{$\mu$};
\draw[color=red, fill=red] (0,-2) circle (.2) node[anchor=west]{$\nu$};
\end{scope}
\end{tikzpicture}
\end{align*}
The proof of \eqref{pic:insert} is analogous. The only difference is that it uses the traces of $\mac$ or $\mad$.
Identity \eqref{pic:closepoly}  is a direct consequence of the second identity in \eqref{eq:projincl}, depicted  in \eqref{pic:semisimpleids}. In this case, 
 the contributions of the diagram in the interior  and  the boundary morphisms that are not drawn  combine into a morphism  $\rho: F(m)\to F(m)$ in both diagrams. The cyclic evaluations of the left-hand side and right-hand side of \eqref{pic:closepoly}  are then given by
\begin{align*}
\begin{tikzpicture}[scale=.3]
\node at (-10,-1) {$\sum_{n,\alpha} \dim(n)$};
\begin{scope}[shift={(-3,0)}]
\draw[line width=1.5pt, color=red] (-2.5,4.4)--(.5,4.5) ;
\draw[line width=1.5pt, color=red] (-2.5,-6.5)--(.5,-6.5) ;
\draw[line width=1.5pt, color=blue] (0,4.5)node[ anchor=north west]{$m$} --(0,2) ;
\draw[line width=1.5pt, color=blue] (0,2)--(0,-1)node[midway, anchor=west]{$m$};
\draw[line width=1.5pt, color=red] (0,-4)--(0,-1) node[midway, anchor=west]{$n$};
\draw[line width=1.5pt, color=blue] (0,-4)--(0,-6.5) node[anchor=south west]{$m$};
\draw[line width=1pt, dashed] (-1.5,4.5)node[anchor=north east]{$F$}--(0,2);
\draw[line width=1pt, dashed]  (0,2).. controls (-2,.5).. (0,-1) node[midway, anchor=east]{$F$};
\draw[line width=1pt, dashed] (-1.5,-6.5) node[anchor=south east]{$F$}--(0,-4);
\draw[color=red, fill=red] (0,2) circle (.2) node[anchor=west]{$\rho$};
\draw[color=red, fill=red] (0,-4) circle (.2) node[anchor=west]{$\alpha$};
\draw[color=red, fill=red] (0,-1) circle (.2) node[anchor=west]{$\alpha$};
\end{scope}
\node at (1,0) {$\stackrel{\eqref{pic:semisimpleids}}=$};
\begin{scope}[shift={(6,0)}]
\draw[line width=1.5pt, color=red] (-2.5,2)--(.5,2) ;
\draw[line width=1.5pt, color=red] (-2.5,-4)--(.5,-4) ;
\draw[line width=1.5pt, color=blue] (0,2)node[ anchor=north west]{$m$} --(0,-4) node[anchor=south west]{$m$};
\draw[line width=1pt, dashed] (-1.5,2) node[anchor=north east]{$F$}--(0,-1);
\draw[line width=1pt, dashed] (-1.5,-4) node[anchor=south east]{$F$}--(0,-1);
\draw[color=red, fill=red] (0,-1) circle (.2) node[anchor=west]{$\rho$};
\end{scope}
\end{tikzpicture}
\end{align*}
Finally, to prove identity \eqref{pic:2gon}, we compute with the definition of  $\dim(n)$ and $\dim\mam$
\begin{align*}
\begin{tikzpicture}[scale=.3]
\begin{scope}[shift={(-4,0)}]
\node at (-10,0){$\sum_{m,n,\alpha}$};
\node at (-6,0){$\dim(m)$};
\node at (-6,-1.5){$\dim(n)$};
\draw[line width=1.5pt, color=blue] (0,3.5)--(0,-3.5) node[midway, anchor=west]{$n$};
\draw[line width=1.5pt, color=blue] (-.5,3.5)--(.5,3.5) node[anchor=west]{$m$};
\draw[line width=1.5pt, color=blue] (-.5,-3.5)--(.5,-3.5) node[anchor=west]{$m$};
\draw[line width=.5pt] (0,2)--(-2,0) node[sloped, pos=.5, allow upside down]{\arrowOut} node[midway, anchor=south east]{$x$};
\draw[line width=.5pt] (-2,0)--(0,-2) node[sloped, pos=.5, allow upside down]{\arrowOut} node[midway, anchor=north east]{$x$};
\draw[fill=black] (-2,0) circle (.2) node[anchor=east]{$\phi$};
\draw[blue,fill=blue] (0,2) circle (.2) node[anchor=west]{$\alpha$};
\draw[blue,fill=blue] (0,-2) circle (.2) node[anchor=west]{$\alpha$};
\end{scope}
\node at (-1.5,0){$\stackrel{\eqref{pic:cyclic trace}}=$};
\begin{scope}[shift={(11,0)}]
\node at (-9,0){$\sum_{m,n,\alpha}$};
\node at (-5,0){$\dim(m)$};
\node at (-5,-1.5){$\dim(n)$};
\draw[line width=1.5pt, color=blue] (0,3.5)--(0,-3.5) node[midway, anchor=west]{$m$};
\draw[line width=1.5pt, color=blue] (-2.5,3.5)--(.5,3.5);
\draw[line width=1.5pt, color=blue] (-2.5,-3.5)--(.5,-3.5);
\draw[line width=.5pt] (-2,3.5)--(-2,2) node[sloped, pos=.5, allow upside down]{\arrowOut} node[midway, anchor=west]{$x$};
\draw[line width=.5pt] (-2,2)--(0,1) node[sloped, pos=.5, allow upside down]{\arrowOut} node[midway, anchor=north east]{$x$};
\draw[line width=.5pt] (0,-1)--(-2,-3.5) node[sloped, pos=.5, allow upside down]{\arrowOut} node[midway, anchor=east]{$x$};
\draw[fill=black] (-2,2) circle (.2) node[anchor=east]{$\phi$};
\draw[blue,fill=blue] (0,1) circle (.2) node[anchor=west]{$\alpha$};
\draw[blue,fill=blue] (0,-1) circle (.2) node[anchor=west]{$\alpha$};
\node at (0,3)[anchor=west, blue]{$n$};
\node at (0,-3)[anchor=west, blue]{$n$};
\end{scope}
\node at (14,0){$\stackrel{\eqref{pic:semisimpleids}}=$};
\begin{scope}[shift={(25,0)}]
\node at (-7,0){$\sum_{n}\dim(n)$};
\draw[line width=1.5pt, color=blue] (0,3.5)--(0,-3.5) node[midway, anchor=west]{$n$};
\draw[line width=1.5pt, color=blue] (-2.5,3.5)--(.5,3.5);
\draw[line width=1.5pt, color=blue] (-2.5,-3.5)--(.5,-3.5);
\draw[line width=.5pt] (-2,3.5)--(-2,0) node[sloped, pos=.5, allow upside down]{\arrowOut} node[midway, anchor=east]{$x$};
\draw[line width=.5pt] (-2,0)--(-2,-3.5) node[sloped, pos=.5, allow upside down]{\arrowOut} node[midway, anchor=east]{$x$};
\draw[fill=black] (-2,0) circle (.2) node[anchor=east]{$\phi$};
\end{scope}
\node at (28,0){$\stackrel{\eqref{pic:cmoduletrace}}=$};
\begin{scope}[shift={(38,0)}]
\node at (-7,0){$\dim\mam$};
\draw[line width=.5pt] (-2,2)node[anchor=west]{$x$}--(-2,-2) node[anchor=west]{$x$} node[sloped, pos=.2, allow upside down]{\arrowOut} node[sloped, pos=.8, allow upside down]{\arrowOut};
\draw[line width=.5pt] (-4,-2)--(-4,2) node[sloped, pos=.5, allow upside down]{\arrowOut};
\draw[line width=.5pt] (-2,2)..controls (-2,3) and (-4,3)..(-4,2);
\draw[line width=.5pt] (-2,-2)..controls (-2,-3) and (-4,-3)..(-4,-2);
\draw[fill=black] (-2,0) circle (.2) node[anchor=west]{$\phi$};
\end{scope}
\end{tikzpicture}\\[-6ex]
\end{align*}
\end{proof}

Note that the first three identities in Theorem \ref{th:polyids} are precisely the elementary  transformations for polygon presentations of surface groups, see for instance  the books by Lee \cite[Ch.~6]{Lee} and by Seifert and Threlfall \cite[Ch.~6.38 and 6.40]{ST}.  Identity \eqref{pic:gluepoly} corresponds to the cutting and pasting operation,  identity \eqref{pic:closepoly} to the folding and unfolding operation and identity \eqref{pic:2gon} to the special case of the folding operation for $S^2$. 
 Identity \eqref{pic:insert} describes the insertion of data from spherical categories into polygon diagrams.

\section{State sum models with defects}
\label{sec:statesum}

\subsection{Triangulated 3-manifolds with defects}
\label{subsec:triangsec}

We consider a compact oriented 3d PL manifold  $M$ with (possibly empty) boundary $\partial M$. Defect structures are assigned to a compact  oriented embedded 2d PL submanifold $D\subset M$ with boundary  $\partial D\subset \partial M$ that is a finite sum of circles. The submanifold $D$ is equipped with a (finite, possibly empty) embedded PL graph $D^1$ with vertex set $D^0$  such that $D^1\cap \partial D\subset D^0$ contains only univalent vertices. 
Connected components of $M\setminus D$ are called {\bf regions} of $M$, 
connected  components of $D$  {\bf defect surfaces}, connected components of $D\setminus D^1$ {\bf defect areas}, connected components of $D^1\setminus D^0$ {\bf defect lines},  elements of $D^0\setminus \partial D$ {\bf defect vertices}.

We equip each defect line and defect surface with an orientation. In pictures we indicate the orientation of a defect surface by a positive normal vector to the surface  and the orientation of a defect line by an arrow on the line.
Regions and  defect areas,   lines and  vertices are labeled with categorical data as follows, see Figure \ref{fig:dectet} for an illustration:
\begin{itemize}
\item Regions of $M$ are assigned spherical fusion categories.
\item Each oriented defect area is assigned a pair $(\mam,\theta)$ of a  $(\mac,\mad)$-bimodule category $\mam$ and a $(\mac,\mad)$-bimodule trace $\theta$ on $\mam$, where $\mac$ and $\mad$ are the spherical fusion categories for  the regions at the tip and the tail of the surface's normal vector.

Reversing the orientation of a defect area amounts to replacing  $\mam$ by the opposite bimodule category $\mam^\#$ from Example \ref{ex:dualcat}.

\item  Each oriented defect line is assigned a $(\mac,\mad)$-bimodule functor $F:\mam\to\man$, where $\mam$ and $\man$ are the $(\mac,\mad)$-bimodule categories 
to the left and to the right of the defect line, viewed in the direction of its orientation and from $\mac$.

Reversing the orientation of a defect line corresponds to replacing  $F:\mam\to\man$ by its left adjoint $F^l:\man\to\mam$. Reversing the orientation of the adjacent defect areas and the orientation of the line corresponds to replacing $\mam,\man$ by  $\mam^\#, \man^\#$ and  $F$ by  $F^\#:\mam^\#\to\man^\#$ from Example \ref{ex:modulefuncs}, 7.

\item A defect vertex whose incident defect lines are labeled by  $(\mac,\mad)$-bimodule functors is assigned a cyclic equivalence class of $(\mac,\mad)$-bimodule natural transformations between them, whose sources and targets are determined by the orientation and the cyclic ordering of the incident defect lines. 

\end{itemize}

Defect areas between two regions labeled both with  $\mac$ may be labeled with  $\mac$ as a bimodule category over itself  and with its trace as a bimodule trace. Such defect areas are called {\bf trivial defect areas}. Similarly, a defect line between two defect areas labeled  with  $(\mam,\theta)$ may be labeled with the identity functor  $F=\id_\mam:\mam\to\mam$ as a $(\mac,\mad)$-bimodule functor. Such defect lines are called {\bf trivial defect lines}. A defect vertex labeled with  $\id_F$ or with a counit $\epsilon^F$, $\epsilon'^F$ or unit $\eta^F, \eta'^F$ of the adjunction $F^l\vdash F$ is called a {\bf trivial defect vertex}. 

A compact oriented 3d PL manifold $M$, possibly with boundary, together with  defects  $D^0\subset D^1\subset D$ is called a {\bf defect 3-manifold}. A defect 3-manifold together with a labeling of the defects with
categorical data is called a {\bf 3-manifold with defect data}.

To define a state sum model for a 3-manifold with defect data, we require a triangulation  of $M$. It must be sufficiently refined to resolve all defect surfaces and must intersect the defect surfaces generically.

\begin{definition} \label{def:gentrans} Let $M$ be a defect 3-manifold and $T$ a triangulation of $M$. 
\begin{compactenum}
\item A triangle in $T$ is called {\bf transversal}, if
\begin{compactitem}
\item each of its sides contains at most one point on a defect surface,
\item its intersection with each defect surface is empty or a line with endpoints on its sides,
\end{compactitem}

and {\bf generic} if
\begin{compactitem}
\item none of its vertices is on a defect surface,
\item none of its sides intersects a defect line, 
\item it does not contain a defect vertex.\\[-2ex]
\end{compactitem}
\item A tetrahedron in $T$ is transversal, if all  of its faces  are transversal and 
 its intersection with each defect surface is either empty or a polygon with vertices on its edges. It is  generic, if all  its faces are generic. \\[-2ex]

\item The  triangulation $T$ is transversal  or  generic if all of its tetrahedra are transversal or generic.
\end{compactenum}
\end{definition}

It follows directly that a generic transversal tetrahedron either  intersects a single defect surface in three edges incident at a common vertex, as in Figure \ref{fig:dectet} (a), 
 intersects a defect surface in two pairs of opposite edges, as  in Figure \ref{fig:dectet} (b), or 
does not  intersect any defect surface at all.

Any transversal triangulation can be deformed into a generic one by slightly perturbing the defect surfaces and 
the defect graphs. By taking  derived subdivisions, see Rourke and Sanderson \cite[Ch.~2]{RS}, and perturbing  defect surfaces and defect graphs one can transform any triangulation into one that is both transversal and generic.  In particular,  one can achieve that the defect manifold is  a subcomplex of  a first derived subdivision.

We orient the edges of a generic transversal triangulation in such a way that 
\begin{compactenum}[(i)]
\item   edges in each triangle do not form a cycle,
\item  in each tetrahedron, the edges that intersect a defect surface are  oriented parallel to its positive normal vector.
\end{compactenum}
This does not restrict generality.  Condition (i) can be achieved by numbering the vertices of the triangulation and orienting each edge in such a way that it points from the vertex with the smaller number to the vertex with the bigger number. This is possible because any PL triangulation is determined by an underlying combinatorial simplicial complex, see for instance 
\cite[Th.~2.11]{RS}.

Given a generic transversal  triangulation that satisfies (i),  reverse the orientation of each edge that intersects a defect surface and is oriented against its positive normal vector.  Each triangle that intersects a defect surface intersects it in exactly two edges.  After this orientation reversal these two edges are  either both incoming or  outgoing at a common vertex and hence cannot be part of a cycle.  The resulting triangulation satisfies (i) and (ii).

A defect 3-manifold  together with an oriented triangulation is called a {\bf  triangulated defect 3-manifold}. A 3-manifold with defect data together with an oriented triangulation is called a {\bf triangulated 3-manifold with defect data}.
They are called {\bf transversal} or {\bf generic} if the triangulation is transversal or generic.

\begin{figure} 
\centering
\def\svgwidth{.75\columnwidth}
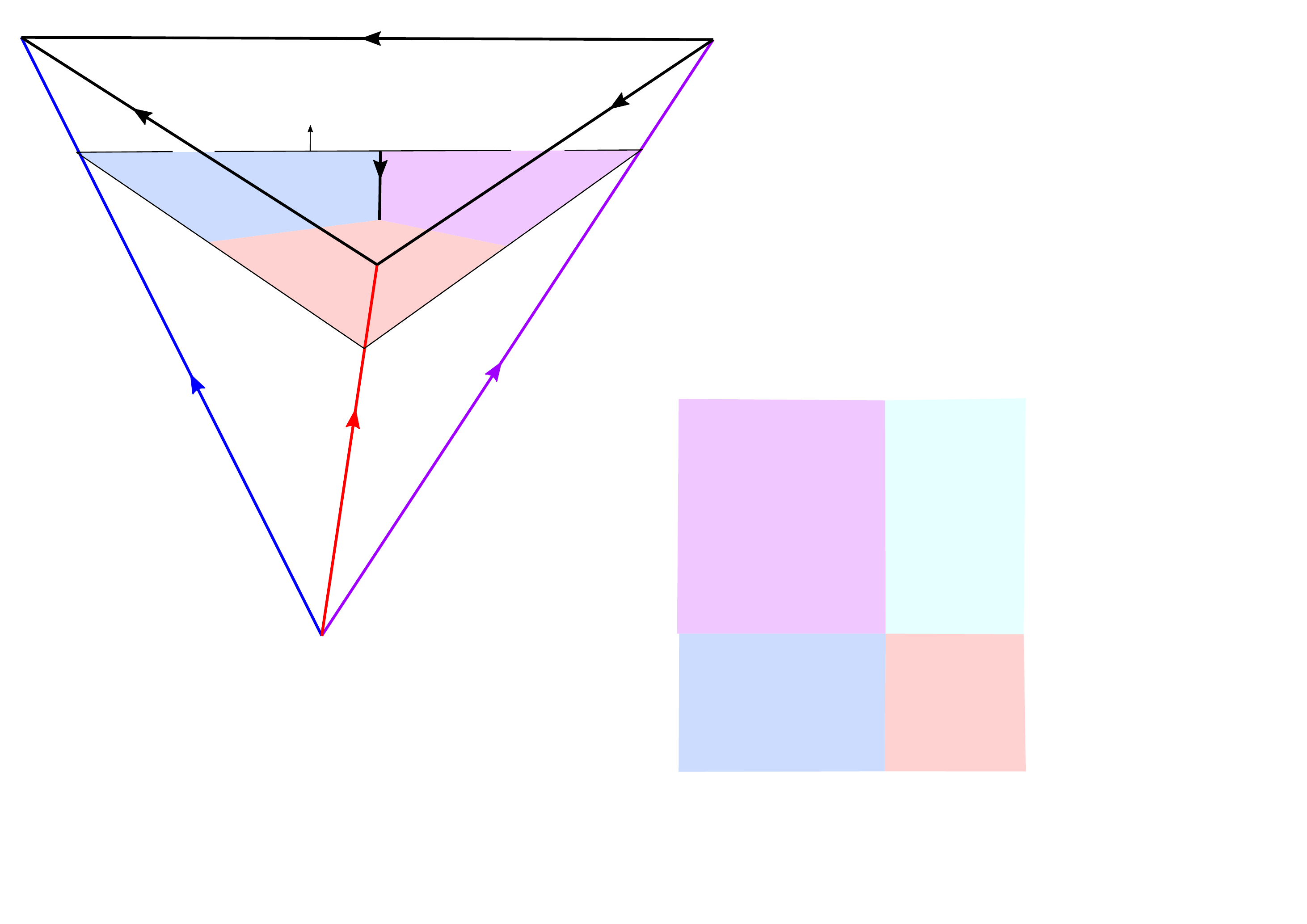
\caption{Generic transversal tetrahedra that intersect a defect surface. \newline
(a) labeling with  $F:\mathcal P\to\mam$, $G:\man\to\mam$, $H:\map\to\man$ and  $\nu:F\Rightarrow GH$,\newline
(b) labeling with  $F:\maq\to \map$, $G:\man\to\maq$, $H:\mam\to\map$, $K:\man\to\mam$ and  $\nu: FG\Rightarrow HK$.
}
\label{fig:dectet}
\end{figure}

\subsection{Labeling of the triangulation}

To define the state sum of a generic  transversal    triangulated 3-manifold with defect data, we assign   objects and morphisms of the associated categories to the edges and triangles of the triangulation. For this,  we fix sets  $I$ of representatives of the isomorphism classes of simple objects and associated projection and inclusion morphisms $p^\alpha_{xi}: x\to i$ and $j^\alpha_{xi}: i\to x$ satisfying \eqref{eq:projincl} 
 for all spherical fusion categories at the regions of $M$ and for all bimodule categories at the defect areas.

\begin{definition} \label{def:label} Let $M$ be generic transversal   triangulated 3-manifold with defect data. A {\bf labeling} of $M$  is an assignment of the following data to the edges of the triangulation:
\begin{compactenum}[(i)]
\item to each 
edge $e$ contained in a region labeled by  $\mac$, a simple object $l(e)\in I_{\mac}$,
\item to each  
edge $e$ that intersects a defect area labeled $\mam$, a  simple object $l(e)\in I_{\mam}$.
\end{compactenum}
A  pair $(M,l)$ of a generic  transversal  triangulated 3-manifold $M$ with defect data  and a labeling $l$ is called a {\bf labeled defect 3-manifold}.
\end{definition}

Note that the orientation of the edges  from Section \ref{subsec:triangsec}  is arbitrary in (i), but fixed by the orientations of the defect  surfaces  in (ii). In (i) we would assign to the edge with the opposite orientation the unique object in $I_\mac$ that represents the isomorphism class of the dual object $l(e)^*$. In (ii) orientation could only be reversed simultaneously for all edges of a tetrahedron  that intersect a  defect surface. This would amount to reversing the orientation of each defect area  and replacing its bimodule category $\mathcal M$ by $\mathcal M^\#$, as explained in Section \ref{subsec:triangsec}.
In this case, we would assign to the edge with the opposite orientation  the same object as an object of  $\mam^\#$.

A labeling of a triangulated manifold with defect data defines a labeling of each triangle in the triangulation. To each labeled triangle, we associate two morphism spaces,
 one for each orientation. 
 For a triangle labeled by objects in $(\mac,\mad)$-bimodule categories $\mam$, $\man$ that intersects a defect line labeled by a bimodule  functor  $F:\mam\to\man$ or  $G:\man\to\mam$, the associated morphisms spaces are 

\begin{align}\label{pic:homspace}
&\begin{tikzpicture}[scale=.4, baseline=(current bounding box.center)]
\draw[line width=1.5pt, color=red] (0,3)--(3,0) node[midway, anchor=south west]{$n\in I_\man$} node[sloped, pos=0.5, allow upside down]{\arrowIn};
\draw[line width=1.5pt, color=blue] (0,3)--(-3,0) node[midway, anchor=south east]{$m\in I_\mam$}node[sloped, pos=0.5, allow upside down]{\arrowIn};
\draw[line width=.5pt] (-3,0)--(3,0) node[midway, anchor=north]{$c\in I_\mac$} node[sloped, pos=0.5, allow upside down]{\arrowOut};
\draw[line width=1pt, stealth-] (0,.8)--(0,1.4) node[midway,anchor=west]{$F$};
\end{tikzpicture}
&
&\begin{tikzpicture}[scale=.4, baseline=(current bounding box.center)]
\draw[line width=1.5pt, color=blue] (0,3)--(3,0) node[midway, anchor=south west]{$m\in I_\mam$} node[sloped, pos=0.5, allow upside down]{\arrowIn};
\draw[line width=1.5pt, color=red] (0,3)--(-3,0) node[midway, anchor=south east]{$n\in I_\man$}node[sloped, pos=0.5, allow upside down]{\arrowIn};
\draw[line width=.5pt] (3,0)--(-3,0) node[midway, anchor=north]{$c\in I_\mac$} node[sloped, pos=0.5, allow upside down]{\arrowOut};
\draw[line width=1pt, -stealth] (0,.8)--(0,1.4) node[midway,anchor=west]{$F$};
\end{tikzpicture}
&
&\begin{array}{ll}
+: \Hom_\man(F(c\rhd m), n)\\
-: \Hom_\man(n, F(c\rhd m))
\end{array}
\\
&\begin{tikzpicture}[scale=.4, baseline=(current bounding box.center)]
\draw[line width=1.5pt, color=red] (0,3)--(3,0) node[midway, anchor=south west]{$n\in I_\man$} node[sloped, pos=0.5, allow upside down]{\arrowIn};
\draw[line width=1.5pt, color=blue] (0,3)--(-3,0) node[midway, anchor=south east]{$m\in I_\mam$}node[sloped, pos=0.5, allow upside down]{\arrowIn};
\draw[line width=.5pt] (-3,0)--(3,0) node[midway, anchor=north]{$c\in I_\mac$} node[sloped, pos=0.5, allow upside down]{\arrowOut};
\draw[line width=1pt, -stealth] (0,.8)--(0,1.4) node[midway,anchor=west]{$G$};
\end{tikzpicture}
&
&
\begin{tikzpicture}[scale=.4, baseline=(current bounding box.center)]
\draw[line width=1.5pt, color=blue] (0,3)--(3,0) node[midway, anchor=south west]{$m\in I_\mam$} node[sloped, pos=0.5, allow upside down]{\arrowIn};
\draw[line width=1.5pt, color=red] (0,3)--(-3,0) node[midway, anchor=south east]{$n\in I_\man$}node[sloped, pos=0.5, allow upside down]{\arrowIn};
\draw[line width=.5pt] (3,0)--(-3,0) node[midway, anchor=north]{$c\in I_\mac$} node[sloped, pos=0.5, allow upside down]{\arrowOut};
\draw[line width=1pt, stealth-] (0,.8)--(0,1.4) node[midway,anchor=west]{$G$};
\end{tikzpicture}
&
&\begin{array}{ll}
+: \Hom_\mam(c\rhd m, G(n))\\
-: \Hom_\mam(G(n), c\rhd m)
\end{array}\nonumber\\
&\begin{tikzpicture}[scale=.4, baseline=(current bounding box.center)]
\draw[line width=1.5pt, color=red] (3,0)--(0,-3) node[midway, anchor=north west]{$n\in I_\man$} node[sloped, pos=0.5, allow upside down]{\arrowIn};
\draw[line width=1.5pt, color=blue] (-3,0)--(0,-3) node[midway, anchor=north east]{$m\in I_\mam$}node[sloped, pos=0.5, allow upside down]{\arrowIn};
\draw[line width=.5pt, gray] (3,0)--(-3,0) node[midway, anchor=south]{$d\in I_\mad$} node[sloped, pos=0.5, allow upside down]{\arrowOut};
\draw[line width=1pt, -stealth] (0,-.8)--(0,-1.4) node[midway,anchor=west]{$F$};
\end{tikzpicture}
&
&\begin{tikzpicture}[scale=.4, baseline=(current bounding box.center)]
\draw[line width=1.5pt, color=blue] (3,0)--(0,-3) node[midway, anchor=north west]{$m\in I_\mam$} node[sloped, pos=0.5, allow upside down]{\arrowIn};
\draw[line width=1.5pt, color=red] (-3,0)--(0,-3) node[midway, anchor=north east]{$n\in I_\man$}node[sloped, pos=0.5, allow upside down]{\arrowIn};
\draw[line width=.5pt, gray] (-3,0)--(3,0) node[midway, anchor=south]{$d\in I_\mad$} node[sloped, pos=0.5, allow upside down]{\arrowOut};
\draw[line width=1pt, stealth-] (0,-.8)--(0,-1.4) node[midway,anchor=west]{$F$};
\end{tikzpicture}
&
&\begin{array}{ll}
+: \Hom_\man(F(m\lhd d),n)\\
-: \Hom_\man(n,F(m\lhd d))\\
\end{array}\nonumber
\\
&\begin{tikzpicture}[scale=.4, baseline=(current bounding box.center)]
\draw[line width=1.5pt, color=red] (3,0)--(0,-3) node[midway, anchor=north west]{$n\in I_\man$} node[sloped, pos=0.5, allow upside down]{\arrowIn};
\draw[line width=1.5pt, color=blue] (-3,0)--(0,-3) node[midway, anchor=north east]{$m\in I_\mam$}node[sloped, pos=0.5, allow upside down]{\arrowIn};
\draw[line width=.5pt, gray] (3,0)--(-3,0) node[midway, anchor=south]{$d\in I_\mad$} node[sloped, pos=0.5, allow upside down]{\arrowOut};
\draw[line width=1pt, stealth-] (0,-.8)--(0,-1.4) node[midway,anchor=west]{$G$};
\end{tikzpicture}
&
&\begin{tikzpicture}[scale=.4, baseline=(current bounding box.center)]
\draw[line width=1.5pt, color=blue] (3,0)--(0,-3) node[midway, anchor=north west]{$m\in I_\mam$} node[sloped, pos=0.5, allow upside down]{\arrowIn};
\draw[line width=1.5pt, color=red] (-3,0)--(0,-3) node[midway, anchor=north east]{$n\in I_\man$}node[sloped, pos=0.5, allow upside down]{\arrowIn};
\draw[line width=.5pt, gray] (-3,0)--(3,0) node[midway, anchor=south]{$d\in I_\mad$} node[sloped, pos=0.5, allow upside down]{\arrowOut};
\draw[line width=1pt, -stealth] (0,-.8)--(0,-1.4) node[midway,anchor=west]{$G$};
\end{tikzpicture}
&
&\begin{array}{ll}
+:\Hom_\mam(m\lhd d, G(n))\\
-:\Hom_\mam(G(n), m\lhd d).
\end{array}\nonumber
\end{align}
Here, the triangles are equipped with the orientation induced by their edge orientation and the embedding in the plane, via the right-hand rule. The first triangle in each line has positive (+) and the second negative (-) orientation. The arrows  indicate the orientation of the defect line relative to the orientation of the triangle. They agree, if the arrow points downwards, and are opposite, if it points upwards. As the orientation of the defect lines is fixed, reversing the orientation of the triangle reverses the  orientation of this arrow.

The morphism spaces are obtained from the triangles as follows: 
The objects in $\mac$ and $\mad$  act on the object of $\mam$ or $\man$ with whom they share a vertex with one incoming and one outgoing edge. Arrows labeled with $\mac$ have this vertex as their starting vertex, arrows labeled by $\mad$ as their target vertex. 
The morphisms go from the object on the left to the one on the right. The functor is applied to the object on the left, if its arrow points downwards, and to the one on the right, if it points upwards.

For $\mam=\man$ and $F=G=\id_\mam$,  the diagrams in the first and the second row and in the third and the fourth row in \eqref{pic:homspace} coincide. Setting $\mam=\man=\mac$, $\rhd=\oo=\lhd$ and $F=G=\id_\mac$  yields the usual assignment of morphisms to triangles for the Turaev-Viro-Barrett-Westbury invariants of a spherical fusion category.

\subsection{Generalised 6j symbols}

We now associate a generalised 6j symbol to each  labeled generic  transversal tetrahedron $t$ with a defect surface. Note that such a defect tetrahedron  is more than a combinatorial tetrahedron with categorical data assigned to each oriented edge. It consists of a combinatorial tetrahedron $t$, an oriented defect polygon $P$,  an assignment of certain edges of $t$ to the vertices  of $P$ and an assignment of the vertices of $t$ to the two regions labeled by $\mac$ and $\mad$. 
  In particular,  a labeled generic  transversal tetrahedron inherits an orientation, because the defect surface is oriented and assigning $\mac$ and $\mad$ to the two sides of the surface specifies a positive normal vector to the surface
  
  \begin{figure} 
\centering
\def\svgwidth{.85\columnwidth}
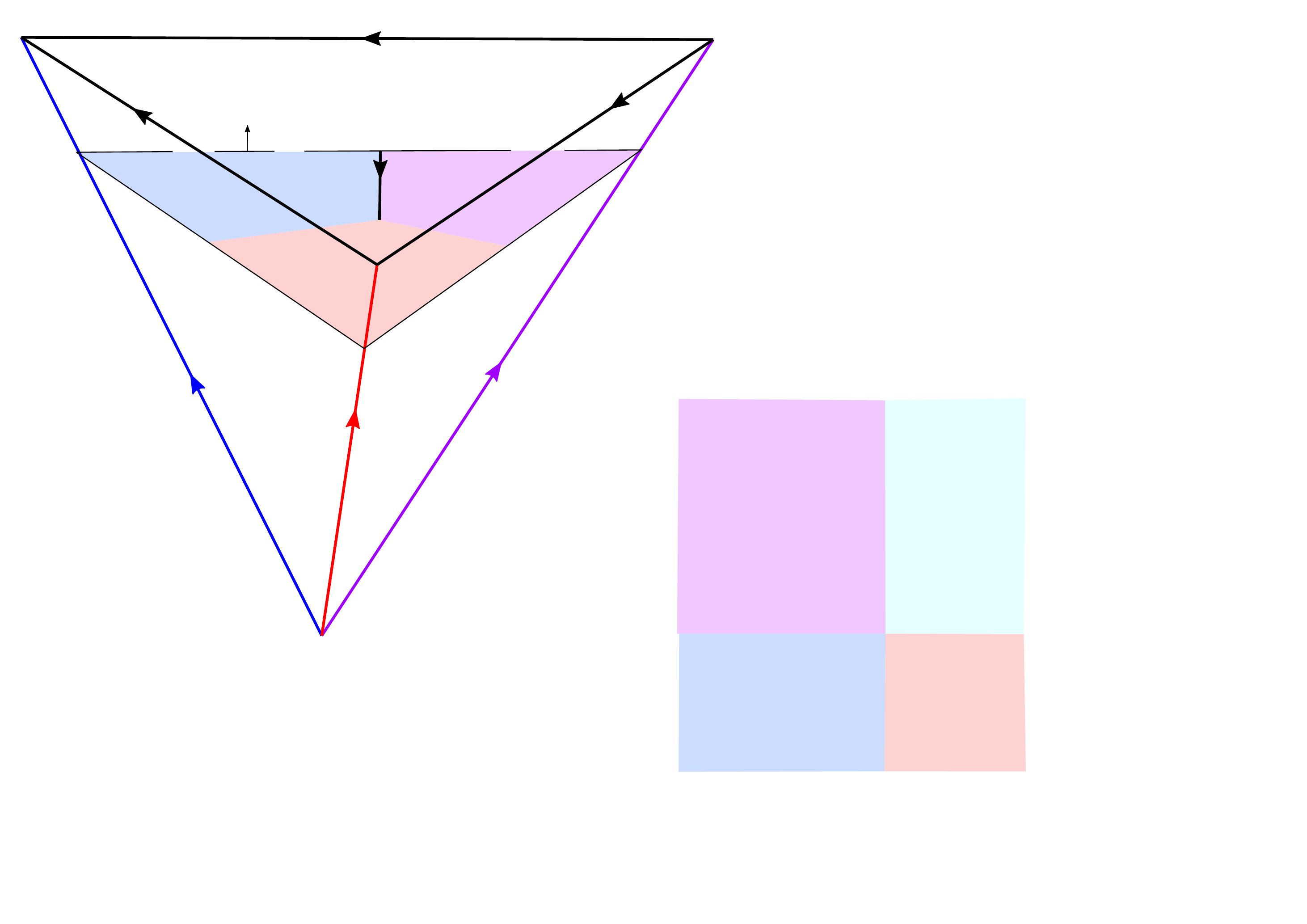
\caption{Projecting the data from spherical categories on the polygon.}
\label{fig:projecting}
\end{figure}
  
   The orientation of $t$ induces an orientation of all of its triangles. To each oriented triangle we assign a morphism in the associated morphism space from \eqref{pic:homspace}. 
To a tetrahedron with these assignments, we  associate a polygon diagram $D$ as follows:

\begin{enumerate}
\item The  defect surface in $t$ is a polygon $P$.  The defect data in $P$, viewed from the region labeled  $\mac$ towards the region labeled $\mad$, defines a cyclic equivalence class of  diagrams for $\Bimod^\theta(\mac,\mad)$. 

\item Pushing the endpoints of defect lines on each side of $P$ to the middle without creating crossings does not change the cyclic equivalence class of the diagram. We thus assume that each side of $P$ is the endpoint of exactly one defect line as  in Figure \ref{fig:dectet}, possibly labeled with  a composite of functors.  

\item Draw  $t$  such that  edges and triangles labeled by data from $\mac$ and $\mad$ lie on the corresponding sides of the polygon $P$ and that edges intersecting $P$ are oriented by the positive normal vector of $P$. 

\item For each edge $e$ of $t$ that does not intersect $P$, draw the dual edge $\bar e$, starting and ending
 on the boundary  $\partial P$ and with its endpoints are displaced slightly from the endpoints of defect lines. They are displaced  towards the starting and target end of $e$, if $e$ is contained in the region for $\mac$ and $\mad$, respectively.
Orient $\bar e$ by duality, as shown in Figure \ref{fig:projecting}, and project it on $P$.

\item Label the vertices of $P$ with the simple objects of the associated edges of $t$ and the projected edges  with the simple objects 
 of their duals in $t$. Label the endpoints of defect lines and the projected vertices in $P$ with the morphisms assigned to the corresponding triangles in $t$. The result is a polygon diagram $D$.  The diagrams for the tetrahedra in Figures \ref{fig:dectet} and \ref{fig:projecting} are  shown in \eqref{fig:diagramproj}.
\end{enumerate}

Note that the word \emph{project} in 3.~is used informally here. For the tetrahedron in Figure \ref{fig:projecting} (a) it means  drawing a trivalent vertex on $P$  whose incident edges are connected with the endpoints on $\partial P$. For the tetrahedron in Figure \ref{fig:projecting} (b) it amounts to drawing two edges that connect  endpoints on opposite sides of $P$. The properties of diagrams for spherical fusion categories and identities \eqref{pic:rm2} to \eqref{pic:bimodulenat} ensure that all ways of doing so yield polygon diagrams with the same evaluation.

Note also that  tetrahedra that do not intersect defect surfaces can be viewed as special cases of Figure \ref{fig:dectet} (a) and (b). 
Each such tetrahedron has one vertex with only outgoing  edges, which can be taken as the lower vertex in Figure \ref{fig:dectet} (a) or as the left vertex in Figure \ref{fig:dectet}  (b). 
Up to  orientation reversal  the tetrahedron then coincides with the one in  Figure \ref{fig:dectet} (a) or (b), if the defect surfaces  are labeled with the spherical fusion category $\mac$  as a $(\mac,\mac)$-bimodule category, with identity functors and with  identity natural transformations. 

Interpreting a combinatorial tetrahedron as a tetrahedron with trivial defects amounts to a choice of orientation: one needs to specify an orientation for the trivial defect surface and to specify its  normal vector. We therefore  treat \emph{oriented} tetrahedra labeled by a spherical fusion category $\mac$  as generic transversal tetrahedra with trivial defect data.  In the following, we assume that all tetrahedra are equipped with an orientation. 

\begin{definition}\label{def:gen6j}
Let $(t,l)$ be a labeled generic transversal tetrahedron  with an assignment $b$ of morphisms in the spaces \eqref{pic:homspace} to its triangles.
The {\bf generalised 6j symbol} $\mathrm{6j}(t,l,b)$  is the cyclic evaluation  of the associated polygon diagram $D$. This defines a linear map
$$
\mathrm{6j}(t): \bigotimes_{f\in\Delta} \Hom(f,l)\to \C, \quad b\mapsto \mathrm{6j}(t,l,b),
$$
where $\Delta$ denotes the set of triangles of $t$,  $\Hom(f,l)$ the  morphism space from \eqref{pic:homspace}  for a triangle $f\in \Delta$ and the tensor product stands for the unordered tensor product of these morphism spaces for all triangles.
\end{definition}

\begin{example}
The generalised 6j symbols for the tetrahedra in Figure \ref{fig:dectet}  are the cyclic evaluations of  
\begin{align}\label{fig:diagramproj}
\begin{tikzpicture}[scale=.35]
\draw[line width=1.5pt, color=blue] (-4,-1)--(0,-6) node[anchor=north]{$m$}--(4,-1);
\draw[line width=1.5pt, color=violet] (-4,-1)--(-8,4) node[anchor=east]{$p$}--(0,4);
\draw[line width=1.5pt, color=red] (0,4)--(8,4)node[anchor=west]{$n$}--(4,-1) ;
\draw[line width=1pt, dashed] (-1,0)--(0,4) node[sloped, pos=0.7, allow upside down]{\arrowIn} node[midway, anchor=south west]{$H$};
\draw[line width=1pt, dashed] (-4,-1)--(-1,0) node[sloped, pos=0.5, allow upside down]{\arrowIn} node[midway, anchor=north]{$F$};
\draw[line width=1pt, dashed] (-1,0)--(4,-1) node[sloped, pos=0.5, allow upside down]{\arrowIn} node[midway, anchor=north]{$G$};
\draw[fill=white, color=white] (-.7,1.4) circle(.3);
\draw[line width=.5pt] (-4,-1)--(-2,2) node[sloped, pos=0.5, allow upside down]{\arrowOut} node[midway, anchor=east]{$i\,$};
\draw[line width=.5pt] (-2,2)--(0,4) node[sloped, pos=0.5, allow upside down]{\arrowOut} node[midway, anchor=east]{$k\,$};
\draw[line width=.5pt] (-2,2)--(4,-1) node[sloped, pos=0.5, allow upside down]{\arrowOut} node[midway, anchor=south]{$j$};
\draw[fill=black, color=black] (-1,0) circle (.2) node[anchor=south east]{$\nu$};
\draw[fill=black, color=black] (-2,2) circle (.2) node[anchor=east]{$\alpha$};
\draw[fill=red, color=red] (0,4) circle (.2) node[anchor=south]{$\gamma$};
\draw[fill=blue, color=blue] (-4,-1) circle (.2) node[anchor=north east]{$\beta$};
\draw[fill=blue, color=blue] (4,-1) circle (.2) node[anchor=north west]{$\delta$};
\end{tikzpicture}
\qquad
\begin{tikzpicture}[scale=.35]
\draw[line width=1.5pt, color=violet] (-6,0)--(-6,6) node[anchor=south east]{$p$}--(0,6);
\draw[line width=1.5pt, color=cyan] (0,6)--(6,6) node[anchor=south west]{$q$}--(6,0);
\draw[line width=1.5pt, color=red] (6,0)--(6,-6) node[anchor=north west]{$n$}--(0,-6);
\draw[line width=1.5pt, color=blue] (0,-6)--(-6,-6) node[anchor=north east]{$m$}--(-6,0);
\draw[line width=.5pt, color=gray] (0,6).. controls (3,0).. (0,-6) node[sloped, pos=0.6, allow upside down]{\arrowOut} node[midway, anchor=north west]{$d$};
\draw[fill=white, color=white] (2.3,0) circle(.3);
\draw[fill=white, color=white] (2,-2) circle(.3);
\draw[line width=1pt, dashed] (0,6)--(0,0) node[sloped, pos=0.6, allow upside down]{\arrowIn} node[midway, anchor=west]{$F$};
\draw[line width=1pt, dashed] (6,0)--(0,0) node[sloped, pos=0.5, allow upside down]{\arrowIn} node[midway, anchor=south]{$G$};
\draw[line width=1pt, dashed] (0,0)--(-6,0) node[sloped, pos=0.5, allow upside down]{\arrowIn} node[midway, anchor=south]{$H$};
\draw[line width=1pt, dashed] (0,0)--(0,-6) node[sloped, pos=0.6, allow upside down]{\arrowIn} node[midway, anchor=west]{$K$};
\draw[fill=white, color=white] (0,-2.3) circle(.3);
\draw[line width=.5pt,] (6,0).. controls (0,-3).. (-6,0) node[sloped, pos=0.6, allow upside down]{\arrowOut} node[midway, anchor=north east]{$c\;\;\;$};
\draw[fill=black, color=black] (0,0) circle (.2) node[anchor=south east]{$\nu$};
\draw[fill=violet, color=violet] (0,6) circle (.2) node[anchor=south]{$\alpha$};
\draw[fill=cyan, color=cyan] (6,0) circle (.2) node[anchor=west]{$\beta$};
\draw[fill=blue, color=blue] (0,-6) circle (.2) node[anchor=north]{$\gamma$};
\draw[fill=violet, color=violet] (-6,0) circle (.2) node[anchor=east]{$\delta$};
\end{tikzpicture}
\end{align}
and given by the diagrams
\begin{align}\label{fig:6jgeneral}
\begin{tikzpicture}[scale=.5, baseline=(current bounding box.center)]
\draw[line width=1.5pt, color=blue] (-1,5)--(1,5) node[anchor=west]{$m$};
\draw[line width=1.5pt, color=blue] (-1,-5)--(1,-5) node[anchor=west]{$m$};
\draw[line width=1.5pt, color=blue] (0,5)--(0,4);
\draw[line width=1.5pt, color=violet] (0,4)--(0,-1)node[midway, anchor=west]{$p$};
\draw[line width=1.5pt, color=red] (0,-1)--(0,-4) node[midway, anchor=west]{$n$};
\draw[line width=1.5pt, color=blue] (0,-4)--(0,-5);
\draw[line width=1pt, color=black, style=dashed] (0,4)--(-5,1)  node[midway, anchor=south east]{$F$};
\draw[line width=.5pt, color=black] (0,4)--(-2,1)  node[midway, anchor=west]{$i$};
\draw[line width=1pt, color=black, style=dashed] (-5,1) --(0,-1)  node[midway, anchor=north]{$H$};
\draw[line width=.5pt, color=black] (-2,1) --(0,-1)  node[midway, anchor=south]{$k$};
\draw[color=white, fill=white] (-1.4,-.5) circle (.4);
\draw[line width=1pt, color=black, style=dashed] (-5,1) --(0,-4)  node[midway, anchor=north east]{$G$};
\draw[line width=.5pt, color=black] (-2,1) --(0,-4)  node[midway, anchor=west]{$j$};
\draw[color=blue, fill=blue] (0,4) circle (.15) node[anchor=west]{$\;\beta$};
\draw[color=red, fill=red] (0,-1) circle (.15) node[anchor=west]{$\;\gamma$};
\draw[color=blue, fill=blue] (0,-4) circle (.15) node[anchor=west]{$\;\delta$};
\draw[ fill=black] (-2,1) circle (.15) node[anchor=east]{$\alpha\;$};
\draw[ fill=black] (-5,1) circle (.15) node[anchor=east]{$\nu\;$};
\end{tikzpicture}
\qquad\qquad\qquad\qquad
\begin{tikzpicture}[scale=.5, baseline=(current bounding box.center)]
\draw[line width=1.5pt, color=violet] (-1,5)--(1,5) node[anchor=west]{$p$};
\draw[line width=1.5pt, color=violet] (-1,-5)--(1,-5) node[anchor=west]{$p$};
\draw[line width=1.5pt, color=cyan] (0,4)--(0,1) node[midway, anchor=west]{$q$};
\draw[line width=1.5pt, color=red] (0,1)--(0,-1) node[midway, anchor=west]{$n$};
\draw[line width=1.5pt, color=blue] (0,-4)--(0,-1)node[midway, anchor=west]{$m$};
\draw[line width=1.5pt, color=violet] (0,5)--(0,4);
\draw[line width=1.5pt, color=violet] (0,-4)--(0,-5);
\draw[line width=.5pt, color=gray] (0,4) .. controls (-2,2.5)  and (-2,.5).. (0,-1) node[midway, anchor=west]{$d$};
\draw[color=white, fill=white] (-1.1,.83) circle (.4);
\draw[color=white, fill=white] (-.8,0) circle (.4);
\draw[line width=1pt, color=black, style=dashed] (0,4) --(-5,0)  node[midway, anchor=south east]{$F$};
\draw[line width=1pt, color=black, style=dashed] (0,1) --(-5,0) node[midway, anchor=south]{$\;\;G$};
\draw[line width=1pt, color=black, style=dashed] (-5,0) --(0,-1)  node[midway, anchor=north]{$\;\;K$};
\draw[line width=1pt, color=black, style=dashed] (-5,0) --(0,-4) node[midway, anchor=north east]{$H$};
\draw[color=white, fill=white] (-1.2,-.83) circle (.4);
\draw[line width=.5pt, color=black] (0,-4) .. controls (-2,-2.5)  and (-2,-.5).. (0,1) node[midway, anchor=west]{$c$};
\draw[color=violet, fill=violet] (0,4) circle (.15) node[anchor=west]{$\;\alpha$};
\draw[color=cyan, fill=cyan] (0,1) circle (.15) node[anchor=west]{$\;\beta$};
\draw[color=blue, fill=blue] (0,-1) circle (.15) node[anchor=west]{$\;\gamma$};
\draw[color=violet, fill=violet] (0,-4) circle (.15) node[anchor=west]{$\;\delta$};
\draw[ fill=black] (-5,0) circle (.15) node[anchor=east]{$\nu\;$};
\end{tikzpicture}
\end{align}
\end{example}

Definition \ref{def:gen6j} also defines the usual 6j symbols of tetrahedra labeled by a spherical fusion category $\mac$  that occur in the Turaev-Viro-Barrett-Westbury state sum. 
However, they are not defined as 6j symbols of combinatorial tetrahedra, as in \cite{BW}, but as 6j symbols of \emph{oriented} combinatorial tetrahedra. 

The 6j symbols from Definition \ref{def:gen6j} 
encode the natural transformations on the defect polygon and the coherence isomorphisms for the defect data. If the former are trivial, we obtain the following cases:

\begin{itemize}
\item If  $\map=\man=\mam$, $F=G=H=\id_\mam$ and $\nu=\id_{\id_\mam}$ for the tetrahedron in Figure \ref{fig:dectet} (a) and the associated 6j symbol in  \eqref{fig:6jgeneral}, we obtain a number that characterises the isomorphism $c_{j,k,p}: (j\oo k)\rhd p\to j\rhd (k\rhd p)$ from Definition \ref{def:modulecat}. 
It follows from \eqref{eq:projincl} that it is given by
\begin{align}\label{eq:c6j}
\begin{tikzpicture}[scale=.38, baseline=(current bounding box.center)]
\node at (-11,0) {$c_{j,k,p}=\sum_{\substack{{i,m,n}\\{\alpha,\beta,\gamma,\delta}}}$};
\node at (-6,1) {$\dim(i)$};
\node at (-6,0) {$\dim(m)$};
\node at (-6,-1) {$\dim(n)$};
\draw[line width=1.5pt, color=blue] (-1,5)--(1,5) node[anchor=west]{$m$};
\draw[line width=1.5pt, color=blue] (-1,-5)--(1,-5) node[anchor=west]{$m$};
\draw[line width=1.5pt, color=blue] (0,5)--(0,-5);
\node at (0,1.5)[anchor=west, color=blue] {$p$};
\node at (0,-1.5)[anchor=west, color=blue] {$n$};
\draw[line width=.5pt, color=black] (0,3) --(-3,1)  node[midway, anchor=south east]{$i$};
\draw[line width=.5pt, color=black] (-3,1)--(0,0)  node[midway, anchor=south west]{$k$};
\draw[line width=.5pt, color=black] (-3,1)--(0,-3)  node[midway, anchor=north east]{$j$};
\draw[color=blue, fill=blue] (0,3) circle (.2) node[anchor=west]{$\;\beta$};
\draw[color=blue, fill=blue] (0,0) circle (.2) node[anchor=west]{$\;\gamma$};
\draw[color=blue, fill=blue] (0,-3) circle (.2) node[anchor=west]{$\;\delta$};
\draw[color=black, fill=black] (-3,1) circle (.2) node[anchor=east]{$\alpha\;$};
\begin{scope}[shift={(7,0)}]
\draw[line width=.5pt] (-4,5)node[anchor=south]{$j$}--(-3,4);
\draw[line width=.5pt] (-2,5)node[anchor=south]{$k$}--(-3,4);
\draw[line width=.5pt] (-3,4)--(0,1) node[midway, anchor=north east]{$i$};
\draw[line width=.5pt] (0,-1)--(-4,-5) node[anchor=north]{$j$};
\draw[line width=.5pt] (0,-3)--(-2,-5) node[anchor=north]{$k$};
\draw[color=blue, line width=1.5pt] (0,5)node[anchor=south]{$p$}--(0,-5) node[anchor=north]{$p$} node[midway, anchor=west]{$m$};
\node at (0,-2) [color=blue, anchor=west]{$n$};
\draw[fill=black] (-3,4) circle (.2) node[anchor=east]{$\alpha$};
\draw[fill=blue, color=blue] (0,1) circle (.2) node[anchor=west]{$\;\beta$};
\draw[fill=blue, color=blue] (0,-1) circle (.2) node[anchor=west]{$\;\delta$};
\draw[fill=blue, color=blue] (0,-3) circle (.2) node[anchor=west]{$\;\gamma$};
\end{scope}
\end{tikzpicture}
\end{align}
where the sum runs over  simple objects $i\in I_\mac$, $m,p\in I_\mam$ and  bases of the associated  morphism spaces. 

There is an analogous generalised 6j symbol for $\mad$-right module categories, which arises from a tetrahedron analogous to Figure \ref{fig:dectet} (a), but with a vertex in the region labeled by $\mad$ and with the orientations of the coloured edges reversed. 

\item In particular, if  $\mam$ is a spherical fusion category $\mac$ as a left module category over itself, we obtain the usual 6j symbol for the spherical fusion category $\mac$ that encodes the associator on simple objects:
\begin{align}\label{eq:a6j}
\begin{tikzpicture}[scale=.38, baseline=(current bounding box.center)]
\node at (-11,0) {$a_{j,k,p}=\sum_{\substack{{i,m,n}\\{\alpha,\beta,\gamma,\delta}}}$};
\node at (-6,1) {$\dim(i)$};
\node at (-6,0) {$\dim(m)$};
\node at (-6,-1) {$\dim(n)$};
\draw[line width=.5pt] (-1,5)--(1,5) node[anchor=west]{$m$};
\draw[line width=.5pt, ] (-1,-5)--(1,-5) node[anchor=west]{$m$};
\draw[line width=.5pt,] (0,5)--(0,-5);
\node at (0,1.5)[anchor=west, ] {$p$};
\node at (0,-1.5)[anchor=west, ] {$n$};
\draw[line width=.5pt, color=black] (0,3) --(-3,1)  node[midway, anchor=south east]{$i$};
\draw[line width=.5pt, color=black] (-3,1)--(0,0)  node[midway, anchor=south west]{$k$};
\draw[line width=.5pt, color=black] (-3,1)--(0,-3)  node[midway, anchor=north east]{$j$};
\draw[color=black, fill=black] (0,3) circle (.2) node[anchor=west]{$\;\beta$};
\draw[color=black, fill=black] (0,0) circle (.2) node[anchor=west]{$\;\gamma$};
\draw[color=black, fill=black] (0,-3) circle (.2) node[anchor=west]{$\;\delta$};
\draw[color=black, fill=black] (-3,1) circle (.2) node[anchor=east]{$\alpha\;$};
\begin{scope}[shift={(7,0)}]
\draw[line width=.5pt] (-4,5)node[anchor=south]{$j$}--(-3,4);
\draw[line width=.5pt] (-2,5)node[anchor=south]{$k$}--(-3,4);
\draw[line width=.5pt] (-3,4)--(0,1) node[midway, anchor=north east]{$i$};
\draw[line width=.5pt] (0,-1)--(-4,-5) node[anchor=north]{$j$};
\draw[line width=.5pt] (0,-3)--(-2,-5) node[anchor=north]{$k$};
\draw[color=black, line width=.5pt] (0,5)node[anchor=south]{$p$}--(0,-5) node[anchor=north]{$p$} node[midway, anchor=west]{$m$};
\node at (0,-2) [color=black, anchor=west]{$n$};
\draw[fill=black] (-3,4) circle (.2) node[anchor=east]{$\alpha$};
\draw[fill=black, color=black] (0,1) circle (.2) node[anchor=west]{$\;\beta$};
\draw[fill=black, color=black] (0,-1) circle (.2) node[anchor=west]{$\;\delta$};
\draw[fill=black, color=black] (0,-3) circle (.2) node[anchor=west]{$\;\gamma$};
\end{scope}
\end{tikzpicture}
\end{align}

\item Setting $\maq=\map=\man=\mam$,  $F=G=H=K=\id_\mam$ and $\nu=\id_{\id_\mam}$   in Figure \ref{fig:dectet} (b) and  \eqref{fig:6jgeneral}  yields the 6j symbol that characterises the isomorphism $b_{c,n,d}: (c\rhd n)\lhd d\to c\rhd (n\lhd d)$ from Definition \ref{def:modulecat}: 
\begin{align}\label{eq:6jb}
\begin{tikzpicture}[scale=.38, baseline=(current bounding box.center)]
\node at (-11,0) {$b_{c,n,d}=\sum_{\substack{{m,p,q}\\{\alpha,\beta,\gamma,\delta}}}$};
\node at (-6,1) {$\dim(m)$};
\node at (-6,0) {$\dim(p)$};
\node at (-6,-1) {$\dim(q)$};
\draw[line width=1.5pt, color=blue] (-1,5)--(1,5) node[anchor=west]{$p$};
\draw[line width=1.5pt, color=blue] (-1,-5)--(1,-5) node[anchor=west]{$p$};
\draw[line width=1.5pt, color=blue] (0,5)--(0,-5);
\draw[line width=.5pt, color=gray] (0,3)  .. controls (-3,2) and (-3,0) .. (0,-1)  node[midway, anchor=south east]{$d$};
\draw[color=white, fill=white] (-2,0) circle (.4);
\draw[line width=.5pt, color=black] (0,1)  .. controls (-3,0) and (-3,-2) .. (0,-3)  node[midway, anchor=south east]{$c$};
\node at (0,2)[anchor=west, color=blue] {$q$};
\node at (0,0)[anchor=west, color=blue] {$n$};
\node at (0,-2)[anchor=west, color=blue] {$m$};
\draw[color=blue, fill=blue] (0,3) circle (.2) node[anchor=west]{$\;\alpha$};
\draw[color=blue, fill=blue] (0,1) circle (.2) node[anchor=west]{$\;\beta$};
\draw[color=blue, fill=blue] (0,-1) circle (.2) node[anchor=west]{$\;\gamma$};
\draw[color=blue, fill=blue] (0,-3) circle (.2) node[anchor=west]{$\;\delta$};
\begin{scope}[shift={(7,0)}]
\draw[line width=.5pt, color=gray] (-4,5) node[anchor=south]{$d$}--(0,1);
\draw[line width=.5pt, color=black] (-2,5) node[anchor=south]{$c$}--(0,3);
\draw[line width=1.5pt, color=blue] (0,5) node[anchor=south]{$n$}--(0,-5)  node[anchor=north]{$n$};
\node at (0,2) [anchor=west, color=blue]{$q$};
\node at (0,0) [anchor=west, color=blue]{$p$};
\node at (0,-2) [anchor=west, color=blue]{$m$};
\draw[line width=.5pt, color=black] (-4,-5) node[anchor=north]{$c$}--(0,-1);
\draw[line width=.5pt, color=gray] (-2,-5) node[anchor=north]{$d$}--(0,-3);
\draw[color=blue, fill=blue] (0,3) circle (.2) node[anchor=west]{$\;\beta$};
\draw[color=blue, fill=blue] (0,1) circle (.2)node[anchor=west]{$\;\alpha$};
\draw[color=blue, fill=blue] (0,-1) circle (.2)node[anchor=west]{$\;\delta$};
\draw[color=blue, fill=blue] (0,-3) circle (.2)node[anchor=west]{$\;\gamma$};
\end{scope}
\end{tikzpicture}
\end{align}

\item  In particular, for $\mam=\mac=\mad$ identity  \eqref{eq:6jb}  yields  the usual 6j symbol for a spherical fusion category $\mac$ that encodes its associator, but in  
 a  diagrammatic representation different from \eqref{eq:a6j}
  \begin{align}\label{eq:6ja2}
\begin{tikzpicture}[scale=.38, baseline=(current bounding box.center)]
\node at (-11,0) {$a_{c,n,d}=\sum_{\substack{{m,p,q}\\{\alpha,\beta,\gamma,\delta}}}$};
\node at (-6,1) {$\dim(m)$};
\node at (-6,0) {$\dim(p)$};
\node at (-6,-1) {$\dim(q)$};
\draw[line width=.5pt, color=black] (-1,5)--(1,5) node[anchor=west]{$p$};
\draw[line width=.5pt, color=black] (-1,-5)--(1,-5) node[anchor=west]{$p$};
\draw[line width=.5pt, color=black] (0,5)--(0,-5);
\draw[line width=.5pt, color=black] (0,3)  .. controls (-3,2) and (-3,0) .. (0,-1)  node[midway, anchor=south east]{$d$};
\draw[color=white, fill=white] (-2,0) circle (.4);
\draw[line width=.5pt, color=black] (0,1)  .. controls (-3,0) and (-3,-2) .. (0,-3)  node[midway, anchor=south east]{$c$};
\node at (0,2)[anchor=west, color=black] {$q$};
\node at (0,0)[anchor=west, color=black] {$n$};
\node at (0,-2)[anchor=west, color=black] {$m$};
\draw[color=black, fill=black] (0,3) circle (.2) node[anchor=west]{$\;\alpha$};
\draw[color=black, fill=black] (0,1) circle (.2) node[anchor=west]{$\;\beta$};
\draw[color=black, fill=black] (0,-1) circle (.2) node[anchor=west]{$\;\gamma$};
\draw[color=black, fill=black] (0,-3) circle (.2) node[anchor=west]{$\;\delta$};
\begin{scope}[shift={(7,0)}]
\draw[line width=.5pt, color=black] (-4,5) node[anchor=south]{$d$}--(0,1);
\draw[line width=.5pt, color=black] (-2,5) node[anchor=south]{$c$}--(0,3);
\draw[line width=.5pt, color=black] (0,5) node[anchor=south]{$n$}--(0,-5)  node[anchor=north]{$n$};
\node at (0,2) [anchor=west, color=black]{$q$};
\node at (0,0) [anchor=west, color=black]{$p$};
\node at (0,-2) [anchor=west, color=black]{$m$};
\draw[line width=.5pt, color=black] (-4,-5) node[anchor=north]{$c$}--(0,-1);
\draw[line width=.5pt, color=black] (-2,-5) node[anchor=north]{$d$}--(0,-3);
\draw[color=black, fill=black] (0,3) circle (.2) node[anchor=west]{$\;\beta$};
\draw[color=black, fill=black] (0,1) circle (.2)node[anchor=west]{$\;\alpha$};
\draw[color=black, fill=black] (0,-1) circle (.2)node[anchor=west]{$\;\delta$};
\draw[color=black, fill=black] (0,-3) circle (.2)node[anchor=west]{$\;\gamma$};
\end{scope}
\end{tikzpicture}
\end{align}

\item If we set $\maq=\map$, $\man=\mam$,  $G=H: \mam\to\map$, $F=\id_\map$, $K=\id_\mam$, $\nu=\id_{G}$ and $c=e$ in Figures \ref{fig:dectet} (b) and \eqref{fig:6jgeneral}, we obtain the 6j symbol for the isomorphism $t^G_{n,d}: G(n)\lhd d\to G(n\lhd d)$ from Definition \ref{def:modulefunc}:
\begin{align}
\begin{tikzpicture}[scale=.38, baseline=(current bounding box.center)]
\node at (-11,0) {$t^G_{n,d}=\sum_{\substack{{m,p,q}\\{\alpha,\beta,\gamma,\delta}}}$};
\node at (-6,1) {$\dim(m)$};
\node at (-6,0) {$\dim(p)$};
\node at (-6,-1) {$\dim(q)$};
\draw[line width=1.5pt, color=violet] (-1,5)--(1,5) node[anchor=west]{$p$};
\draw[line width=1.5pt, color=violet] (-1,-5)--(1,-5) node[anchor=west]{$p$};
\draw[line width=1.5pt, color=violet] (0,5)--(0,1);
\draw[line width=1.5pt, color=violet] (0,-3)--(0,-5);
\draw[line width=1.5pt, color=blue] (0,1)--(0,-3);
\draw[line width=.5pt, color=gray] (0,3)  .. controls (-3,2) and (-3,0) .. (0,-1)  node[midway, anchor=south east]{$d$};
\draw[color=white, fill=white] (-2,0) circle (.4);
\draw[line width=1pt, color=black, dashed] (0,1)  .. controls (-3,0) and (-3,-2) .. (0,-3)  node[midway, anchor=south east]{$G$};
\node at (0,2)[anchor=west, color=violet] {$q$};
\node at (0,0)[anchor=west, color=blue] {$n$};
\node at (0,-2)[anchor=west, color=blue] {$m$};
\draw[color=violet, fill=violet] (0,3) circle (.2) node[anchor=west]{$\;\alpha$};
\draw[color=violet, fill=violet] (0,1) circle (.2) node[anchor=west]{$\;\beta$};
\draw[color=blue, fill=blue] (0,-1) circle (.2) node[anchor=west]{$\;\gamma$};
\draw[color=violet, fill=violet] (0,-3) circle (.2) node[anchor=west]{$\;\delta$};
\begin{scope}[shift={(7,0)}]
\draw[line width=.5pt, color=gray] (-4,5) node[anchor=south]{$d$}--(0,1);
\draw[line width=1pt, color=black, dashed] (-2,5) node[anchor=south]{$G$}--(0,3);
\draw[line width=1.5pt, color=blue] (0,5) node[anchor=south]{$n$}--(0,3);  
\draw[line width=1.5pt, color=blue] (0,-1)-- (0,-5)  node[anchor=north]{$n$};
\draw[line width=1.5pt, color=violet] (0,-1)-- (0,3) ;
\node at (0,2) [anchor=west, color=violet]{$q$};
\node at (0,0) [anchor=west, color=violet]{$p$};
\node at (0,-2) [anchor=west, color=blue]{$m$};
\draw[line width=1pt, color=black, dashed] (-4,-5) node[anchor=north]{$G$}--(0,-1);
\draw[line width=.5pt, color=gray] (-2,-5) node[anchor=north]{$d$}--(0,-3);
\draw[color=violet, fill=violet] (0,3) circle (.2) node[anchor=west]{$\;\beta$};
\draw[color=violet, fill=violet] (0,1) circle (.2)node[anchor=west]{$\;\alpha$};
\draw[color=violet, fill=violet] (0,-1) circle (.2)node[anchor=west]{$\;\delta$};
\draw[color=blue, fill=blue] (0,-3) circle (.2)node[anchor=west]{$\;\gamma$};
\end{scope}
\end{tikzpicture}
\end{align}

\item Setting instead $\map=\mam$, $\maq=\man$, $F=K: \man\to\mam$, $G=\id_\man$, $H=\id_\mam$, $\nu=\id_{F}$ and $d=e$  in Figures \ref{fig:dectet} (b) and  \eqref{fig:6jgeneral}  yields the 6j symbol for the isomorphism $s^F_{c,n}: F(c\rhd n)\to c\rhd F(n)$ from Definition \ref{def:modulefunc}: 
\begin{align}
\begin{tikzpicture}[scale=.38, baseline=(current bounding box.center)]
\node at (-11,0) {$s^F_{c,n}=\sum_{\substack{{m,p,q}\\{\alpha,\beta,\gamma,\delta}}}$};
\node at (-6,1) {$\dim(m)$};
\node at (-6,0) {$\dim(p)$};
\node at (-6,-1) {$\dim(q)$};
\draw[line width=1.5pt, color=blue] (-1,5)--(1,5) node[anchor=west]{$p$};
\draw[line width=1.5pt, color=blue] (-1,-5)--(1,-5) node[anchor=west]{$p$};
\draw[line width=1.5pt, color=blue] (0,5)--(0,3);
\draw[line width=1.5pt, color=blue] (0,-1)--(0,-5);
\draw[line width=1.5pt, color=red] (0,-1)--(0,3);
\draw[line width=1pt, dashed] (0,3)  .. controls (-3,2) and (-3,0) .. (0,-1)  node[midway, anchor=south east]{$F$};
\draw[color=white, fill=white] (-2,0) circle (.4);
\draw[line width=.5pt, color=black] (0,1)  .. controls (-3,0) and (-3,-2) .. (0,-3)  node[midway, anchor=south east]{$c$};
\node at (0,2)[anchor=west, color=red] {$q$};
\node at (0,0)[anchor=west, color=red] {$n$};
\node at (0,-2)[anchor=west, color=blue] {$m$};
\draw[color=blue, fill=blue] (0,3) circle (.2) node[anchor=west]{$\;\alpha$};
\draw[color=red, fill=red] (0,1) circle (.2) node[anchor=west]{$\;\beta$};
\draw[color=blue, fill=blue] (0,-1) circle (.2) node[anchor=west]{$\;\gamma$};
\draw[color=blue, fill=blue] (0,-3) circle (.2) node[anchor=west]{$\;\delta$};
\begin{scope}[shift={(7,0)}]
\draw[line width=1pt, dashed] (-4,5) node[anchor=south]{$F$}--(0,1);
\draw[line width=.5pt, color=black] (-2,5) node[anchor=south]{$c$}--(0,3);
\draw[line width=1.5pt, color=red] (0,5) node[anchor=south]{$n$}--(0,1);
\draw[line width=1.5pt, color=red] (0,-3)--(0,-5)  node[anchor=north]{$n$};
\draw[line width=1.5pt, color=blue] (0,-3)--(0,1);
\node at (0,2) [anchor=west, color=red]{$q$};
\node at (0,0) [anchor=west, color=blue]{$p$};
\node at (0,-2) [anchor=west, color=blue]{$m$};
\draw[line width=.5pt, color=black] (-4,-5) node[anchor=north]{$c$}--(0,-1);
\draw[line width=1pt, dashed] (-2,-5) node[anchor=north]{$F$}--(0,-3);
\draw[color=red, fill=red] (0,3) circle (.2) node[anchor=west]{$\;\beta$};
\draw[color=blue, fill=blue] (0,1) circle (.2)node[anchor=west]{$\;\alpha$};
\draw[color=blue, fill=blue] (0,-1) circle (.2)node[anchor=west]{$\;\delta$};
\draw[color=blue, fill=blue] (0,-3) circle (.2)node[anchor=west]{$\;\gamma$};
\end{scope}
\end{tikzpicture}
\end{align}
\end{itemize}
In all cases the diagrammatic expressions for the inverses of the coherence isomorphisms 
are obtained  by reflecting  the diagrams on a horizontal axis. This corresponds to evaluating the associated polygon diagrams counterclockwise and to an orientation reversal of the associated defect surface. This is possible, because all of the bimodule natural transformations in these tetrahedra are bimodule natural isomorphisms and hence their directions can be reversed, see the discussion before Definition \ref{def:oppoly}.

We now investigate how the 6j symbols depend on the orientations of  edges  labeled by objects  in spherical fusion categories. 
Recall that reversing the orientation of such an edge does not affect  the orientations of the adjacent  triangles and tetrahedra. 
Although it induces an odd permutation of the vertex numbers, which may be viewed as an orientation reversal for the associated \emph{combinatorial} tetrahedron, the orientations in our formalism are fixed as part of the data. 

Reversing the orientation of such an edge  and replacing its label by its dual induces natural isomorphisms between the morphism spaces for the adjacent triangles. 
 
 For a $\mac$-module functor  $F:\mam\to\man$ and the first  morphism space from \eqref{pic:homspace}, we obtain
 the following isomorphism
that  is natural in $c\in \Ob\mac$, $m\in\Ob \mam$ and $n\in \Ob\man$
\begin{align}\label{eq:reverse edge}
&\begin{tikzpicture}[scale=.45, baseline=(current bounding box.center)]
\draw[line width=1.5pt, color=red] (0,3)--(3,0) node[midway, anchor=south west]{$n$} node[sloped, pos=0.5, allow upside down]{\arrowIn};
\draw[line width=1.5pt, color=blue] (0,3)--(-3,0) node[midway, anchor=south east]{$m$}node[sloped, pos=0.5, allow upside down]{\arrowIn};
\draw[line width=.5pt] (-3,0)--(3,0) node[midway, anchor=north]{$c$} node[sloped, pos=0.5, allow upside down]{\arrowOut};
\draw[line width=1pt, stealth-] (0,.8)--(0,1.4) node[midway,anchor=west]{$F$};
\node at (0,-2) {$\Hom_\man(F(c\rhd m), n)$};
\begin{scope}[shift={(7,0)}]
\draw[line width=1.5pt, color=blue] (0,2) node[anchor=south]{$m$}--(0,0);
\draw[line width=1.5pt, color=red] (0,-2) node[anchor=north]{$n$}--(0,0);
\draw[line width=.5pt, color=black] (-1,2) node[anchor=south]{$c$}--(0,0);
\draw[line width=1pt, color=black, dashed] (-2,2) node[anchor=south]{$F$}--(0,0);
\draw[fill=red, color=red] (0,0) circle (.2) node[anchor=west]{$\alpha$};
 \end{scope}
\end{tikzpicture}
&
&\longrightarrow
&
&\begin{tikzpicture}[scale=.45, baseline=(current bounding box.center)]
\draw[line width=1.5pt, color=red] (0,3)--(3,0) node[midway, anchor=south west]{$n$} node[sloped, pos=0.5, allow upside down]{\arrowIn};
\draw[line width=1.5pt, color=blue] (0,3)--(-3,0) node[midway, anchor=south east]{$m$}node[sloped, pos=0.5, allow upside down]{\arrowIn};
\draw[line width=.5pt] (3,0)--(-3,0) node[midway, anchor=north]{$c^*$} node[sloped, pos=0.5, allow upside down]{\arrowOut};
\draw[line width=1pt, stealth-] (0,.8)--(0,1.4) node[midway,anchor=west]{$F$};
\node at (0,-2) {$\Hom_\man(F(m), c^*\rhd n)$};
\begin{scope}[shift={(8,0)}]
\draw[line width=1.5pt, color=blue] (0,2) node[anchor=south]{$m$}--(0,0);
\draw[line width=1.5pt, color=red] (0,-2) node[anchor=north]{$n$}--(0,0);
\draw[line width=1pt, color=black, dashed] (-2,2) node[anchor=south]{$F$}--(0,0);
\draw[color=white, fill=white]  (-1.4,1.4) circle (.4);
\draw[line width=.5pt, color=black] (0,0).. controls (-.5,2) and (-2,2).. (-2,0);
\draw[line width=.5pt, color=black] (-2,-2) node[anchor=north]{$c$} -- (-2,0) node[sloped, pos=0.5, allow upside down]{\arrowOut} ;
\draw[fill=red, color=red] (0,0) circle (.2) node[anchor=west]{$\alpha$};
 \end{scope}
\end{tikzpicture}\\
&\begin{tikzpicture}[scale=.45, baseline=(current bounding box.center)]
\draw[line width=1.5pt, color=red] (0,3)--(3,0) node[midway, anchor=south west]{$n$} node[sloped, pos=0.5, allow upside down]{\arrowIn};
\draw[line width=1.5pt, color=blue] (0,3)--(-3,0) node[midway, anchor=south east]{$m$}node[sloped, pos=0.5, allow upside down]{\arrowIn};
\draw[line width=.5pt] (-3,0)--(3,0) node[midway, anchor=north]{$c$} node[sloped, pos=0.5, allow upside down]{\arrowOut};
\draw[line width=1pt, stealth-] (0,.8)--(0,1.4) node[midway,anchor=west]{$F$};
\node at (0,-2) {$\Hom_\man(F(c\rhd m), n)$};
\begin{scope}[shift={(7,0)}]
\draw[line width=1.5pt, color=blue] (0,2) node[anchor=south]{$m$}--(0,0);
\draw[line width=1.5pt, color=red] (0,-2) node[anchor=north]{$n$}--(0,0);
\draw[line width=1pt, color=black, dashed] (-2,2) node[anchor=south]{$F$}--(0,0);
\draw[color=white, fill=white] (-1,1) circle (.3);
\draw[line width=.5pt, color=black] (-1,2) node[anchor=south]{$c$}--(-1,0);
\draw[line width=.5pt, color=black] (-1,0) .. controls  (-1,-1) and (-.25, -1) .. (0,0) node[sloped, pos=0.8, allow upside down]{\arrowOut};
\draw[fill=red, color=red] (0,0) circle (.2) node[anchor=west]{$\beta$};
 \end{scope}
\end{tikzpicture}
&
&\longleftarrow
&
&\begin{tikzpicture}[scale=.45, baseline=(current bounding box.center)]
\draw[line width=1.5pt, color=red] (0,3)--(3,0) node[midway, anchor=south west]{$n$} node[sloped, pos=0.5, allow upside down]{\arrowIn};
\draw[line width=1.5pt, color=blue] (0,3)--(-3,0) node[midway, anchor=south east]{$m$}node[sloped, pos=0.5, allow upside down]{\arrowIn};
\draw[line width=.5pt] (3,0)--(-3,0) node[midway, anchor=north]{$c^*$} node[sloped, pos=0.5, allow upside down]{\arrowOut};
\draw[line width=1pt, stealth-] (0,.8)--(0,1.4) node[midway,anchor=west]{$F$};
\node at (0,-2) {$\Hom_\man(F(m), c^*\rhd n)$};
\begin{scope}[shift={(8,0)}]
\draw[line width=1.5pt, color=blue] (0,2) node[anchor=south]{$m$}--(0,0);
\draw[line width=1.5pt, color=red] (0,-2) node[anchor=north]{$n$}--(0,0);
\draw[line width=1pt, color=black, dashed] (-2,2) node[anchor=south]{$F$}--(0,0);
\draw[line width=.5pt, color=black] (-2,-2) node[anchor=north]{$c$} -- (0,0) node[sloped, pos=0.5, allow upside down]{\arrowOut} ;
\draw[fill=red, color=red] (0,0) circle (.2) node[anchor=west]{$\beta$};
 \end{scope}
\end{tikzpicture}\nonumber
\end{align}
For a $\mac$-module functor  $G:\man\to\mam$ and the third  morphism space from  \eqref{pic:homspace} we have the isomorphism 
\begin{align}\label{eq:reverse edge2}
&\begin{tikzpicture}[scale=.45, baseline=(current bounding box.center)]
\draw[line width=1.5pt, color=red] (0,3)--(3,0) node[midway, anchor=south west]{$n$} node[sloped, pos=0.5, allow upside down]{\arrowIn};
\draw[line width=1.5pt, color=blue] (0,3)--(-3,0) node[midway, anchor=south east]{$m$}node[sloped, pos=0.5, allow upside down]{\arrowIn};
\draw[line width=.5pt] (-3,0)--(3,0) node[midway, anchor=north]{$c$} node[sloped, pos=0.5, allow upside down]{\arrowOut};
\draw[line width=1pt, -stealth] (0,.8)--(0,1.4) node[midway,anchor=west]{$G$};
\node at (0,-2) {$\Hom_\man(c\rhd m, G(n))$};
\begin{scope}[shift={(7,0)}]
\draw[line width=1.5pt, color=blue] (0,2) node[anchor=south]{$m$}--(0,0);
\draw[line width=1.5pt, color=red] (0,-2) node[anchor=north]{$n$}--(0,0);
\draw[line width=.5pt, color=black] (-2,2) node[anchor=south]{$c$}--(0,0);
\draw[line width=1pt, color=black, dashed] (-2,-2) node[anchor=north]{$G$}--(0,0);
\draw[fill=red, color=red] (0,0) circle (.2) node[anchor=west]{$\alpha$};
 \end{scope}
\end{tikzpicture}
&
&\longrightarrow
&
&\begin{tikzpicture}[scale=.45, baseline=(current bounding box.center)]
\draw[line width=1.5pt, color=red] (0,3)--(3,0) node[midway, anchor=south west]{$n$} node[sloped, pos=0.5, allow upside down]{\arrowIn};
\draw[line width=1.5pt, color=blue] (0,3)--(-3,0) node[midway, anchor=south east]{$m$}node[sloped, pos=0.5, allow upside down]{\arrowIn};
\draw[line width=.5pt] (3,0)--(-3,0) node[midway, anchor=north]{$c^*$} node[sloped, pos=0.5, allow upside down]{\arrowOut};
\draw[line width=1pt, -stealth] (0,.8)--(0,1.4) node[midway,anchor=west]{$G$};
\node at (0,-2) {$\Hom_\man(m, G(c^*\rhd n))$};
\begin{scope}[shift={(8,0)}]
\draw[line width=1.5pt, color=blue] (0,2) node[anchor=south]{$m$}--(0,0);
\draw[line width=1.5pt, color=red] (0,-2) node[anchor=north]{$n$}--(0,0);
\draw[line width=1pt, color=black, dashed] (-2,-2) node[anchor=north]{$G$}--(0,0);
\draw[color=white, fill=white]  (-1,-1) circle (.4);
\draw[line width=.5pt, color=black] (-1,0).. controls (-1,1) and (-.25,1).. (0,0);
\draw[line width=.5pt, color=black] (-1,-2) node[anchor=north]{$c$} -- (-1,0) node[sloped, pos=0.7, allow upside down]{\arrowOut} ;
\draw[fill=red, color=red] (0,0) circle (.2) node[anchor=west]{$\alpha$};
 \end{scope}
\end{tikzpicture}\\
&\begin{tikzpicture}[scale=.45, baseline=(current bounding box.center)]
\draw[line width=1.5pt, color=red] (0,3)--(3,0) node[midway, anchor=south west]{$n$} node[sloped, pos=0.5, allow upside down]{\arrowIn};
\draw[line width=1.5pt, color=blue] (0,3)--(-3,0) node[midway, anchor=south east]{$m$}node[sloped, pos=0.5, allow upside down]{\arrowIn};
\draw[line width=.5pt] (-3,0)--(3,0) node[midway, anchor=north]{$c$} node[sloped, pos=0.5, allow upside down]{\arrowOut};
\draw[line width=1pt, -stealth] (0,.8)--(0,1.4) node[midway,anchor=west]{$G$};
\node at (0,-2) {$\Hom_\man(c\rhd m, G( n))$};
\begin{scope}[shift={(7,0)}]
\draw[line width=1.5pt, color=blue] (0,2) node[anchor=south]{$m$}--(0,0);
\draw[line width=1.5pt, color=red] (0,-2) node[anchor=north]{$n$}--(0,0);
\draw[line width=1pt, color=black, dashed] (-2,-2) node[anchor=north]{$G$}--(0,0);
\draw[color=white, fill=white] (-1.5,-1.5) circle (.3);
\draw[line width=.5pt, color=black] (-2,2) node[anchor=south]{$c$}--(-2,0);
\draw[line width=.5pt, color=black] (-2,0) .. controls  (-2,-2) and (-.5, -2) .. (0,0) node[sloped, pos=0.8, allow upside down]{\arrowOut};
\draw[fill=red, color=red] (0,0) circle (.2) node[anchor=west]{$\beta$};
 \end{scope}
\end{tikzpicture}
&
&\longleftarrow
&
&\begin{tikzpicture}[scale=.45, baseline=(current bounding box.center)]
\draw[line width=1.5pt, color=red] (0,3)--(3,0) node[midway, anchor=south west]{$n$} node[sloped, pos=0.5, allow upside down]{\arrowIn};
\draw[line width=1.5pt, color=blue] (0,3)--(-3,0) node[midway, anchor=south east]{$m$}node[sloped, pos=0.5, allow upside down]{\arrowIn};
\draw[line width=.5pt] (3,0)--(-3,0) node[midway, anchor=north]{$c^*$} node[sloped, pos=0.5, allow upside down]{\arrowOut};
\draw[line width=1pt, -stealth] (0,.8)--(0,1.4) node[midway,anchor=west]{$G$};
\node at (0,-2) {$\Hom_\man(m, G(c^*\rhd n))$};
\begin{scope}[shift={(8,0)}]
\draw[line width=1.5pt, color=blue] (0,2) node[anchor=south]{$m$}--(0,0);
\draw[line width=1.5pt, color=red] (0,-2) node[anchor=north]{$n$}--(0,0);
\draw[line width=1pt, color=black, dashed] (-2,-2) node[anchor=north]{$G$}--(0,0);
\draw[line width=.5pt, color=black] (-1,-2) node[anchor=north]{$c$} -- (0,0) node[sloped, pos=0.5, allow upside down]{\arrowOut} ;
\draw[fill=red, color=red] (0,0) circle (.2) node[anchor=west]{$\beta$};
 \end{scope}
\end{tikzpicture}\nonumber.
\end{align}
The isomorphisms for the other  morphism spaces  from \eqref{pic:homspace}
are analogous, just that the black line labeled $c$ may be replaced by a grey line labeled $d$ that crosses under  the dashed lines for the functors. 

With these transformations of the morphism spaces one finds that reversing the orientation of an edge labeled by a spherical fusion category does not affect the generalised 6j symbols of defect tetrahedra.

\begin{lemma}\label{lem:sphlemma}  The 6j symbol of  a labeled generic transversal tetrahedron is invariant under reversing an edge labeled with an object $c\in I_\mac$ or $d\in I_\mad$, replacing its label with the dual and transforming the morphisms at its triangles according to \eqref{eq:reverse edge} and \eqref{eq:reverse edge2}.
\end{lemma}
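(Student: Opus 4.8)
### Proof plan

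The plan is to reduce the statement to a purely diagrammatic identity about polygon diagrams and then verify that identity using the cutting/gluing rules and the naturality and duality moves established in Section~\ref{sec:diagrams}. Recall that the generalised 6j symbol of a labeled generic transversal tetrahedron is, by Definition~\ref{def:gen6j}, the cyclic evaluation of an associated polygon diagram $D$ obtained by projecting the duals of the non-defect edges and triangles onto the defect polygon $P$. Reversing the orientation of an edge $e$ labeled with $c\in I_\mac$ (the $\mad$ case being entirely analogous, with grey lines crossing under the dashed functor lines) affects $D$ only locally: it reverses the orientation of the projected dual line $\bar e$ and, at the at most two triangles adjacent to $e$, replaces the morphism labels at the endpoints of $\bar e$ according to the transformations \eqref{eq:reverse edge} and \eqref{eq:reverse edge2}. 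So the lemma amounts to checking that these local replacements, carried out simultaneously at both endpoints of $\bar e$, leave the cyclic evaluation of $D$ unchanged.

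First I would isolate the relevant portion of the polygon diagram: the line $\bar e$, labeled $c$, running between its two endpoints on $\partial P$, each endpoint carrying a morphism from one of the spaces in \eqref{pic:homspace}. The key observation is that the transformations in \eqref{eq:reverse edge} and \eqref{eq:reverse edge2} are exactly the adjunction/bending moves: the new morphism $\alpha$ (resp.\ $\beta$) is obtained from the old one by composing with a coevaluation $\coev^L_c$ (resp.\ an evaluation $\ev^L_c$) and then bending the $c$-strand around, possibly sliding it over or under the dashed functor line using \eqref{pic:natfuncconst} or \eqref{pic:natb}. Concretely, at one endpoint the strand labeled $c$ outgoing from $\bar e$ gets reattached by a cap to a strand labeled $c^\ast$ now carried by $\bar e$, and symmetrically at the other endpoint a cup appears. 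Gluing these two local pictures back into $D$, the net effect on $\bar e$ is: the label flips $c\mapsto c^\ast$, the orientation reverses, and a cup at one end and a cap at the other are inserted. By the snake identities \eqref{pic:snake} (equivalently \eqref{eq:snake}) these cup/cap pairs cancel, restoring the original diagram up to the relabeling $c\mapsto c^\ast$ together with the reversed orientation — which is precisely the diagrammatic convention identifying a downward $c^\ast$-strand with an upward $c$-strand. Hence $D$ and its transform represent literally the same diagram, so their cyclic evaluations coincide.

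The one point requiring care — and the main obstacle — is that when I slide the bent $c$-strand past the dashed functor line(s) at an endpoint, I must use the correct crossing isomorphism ($s^F$ or $t^F$ from Definition~\ref{def:modulefunc}, depicted in \eqref{pic:modulefunc}, \eqref{pic:modulefuncinv}) and check that the cup/cap so produced is the \emph{same} cup/cap that the snake identity will cancel. This is where the pivotal structure is needed: the definition of the right duals $\ev^R,\coev^R$ in terms of the left duals and the pivot \eqref{eq:coevr}, together with the pivotality identity \eqref{pic:pivotalfus} (and its 2-categorical analogue \eqref{pic:pivotal}), guarantees that bending a strand up versus down does not introduce spurious factors and that the two ways of routing the $c$-strand around the functor lines agree. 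Concretely I would verify the four cases of \eqref{eq:reverse edge} and \eqref{eq:reverse edge2} diagrammatically: each of the four pictures on the right-hand side, when its capped/cupped $c$-strand is straightened using \eqref{pic:snake}, \eqref{pic:natfuncconst} and \eqref{pic:natb}, reduces to the picture on the left-hand side with the $c^\ast$-strand reinterpreted as an oppositely-oriented $c$-strand. Then I would note that for the remaining morphism spaces in \eqref{pic:homspace} the argument is identical, replacing the black $c$-line by a grey $d$-line and using that $d$-lines cross \emph{under} rather than over the dashed functor lines, which changes nothing in the cancellation argument.

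Finally, I would remark that the triangle adjacent to $e$ may not exist (if $e$ lies on the boundary of a single triangle within $t$, i.e.\ $e$ is on $\partial P$), or both triangles adjacent to $e$ may coincide with sides of the defect polygon, in which case $\bar e$ has an endpoint coinciding with a defect-line endpoint; in all these degenerate configurations the local replacement and the snake cancellation still apply verbatim, since the cup/cap insertions are inserted on the $c$-strand only and do not interact with the defect data. Since the cyclic evaluation is, by Lemma~\ref{lem:cyclicinv}, invariant under cyclic transformations and hence well-defined on the equivalence class of bordered diagrams, and since we have exhibited $D$ and its transform as the same polygon diagram, Definition~\ref{def:gen6j} gives $\mathrm{6j}(t,l,b)=\mathrm{6j}(t,l,b')$, where $b'$ is the transformed assignment. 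This completes the proof.
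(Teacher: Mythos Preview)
Your proposal is correct and follows the same diagrammatic strategy as the paper: verify directly that the two polygon diagrams agree using the snake identities \eqref{pic:snake} together with the crossing and naturality moves \eqref{pic:rm2}--\eqref{pic:bimodulenat}. The only notable difference is that the paper's explicit computation also invokes the module-trace identity (the second identity in \eqref{eq:functortrpivot}, i.e.\ \eqref{pic:cmoduletrace} for the action functor $c\rhd-$) to pull the reversed strand out to the trace bars, whereas your snake cancellation of the cup/cap pair stays inside the body of the diagram --- either route works.
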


\begin{proof} This follows by a direct diagrammatic computation from  \eqref{eq:reverse edge} and \eqref{eq:reverse edge2},  together with identity \eqref{pic:snake}  and the second  identity in \eqref{eq:functortrpivot} for the trace, applied to the black and grey lines of the diagram. It also uses identities \eqref{pic:rm2} to \eqref{pic:bimodulenat}  that encode the  Reidemeister moves and  the naturality conditions for the over- and undercrossings. 
For the first diagram in \eqref{fig:6jgeneral} this yields 
\begin{align*}
\begin{tikzpicture}[scale=.5, baseline=(current bounding box.center)]
\draw[line width=1.5pt, color=blue] (-1,5)--(1,5) node[anchor=west]{$m$};
\draw[line width=1.5pt, color=blue] (-1,-5)--(1,-5) node[anchor=west]{$m$};
\draw[line width=1.5pt, color=blue] (0,5)--(0,4);
\draw[line width=1.5pt, color=violet] (0,4)--(0,-1)node[midway, anchor=west]{$p$};
\draw[line width=1.5pt, color=red] (0,-1)--(0,-4) node[midway, anchor=west]{$n$};
\draw[line width=1.5pt, color=blue] (0,-4)--(0,-5);
\draw[line width=1pt, color=black, style=dashed] (0,4)--(-5,1)  node[midway, anchor=south east]{$F$};
\draw[line width=.5pt, color=black] (0,4)--(-2,1)  node[midway, anchor=west]{$i$};
\draw[line width=1pt, color=black, style=dashed] (-5,1) --(0,-1)  node[midway, anchor=north]{$H$};
\draw[line width=.5pt, color=black] (-2,1) --(0,-1)  node[midway, anchor=south]{$k$};
\draw[color=white, fill=white] (-1.4,-.5) circle (.4);
\draw[line width=1pt, color=black, style=dashed] (-5,1) --(0,-4)  node[midway, anchor=north east]{$G$};
\draw[line width=.5pt, color=black] (-2,1) --(0,-4)  node[midway, anchor=east]{$j$};
\draw[color=blue, fill=blue] (0,4) circle (.15) node[anchor=west]{$\;\beta$};
\draw[color=red, fill=red] (0,-1) circle (.15) node[anchor=west]{$\;\gamma$};
\draw[color=blue, fill=blue] (0,-4) circle (.15) node[anchor=west]{$\;\delta$};
\draw[ fill=black] (-2,1) circle (.15) node[anchor=east]{$\alpha\;$};
\draw[ fill=black] (-5,1) circle (.15) node[anchor=east]{$\nu\;$};
\end{tikzpicture}
\stackrel{\eqref{pic:snake}}=
\begin{tikzpicture}[scale=.5, baseline=(current bounding box.center)]
\draw[line width=1.5pt, color=blue] (-1,5)--(1,5) node[anchor=west]{$m$};
\draw[line width=1.5pt, color=blue] (-1,-5)--(1,-5) node[anchor=west]{$m$};
\draw[line width=1.5pt, color=blue] (0,5)--(0,4);
\draw[line width=1.5pt, color=violet] (0,4)--(0,-1)node[midway, anchor=west]{$p$};
\draw[line width=1.5pt, color=red] (0,-1)--(0,-4) node[midway, anchor=west]{$n$};
\draw[line width=1.5pt, color=blue] (0,-4)--(0,-5);
\draw[line width=1pt, color=black, style=dashed] (0,4)--(-5,1)  node[midway, anchor=south east]{$F$};
\draw[line width=.5pt, color=black] (0,4)--(-2,1)  node[midway, anchor=west]{$i$};
\draw[line width=1pt, color=black, style=dashed] (-5,1) --(0,-1)  node[midway, anchor=north]{$H$};
\draw[color=white, fill=white] (-1.4,-.5) circle (.4);
\draw[color=white, fill=white] (-.8,-.8) circle (.4);
\draw[line width=1pt, color=black, style=dashed] (-5,1) --(0,-4)  node[midway, anchor=north east]{$G$};
\draw[line width=.5pt, color=black] (-2,1) --(0,-4)  node[midway, anchor=east]{$j$};
\draw[line width=.5pt, color=black] (0,-1) .. controls (-1,-3) and (-.7,3)..(-2,1) node[sloped, pos=0.5, allow upside down]{\arrowOut} node[midway, anchor=west]{$k$};
\draw[color=blue, fill=blue] (0,4) circle (.15) node[anchor=west]{$\;\beta$};
\draw[color=red, fill=red] (0,-1) circle (.15) node[anchor=west]{$\;\gamma$};
\draw[color=blue, fill=blue] (0,-4) circle (.15) node[anchor=west]{$\;\delta$};
\draw[ fill=black] (-2,1) circle (.15) node[anchor=east]{$\alpha\;$};
\draw[ fill=black] (-5,1) circle (.15) node[anchor=east]{$\nu\;$};
\end{tikzpicture}
\stackrel{\eqref{pic:snake},\eqref{eq:functortrpivot}}=
\begin{tikzpicture}[scale=.5, baseline=(current bounding box.center)]
\draw[line width=1.5pt, color=blue] (-3.5,5)--(1,5) node[anchor=west]{$m$};
\draw[line width=1.5pt, color=blue] (-3.5,-5)--(1,-5) node[anchor=west]{$m$};
\draw[line width=1.5pt, color=blue] (0,5)--(0,4);
\draw[line width=1.5pt, color=violet] (0,4)--(0,-1)node[midway, anchor=west]{$p$};
\draw[line width=1.5pt, color=red] (0,-1)--(0,-4) node[midway, anchor=west]{$n$};
\draw[line width=1.5pt, color=blue] (0,-4)--(0,-5);
\draw[line width=1pt, color=black, style=dashed] (0,4)--(-5,1)  node[pos=.2, anchor=south]{$F$};
\draw[line width=.5pt, color=black] (0,4)--(-2,1)  node[midway, anchor=west]{$i$};
\draw[line width=1pt, color=black, style=dashed] (-5,1) --(0,-1)  node[pos=.7, anchor=north]{$H$};
\draw[line width=.5pt, color=black] (-2,1) --(0,-1)  node[pos=.7, anchor=south]{$k$};
\draw[line width=1pt, color=black, style=dashed] (-5,1) --(0,-4)  node[pos=.7, anchor=north east]{$G$};
\draw[color=white, fill=white] (-3,2) circle (.4);
\draw[color=white, fill=white] (-2.2,-2) circle (.4);
\draw[line width=.5pt, color=black] (-2,1) .. controls (-2,0) and (-3,0).. (-3,1);
\draw[line width=.5pt, color=black] (-3,1) --(-3,5) node[sloped, pos=0.8, allow upside down]{\arrowOut}  node[pos=.8, anchor=east]{$j$};
\draw[line width=.5pt, color=black] (-3,-5) --(-3,-4) node[sloped, pos=0.5, allow upside down]{\arrowOut}  node[midway, anchor=east]{$j$};
\draw[line width=.5pt, color=black] (-3,-4) .. controls (-3,-1) and (-.5,-1).. (0,-4);
\draw[color=blue, fill=blue] (0,4) circle (.15) node[anchor=west]{$\;\beta$};
\draw[color=red, fill=red] (0,-1) circle (.15) node[anchor=west]{$\;\gamma$};
\draw[color=blue, fill=blue] (0,-4) circle (.15) node[anchor=west]{$\;\delta$};
\draw[ fill=black] (-2,1) circle (.15) node[anchor=west]{$\;\alpha$};
\draw[ fill=black] (-5,1) circle (.15) node[anchor=east]{$\nu\;$};
\end{tikzpicture}
\end{align*}
\begin{align*}
\stackrel{\eqref{pic:snake}}=
\begin{tikzpicture}[scale=.5, baseline=(current bounding box.center)]
\draw[line width=1.5pt, color=blue] (-3.5,5)--(1,5) node[anchor=west]{$m$};
\draw[line width=1.5pt, color=blue] (-3.5,-5)--(1,-5) node[anchor=west]{$m$};
\draw[line width=1.5pt, color=blue] (0,5)--(0,4);
\draw[line width=1.5pt, color=violet] (0,4)--(0,-1)node[midway, anchor=west]{$p$};
\draw[line width=1.5pt, color=red] (0,-1)--(0,-4) node[midway, anchor=west]{$n$};
\draw[line width=1.5pt, color=blue] (0,-4)--(0,-5);
\draw[line width=1pt, color=black, style=dashed] (0,4)--(-5,1)  node[pos=.35, anchor=south]{$F$};
\draw[line width=1pt, color=black, style=dashed] (-5,1) --(0,-1)  node[pos=.7, anchor=north]{$H$};
\draw[line width=.5pt, color=black] (-2,1) --(0,-1)  node[pos=.7, anchor=south]{$k$};
\draw[line width=1pt, color=black, style=dashed] (-5,1) --(0,-4)  node[pos=.7, anchor=north east]{$G$};
\draw[color=white, fill=white] (-3,2) circle (.4);
\draw[color=white, fill=white] (-2.2,-2) circle (.4);
\draw[color=white, fill=white] (-1,3.2) circle (.4);
\draw[color=white, fill=white] (-.6,3.6) circle (.4);
\draw[line width=.5pt, color=black] (-2,1) .. controls (-2,0) and (-3,0).. (-3,1);
\draw[line width=.5pt, color=black] (-3,1) --(-3,5) node[sloped, pos=0.8, allow upside down]{\arrowOut}  node[pos=.8, anchor=east]{$j$};
\draw[line width=.5pt, color=black] (-3,-5) --(-3,-4) node[sloped, pos=0.5, allow upside down]{\arrowOut}  node[midway, anchor=east]{$j$};
\draw[line width=.5pt, color=black] (-3,-4) .. controls (-3,-1) and (-.5,-1).. (0,-4);
\draw[line width=.5pt, color=black] (0,4) .. controls (-.4,2) and (-.75,6).. (-1,3)  node[sloped, pos=0.5, allow upside down]{\arrowOut};
\draw[line width=.5pt, color=black] (-1,3)--(-1,1)   node[midway, anchor=west]{$i$};
\draw[line width=.5pt, color=black] (-1,1) .. controls (-1.25,0) and (-1.75,3).. (-2,1)  node[sloped, pos=0.5, allow upside down]{\arrowOut};
\draw[color=blue, fill=blue] (0,4) circle (.15) node[anchor=west]{$\;\beta$};
\draw[color=red, fill=red] (0,-1) circle (.15) node[anchor=west]{$\;\gamma$};
\draw[color=blue, fill=blue] (0,-4) circle (.15) node[anchor=west]{$\;\delta$};
\draw[ fill=black] (-2,1) circle (.15) node[anchor=east]{$\alpha\,$};
\draw[ fill=black] (-5,1) circle (.15) node[anchor=east]{$\nu\;$};
\end{tikzpicture}
\stackrel{\eqref{pic:snake},\eqref{eq:functortrpivot}}=
\begin{tikzpicture}[scale=.5, baseline=(current bounding box.center)]
\draw[line width=1.5pt, color=blue] (-4,5)--(1,5) node[anchor=west]{$m$};
\draw[line width=1.5pt, color=blue] (-4,-5)--(1,-5) node[anchor=west]{$m$};
\draw[line width=1.5pt, color=blue] (0,5)--(0,4);
\draw[line width=1.5pt, color=violet] (0,4)--(0,-1)node[midway, anchor=west]{$p$};
\draw[line width=1.5pt, color=red] (0,-1)--(0,-4) node[midway, anchor=west]{$n$};
\draw[line width=1.5pt, color=blue] (0,-4)--(0,-5);
\draw[line width=1pt, color=black, style=dashed] (0,4)--(-5,1)  node[pos=.35, anchor=south]{$F$};
\draw[line width=1pt, color=black, style=dashed] (-5,1) --(0,-1)  node[pos=.7, anchor=north]{$H$};
\draw[line width=1pt, color=black, style=dashed] (-5,1) --(0,-4)  node[pos=.7, anchor=north east]{$G$};
\draw[color=white, fill=white] (-3,2) circle (.4);
\draw[color=white, fill=white] (-3.5,1.6) circle (.4);
\draw[color=white, fill=white] (-2.2,-2) circle (.4);
\draw[color=white, fill=white] (-1,3.2) circle (.4);
\draw[color=white, fill=white] (-.6,3.6) circle (.4);
\draw[color=white, fill=white] (-3.2,-1.2) circle (.4);
\draw[color=white, fill=white] (-2.5,0) circle (.4);
\draw[line width=.5pt, color=black] (-2,1) .. controls (-2,.5) and (-3,.5).. (-3,1);
\draw[line width=.5pt, color=black] (-3,1) --(-3,5) node[sloped, pos=0.8, allow upside down]{\arrowOut}  node[pos=.8, anchor=west]{$j$};
\draw[line width=.5pt, color=black] (-3,-5) --(-3,-4) node[sloped, pos=0.5, allow upside down]{\arrowOut}  node[midway, anchor=west]{$j$};
\draw[line width=.5pt, color=black] (-3,-4) .. controls (-3,-1) and (-.5,-1).. (0,-4);
\draw[line width=.5pt, color=black] (-2,1) .. controls (-2,0) and (-3.5,0).. (-3.5,1.5);
\draw[line width=.5pt, color=black] (-3.5,1.5) --(-3.5,5) node[sloped, pos=0.8, allow upside down]{\arrowOut}  node[pos=.8, anchor=east]{$k$};
\draw[line width=.5pt, color=black] (-3.5,-5) --(-3.5,-3) node[sloped, pos=0.5, allow upside down]{\arrowOut}  node[midway, anchor=east]{$k$};
\draw[line width=.5pt, color=black] (0,-1) .. controls (-1,1) and (-3.5,1).. (-3.5,-3);
\draw[line width=.5pt, color=black] (0,4) .. controls (-.4,2) and (-.75,6).. (-1,3)  node[sloped, pos=0.5, allow upside down]{\arrowOut};
\draw[line width=.5pt, color=black] (-1,3)--(-1,1)   node[midway, anchor=west]{$i$};
\draw[line width=.5pt, color=black] (-1,1) .. controls (-1.25,0) and (-1.75,3).. (-2,1)  node[sloped, pos=0.5, allow upside down]{\arrowOut};
\draw[color=blue, fill=blue] (0,4) circle (.15) node[anchor=west]{$\;\beta$};
\draw[color=red, fill=red] (0,-1) circle (.15) node[anchor=west]{$\;\gamma$};
\draw[color=blue, fill=blue] (0,-4) circle (.15) node[anchor=west]{$\;\delta$};
\draw[ fill=black] (-2,1) circle (.15) node[anchor=east]{$\alpha\,$};
\draw[ fill=black] (-5,1) circle (.15) node[anchor=east]{$\nu\;$};
\end{tikzpicture}
\stackrel{\eqref{pic:snake}}=
\begin{tikzpicture}[scale=.5, baseline=(current bounding box.center)]
\draw[line width=1.5pt, color=blue] (-4,5)--(1,5) node[anchor=west]{$m$};
\draw[line width=1.5pt, color=blue] (-4,-5)--(1,-5) node[anchor=west]{$m$};
\draw[line width=1.5pt, color=blue] (0,5)--(0,4);
\draw[line width=1.5pt, color=violet] (0,4)--(0,-1)node[midway, anchor=west]{$p$};
\draw[line width=1.5pt, color=red] (0,-1)--(0,-4) node[midway, anchor=west]{$n$};
\draw[line width=1.5pt, color=blue] (0,-4)--(0,-5);
\draw[line width=1pt, color=black, style=dashed] (0,4)--(-5,1)  node[pos=.35, anchor=south]{$F$};
\draw[line width=1pt, color=black, style=dashed] (-5,1) --(0,-1)  node[pos=.7, anchor=north]{$H$};
\draw[line width=1pt, color=black, style=dashed] (-5,1) --(0,-4)  node[pos=.7, anchor=north east]{$G$};
\draw[color=white, fill=white] (-3,2) circle (.4);
\draw[color=white, fill=white] (-3.5,1.6) circle (.4);
\draw[color=white, fill=white] (-2.2,-2) circle (.4);
\draw[color=white, fill=white] (-3.2,-1.2) circle (.4);
\draw[color=white, fill=white] (-2.5,0) circle (.4);
\draw[line width=.5pt, color=black] (-2,1) .. controls (-2,.5) and (-3,.5).. (-3,1);
\draw[line width=.5pt, color=black] (-3,1) --(-3,5) node[sloped, pos=0.8, allow upside down]{\arrowOut}  node[pos=.8, anchor=west]{$j$};
\draw[line width=.5pt, color=black] (-3,-5) --(-3,-4) node[sloped, pos=0.5, allow upside down]{\arrowOut}  node[midway, anchor=west]{$j$};
\draw[line width=.5pt, color=black] (-3,-4) .. controls (-3,-1) and (-.5,-1).. (0,-4);
\draw[line width=.5pt, color=black] (-2,1) .. controls (-2,0) and (-3.5,0).. (-3.5,1.5);
\draw[line width=.5pt, color=black] (-3.5,1.5) --(-3.5,5) node[sloped, pos=0.8, allow upside down]{\arrowOut}  node[pos=.8, anchor=east]{$k$};
\draw[line width=.5pt, color=black] (-3.5,-5) --(-3.5,-3) node[sloped, pos=0.5, allow upside down]{\arrowOut}  node[midway, anchor=east]{$k$};
\draw[line width=.5pt, color=black] (0,-1) .. controls (-1,1) and (-3.5,1).. (-3.5,-3);
\draw[line width=.5pt, color=black] (0,4)--(-2,1)   node[midway, anchor=west]{$i$};
\draw[color=blue, fill=blue] (0,4) circle (.15) node[anchor=west]{$\;\beta$};
\draw[color=red, fill=red] (0,-1) circle (.15) node[anchor=west]{$\;\gamma$};
\draw[color=blue, fill=blue] (0,-4) circle (.15) node[anchor=west]{$\;\delta$};
\draw[ fill=black] (-2,1) circle (.15) node[anchor=east]{$\alpha\,$};
\draw[ fill=black] (-5,1) circle (.15) node[anchor=east]{$\nu\;$};
\end{tikzpicture}
\end{align*}
and for the second diagram in \eqref{fig:6jgeneral}
\begin{align*}
\begin{tikzpicture}[scale=.5, baseline=(current bounding box.center)]
\draw[line width=1.5pt, color=violet] (-1,5)--(1,5) node[anchor=west]{$p$};
\draw[line width=1.5pt, color=violet] (-1,-5)--(1,-5) node[anchor=west]{$p$};
\draw[line width=1.5pt, color=cyan] (0,4)--(0,1) node[midway, anchor=west]{$q$};
\draw[line width=1.5pt, color=red] (0,1)--(0,-1) node[midway, anchor=west]{$n$};
\draw[line width=1.5pt, color=blue] (0,-4)--(0,-1)node[midway, anchor=west]{$m$};
\draw[line width=1.5pt, color=violet] (0,5)--(0,4);
\draw[line width=1.5pt, color=violet] (0,-4)--(0,-5);
\draw[line width=.5pt, color=gray] (0,4) .. controls (-2,2.5)  and (-2,.5).. (0,-1) node[midway, anchor=west]{$d$};
\draw[color=white, fill=white] (-1.1,.83) circle (.4);
\draw[color=white, fill=white] (-.8,0) circle (.4);
\draw[line width=1pt, color=black, style=dashed] (0,4) --(-5,0)  node[midway, anchor=south east]{$F$};
\draw[line width=1pt, color=black, style=dashed] (0,1) --(-5,0) node[midway, anchor=south]{$\;\;G$};
\draw[line width=1pt, color=black, style=dashed] (-5,0) --(0,-1)  node[midway, anchor=north]{$\;\;K$};
\draw[line width=1pt, color=black, style=dashed] (-5,0) --(0,-4) node[midway, anchor=north east]{$H$};
\draw[color=white, fill=white] (-1.2,-.83) circle (.4);
\draw[line width=.5pt, color=black] (0,-4) .. controls (-2,-2.5)  and (-2,-.5).. (0,1) node[midway, anchor=west]{$c$};
\draw[color=violet, fill=violet] (0,4) circle (.15) node[anchor=west]{$\;\alpha$};
\draw[color=cyan, fill=cyan] (0,1) circle (.15) node[anchor=west]{$\;\beta$};
\draw[color=blue, fill=blue] (0,-1) circle (.15) node[anchor=west]{$\;\gamma$};
\draw[color=violet, fill=violet] (0,-4) circle (.15) node[anchor=west]{$\;\delta$};
\draw[ fill=black] (-5,0) circle (.15) node[anchor=east]{$\nu\;$};
\end{tikzpicture}
\stackrel{\eqref{pic:snake},\eqref{eq:functortrpivot}}=
\begin{tikzpicture}[scale=.5, baseline=(current bounding box.center)]
\draw[line width=1.5pt, color=violet] (-2,5)--(1,5) node[anchor=west]{$p$};
\draw[line width=1.5pt, color=violet] (-2,-5)--(1,-5) node[anchor=west]{$p$};
\draw[line width=1.5pt, color=cyan] (0,4)--(0,1) node[midway, anchor=west]{$q$};
\draw[line width=1.5pt, color=red] (0,1)--(0,-1) node[midway, anchor=west]{$n$};
\draw[line width=1.5pt, color=blue] (0,-4)--(0,-1)node[midway, anchor=west]{$m$};
\draw[line width=1.5pt, color=violet] (0,5)--(0,4);
\draw[line width=1.5pt, color=violet] (0,-4)--(0,-5);
\draw[line width=.5pt, color=gray] (0,4) .. controls (-.5,2)  and (-1.5,2).. (-1.5,4);
\draw[line width=.5pt, color=gray] (-1.5,4)--(-1.5,5) node[sloped, pos=0.2, allow upside down]{\arrowOut} node[pos=.2, anchor=east]{$d$};
\draw[line width=.5pt, color=gray] (0,-1) .. controls (-.5,1)  and (-1.5,1).. (-1.5,-1);
\draw[line width=.5pt, color=gray] (-1.5,-5)--(-1.5,-1) node[sloped, pos=0.2, allow upside down]{\arrowOut} node[pos=.2, anchor=east]{$d$};
\draw[color=white, fill=white] (-1.4, 3) circle (.4);
\draw[color=white, fill=white] (-1.5,-2.8) circle (.4);
\draw[color=white, fill=white] (-.5,.2) circle (.4);
\draw[line width=1pt, color=black, style=dashed] (0,4) --(-5,0)  node[midway, anchor=south east]{$F$};
\draw[line width=1pt, color=black, style=dashed] (0,1) --(-5,0) node[midway, anchor=south]{$\;\;G$};
\draw[line width=1pt, color=black, style=dashed] (-5,0) --(0,-1)  node[midway, anchor=north]{$\;\;K$};
\draw[line width=1pt, color=black, style=dashed] (-5,0) --(0,-4) node[midway, anchor=north east]{$H$};
\draw[color=white, fill=white] (-1.2,-.83) circle (.4);
\draw[line width=.5pt, color=black] (0,-4) .. controls (-1.5,-2.5)  and (-1.5,-.5).. (0,1) node[midway, anchor=west]{$c$};
\draw[color=violet, fill=violet] (0,4) circle (.15) node[anchor=west]{$\;\alpha$};
\draw[color=cyan, fill=cyan] (0,1) circle (.15) node[anchor=west]{$\;\beta$};
\draw[color=blue, fill=blue] (0,-1) circle (.15) node[anchor=west]{$\;\gamma$};
\draw[color=violet, fill=violet] (0,-4) circle (.15) node[anchor=west]{$\;\delta$};
\draw[ fill=black] (-5,0) circle (.15) node[anchor=east]{$\nu\;$};
\end{tikzpicture}
\end{align*}
\begin{align*}
\stackrel{\eqref{pic:snake},\eqref{eq:functortrpivot}}=
\begin{tikzpicture}[scale=.5, baseline=(current bounding box.center)]
\draw[line width=1.5pt, color=violet] (-3,5)--(1,5) node[anchor=west]{$p$};
\draw[line width=1.5pt, color=violet] (-3,-5)--(1,-5) node[anchor=west]{$p$};
\draw[line width=1.5pt, color=cyan] (0,4)--(0,1) node[midway, anchor=west]{$q$};
\draw[line width=1.5pt, color=red] (0,1)--(0,-1) node[midway, anchor=west]{$n$};
\draw[line width=1.5pt, color=blue] (0,-4)--(0,-1)node[midway, anchor=west]{$m$};
\draw[line width=1.5pt, color=violet] (0,5)--(0,4);
\draw[line width=1.5pt, color=violet] (0,-4)--(0,-5);
\draw[line width=.5pt, color=gray] (0,4) .. controls (-.5,2)  and (-1.5,2).. (-1.5,4);
\draw[line width=.5pt, color=gray] (-1.5,4)--(-1.5,5) node[sloped, pos=0.2, allow upside down]{\arrowOut} node[pos=.2, anchor=west]{$d$};
\draw[line width=.5pt, color=gray] (0,-1) .. controls (-.5,.2)  and (-1.5,.2).. (-1.5,-1);
\draw[line width=.5pt, color=gray] (-1.5,-5)--(-1.5,-1) node[sloped, pos=0.2, allow upside down]{\arrowOut} node[pos=.2, anchor=west]{$d$};
\draw[color=white, fill=white] (-1.4, 3) circle (.4);
\draw[color=white, fill=white] (-1.5,-2.8) circle (.4);
\draw[color=white, fill=white] (-1.5,-.8) circle (.4);
\draw[color=white, fill=white] (-1.5,-1.8) circle (.2);
\draw[line width=1pt, color=black, style=dashed] (0,4) --(-5,0)  node[pos=.6, anchor=south east]{$F$};
\draw[line width=1pt, color=black, style=dashed] (0,1) --(-5,0) node[pos=.3, anchor=south]{$\;\;G$};
\draw[line width=1pt, color=black, style=dashed] (-5,0) --(0,-1)  node[midway, anchor=north]{$\;\;K$};
\draw[line width=1pt, color=black, style=dashed] (-5,0) --(0,-4) node[midway, anchor=north east]{$H$};
\draw[color=white, fill=white] (-2.2,-2.5) circle (.4);
\draw[color=white, fill=white] (-2,.8) circle (.4);
\draw[color=white, fill=white] (-2.5,2) circle (.4);
\draw[line width=.5pt, color=black] (0,-4) .. controls (-.5,-1)  and (-2.5,-1).. (-2.5,-4);
\draw[line width=.5pt, color=black] (0,1) .. controls (-.5,0)  and (-2.5,0).. (-2.5,1.5);
\draw[line width=.5pt, color=black] (-2.5,-5)--(-2.5,-4) node[sloped, pos=0.8, allow upside down]{\arrowOut} node[pos=.8, anchor=east]{$c$};
\draw[line width=.5pt, color=black] (-2.5,1.5)--(-2.5,5) node[sloped, pos=0.8, allow upside down]{\arrowOut} node[pos=.8, anchor=east]{$c$};
\draw[color=violet, fill=violet] (0,4) circle (.15) node[anchor=west]{$\;\alpha$};
\draw[color=cyan, fill=cyan] (0,1) circle (.15) node[anchor=west]{$\;\beta$};
\draw[color=blue, fill=blue] (0,-1) circle (.15) node[anchor=west]{$\;\gamma$};
\draw[color=violet, fill=violet] (0,-4) circle (.15) node[anchor=west]{$\;\delta$};
\draw[ fill=black] (-5,0) circle (.15) node[anchor=east]{$\nu\;$};
\end{tikzpicture}
\stackrel{\eqref{pic:snake},\eqref{eq:functortrpivot}}=
\begin{tikzpicture}[scale=.5, baseline=(current bounding box.center)]
\draw[line width=1.5pt, color=violet] (-1.5,5)--(1,5) node[anchor=west]{$p$};
\draw[line width=1.5pt, color=violet] (-1.5,-5)--(1,-5) node[anchor=west]{$p$};
\draw[line width=1.5pt, color=cyan] (0,4)--(0,1) node[midway, anchor=west]{$q$};
\draw[line width=1.5pt, color=red] (0,1)--(0,-1) node[midway, anchor=west]{$n$};
\draw[line width=1.5pt, color=blue] (0,-4)--(0,-1)node[midway, anchor=west]{$m$};
\draw[line width=1.5pt, color=violet] (0,5)--(0,4);
\draw[line width=1.5pt, color=violet] (0,-4)--(0,-5);
\draw[line width=.5pt, color=gray] (0,4) .. controls (-2,2.5)  and (-2,.5).. (0,-1) node[pos=.4, anchor=east]{$d$};
\draw[color=white, fill=white] (-1.1,.83) circle (.4);
\draw[line width=1pt, color=black, style=dashed] (0,4) --(-5,0)  node[midway, anchor=south east]{$F$};
\draw[line width=1pt, color=black, style=dashed] (0,1) --(-5,0) node[midway, anchor=south]{$\;\;G$};
\draw[line width=1pt, color=black, style=dashed] (-5,0) --(0,-1)  node[midway, anchor=north]{$\;\;K$};
\draw[line width=1pt, color=black, style=dashed] (-5,0) --(0,-4) node[midway, anchor=north east]{$H$};
\draw[color=white, fill=white] (-1.2,-.83) circle (.4);
\draw[color=white, fill=white] (-.9,-3.2) circle (.4);
\draw[color=white, fill=white] (-1,1) circle (.4);
\draw[color=white, fill=white] (-1,3) circle (.4);
\draw[line width=.5pt, color=black] (0,-4) .. controls (-.25,-2)  and (-1,-2).. (-1,-4);
\draw[line width=.5pt, color=black] (0,1) .. controls (-.25,0)  and (-1,0).. (-1,1);
\draw[line width=.5pt, color=black] (-1,-5)--(-1,-4) node[sloped, pos=0.8, allow upside down]{\arrowOut} node[pos=.8, anchor=east]{$c$};
\draw[line width=.5pt, color=black] (-1,1)--(-1,5) node[sloped, pos=0.8, allow upside down]{\arrowOut} node[pos=.8, anchor=east]{$c$};
\draw[color=violet, fill=violet] (0,4) circle (.15) node[anchor=west]{$\;\alpha$};
\draw[color=cyan, fill=cyan] (0,1) circle (.15) node[anchor=west]{$\;\beta$};
\draw[color=blue, fill=blue] (0,-1) circle (.15) node[anchor=west]{$\;\gamma$};
\draw[color=violet, fill=violet] (0,-4) circle (.15) node[anchor=west]{$\;\delta$};
\draw[ fill=black] (-5,0) circle (.15) node[anchor=east]{$\nu\;$};
\end{tikzpicture}
\end{align*}
\end{proof}

The generalised 6j symbols are also invariant under a simultaneous orientation reversal  of the defect surface and all  defect lines on the surface, if the defect data is transformed accordingly.

\begin{lemma}\label{lem:mirror}
The 6j symbol of  a labeled generic transversal tetrahedron is invariant under 
\begin{compactitem} 
\item reversing the orientation of all defect areas and all defect lines, 
\item replacing each $(\mac,\mad)$-bimodule category $\mam$ by the $(\mad,\mac)$-bimodule category $\mam^\#$, 
\item replacing each bimodule functor $F:\mam\to\man$ by $F^\#:\mam^\#\to \man^\#$,
\item replacing each bimodule natural transformation $\nu: F\Rightarrow G$ by $\nu^\#: G^\#\Rightarrow F^\#$.
\end{compactitem}
\end{lemma}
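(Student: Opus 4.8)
The plan is to reduce this statement to Corollary \ref{rem:polyback}, which already establishes that $\mathrm{ev}(D)=\mathrm{ev}(D^\#)$ for any polygon diagram $D$. The key observation is that the operation described in Lemma \ref{lem:mirror} on a labeled generic transversal tetrahedron $t$ corresponds precisely, at the level of the associated polygon diagram, to passing from $D$ to its opposite polygon diagram $D^\#$ in the sense of Definition \ref{def:oppoly}. So the first step is to spell out carefully how the five-point construction that produces the polygon diagram $D$ from a tetrahedron (the numbered list preceding Definition \ref{def:gen6j}) interacts with the four substitutions in the statement.

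First I would check that reversing the orientation of the defect surface (polygon) $P$ in $t$, while keeping the combinatorial tetrahedron and its edge orientations fixed, amounts geometrically to viewing $P$ ``from the other side'' — i.e.\ from the region labeled $\mad$ towards the region labeled $\mac$ instead of the other way around. By the recipe in step 1, the defect data in $P$, read from this opposite side, defines a cyclic equivalence class of diagrams for $\Bimod^\theta(\mad,\mac)$ obtained from the original by taking a mirror image. Simultaneously reversing all defect lines on $P$ turns each bimodule functor $F$ into $F^\#$ (not its adjoint), as recorded in Section \ref{subsec:triangsec}, and each defect vertex labeled $\nu: F\Rightarrow G$ into $\nu^\#: G^\#\Rightarrow F^\#$, by Example \ref{ex:modulefuncs}, 7. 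Replacing $\mam$ by $\mam^\#$ everywhere is exactly the relabeling of regions in Definition \ref{def:oppoly}. Thus the interior $\Bimod$-part of the polygon diagram for the transformed tetrahedron is the interior part of $D^\#$.

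Next I would treat the contribution of the non-defect edges and triangles — steps 4 and 5 of the construction, i.e.\ the projected dual edges labeled by objects of $\mac$ and $\mad$ and the morphisms at the corresponding triangles. Under the transformation, the vertices of $t$ assigned to the $\mac$-region and the $\mad$-region are swapped (because the positive normal vector of $P$ is reversed), so the dual edges that previously lay on the $\mac$-side now lie on the $\mad$-side and vice versa, and their orientations by duality are reversed accordingly. Crucially, this is just the mirror-image operation together with the orientation reversal of interior lines in Definition \ref{def:oppoly}; the labels $c\in I_\mac$ and $d\in I_\mad$ are kept fixed as objects (the spherical categories $\mac,\mad$ themselves are not replaced, only the bimodule categories are), and the morphisms at the boundary endpoints and at the projected vertices are kept fixed as morphisms, exactly as in the last bullet of Definition \ref{def:oppoly}. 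Here I would invoke Lemma \ref{lem:sphlemma} if needed to absorb any edge-orientation bookkeeping coming from the swap of $\mac$- and $\mad$-sides. Having identified the transformed polygon diagram with $D^\#$, the conclusion $\mathrm{6j}(t,l,b) = \mathrm{ev}(D) = \mathrm{ev}(D^\#) = \mathrm{6j}(t^\#,l,b)$ follows immediately from Corollary \ref{rem:polyback}.

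The main obstacle I anticipate is bookkeeping rather than anything conceptual: one must verify that the geometric ``mirror image'' built into Definition \ref{def:oppoly} (looking at the polygon from behind the plane of the drawing) really does match the geometric operation of reversing the normal vector of the defect surface inside the three-dimensional tetrahedron, and that under this match the morphism spaces from \eqref{pic:homspace} attached to the triangles of $t$ and of $t^\#$ are canonically identified in the way Corollary \ref{rem:polyback} requires (using the adjunction units/counits and the induced bimodule trace on the opposite category from Example \ref{ex:opptrace}). The cleanest way to do this is to treat the two cases of Figure \ref{fig:dectet} explicitly — in case (a) the trivalent interior vertex of the dual graph stays trivalent and in case (b) the two opposite-side dual edges are interchanged — and to note that in both cases the transformation carries the diagrams \eqref{fig:6jgeneral} to their mirror images with the substitutions of Definition \ref{def:oppoly}, so that Corollary \ref{rem:polyback} applies verbatim. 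Since tetrahedra without defect surfaces are the special case of trivial defect data, the same argument covers the usual $6j$ symbols as well.
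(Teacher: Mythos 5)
Your proposal is correct and follows essentially the same route as the paper: the paper's proof likewise identifies the polygon diagram of the transformed tetrahedron with the opposite polygon diagram of Definition \ref{def:oppoly} and concludes with Corollary \ref{rem:polyback}. The only refinement is that the paper invokes Lemma \ref{lem:sphlemma} not merely ``if needed'' but as the explicit first step, reversing all edges labeled by $\mac$ or $\mad$ and dualizing their labels, so that via the reinterpretation of the triangle morphism spaces (e.g.\ $\Hom_\mam(F(c\rhd m),n)$ viewed as $\Hom_{\mam^\#}(n,F(m\lhd c^*))$) the projected diagram becomes literally the mirror image with all interior lines reversed, to which Corollary \ref{rem:polyback} applies.
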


\begin{proof} 
By Lemma \ref{lem:sphlemma}, one may additionally reverse the orientation of all edges labeled with objects in spherical fusion categories and replace those objects by their duals without changing the associated 6j symbol. 

Let $t$ be a generic transversal tetrahedron and $t'$ the tetrahedron obtained  from $t$ by reversing the orientations of all defect areas,  defect lines and edges of $t$ 
 and transforming their labels accordingly.  For each triangle of $t$ the corresponding triangle of $t'$ has  the same morphism space assigned to it  by \eqref{pic:homspace}, but interpreted as a morphism space in the opposite category. For instance, $\Hom_\mam(F(c\rhd m), n)$ is viewed as $\Hom_{\mam^\#}(n, F(m\lhd c^*))$  and $\Hom_{\mam}(m\lhd d, G(n))$ is viewed as $\Hom_{\mam^\#}(G(n), d^*\rhd m)$. 

Projecting  the data from the spherical fusion categories on the defect plane of $t'$ then  yields a mirror image of the polygon diagram for $t$, in which the orientation of all interior lines is reversed, each bimodule category $\mam$ is replaced by its opposite $\mam^\#$, each bimodule functor $F$ by $F^\#$, each bimodule natural transformation $\nu$ by $\nu^\#$
and each object in a spherical fusion category by its dual. As the left action of $c\in \Ob\mac$ or the right action of $d\in \Ob\mad$ on $\mam$ corresponds to the right action of $c^*$ and the left action of $d^*$ on $\mam^\#$, the evaluations of the polygon diagrams for $t'$ and $t$ are equal by Corollary \ref{rem:polyback}.\end{proof}

\subsection{State sum model with defects}
In this section we show how the generalised 6j symbols for generic transversal defect tetrahedra  combine to state sums for a generic transversal triangulated 3-manifolds with defect data.

Let $M$ be a generic transversal triangulated 3-manifold with defect data, $R$ its set of regions and $A$ its set of oriented defect areas.  We denote by $\mac_r$ the spherical fusion category assigned to the region $r$ and  by $\mam_a$ the bimodule category assigned to the defect area $a$. A vertex, edge or triangle of the triangulation is called {\bf boundary} if its is contained
 in $\partial M$ and  {\bf internal} otherwise. 
We use the following notation:
\begin{compactitem}
\item $\mathrm{Tet}$ for the set of tetrahedra of the triangulation, 
\item $\Delta$ for its set of triangles, $\Delta_{\partial M}$ for its set of boundary triangles and $\Delta_{i}$ for its set of internal triangles,
\item $E$ for its set of edges, $E_{\partial M}$ for its set of boundary edges and $E_{i}$ for its set of internal edges, $E_r$ for  its set of edges  in the region $r$ and $E_a$ for its set of edges that intersect a defect area $a$,
\item $V$ for its set of vertices, $V_{\partial M}$ for its set of boundary vertices,  $V_{i}$ for its set of internal vertices and $V_r$ for its set of vertices  in the region $r$.  
\end{compactitem}

We fix for each spherical fusion category $\mac$ and each bimodule category $\mam$ sets $I_{\mac}$ and $I_{\mam}$ of representatives of the isomorphism classes of simple objects and
associated projection and inclusion morphisms  as  in Section \ref{sec:projincl}.  
For each labeling $l$ of $M$  as in Definition \ref{def:label} 
 we denote by $l(e)\in I_{\mac_r}$ or $l(e')\in I_{\mam_a}$ the simple object assigned to  $e\in E_r$ or $e'\in E_a$. For an oriented triangle $f$ labeled with simple objects in the sets of representatives and associated projection or inclusion morphisms, we write $\Hom(f)$ for the associated morphism space from \eqref{pic:homspace}.

Each tetrahedron $t\in \mathrm{Tet}$ is equipped with the orientation induced by $M$. For the tetrahedra that intersect defect surfaces, this is already encoded in their labeling with defect data. For  tetrahedra that do not intersect defect surfaces, we always assume this orientation choice without emphasising it notationally.

\begin{definition} \label{def:statesum} Let $M$ be a generic transversal triangulated  3-manifold with defect data,  $l_{\partial M}$ a labeling of its boundary edges with simple objects in the chosen sets of representatives and $b_{\partial M}$ an assignment of  morphisms from the morphism spaces \eqref{pic:homspace} to its boundary triangles. The state sum of $(M,l_{\partial M}, b_{\partial M})$ is
\begin{align}\label{eq:state sum}
\mathcal Z(M,l_{\partial M},b_{\partial M})=
\prod_{r\in R} \dim(\mathcal C_r)^{-\left(\Sigma_{v\in V_r} \epsilon(v)\right)}
\sum_{l}\sum_{b_i}   \prod_{t\in \mathrm{Tet}} \mathrm{6j}(t, l,b)\prod_{e\in E} \dim l(e)^{\epsilon(e)},
\end{align} 
where
\begin{compactitem}
\item  $\epsilon(v)=1$ for $v\in V_{i}$, $\epsilon(e)=1$ for $e\in E_i$ and $\epsilon(v)=1/2$ for $v\in V_{\partial M}$,  $\epsilon(e)=1/2$ for $e\in E_{\partial M}$,
\item $\Sigma_l$ is the sum over all labelings $l$ of  edges of $M$  with simple objects in  $I_{\mac_r}$ and $I_{\mam_a}$ that coincide with  $l_{\partial M}$ on the boundary,
\item $\Sigma_{b_i}$ is the sum over all labelings $b_i$ of internal triangles of $M$ with the projection and inclusion morphisms in the morphism spaces \eqref{pic:homspace},
\item $\Pi_{t\in \mathrm{Tet}}\, \mathrm{6j}(t, l, b)$ stands for the  product of the generalised 6j symbols $\mathrm{6j}(t, l, b)\in\C$ of all tetrahedra given by the labeling $l$ and the assignment $b=(b_i, b_{\partial M})$ of morphisms to the triangles. 
\end{compactitem}
We denote by $\mathcal Z(M, l_{\partial M})$ the associated  linear map 
\begin{align}\label{eq:statesummap}
\mathcal Z(M, l_{\partial M}): \bigotimes_{f\in \Delta_{\partial M}} \Hom(f)\to\C, \quad b_{\partial M}\mapsto \mathcal Z(M,l_{\partial M},b_{\partial M})
\end{align}
and by $\mathcal Z'(M,l_{\partial M},b_{\partial M})$ the rescaled state sum without the dimension factors of boundary edges and vertices
\begin{align}\label{eq:rescstate}
\mathcal Z(M, l_{\partial M}, b_{\partial M})=\left(\frac {\prod_{e\in E_{\partial M}} \dim l(e)} {\prod_{r\in R}\prod_{v\in V_r\cap V_{\partial M}} \dim \mac_r}  \right)^{1/2} \mathcal Z'(M, l_{\partial M}, b_{\partial M}).
\end{align}
\end{definition}

The product $\Pi_{t\in \mathrm{Tet}} \mathrm{6j}(t,l,b)$  in formula \eqref{eq:state sum} can also be viewed as the evaluation of the (unordered) tensor product $\otimes_{t\in \mathrm{Tet}} \mathrm{6j}(t)$ of the linear forms $\mathrm{6j}(t)$ in Definition \ref{def:gen6j} on a vector in the  tensor product of the morphism spaces for  the triangles. This vector is  defined by the labelings $l,b_i, b_{\partial M}$, and the sums over $l$ and $b_i$ eliminate the dependence on the labeling in the interior of $M$.

It is clear  that the linear map $\mathcal Z(M, l_{\partial M})$ does not depend on the choice of projection and inclusion morphisms for the interior triangles. If the simple objects assigned to the edges of the triangulation are fixed, then the projection and inclusion morphisms assigned to the interior triangles are fixed by \eqref{eq:projincl} up to simultaneous rescaling $p^\alpha_{xm}\to \lambda p^\alpha_{xm}$ and $j^\alpha_{xm}\to \lambda^\inv j^\alpha_{xm}$ for $\lambda=\lambda(x,m,\alpha)\in \C^\times$. As each interior triangle arises in exactly two tetrahedra with opposite orientations, the state sum is invariant under such rescalings.

To determine the dependence of the state sum   on the choice of representatives of simple objects, let $M$ be a  triangulated 3-manifold with defect data. Fix two sets of representatives $I,I'$ for each category labeling a region or  defect area and an isomorphism $\phi_{xx'}: x\to x'$ for each pair of isomorphic objects $x\in I$, $x'\in I'$. Note that this isomorphism is unique up to rescaling with $\C^\times$ and that it induces an isomorphism $\phi_f: \Hom(f)\to \Hom(f)'$ between the morphism spaces for a triangle $f$ labeled by isomorphic objects in $I$ and $I'$. 
More explicitly, these isomorphisms are given by 
\begin{align*}
&\phi_f: \Hom(m, F(j\rhd n))\to \Hom(m', F(j'\rhd n')), \quad \alpha\mapsto F(\phi_{jj'}\rhd \phi_{nn'})\circ \alpha\circ \phi^\inv_{mm'}\\
&\phi_f: \Hom(F(j\rhd n),n)\to \Hom(F(j'\rhd n'),m'), \quad \beta \mapsto \phi_{mm'} \circ \beta\circ F(\phi_{jj'}^\inv\rhd \phi_{nn'}^\inv).
\end{align*}
Definition \ref{def:gen6j} and the cyclic invariance of the trace imply that the 6j symbols are invariant under these isomorphisms, if they are applied to all morphism spaces simultaneously.  This yields

\begin{corollary} Let $M$ be a generic transversal  triangulated 3-manifold with defect data. Let $l_{\partial M}$ and $l_{\partial M}'$ be labelings of the boundary triangles with simple objects in two sets of representatives $I,I'$. Then 
$$\mathcal Z(M,l_{\partial_M})=\mathcal Z(M,l'_{\partial_M})\circ (\otimes_{f\in \Delta_{\partial M}}\phi_f):\bigotimes_{f\in \Delta_{\partial M}}\Hom(f)\to \C.$$
\end{corollary}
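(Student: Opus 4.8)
The plan is to reduce the statement to the already-established invariance of the generalised 6j symbols under the change-of-representatives isomorphisms, and then to observe that the remaining ingredients in the definition of the state sum --- the dimension factors and the summations --- are manifestly insensitive to the choice of representatives.

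First I would unpack the definition: by \eqref{eq:state sum}, the state sum $\mathcal Z(M,l_{\partial M},b_{\partial M})$ is a product over regions of powers of $\dim(\mac_r)$, times a sum over labelings $l$ extending $l_{\partial M}$ and over labelings $b_i$ of internal triangles, of the product $\prod_{t\in\mathrm{Tet}}\mathrm{6j}(t,l,b)$ times the edge-dimension factors. Fix an extension $l$ of $l_{\partial M}$ with respect to the representatives $I$. Using the isomorphisms $\phi_{xx'}$, $l$ determines a unique extension $l'$ of $l'_{\partial M}$ with respect to $I'$ (namely $l'(e)$ is the representative in $I'$ of the isomorphism class of $l(e)$), and $x\mapsto x'$ is a bijection between the two sets of admissible labelings; moreover $\dim l(e)=\dim l'(e)$ for every edge, since isomorphic objects have equal dimension, so the factor $\prod_{e\in E}\dim l(e)^{\epsilon(e)}$ is unchanged. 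Next, for each internal triangle $f$, the map $\phi_f:\Hom(f)\to\Hom(f)'$ sends the chosen projection/inclusion basis of $\Hom(f)$ associated with $I$ to a basis of $\Hom(f)'$; composing, a labeling $b_i$ of internal triangles by such morphisms corresponds bijectively to a labeling $b_i'$, and similarly $\phi_{f}$ applied to $b_{\partial M}$ gives $b_{\partial M}'=(\otimes_f\phi_f)(b_{\partial M})$ on the boundary triangles.

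The key step is then the invariance of the 6j symbols: for each tetrahedron $t$, Definition \ref{def:gen6j} expresses $\mathrm{6j}(t,l,b)$ as the cyclic evaluation of the associated polygon diagram, and applying $\phi$ to all the objects labeling the edges of $t$ and to all the morphisms labeling its triangles transforms this diagram into the corresponding diagram with respect to $I'$; since the $\phi_{xx'}$ and $\phi_f$ are composed into the boundary data in a way that conjugates the underlying bimodule natural transformation and pre/post-composes the boundary morphisms with invertible maps, the cyclic invariance of the bimodule trace (Lemma \ref{lem:cyclicinv}, together with the cyclic-invariance properties collected in \eqref{pic:cyclic trace}--\eqref{eq:functortrpivot}) gives $\mathrm{6j}(t,l,b)=\mathrm{6j}(t,l',b')$. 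Here I would note that the isomorphisms $\phi_f$ at the faces shared by two tetrahedra appear once in each of the two incident tetrahedra, but with opposite orientations, so the $\phi_f$'s and their inverses pair up internally and only the boundary $\phi_f$'s survive --- exactly as in the proof that the state sum does not depend on the choice of projection/inclusion morphisms. Taking the product over $t$ and then summing over the (matched) labelings $l\leftrightarrow l'$ and $b_i\leftrightarrow b_i'$, and using that the $\dim(\mac_r)$-prefactors do not involve any representatives at all, yields
$$
\mathcal Z(M,l_{\partial M},b_{\partial M})=\mathcal Z(M,l'_{\partial M},(\otimes_{f\in\Delta_{\partial M}}\phi_f)(b_{\partial M})),
$$
which is precisely the asserted identity of linear maps.

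The main obstacle is the bookkeeping of orientations and of how the $\phi$'s insert into the boundary morphism data: one must check that the transformation of $\Hom(f)$ induced by the $\phi_{xx'}$ is compatible with the functor and action-functor structures (the explicit formulas for $\phi_f$ given just before the corollary), and that after projecting the spherical-category data onto the defect polygon the accumulated $\phi$'s on a given interior edge or face indeed cancel because that edge or face is traversed twice with opposite induced orientations. Once this is set up carefully, the invariance of the 6j symbol is a direct consequence of the cyclic invariance of the trace, and no new calculation beyond what is already recorded in Lemma \ref{lem:cyclicinv} and the identities of Section \ref{sec:projincl} is needed. I do not expect any genuinely hard analytic point; the content is entirely combinatorial-diagrammatic.
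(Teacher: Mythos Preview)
Your proposal is correct and follows essentially the same approach as the paper: the paper's proof is in fact just the sentence preceding the corollary, which asserts that ``Definition \ref{def:gen6j} and the cyclic invariance of the trace imply that the 6j symbols are invariant under these isomorphisms, if they are applied to all morphism spaces simultaneously,'' and your argument is a careful unpacking of exactly this. One small point: your ``cancellation'' framing for internal faces is slightly misleading --- the cleaner statement (and the one the paper has in mind) is that each individual 6j symbol is invariant under simultaneously applying $\phi_f$ to all four of its faces, so the product over tetrahedra is invariant term-by-term, and then the summation over internal $b_i$ is unchanged because $\phi_f$ carries one basis to another; no pairing between adjacent tetrahedra is needed.
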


In particular, it follows that the state sum for a \emph{closed} generic transversal  triangulated 3-manifold with defect data does not depend on the choice of representatives of simple objects. 

It is also apparent from  Definition \ref{def:statesum} that the state sum \eqref{eq:state sum} is a direct generalisation of the usual Turaev-Viro-Barrett-Westbury state sum from \cite{BW,TV} and coincides with it if all defect areas, defect lines and defect vertices are trivial. This amounts to replacing all bimodule categories in \eqref{eq:state sum} by a single spherical fusion category $\mac$, viewed as a $(\mac,\mac)$-bimodule category, and all bimodule functors and natural transformations by identity functors and natural transformations. 

\begin{corollary}\label{cor:reducedstatesum}
For any triangulated 3-manifold without defects and any generic transversal triangulated 3-manifold with trivial defect data, labeled by a spherical fusion category $\mac$, the state sum from Definition \ref{def:statesum} reduces to the Turaev-Viro-Barrett-Westbury state sum
\begin{align}\label{eq:statesumsph}
\mathcal Z(M, l_{\partial M}, b_{\partial M})=\frac 1 {\dim\mac^{|V_{i}|+|V_{\partial M}|/2}}  \sum_{l}\sum_{b_i}   \prod_{t\in \mathrm{Tet}} \mathrm{6j}(t,l,b)\prod_{e\in E} \dim l(e)^{\epsilon(e)}.
\end{align} 
\end{corollary}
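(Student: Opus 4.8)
\textbf{Proof plan for Corollary \ref{cor:reducedstatesum}.}
The plan is to unwind all the defect-specific structure in Definition \ref{def:statesum} under the hypothesis that every defect area, defect line and defect vertex is trivial, so that the general state sum collapses to the classical Turaev--Viro--Barrett--Westbury formula \eqref{eq:statesumsph}. I would proceed in three stages: first identify the categorical data, then the combinatorial/geometric data, then reconcile the normalisation prefactors.

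First I would observe that, by the conventions laid out after Definition \ref{def:gen6j} and in Section \ref{subsec:triangsec}, a 3-manifold without defects (respectively, one with only trivial defects) is by definition the special case in which every region is labelled by one and the same spherical fusion category $\mac$, every defect area is $\mac$ viewed as a $(\mac,\mac)$-bimodule category with $\rhd=\oo=\lhd$ and bimodule trace equal to the spherical trace (Example \ref{ex:sphtriv}, Example \ref{ex:bimodulecategory}), every defect line is labelled by $\id_\mac$, and every defect vertex by $\id_{\id_\mac}$. Under this substitution the morphism spaces \eqref{pic:homspace} all become the ordinary $\Hom$-spaces $\Hom_\mac(i\oo j, p)$ attached to labelled triangles in the Turaev--Viro--Barrett--Westbury setting, since $F(c\rhd m)=c\oo m$ and all functors are identities; likewise the chosen projection and inclusion morphisms coincide with those used in \cite{BW,TV}. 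Next I would invoke the discussion following \eqref{fig:6jgeneral} together with \eqref{eq:a6j} and \eqref{eq:6ja2}: when all the bimodule functors and natural transformations are trivial, the generalised 6j symbol $\mathrm{6j}(t,l,b)$ of any oriented tetrahedron $t$ reduces to the usual 6j symbol of $\mac$ on the labels of $t$, i.e.\ to the evaluation in $\mac$ of the tetrahedral spin network on the dual graph, with the only subtlety being that the tetrahedron carries a fixed orientation (as emphasised after Definition \ref{def:gen6j}); Lemma \ref{lem:sphlemma} guarantees this value is independent of the chosen edge orientations, so it agrees with the orientation-independent Barrett--Westbury 6j symbol. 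Hence the product $\prod_{t\in\mathrm{Tet}}\mathrm{6j}(t,l,b)$ in \eqref{eq:state sum} becomes exactly the product of 6j symbols appearing in \eqref{eq:statesumsph}.

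It then remains to compare the prefactors. In \eqref{eq:state sum} the normalisation is $\prod_{r\in R}\dim(\mac_r)^{-(\Sigma_{v\in V_r}\epsilon(v))}$, and since there is a single region labelled $\mac$ (a manifold with only trivial defects is connected after removing the trivial defect surface, or more precisely every region carries the same $\mac$ and the exponents add), the sum $\sum_{r}\sum_{v\in V_r}\epsilon(v)$ collapses to $\sum_{v\in V}\epsilon(v)=|V_i|+\tfrac12|V_{\partial M}|$, using $\epsilon(v)=1$ for internal and $\epsilon(v)=1/2$ for boundary vertices. This reproduces the factor $\dim\mac^{-(|V_i|+|V_{\partial M}|/2)}$ in \eqref{eq:statesumsph}. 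The sum over labelings $l$ and internal morphism bases $b_i$, and the dimension weights $\prod_{e\in E}\dim l(e)^{\epsilon(e)}$, carry over verbatim. The one point requiring a brief remark is the bookkeeping of which vertices lie on the (trivial) defect surface: since the trivial defect surface contributes no extra region and $\mac_r=\mac$ throughout, no vertex is double-counted and the formula is unambiguous.

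I expect the main (though still modest) obstacle to be purely bookkeeping: making precise the claim that ``a manifold with trivial defect data is literally a special case of the general set-up'' in a way that lines up the region-indexed normalisation $\prod_{r\in R}\dim(\mac_r)^{-\Sigma_{v\in V_r}\epsilon(v)}$ with the single-category exponent $|V_i|+|V_{\partial M}|/2$, and checking that the trivial defect surface does not spuriously subdivide a region or alter the vertex count. All the genuinely categorical content has already been done: the reduction of $\mathrm{6j}(t,l,b)$ to the ordinary 6j symbol is exactly the content of \eqref{eq:a6j}/\eqref{eq:6ja2} and the remarks around \eqref{fig:6jgeneral}, and orientation-independence of the resulting symbol is Lemma \ref{lem:sphlemma}. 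Thus the proof is essentially a matter of substituting the trivial defect data into Definition \ref{def:statesum} and simplifying, citing \cite{BW,TV} for the identification with the Turaev--Viro--Barrett--Westbury state sum.
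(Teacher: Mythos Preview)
Your proposal is correct and takes essentially the same approach as the paper: the paper does not give a separate proof but simply remarks, in the paragraph immediately preceding the corollary, that this is ``apparent from Definition~\ref{def:statesum}'' by substituting trivial defect data, and your plan just unwinds that substitution in detail. Your more careful tracking of the normalisation prefactor and the reduction of the generalised 6j symbols via \eqref{eq:a6j}/\eqref{eq:6ja2} is entirely in line with what the paper leaves implicit.
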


Another direct consequence of  Definition \ref{def:statesum} is that the state sum can be computed by gluing  triangulated 3-manifolds  along  boundary components. As in the case  without defects, this allows one to compute state sums by cutting  3-manifolds into simpler pieces. The only additional restriction  is that the defect surfaces must meet the cut transversally and all pieces must be generic and transversal.

\begin{corollary}\label{cor:gluing} Let $M_1$ and $M_2$ be generic  transversal  triangulated 3-manifolds with defect data and $M$ obtained by gluing $M_1$ and $M_2$ along a boundary component $\Sigma\subset \partial M_i$. Then
\begin{align}
\mathcal Z(M,l_{\partial M}, b_{\partial M})=\sum_{l_\Sigma} \mathcal Z(M_1,l_{\partial M_1}, b_{\partial M_1})\cdot \mathcal Z(M_2,l_{\partial M_2}, b_{\partial M_2}),
\end{align}
where the labelings $l_{\partial M}$ and $b_{\partial M}$ of $\partial M$ are induced by the labelings $l_{\partial M_i}$ and $b_{\partial M_i}$ of $\partial M_i$ and the
 sum is over the labelings of  edges in $\Sigma$ with simple objects and over dual bases of the  morphism spaces for  triangles in $\Sigma$.
\end{corollary}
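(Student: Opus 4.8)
The plan is to unpack both sides of the claimed identity directly from Definition~\ref{def:statesum} and observe that the state sum of $M$ is simply a reorganisation of the product of the state sums of $M_1$ and $M_2$, with the summation over $l_\Sigma$ playing the role of the interior summation for edges and triangles on $\Sigma$. First I would set up notation: write $\mathrm{Tet}=\mathrm{Tet}_1\sqcup\mathrm{Tet}_2$, $E=E_1\cup E_2$ with $E_1\cap E_2=E_\Sigma$, $V=V_1\cup V_2$ with $V_1\cap V_2=V_\Sigma$, and $\Delta=\Delta_1\cup\Delta_2$ with $\Delta_1\cap\Delta_2=\Delta_\Sigma$. Because the gluing is along a full boundary component and is required to be transversal and generic, the triangulations of $M_1$ and $M_2$ restrict to the same triangulation of $\Sigma$, and the defect structure on $\Sigma$ inherited from the two sides agrees, so the categorical data on $\Sigma$-edges and $\Sigma$-triangles is unambiguous; edges and vertices of $\Sigma$ are boundary in each $M_i$ but become internal in $M$.

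Next I would track the three ingredients of the formula \eqref{eq:state sum} across the gluing. The product of $6j$ symbols factorises tautologically: $\prod_{t\in\mathrm{Tet}}\mathrm{6j}(t,l,b)=\bigl(\prod_{t\in\mathrm{Tet}_1}\mathrm{6j}(t,l,b)\bigr)\bigl(\prod_{t\in\mathrm{Tet}_2}\mathrm{6j}(t,l,b)\bigr)$, since each tetrahedron lies on exactly one side. For the edge dimension factors, each edge $e\notin E_\Sigma$ contributes the same exponent $\epsilon(e)$ in $M$ as in the $M_i$ containing it, while each $e\in E_\Sigma$ has $\epsilon_{M_1}(e)=\epsilon_{M_2}(e)=1/2$ and $\epsilon_M(e)=1$, so the two half-powers from $M_1$ and $M_2$ multiply to the full power in $M$; identically for the vertex count: $\sum_{v\in V_r}\epsilon_M(v)=\sum_{v\in V_r\cap M_1}\epsilon_{M_1}(v)+\sum_{v\in V_r\cap M_2}\epsilon_{M_2}(v)$ because the shared vertices carry $1/2$ on each side. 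This is exactly the bookkeeping reason why Definition~\ref{def:statesum} introduces the half-integer exponents and why the rescaled quantities $\mathcal Z'$ are not the ones that glue cleanly. Finally, the summation $\sum_l$ over interior labelings of $M$ splits as $\sum_{l_\Sigma}\sum_{l_1}\sum_{l_2}$, where $l_i$ ranges over interior edges of $M_i$ away from $\Sigma$ and $l_\Sigma$ over edges in $\Sigma$; similarly $\sum_{b_i}$ over interior triangles of $M$ splits as $\sum_{b_\Sigma}\sum_{b_1}\sum_{b_2}$ with $b_\Sigma$ ranging over the chosen projection/inclusion bases attached to $\Sigma$-triangles. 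Assembling these factorisations and pulling the $l_1,b_1$ sums into the $M_1$-factor and the $l_2,b_2$ sums into the $M_2$-factor gives precisely $\sum_{l_\Sigma}\sum_{b_\Sigma}\mathcal Z'(M_1,\cdot)\mathcal Z'(M_2,\cdot)$, and I would then convert to the unprimed $\mathcal Z$ using \eqref{eq:rescstate}: the boundary-edge dimension prefactor and the boundary-vertex dimension prefactor of $M$ are each the product of those of $M_1$ and $M_2$ over the parts of $\partial M_i$ not in $\Sigma$ (which together make up $\partial M$), while the $\Sigma$-contributions to the $\mathcal Z_i$ prefactors are exactly $\sqrt{\prod_{e\in E_\Sigma}\dim l(e)}\big/\sqrt{\prod_r\prod_{v\in V_r\cap V_\Sigma}\dim\mac_r}$ on each side, whose product is the full factor needed to turn the extra half-powers in the $6j$/dimension product into the correct interior powers for $M$. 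The only subtlety worth a sentence is to confirm that the morphism spaces $\Hom(f)$ for $f\in\Delta_\Sigma$ are literally the same on both sides — i.e.\ the orientation of $f$ inherited from $M_1$ is opposite to that inherited from $M_2$, so that one uses $\Hom(f)$ and its dual, matching the phrase ``dual bases of the morphism spaces'' in the statement; this is where I would invoke the conventions of Section~\ref{subsec:triangsec} and the pairing from \eqref{eq:projincl}.

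The main obstacle, such as it is, is purely organisational rather than conceptual: one must be scrupulous that every edge, vertex and triangle of $\Sigma$ is counted with the right exponent on each side and that the two $1/2$'s add to $1$, and that the ``dual basis'' phrasing is reconciled with the fact that a triangle in $\Sigma$ receives opposite orientations from $M_1$ and $M_2$, so that the pairing used in $\sum_{b_\Sigma}$ is the nondegenerate pairing of \eqref{eq:projincl} between $\Hom(f)$ and $\Hom(f)$ with reversed orientation. I expect no genuine difficulty: there is no appeal to triangulation independence (that is Theorem~\ref{th:fulltopinv}, proved later) and no identity beyond the definition is needed — the result is a formal consequence of the multiplicativity of the $6j$-product, the additivity of the exponents $\epsilon$, and the factorisation of the summation over interior data into data on $\Sigma$ and data on the two complementary pieces. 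I would therefore present it as a short, essentially one-paragraph verification, spelling out the exponent accounting and the prefactor accounting explicitly and leaving the rest to ``it follows directly from Definition~\ref{def:statesum}''.
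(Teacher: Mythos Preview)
Your proposal is correct and is exactly the verification the paper has in mind: the corollary is stated without proof as ``another direct consequence of Definition~\ref{def:statesum}'', and your exponent bookkeeping for edges and vertices on $\Sigma$ together with the factorisation of the $6j$-product and the interior summations is precisely that direct consequence made explicit. The paper gives no further argument, so your write-up would serve as the omitted details.
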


\section{Topological invariance}
\label{sec:topinvariance}

\subsection{Moves between triangulated PL manifolds}
\label{subsec:PLsubsec}

To establish triangulation independence for  state sum models with defects, we require some background from PL topology. We consider different moves on triangulations that relate PL homeomorphic 3-manifolds with and without boundary. The main references  are the articles \cite{P, Pshell}  by Pachner, see also  the articles \cite{L} by Lickorish and \cite{Cas} by Casali.

\begin{theorem}\cite[Th.~5.5]{P},\cite[Th.~5.9]{L}\\
Two closed triangulated $n$-dimensional PL manifolds are PL homeomorphic iff they are related by  a finite sequence of bistellar  moves.
\end{theorem}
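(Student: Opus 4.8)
\textbf{Proof plan for Theorem (Pachner, \cite[Th.~5.5]{P}, \cite[Th.~5.9]{L}).}
The statement being cited is a well-known result from PL topology, so the ``proof'' here is really a survey of the two directions and the reduction to standard results. The plan is to split into the easy direction (bistellar moves preserve PL homeomorphism type) and the hard direction (PL homeomorphic triangulations are connected by bistellar moves), and to reduce the latter to the classical fact that two triangulations of the same PL manifold have a common subdivision, together with Alexander's theorem relating stellar subdivisions.

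First I would dispose of the forward implication. A bistellar move on a closed triangulated $n$-manifold replaces a subcomplex that is the boundary of a simplex pattern by the complementary pattern; concretely, one removes $\mathrm{star}(\sigma)$ for a face $\sigma$ whose link is a standard sphere boundary and reglues the complementary collection of simplices. Both the old and new configurations are triangulations of the same PL $n$-ball neighbourhood, so the underlying topological space is unchanged and the identity map is a PL homeomorphism on the nose. Hence a finite sequence of bistellar moves yields a PL homeomorphic manifold; this direction requires only checking that each elementary move is a local retriangulation of a ball, which is immediate from the combinatorial definition.

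For the converse, the plan is to invoke three classical ingredients. (i) By the existence of common subdivisions for PL manifolds \cite[Ch.~2]{RS}, if $T_1$ and $T_2$ triangulate PL homeomorphic closed $n$-manifolds $M_1\cong M_2$, then after transporting $T_2$ across the homeomorphism both become triangulations of a single $M$, and there is a triangulation $T$ that is a common subdivision of $T_1$ and $T_2$. (ii) By Alexander's subdivision theorem, any two triangulations of $M$ with a common subdivision are related by a finite sequence of \emph{stellar} subdivisions and inverse stellar subdivisions (stellar ``welds''). (iii) By Pachner's key lemma \cite[Th.~5.5]{P}, a single stellar subdivision or weld of a closed PL $n$-manifold can be realised by a finite sequence of bistellar moves --- this is the genuinely nontrivial combinatorial step, proved by an induction on dimension and on the number of simplices involved, writing the stellar operation as a composition of bistellar flips within the star of the relevant face. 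Chaining (i)--(iii) gives a finite sequence of bistellar moves from $T_1$ to $T_2$.

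The main obstacle is step (iii): showing that an arbitrary stellar subdivision decomposes into bistellar moves. The difficulty is that stellar subdivision of a high-dimensional face affects a star whose link can be a complicated triangulated sphere, so one cannot flip it in one step; Pachner's argument handles this by first subdividing the link down to the boundary of a simplex (again inductively reducible to bistellar moves on the link, one dimension down) and only then performing the flip, which now sits inside a standard simplicial ball. I would present this as the cited lemma rather than reprove it, since the full induction is lengthy; the remaining glue (common subdivision plus Alexander's theorem) is standard and can be quoted verbatim from \cite[Ch.~2]{RS} and the references in \cite{P,L}. The boundary case --- needed later in the paper --- is the analogous statement with shellings added to the move set, due to Pachner \cite{Pshell}, and follows the same pattern with the extra bookkeeping of collar neighbourhoods of $\partial M$.
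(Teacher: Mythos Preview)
The paper does not prove this theorem at all: it is stated purely as background from PL topology, with citations to Pachner \cite{P} and Lickorish \cite{L}, and no proof or proof sketch is given. Your outline is a reasonable summary of the standard argument (easy direction plus common subdivision, Alexander's stellar theorem, and Pachner's reduction of stellar moves to bistellar moves), but there is nothing in the paper to compare it against --- the authors simply quote the result and move on.
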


\begin{theorem}\cite{Cas}\label{th:pachbound}\\
Two triangulated $n$-dimensional PL manifolds with boundary, whose boundary triangulations coincide, are PL homeomorphic iff they are related by bistellar moves.
\end{theorem}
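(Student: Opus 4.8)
\textbf{Proof proposal for Theorem \ref{th:pachbound}.}

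The statement I need to prove is Casali's theorem: two triangulated $n$-dimensional PL manifolds with boundary whose boundary triangulations agree are PL homeomorphic if and only if they are related by a finite sequence of bistellar moves (where the moves are understood to take place in the interior, fixing the boundary). Since this is a known result that I am citing from \cite{Cas}, the natural plan is to reduce it to the closed case via the classical doubling construction rather than to reprove everything from scratch. First I would recall the bistellar (Pachner) moves on an $n$-manifold: the move that replaces a sub-$(k+1)$-ball consisting of $k+1$ simplices of dimension $n$ sharing a common $(n-k)$-face with the complementary $(n-k)$ simplices in the boundary of an $(n+1)$-simplex. The key point is that these moves are local and can be performed so as not to touch a prescribed subcomplex — in particular the boundary $\partial M$.

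The main step is the reduction to Pachner's theorem for closed manifolds (\cite[Th.~5.5]{P}, \cite[Th.~5.9]{L}). Given a triangulated PL $n$-manifold $M$ with boundary, form the double $DM = M \cup_{\partial M} M$, glued along the identity on $\partial M$; this is a closed PL $n$-manifold, triangulated by the two copies of $T$ meeting along the common subcomplex triangulating $\partial M$. If $M_1$ and $M_2$ are PL homeomorphic rel boundary (which is what "whose boundary triangulations coincide and which are PL homeomorphic" gives us, after possibly composing with a homeomorphism that is the identity on the boundary, using that the boundary triangulations are literally equal), then $DM_1$ and $DM_2$ are PL homeomorphic closed manifolds, so by Pachner's theorem they are connected by a finite sequence of bistellar moves. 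The delicate part is to arrange these moves symmetrically with respect to the doubling involution and so that they never involve the diagonal copy of $\partial M$; one does this by first subdividing so that a regular neighbourhood of $\partial M$ in $DM$ is a product $\partial M \times [-1,1]$ triangulated as a prism, then observing that any bistellar move can be pushed off this neighbourhood (or matched by its mirror image across it) at the cost of finitely many further bistellar moves. Restricting the resulting symmetric sequence to one half $M \subset DM$ yields a sequence of bistellar moves from $T_1$ to $T_2$ fixing $\partial M$, which is exactly what is claimed. The converse direction is trivial: a bistellar move is a local re-triangulation of a PL ball and hence does not change the PL homeomorphism type, and if it fixes $\partial M$ the boundary triangulation is unchanged.

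The step I expect to be the main obstacle is making the "push off the collar / symmetrise the sequence" argument precise: a priori Pachner's theorem gives \emph{some} sequence of moves on $DM_1 \to DM_2$ with no control near $\partial M$, and one must show it can be replaced by a sequence that is equivariant under the reflection swapping the two copies of $M$ and disjoint from the central $\partial M$. The standard way to handle this is to invoke the relative version of Pachner's theorem — Pachner's own shellability argument in \cite{Pshell} is already formulated so that a subcomplex can be fixed — or, equivalently, to use that two triangulations of a PL manifold that agree on a subcomplex $L$ and are related by PL homeomorphism rel $L$ are related by bistellar moves supported in the complement of $L$. Since the excerpt only needs this theorem as a black box for the subsequent stellar-subdivision arguments, I would present the doubling reduction as the conceptual skeleton, cite \cite{Cas} and \cite{Pshell} for the relative/equivariant refinement that supplies the collar control, and flag that no new ideas beyond those references are required.
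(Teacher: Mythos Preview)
The paper does not give a proof of this theorem at all: it is stated with the citation \cite{Cas} and used as a black box in Section~\ref{subsec:PLsubsec}. So there is no ``paper's own proof'' to compare your proposal against.

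That said, your doubling argument has a genuine gap that you yourself identify but do not close. Pachner's theorem for the closed double $DM$ gives you \emph{some} sequence of bistellar moves, with no control near the central copy of $\partial M$. To upgrade this to a sequence of moves supported away from $\partial M$ (equivalently, equivariant under the reflection), you appeal to ``the relative version of Pachner's theorem'' and to the statement that triangulations agreeing on a subcomplex $L$ and PL homeomorphic rel $L$ are related by bistellar moves in the complement of $L$. But for $L=\partial M$ this \emph{is} Casali's theorem, so the argument is circular; and for general $L$ it is strictly stronger than what you are trying to prove. Citing \cite{Cas} to fill this step defeats the purpose of the reduction.

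Casali's actual argument does not go through the double. It uses Alexander's theorem that PL homeomorphic complexes admit a common stellar subdivision, together with the observation that a stellar subdivision at an interior simplex can be realised by bistellar moves that do not touch the boundary (this is the content the present paper recalls after Figure~\ref{fig:stellar}). The work is in showing that the Alexander/Newman sequence of stellar moves connecting the two triangulations can be taken to involve only interior simplices when the boundary triangulations already agree. If you want to sketch a proof rather than just cite, that is the line to follow; the doubling trick does not buy you anything here.
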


The {\bf bistellar moves} for a 3-manifold are the  2-3 move and the 1-4 move depicted in Figure \ref{fig:bistellar}. The 1-4 move  subdivides a tetrahedron into four  by inserting a new vertex in its interior  or reverses this step. The 2-3 move replaces two tetrahedra that share a triangle by three tetrahedra that share an edge  or reverses this step.

\begin{figure}
\begin{center}
\def\svgwidth{.41\columnwidth}
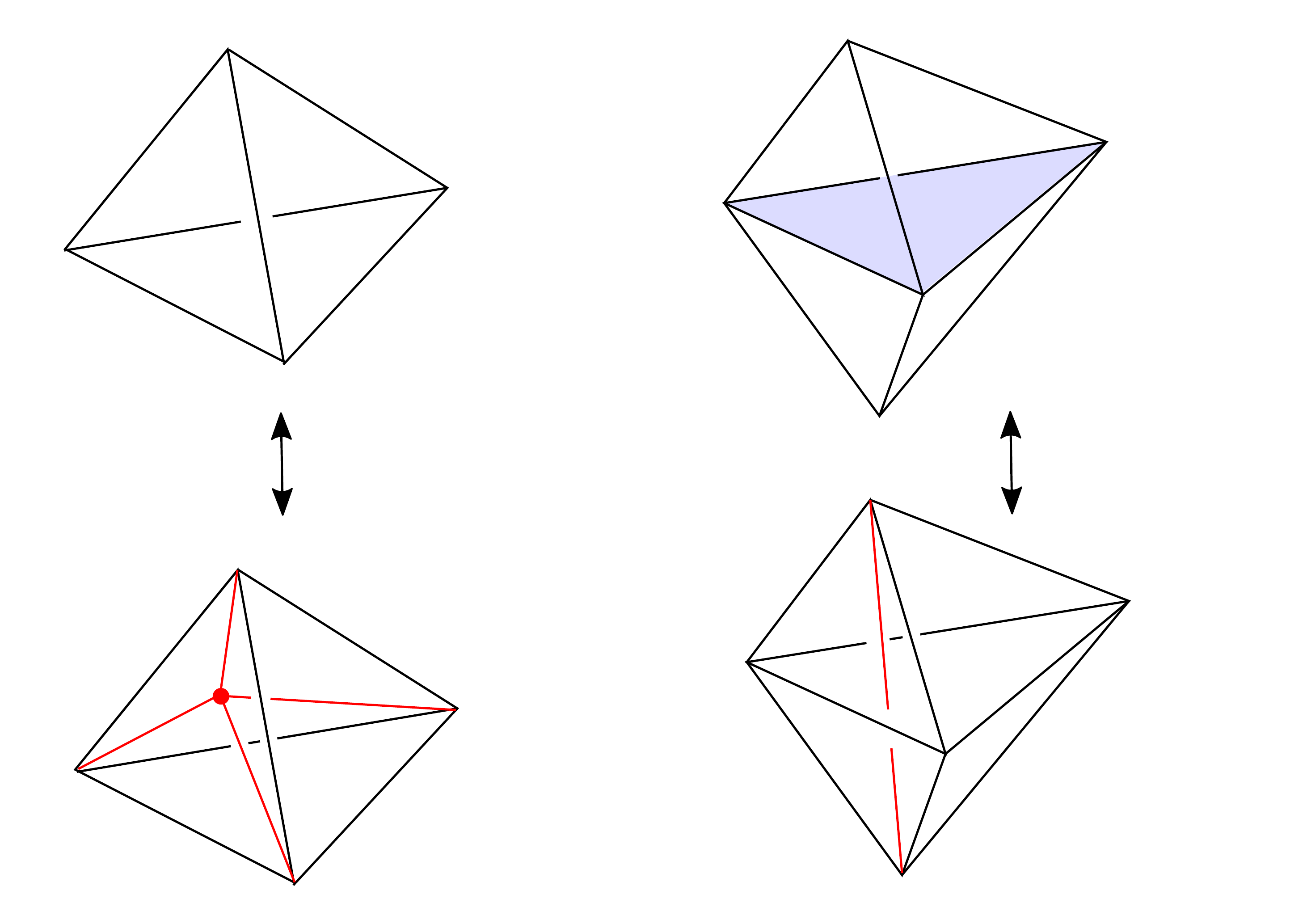 
\end{center}
\caption{Bistellar moves for a tetrahedron.}
\label{fig:bistellar}
\end{figure}

We also consider {\bf stellar subdivisions}, see for instance \cite{RS}. The stellar subdivision for a tetrahedron is  the 1-4 move  from Figure \ref{fig:bistellar}. The stellar subdivisions for a triangle and an edge are shown in Figure \ref{fig:stellar}.
A stellar subdivision of a triangle $\Delta$ inserts a vertex in the interior of $\Delta$ and connects it to the vertices of $\Delta$ and to the  opposite vertices of all tetrahedra containing $\Delta$. 
A stellar subdivision of an edge $e$ inserts a vertex in the interior of $e$ and connects it to the opposite vertices of all triangles containing $e$. 

It is shown in \cite{BW} that stellar subdivisions are composites of 1-4 and 2-3 moves. The stellar subdivision for a triangle $\Delta$ is obtained by to first applying a 1-4 move   to one of the two tetrahedra containing $\Delta$ and then a 2-3 move to the other tetrahedron and the adjacent tetrahedron created by the 1-4 move, see the proof of Theorem 4.6 in \cite{BW}. The stellar subdivision for an edge $e$ shared by $n$ tetrahedra  is obtained by first applying a 1-4  move  to one of the tetrahedra containing $e$.  Then one applies a sequence of $(n-2)$  2-3  moves  clockwise to the remaining tetrahedra containing  $e$. These 2-3 moves  connect the new vertex  to the vertices of these  tetrahedra.  Finally one applies a 2-3  move that removes $e$, see the proof of Theorem 4.6 in  \cite{BW}.

\begin{figure}
\begin{center}
\def\svgwidth{.41\columnwidth}
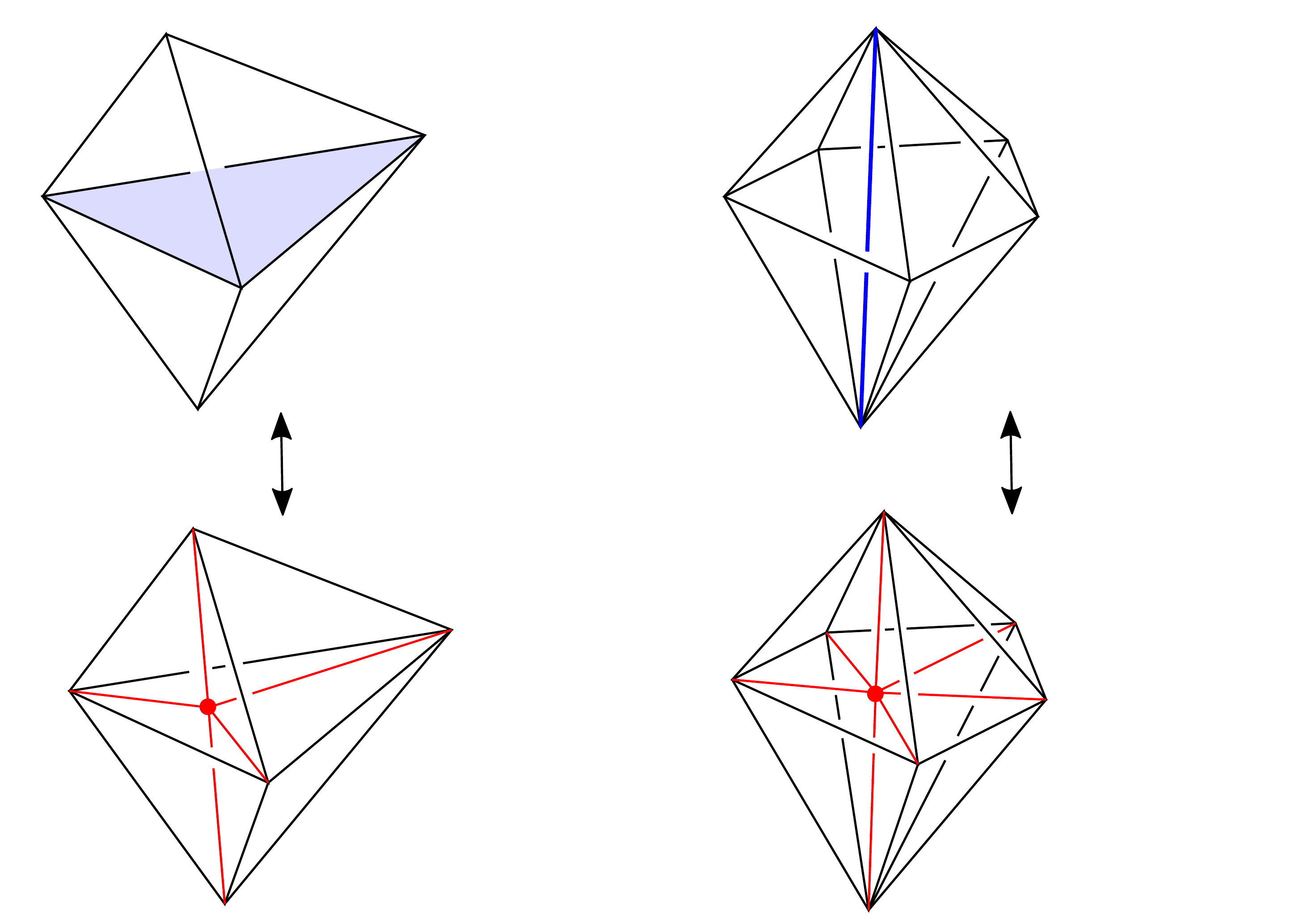 
\end{center}
\caption{Stellar subdivisions for a triangle and an edge of a triangulation.}
\label{fig:stellar}
\end{figure}

For 3-manifolds with boundaries one also needs to consider elementary shellings and their inverses. An {\bf elementary shelling} of a PL 3-manifold $M$ removes a 3-simplex $t$ that is the  join $t=A\star B=\mathrm{conv}(A\cup B)$ of two simplexes $A$ and $B$ with $A\cap M=\partial A$ and $B\star \partial A\subset \partial M$ and replaces $M$ by the closure of $M\setminus A\star B$, see \cite[Def.~5.2]{L}. This amounts to removing the tetrahedron $t$, which has  one, two or three faces in the boundary $\partial M$, such that its remaining faces become boundary faces, as shown in Figure \ref{fig:shellingdef}. Correspondingly, an {\bf inverse shelling} amounts to adding a tetrahedron to $M$ that shares  one, two  or three triangles with $\partial M$. 
 
 \begin{figure}
\begin{center}
\def\svgwidth{.37\columnwidth}
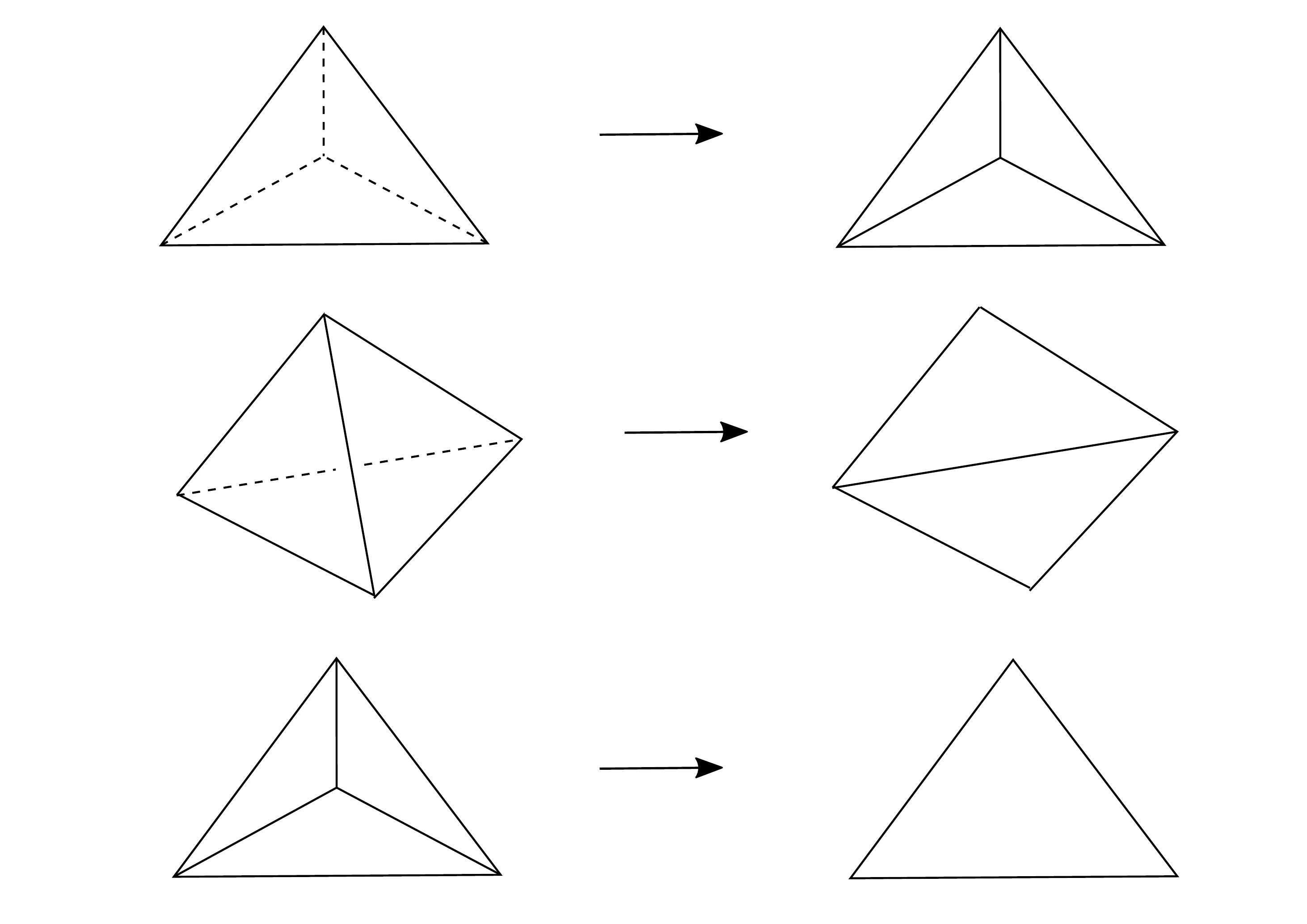 
\end{center}
\caption{Elementary shellings: removing a tetrahedron with one, two or three faces in the boundary.}
\label{fig:shellingdef}
\end{figure}

Elementary shellings and their inverses on a 3-manifold $M$ induce bistellar moves on the boundary $\partial M$. These are the 1-3 move, which subdivides a boundary triangle into three  by adding a vertex in its interior or reverses this step, and the 2-2 move, which replaces a common edge  shared by two boundary triangles with the edge connecting their remaining two vertices, as  in Figure \ref{fig:shellingdef}. It is shown  in \cite{Pshell, P} that  triangulations of a PL manifold with boundary are related by bistellar moves, elementary shellings and their inverses. 

\begin{theorem}\cite[Th.~4.14]{P}, \cite[Th.~2]{Pshell} \label{the:pachschellstell}\\
Two triangulated $n$-dimensional PL manifolds $M,M'$ with boundary are PL homeomorphic iff $M$ can be transformed into $M'$ by a finite number of elementary shellings and bistellar moves.
\end{theorem}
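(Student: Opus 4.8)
The plan is to prove the two implications separately; the forward direction is routine and the content of the theorem lies in the converse, which is exactly Pachner's theorem.

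\textbf{The ``only if'' direction.} Each of the permitted moves is a local modification: the 2-3 and 1-4 moves replace one triangulated PL ball sitting inside $M$ by another triangulation of a PL ball with the same induced triangulation on the faces glued to the rest of $M$, and an elementary shelling removes (resp.\ adds) a tetrahedron meeting $\partial M$ in one, two or three faces, again replacing a collar piece by a PL-homeomorphic one while leaving the triangulation fixed outside. Hence each move, and therefore any finite composite of moves and their inverses, produces a triangulated PL manifold PL-homeomorphic to $M$ whose boundary triangulation is altered only by the induced boundary moves. This establishes the easy implication.

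\textbf{The ``if'' direction: reduction to stellar moves.} Let $\phi\colon M\to M'$ be a PL homeomorphism. By the standard results of combinatorial PL topology (Alexander, Newman; see \cite{RS}), $M$ and $M'$ admit subdivisions that are simplicially isomorphic, and any two subdivisions of a fixed simplicial complex are connected by a finite sequence of stellar subdivisions and inverse stellar subdivisions. Since simplicially isomorphic triangulations are identified, it suffices to show that $M$ and the triangulation obtained from it by a single stellar subdivision (at a simplex of $M$, interior or boundary) are related by bistellar moves and elementary shellings. Thus the theorem reduces to the statement: every stellar subdivision of $M$ can be realised by a finite sequence of 2-3 moves, 1-4 moves, elementary shellings, and their inverses.

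\textbf{Realising stellar moves.} For a stellar subdivision at a simplex contained in the interior of $M$ the argument is purely local and is exactly the one recalled in the excerpt for the Barrett--Westbury invariant \cite{BW}: starring a tetrahedron is the 1-4 move, and starring an interior triangle or interior edge is a composite of one 1-4 move followed by a sequence of 2-3 moves running around the link of the simplex. For a stellar subdivision at a simplex lying on $\partial M$ one must instead use shellings: an inverse elementary shelling glues a tetrahedron onto $\partial M$ along one, two, or three faces and, correspondingly, a 1-3 move, a 2-2 move, or nothing is induced on $\partial M$, and by interleaving such inverse shellings with interior bistellar moves one produces the starring of a boundary triangle or boundary edge; the precise bookkeeping of which tetrahedra to add and in which order is the work carried out in \cite{Pshell, P}. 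The main obstacle — the step I expect to require genuine care — is that these local modifications near $\partial M$ decompose into \emph{elementary} shellings only when the auxiliary simplicial balls one builds up or collapses are shellable; guaranteeing this shellability of the relevant small balls is precisely the technical heart of Pachner's analysis \cite{Pshell}, and everything above rests on it. (An alternative route is to reduce to the closed case by doubling: $\phi$ induces a PL homeomorphism of the doubles $DM\to DM'$, which are connected by bistellar moves via Pachner's closed-manifold theorem \cite{P}, and one then pushes each move back to $M$ — moves straddling the seam $\partial M\subset DM$ become elementary shellings on $M$; this again isolates the boundary interaction as the essential new ingredient beyond the closed case.)
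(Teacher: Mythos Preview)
The paper does not prove this theorem at all; it is quoted as background from Pachner's work \cite{P,Pshell} and used as a black box in Section~\ref{sec:topinvariance}. So there is no proof in the paper to compare against.

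Your outline is a reasonable high-level sketch of the strategy in Pachner's original papers: reduce to Alexander/Newman stellar equivalence, then factor each stellar move into bistellar moves (for interior simplices) and shellings (for boundary simplices). But you explicitly defer the substantive step --- showing that boundary stellar moves decompose into elementary shellings, which hinges on the shellability of certain auxiliary balls --- back to \cite{Pshell,P}. So your proposal is not a self-contained proof; it is a roadmap that ultimately cites the same sources the paper does. The alternative doubling argument you mention is also just a sketch: pushing bistellar moves on $DM$ back to moves on $M$ when they straddle the seam is not automatic and would itself need the kind of boundary analysis you are trying to avoid. In short, nothing is wrong with what you wrote as an overview, but it does not supply the missing technical content beyond what the citation already provides.
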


\begin{theorem}\cite[Th.~6.3]{P}, \cite[Th.~5.10]{L}\\  \label{th:pachshell}Two connected triangulated $n$-dimensional PL manifolds with non-empty boundary are PL homeomorphic iff they are related by a finite sequence of elementary shellings and  inverse shellings.
\end{theorem}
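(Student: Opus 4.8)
The plan is to deduce Theorem \ref{th:pachshell} from Theorem \ref{the:pachschellstell} by showing that, when the boundary is non-empty, every bistellar move can be simulated by a finite sequence of elementary shellings and inverse shellings carried out near the boundary. The implication ``$\Leftarrow$'' is immediate: each elementary shelling and each inverse shelling is by construction a PL homeomorphism of triangulated PL manifolds, and if one insists on staying within connected manifolds with non-empty boundary the simulation below never leaves this class.

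For ``$\Rightarrow$'', let $M$ and $M'$ be connected with non-empty boundary and PL homeomorphic. By Theorem \ref{the:pachschellstell} there is a finite chain $M=M_0,\dots,M_k=M'$ in which each step is an elementary shelling, an inverse shelling, or a bistellar move, so it suffices to replace each bistellar move in the chain by a finite sequence of elementary shellings and inverse shellings while keeping all intermediate triangulations connected with non-empty boundary. Recall that a bistellar $j$-move on an $n$-manifold replaces the star $\sigma\star\partial\tau$ of a $j$-simplex $\sigma$ with link $\partial\tau$ (the boundary of an $(n-j)$-simplex) by $\partial\sigma\star\tau$; the two configurations are shellable $n$-balls with common boundary $\partial\sigma\star\partial\tau$, and their union is the boundary of the $(n+1)$-simplex $\sigma\star\tau$. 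Rather than attach this $(n+1)$-simplex, which would leave dimension $n$, the idea is to ``dig a tunnel'' from $\partial M$ to the support of the move and perform the flip on the boundary.

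Concretely: since $M$ is connected with $\partial M\neq\emptyset$, choose a PL arc from a point of $\partial M$ into $\sigma\star\partial\tau$ and, after passing to a sufficiently fine derived subdivision (compare \cite[Ch.~2]{RS}), take a regular neighbourhood $B$ of this arc that is an $n$-ball meeting $\partial M$ in an $(n-1)$-ball and containing the flip region in a controlled position. A regular neighbourhood ball is shellable onto its contact part with $\partial M$, so the tetrahedra of $B$ outside the flip region can be removed one at a time by elementary shellings; after these shellings $\sigma\star\partial\tau$ lies in the boundary of the new manifold, where the bistellar move becomes a local modification of the boundary triangulation. By the bistellar-move theorem for the closed $(n-1)$-manifold $\partial M$, this modification is a finite composite of the boundary moves induced by elementary shellings and inverse shellings of $M$ --- for $n=3$ these are the $1$-$3$ and $2$-$2$ moves of Figure \ref{fig:shellingdef}. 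Performing each of them by the corresponding shelling or inverse shelling of $M$ and then reattaching $B$ by inverse shellings realizes the original bistellar move as a finite sequence of the allowed moves; connectedness and non-emptiness of the boundary are preserved throughout because $B$ is a ball attached along a ball in $\partial M$ and is re-glued at the end.

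The main obstacle I expect is the tunnelling construction itself: one has to triangulate the regular neighbourhood $B$ so that it is simultaneously shellable from the interior side and contains the flip region as a subcomplex in the right position, and one has to order the elementary shellings so that the manifold remains connected with non-empty boundary at every intermediate stage. This is where one invokes the standard PL facts about regular neighbourhoods and shellability of balls together with the explicit description of the boundary moves induced by elementary shellings in Figure \ref{fig:shellingdef}; granting these, the reduction to the lower-dimensional bistellar theorem of Pachner and Lickorish is routine.
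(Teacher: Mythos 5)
First, note that the paper does not prove this statement at all: it is quoted verbatim from Pachner \cite[Th.~6.3]{P} and Lickorish \cite[Th.~5.10]{L} as background from PL topology, so there is no in-paper proof to compare against. Your attempt is therefore a sketch of a new proof of a substantial PL-topology theorem, and as it stands it has genuine gaps.

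The first gap is the claim that the regular neighbourhood ball $B$ of the tunnelling arc ``is shellable onto its contact part with $\partial M$''. This is not a standard PL fact: shellability of triangulated balls is delicate (the paper itself cites Ziegler \cite{Zi} for non-shellable triangulated $3$-balls), and what you actually need is stronger than abstract shellability of $B$, namely an ordering in which every removed $n$-simplex has its free faces in the \emph{current} boundary of $M$, so that each removal is an elementary shelling of $M$ rather than merely of $B$. Worse, to obtain such a $B$ you pass to a ``sufficiently fine derived subdivision'', which replaces the triangulation $M_i$ in your chain by a subdivision $M_i''$; to return to the original chain you would need to know that a triangulation and its derived subdivision are related by shellings and inverse shellings, and that lemma is of essentially the same depth as the theorem you are proving (you cannot obtain it from bistellar moves, since eliminating those is the whole point). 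The second gap is the step where, after exposing the flip region, you invoke ``the bistellar-move theorem for the closed $(n-1)$-manifold $\partial M$''. The bistellar move you must simulate is an $n$-dimensional retriangulation (it exchanges the $n$-simplices of $\sigma\star\partial\tau$ for those of $\partial\sigma\star\tau$); Pachner's result for the closed $(n-1)$-manifold $\partial M$ only concerns the boundary complex and says nothing about how the interior $n$-simplices are replaced. What is actually required at that point is that $\sigma\star\partial\tau$ can be removed from the partially shelled manifold by elementary shellings and $\partial\sigma\star\tau$ reattached by inverse shellings, which demands specific free-face conditions that your tunnel construction would have to be shown to guarantee --- and this, the crux of the argument, is exactly what is left unverified. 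The overall strategy (trading interior moves for boundary operations via a tunnel, then concluding with Theorem \ref{the:pachschellstell}) is in the spirit of the arguments in \cite{P,L}, but without closing these two steps the proof does not go through.
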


If a triangulated $n$-manifold $M$ is transformed  into  $M'$ by a finite number of elementary shellings, one says that $M$ {\bf shells} to $M'$. A triangulated $n$-manifold $M$ is called {\bf shellable} if it shells to an $n$-simplex. It is well-known that every triangulation and, more generally, CW-decomposition of a $2$-disc is shellable. 
This does not hold for  triangulations or CW-decompositions of the 3-ball, see Ziegler \cite{Zi} and the references therein.

The triangulation independence of the  Turaev-Viro-Barrett-Westbury  state sum  from \cite{TV,BW} follows from its invariance under bistellar moves.
It is shown in \cite{BW} and \cite{T} that the invariance of the state sum
 under the 2-3 move in Figure \ref{fig:bistellar} encodes the pentagon relation for the associator of a spherical fusion category. Its invariance under the 1-4 move in Figure \ref{fig:bistellar}  corresponds to an equivalent relation  obtained by inverting one of the associators in the pentagon relation.  It follows from the invariance under the 2-3 move and  under a \emph{bubble move}. The latter corresponds to the invertibility of the associator and  states that gluing two tetrahedra with  opposite orientations along  four faces yields the trivial state sum.  
 
 The pentagon relation for the associator is encoded in the Biedenharn-Elliott relations for the 6j symbols and the invertibility of the associator in their orthogonality relations
\cite[Prop.~5.2]{BW}.  They  are obtained by expressing the associator on simple objects of $\mac$ as in \eqref{eq:a6j} and using its naturality.
In our notation they read
\begin{align}
\label{eq:biedenharneliotsph}
&\begin{tikzpicture}[scale=.3]
\node at (-5,0) [anchor=east]{$\sum_{\beta}$};
\draw[line width=.5pt, color=black] (0,5)--(0,-5);
\draw[line width=.5pt, color=black] (-.5,5)--(.5,5) node[anchor=west] {$m$};
\draw[line width=.5pt, color=black] (-.5,-5)--(.5,-5) node[anchor=west] {$m$};
\draw[line width=.5pt, color=black] (0,3)--(-3,1) node[midway, anchor=south east]{$k$};
\draw[line width=.5pt, color=black] (-3,1)--(0,0) node[pos=.7, anchor=south]{$\;j$};
\draw[line width=.5pt, color=black] (-3,1)--(0,-3) node[midway, anchor=north east]{$i$};
\node at (0,1.5) [anchor=west, color=black]{$n$};
\node at (0,-1.5) [anchor=west, color=black]{$p$};
\draw[color=black, fill=black] (-3,1) circle (.2) node[anchor=east]{$\alpha\;$};
\draw[color=black, fill=black] (0,3) circle (.2) node[anchor=west]{$\beta$};
\draw[color=black, fill=black] (0,0) circle (.2) node[anchor=west]{$\gamma$};
\draw[color=black, fill=black] (0,-3) circle (.2) node[anchor=west]{$\delta$};
\begin{scope}[shift={(7,0)}]
\draw[line width=.5pt, color=black] (0,5)--(0,-5);
\draw[line width=.5pt, color=black] (-.5,5)--(.5,5) node[anchor=west] {$m$};
\draw[line width=.5pt, color=black] (-.5,-5)--(.5,-5) node[anchor=west] {$m$};
\draw[line width=.5pt, color=black] (0,3)--(-3,1) node[midway, anchor=south east]{$a$};
\draw[line width=.5pt, color=black] (-3,1)--(0,0) node[pos=.7, anchor=south]{$\;b$};
\draw[line width=.5pt, color=black] (-3,1)--(0,-3) node[midway, anchor=north east]{$k$};
\node at (0,1.5) [anchor=west, color=black]{$q$};
\node at (0,-1.5) [anchor=west, color=black]{$n$};
\draw[color=black, fill=black] (-3,1) circle (.2) node[anchor=east]{$\phi\;$};
\draw[color=black, fill=black] (0,3) circle (.2) node[anchor=west]{$\rho$};
\draw[color=black, fill=black] (0,0) circle (.2) node[anchor=west]{$\sigma$};
\draw[color=black, fill=black] (0,-3) circle (.2) node[anchor=west]{$\beta$};
\end{scope}
\node at (9,0)[anchor=west]{$=\sum_{c,\lambda,\mu,\nu}\dim(c)$};
\begin{scope}[shift={(24,0)}]
\draw[line width=.5pt, color=black] (0,5)--(0,-5);
\draw[line width=.5pt, color=black] (-.5,5)--(.5,5) node[anchor=west] {$m$};
\draw[line width=.5pt, color=black] (-.5,-5)--(.5,-5) node[anchor=west] {$m$};
\draw[line width=.5pt, color=black] (0,3)--(-3,1) node[midway, anchor=south east]{$a$};
\draw[line width=.5pt, color=black] (-3,1)--(0,0) node[pos=.7, anchor=south]{$\;c$};
\draw[line width=.5pt, color=black] (-3,1)--(0,-3) node[midway, anchor=north east]{$i$};
\node at (0,1.5) [anchor=west, color=black]{$q$};
\node at (0,-1.5) [anchor=west, color=black]{$n$};
\draw[color=black, fill=black] (-3,1) circle (.2) node[anchor=east]{$\lambda\;$};
\draw[color=black, fill=black] (0,3) circle (.2) node[anchor=west]{$\rho$};
\draw[color=black, fill=black] (0,0) circle (.2) node[anchor=west]{$\mu$};
\draw[color=black, fill=black] (0,-3) circle (.2) node[anchor=west]{$\delta$};
\end{scope}
\begin{scope}[shift={(31,0)}]
\draw[line width=.5pt, color=black] (0,5)--(0,-5);
\draw[line width=.5pt, color=black] (-.5,5)--(.5,5) node[anchor=west] {$a$};
\draw[line width=.5pt, color=black] (-.5,-5)--(.5,-5) node[anchor=west] {$a$};
\draw[line width=.5pt, color=black] (0,3)--(-3,1) node[midway, anchor=south east]{$k$};
\draw[line width=.5pt, color=black] (-3,1)--(0,0) node[pos=.7, anchor=south]{$\;j$};
\draw[line width=.5pt, color=black] (-3,1)--(0,-3) node[midway, anchor=north east]{$i$};
\node at (0,1.5) [anchor=west, color=black]{$b$};
\node at (0,-1.5) [anchor=west, color=black]{$c$};
\draw[color=black, fill=black] (-3,1) circle (.2) node[anchor=east]{$\alpha\;$};
\draw[color=black, fill=black] (0,3) circle (.2) node[anchor=west]{$\phi$};
\draw[color=black, fill=black] (0,0) circle (.2) node[anchor=west]{$\nu$};
\draw[color=black, fill=black] (0,-3) circle (.2) node[anchor=west]{$\lambda$};
\end{scope}
\begin{scope}[shift={(38,0)}]
\draw[line width=.5pt, color=black] (0,5)--(0,-5);
\draw[line width=.5pt, color=black] (-.5,5)--(.5,5) node[anchor=west] {$p$};
\draw[line width=.5pt, color=black] (-.5,-5)--(.5,-5) node[anchor=west] {$p$};
\draw[line width=.5pt, color=black] (0,3)--(-3,1) node[midway, anchor=south east]{$c$};
\draw[line width=.5pt, color=black] (-3,1)--(0,0) node[pos=.7, anchor=south]{$\;b$};
\draw[line width=.5pt, color=black] (-3,1)--(0,-3) node[midway, anchor=north east]{$j$};
\node at (0,1.5) [anchor=west, color=black]{$q$};
\node at (0,-1.5) [anchor=west, color=black]{$n$};
\draw[color=black, fill=black] (-3,1) circle (.2) node[anchor=east]{$\nu\;$};
\draw[color=black, fill=black] (0,3) circle (.2) node[anchor=west]{$\mu$};
\draw[color=black, fill=black] (0,0) circle (.2) node[anchor=west]{$\sigma$};
\draw[color=black, fill=black] (0,-3) circle (.2) node[anchor=west]{$\gamma$};
\end{scope}
\end{tikzpicture}
\end{align}
\begin{align}
\label{eq:orthogonalitysph}
&\begin{tikzpicture}[scale=.3]
\node at (-5,0) [anchor=east]{$\sum_{p,\gamma,\delta} \dim(p)\dim(k)$};
\node at (9,0)[anchor=west]{$=\delta_{\alpha\sigma}\delta_{\beta\rho}\delta_{kl}$};
\draw[line width=.5pt, color=black] (0,5)--(0,-5);
\draw[line width=.5pt, color=black] (-.5,5)--(.5,5) node[anchor=west] {$m$};
\draw[line width=.5pt, color=black] (-.5,-5)--(.5,-5) node[anchor=west] {$m$};
\draw[line width=.5pt, color=black] (0,3)--(-3,1) node[midway, anchor=south east]{$k$};
\draw[line width=.5pt, color=black] (-3,1)--(0,0) node[pos=.7, anchor=south]{$\;j$};
\draw[line width=.5pt, color=black] (-3,1)--(0,-3) node[midway, anchor=north east]{$i$};
\node at (0,1.5) [anchor=west, color=black]{$n$};
\node at (0,-1.5) [anchor=west, color=black]{$p$};
\draw[color=black, fill=black] (-3,1) circle (.2) node[anchor=east]{$\alpha\;$};
\draw[color=black, fill=black] (0,3) circle (.2) node[anchor=west]{$\beta$};
\draw[color=black, fill=black] (0,0) circle (.2) node[anchor=west]{$\gamma$};
\draw[color=black, fill=black] (0,-3) circle (.2) node[anchor=west]{$\delta$};
\begin{scope}[shift={(7,0)}]
\draw[line width=.5pt, color=black] (0,5)--(0,-5);
\draw[line width=.5pt, color=black] (-.5,5)--(.5,5) node[anchor=west] {$m$};
\draw[line width=.5pt, color=black] (-.5,-5)--(.5,-5) node[anchor=west] {$m$};
\draw[line width=.5pt, color=black] (0,3)--(-3,-1) node[midway, anchor=south east]{$i$};
\draw[line width=.5pt, color=black] (-3,-1)--(0,0) node[pos=.7, anchor=south]{$\;j$};
\draw[line width=.5pt, color=black] (-3,-1)--(0,-3) node[midway, anchor=north east]{$l$};
\node at (0,1.5) [anchor=west, color=black]{$p$};
\node at (0,-1.5) [anchor=west, color=black]{$n$};
\draw[color=black, fill=black] (-3,-1) circle (.2) node[anchor=east]{$\sigma\;$};
\draw[color=black, fill=black] (0,3) circle (.2) node[anchor=west]{$\delta$};
\draw[color=black, fill=black] (0,0) circle (.2) node[anchor=west]{$\gamma$};
\draw[color=black, fill=black] (0,-3) circle (.2) node[anchor=west]{$\rho$};
\end{scope}
\end{tikzpicture}
\end{align}

These relations yield a direct proof of the triangulation independence of the state sum for closed 3-manifolds \cite[Th.~5.1]{BW}.  For 3-manifolds  with boundary one has the following result.

\begin{theorem} \label{th:pachinv}\cite[Th.~1.4]{TV}, \cite[Th.~1.7]{T}\\
Let $M, M'$ be compact triangulated 3-manifolds with boundary whose boundary triangulations and boundary labelings $l_{\partial M}$ and $b_{\partial M}$ coincide. Then their  state sums \eqref{eq:statesumsph} are equal.
\end{theorem}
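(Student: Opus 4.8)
The statement to prove is Theorem~\ref{th:pachinv}: that the Turaev-Viro-Barrett-Westbury state sum \eqref{eq:statesumsph} depends only on the boundary triangulation and the boundary labelings $l_{\partial M}$, $b_{\partial M}$. Since this is the already-known triangulation independence for the state sum \emph{without defects}, the plan is to cite the existing proofs in \cite{TV,T} essentially verbatim, but to reconcile their setting with ours, because our Definition~\ref{def:statesum} defines $\mathrm{6j}(t)$ for \emph{oriented} combinatorial tetrahedra (via the polygon-diagram evaluation) rather than for bare combinatorial tetrahedra as in \cite{BW}. So the first step is a lemma: for a tetrahedron labeled by a spherical fusion category with trivial defect data, the generalised $6j$ symbol of Definition~\ref{def:gen6j} agrees with the classical $6j$ symbol of \cite{BW} up to the reordering of morphism spaces encoded in the trace, and is invariant under the orientation choices that distinguish an oriented tetrahedron from a combinatorial one. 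For the edge-orientation part this is exactly Lemma~\ref{lem:sphlemma}; for the choice of which vertex of the combinatorial tetrahedron is declared the ``source'' of the trivial defect polygon, one checks that all such choices yield cyclically equivalent bordered diagrams and hence the same cyclic evaluation by Lemma~\ref{lem:cyclicinv}.

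\textbf{Key steps.} With that identification in hand, I would proceed as follows. First, reduce to the two bistellar moves: by Theorem~\ref{the:pachschellstell}, any two triangulations of a PL $3$-manifold with fixed boundary triangulation are related by a finite sequence of bistellar moves (the $2$-$3$ and $1$-$4$ moves of Figure~\ref{fig:bistellar}) together with elementary shellings and their inverses; but the shellings change the boundary triangulation, so under the hypothesis that the boundary triangulations agree one can stay within the class of moves fixing the boundary --- here one invokes Theorem~\ref{th:pachbound} (Casali), which states precisely that two triangulations with the same boundary are related by bistellar moves alone. Second, prove invariance under the $2$-$3$ move: this is the Biedenharn-Elliott relation \eqref{eq:biedenharneliotsph}, which holds because it is the pentagon identity \eqref{eq:pentagoncdef} for the associator of $\mac$ written out on simple objects via \eqref{eq:a6j} and expanded using \eqref{eq:projincl}. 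One sums the product of $6j$ symbols over the internal edge and morphism labels created by the move and matches the two sides; the edge-dimension factors $\dim l(e)^{\epsilon(e)}$ and the category-dimension prefactor are accounted for exactly as in \cite[Prop.~5.2]{BW}, noting that the new internal edge in the $3$-tetrahedron configuration contributes a factor $\dim(c)$ that is already present on the right-hand side of \eqref{eq:biedenharneliotsph}. Third, prove invariance under the $1$-$4$ move: as explained in the excerpt, this follows from invariance under the $2$-$3$ move together with the bubble move, and the bubble move is the orthogonality relation \eqref{eq:orthogonalitysph}, which expresses the invertibility of the associator and follows from the second identity of \eqref{eq:projincl} and \eqref{pic:semisimpleids}; the new interior vertex contributes the factor $\dim(\mac)^{-1}$, which matches the $\dim\mac^{-(|V_i|+|V_{\partial M}|/2)}$ normalization in \eqref{eq:statesumsph}. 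Fourth, assemble: since each move preserves the state sum and every two admissible triangulations are connected by such moves, $\mathcal Z(M,l_{\partial M},b_{\partial M}) = \mathcal Z(M',l_{\partial M},b_{\partial M})$.

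\textbf{Main obstacle.} The genuinely new bookkeeping --- and the step I expect to be the main obstacle --- is \emph{not} the algebraic identities \eqref{eq:biedenharneliotsph}--\eqref{eq:orthogonalitysph}, which are classical, but showing that our orientation-decorated $6j$ symbols transform correctly under the bistellar moves. A bistellar move alters the vertex numbering (and hence, in our conventions, the induced edge orientations and the designation of trivial-defect normal vectors) of the tetrahedra it touches; one must verify that applying Lemma~\ref{lem:sphlemma} to realign edge orientations and Lemma~\ref{lem:cyclicinv} to realign the cyclic representatives turns the configuration before the move into one to which the classical Biedenharn-Elliott or orthogonality relation applies with all signs and dimension factors consistent. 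Concretely this means checking that the transformation isomorphisms \eqref{eq:reverse edge}, \eqref{eq:reverse edge2} of the boundary morphism spaces compose coherently around the move so that no spurious dimension factors or phases appear. Once this compatibility is in place, the proof is a direct transcription of \cite[Th.~5.1]{BW} and \cite[Th.~1.4]{TV}, \cite[Th.~1.7]{T}, and I would present it as such, relegating the orientation compatibility to a short lemma preceding the main argument.
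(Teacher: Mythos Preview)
The paper does not prove Theorem~\ref{th:pachinv} at all: it is stated as a citation of \cite[Th.~1.4]{TV} and \cite[Th.~1.7]{T}, with no proof given. Your reconstruction via Theorem~\ref{th:pachbound}, the Biedenharn--Elliott relation \eqref{eq:biedenharneliotsph} for the $2$-$3$ move, and the orthogonality relation \eqref{eq:orthogonalitysph} for the bubble/$1$-$4$ move is the standard argument and matches the informal discussion the paper gives in the paragraphs preceding the theorem; your attention to the orientation bookkeeping via Lemma~\ref{lem:sphlemma} is appropriate for this paper's conventions but is not something the paper itself addresses, since it simply defers to the cited sources.
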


\subsection{Neighbourhoods of defect surfaces}
\label{subsec:gentrans}

 In the presence of defects,  applying a bistellar move to a generic transversal triangulation may yield a triangulation that is no longer transversal or generic. 
  For this reason, it is not straightforward to establish invariance of the state sums via bistellar moves.  
  Instead, we refine the triangulations and decompose the underlying  defect 3-manifold   into certain neighbourhoods of defect surfaces and components contained in the regions between them. Their state sums are  computed separately and then  glued with Corollary \ref{cor:gluing}. This also gives a geometrical interpretation to the state sums and facilitates the computation of  examples.

The neighbourhoods of the defect surfaces we consider are similar to regular neighbourhoods, see \cite[Ch.~3]{RS}, but our defect surfaces are not realised as subcomplexes of the triangulation. 

\begin{definition} A 
 {\bf fine neighbourhood} of a defect surface $D$ is a connected
generic transversal  triangulated  3-manifold $M$ with defect surface $D\subset M$ such that
 $D\cap \partial M=\partial D$,  every vertex of $M$ is contained in $\partial M$  and every triangle or edge  in $M$ that does not intersect $D$ is contained in $\partial M$.
\end{definition}

By applying a finite number of  bistellar moves between generic transversal triangulations, any 
 generic transversal triangulation of a defect 3-manifold $M$ can be subdivided in such a way that the tetrahedra intersecting a given defect surface form a fine neighbourhood of the defect surface.  
 A bistellar move between two generic transversal triangulations is called {\bf generic transversal bistellar move} in the following.

\begin{lemma} \label{lem:finetriang}Let $(M, D)$ be a defect 3-manifold  and $T$ a generic transversal  triangulation of $M$. 
Then there is a finite sequence of  generic transversal bistellar  moves  that transforms  $T$ into a generic transversal  triangulation $S$ such that:
\begin{compactenum}[(i)]
\item $S$ coincides with $T$ on $\partial M$,
\item the tetrahedra in $S$ contained in any region $r\subset M$ form a  3-manifold $R$ homeomorphic to $\bar r$,
\item the tetrahedra in $S$ that intersect any defect surface $\Sigma\subset D$ form a fine neighbourhood of $\Sigma$.
 \end{compactenum}
\end{lemma}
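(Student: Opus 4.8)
\textbf{Proof plan for Lemma \ref{lem:finetriang}.}

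The plan is to build $S$ from $T$ in two stages: first a \emph{barycentric-type refinement} that pushes all the combinatorial complexity away from the defect surfaces and region interiors and onto $\partial M$ and small neighbourhoods of $D$, and then a clean-up stage that realises (ii) and (iii). For the first stage I would take iterated derived (barycentric) subdivisions of $T$, which, as recalled in Section \ref{subsec:triangsec}, can always be arranged — after a small perturbation of the defect surfaces and defect graphs — to remain transversal and generic, and which by \cite[Ch.~2]{RS} are composites of stellar subdivisions. Stellar subdivisions of edges, triangles and tetrahedra are in turn composites of $1$--$4$ and $2$--$3$ bistellar moves (the proof of Theorem 4.6 in \cite{BW}, recalled in Section \ref{subsec:PLsubsec}), so each such refinement is realised by a finite sequence of bistellar moves; the point to check is that at each intermediate step the triangulation stays generic and transversal, i.e.\ that the moves used are generic transversal bistellar moves. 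This is where one uses that stellar subdivision of a simplex $\sigma$ only introduces a new vertex in the interior of $\sigma$ and edges from it to vertices of the star of $\sigma$: if $\sigma$ is disjoint from $D$ this is automatically a generic transversal move, and if $\sigma$ meets $D$ one first perturbs $D$ slightly so that the new vertex is off $D$ and the new edges are transversal; the intermediate triangulations created inside a single stellar subdivision are the ones appearing in the proof of \cite[Th.~4.6]{BW}, and one verifies directly that each is transversal and generic with respect to the (perturbed) defect data.

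The second stage makes (ii) and (iii) precise. After enough derived subdivisions, $D$ (together with its graph $D^1 \subset D$) is contained in — indeed is a subcomplex of — a subdivision, so a regular neighbourhood $N(D)$ of $D$ in the subdivided triangulation is a full subcomplex whose underlying space deformation-retracts onto $D$ and whose frontier is a triangulated surface; this is the standard regular-neighbourhood theory of \cite[Ch.~3]{RS}. I would then arrange, by one more round of generic transversal stellar subdivisions near $\partial N(D)$, that $N(D)$ restricted to a single connected defect surface $\Sigma$ has \emph{every} vertex on its boundary and \emph{every} edge or triangle not meeting $\Sigma$ on its boundary — i.e.\ is a fine neighbourhood of $\Sigma$ in the sense of Definition \ref{lem:finetriang}'s preamble. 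The complement $M \setminus \bigcup_\Sigma N(\Sigma)$ then consists of pieces, one for each region $r$, each PL homeomorphic to $\bar r$ (again by regular-neighbourhood theory: removing a regular neighbourhood of a properly embedded surface from a region gives back a manifold of the same PL type as the closed region), and the induced triangulation of such a piece is the $R$ demanded in (ii). Requirement (i) is satisfied throughout because all subdivisions can be taken to fix $\partial M$: a vertex is only ever inserted in the interior of a simplex, and one simply never subdivides boundary simplices except as forced, and even then boundary subdivisions can be avoided by choosing the derived subdivisions to be "interior" away from $\partial M$ — alternatively one invokes Theorem \ref{th:pachbound} to note that two triangulations agreeing on the boundary are related by bistellar moves, and restricts attention to those that fix $\partial M$.

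The main obstacle is the bookkeeping in the first stage: one must show that the \emph{entire} sequence of bistellar moves realising a stellar (hence barycentric) subdivision can be chosen to pass only through generic transversal triangulations, which is not automatic because the intermediate triangulations in \cite[Th.~4.6]{BW} contain tetrahedra that are "in transit" and need not a priori meet $D$ in a single triangle or quadrilateral. The resolution is to perturb $D$ and $D^1$ \emph{first}, before performing the subdivision, into general position with respect to the entire (finite) list of intermediate triangulations — a standard transversality/general-position argument in PL topology — so that each intermediate tetrahedron meets $D$ transversally and generically; since each intermediate tetrahedron is a genuine geometric simplex and $D$ is a $2$-submanifold, general position forces the intersection to be empty, a single transversal triangle, or a single transversal quadrilateral, exactly the transversality condition of Definition \ref{def:gentrans}. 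Once this perturbation is fixed, the rest of the argument is routine assembly of regular-neighbourhood facts, and the three conclusions (i)--(iii) follow.
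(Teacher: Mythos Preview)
Your plan has a genuine gap in the second stage that undermines the whole approach. You propose to subdivide until $D$ becomes a subcomplex and then take a simplicial regular neighbourhood of it. But a triangulation in which $D$ is a subcomplex is by definition \emph{not} generic: Definition~\ref{def:gentrans} requires that no vertex lie on a defect surface, whereas a subcomplex $D$ has all of its vertices on $D$. So already the intermediate triangulation you want to pass through fails to be generic (in fact fails to be transversal, since edges lying inside $D$ meet $D$ in more than one point), and the bistellar moves leading to it cannot all be generic transversal. Stellar subdivisions only \emph{add} vertices, so once a vertex sits on $D$ there is no way to move it off by further subdivision; the further clean-up you describe near $\partial N(D)$ cannot repair this. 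Relatedly, your suggestion to ``perturb $D$ and $D^1$'' into general position with the intermediate triangulations is not available here: $(M,D)$ is fixed data, and the lemma asks for a sequence of moves on the triangulation, not on the defect.

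The paper's proof avoids these problems by never letting $D$ become a subcomplex and never perturbing $D$. Instead it performs \emph{targeted} stellar subdivisions only on those tetrahedra, triangles, and edges that intersect $D$ (and are not in $\partial M$), and the free choice in a stellar subdivision --- the location of the new vertex --- is exploited: the new vertex is placed in the interior of the simplex but on a \emph{specified side} of $D$ (first the $r_f$-side for every such simplex, then in a second pass the $r_i$-side). Because the new vertex is off $D$, each intermediate tetrahedron remains transversal, and a further small adjustment of the vertex position keeps everything generic. After both passes, the defect surface is sandwiched between two PL surfaces $E_f$ and $E_i$ spanned by the new vertices, the tetrahedra between them form a fine neighbourhood of $\Sigma$, and the remaining tetrahedra assemble into triangulations of the closed regions. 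Since only interior simplices are subdivided, the boundary triangulation is untouched, giving (i) for free rather than by an appeal to Theorem~\ref{th:pachbound}.
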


\begin{proof}  Let $D=\amalg_{j=1}^n \Sigma_j$ be the defect manifold, whose connected components are the defect surfaces $\Sigma_j$, and $M\setminus D=\amalg_{k=1}^m r_k$ with the regions $r_k$ as connected components.  

For each oriented edge in $T$ that intersects $D$ and hence is  oriented by $D$,  denote by $r_f$ the region that contains its target vertex  and by  $r_i$ the region that contains its starting vertex. Analogously, for each triangle and tetrahedron  in $T$ that intersects $D$,  denote by $r_f$ and $r_i$ the regions that contain the target vertices and starting vertices of its edges that intersect $D$.

The triangulation $S$ is constructed by a finite sequence of stellar subdivisions from  Section \ref{subsec:PLsubsec}, which are finite sequences of bistellar moves. 
We show  that this can be achieved with generic transversal bistellar moves.

1.~In the first step, 
for each tetrahedron $t$ in $T$ that intersects $D$, choose a point $t_f\in r_f\cap \mathring{t}$  and  perform a 1-4  move  with $t_f$ as the new vertex. 
This  subdivides $t$ into four transversal tetrahedra. By adjusting the position of $t_f$ one can achieve that none of the new triangles contains a defect vertex and none of the new edges intersect defect lines. Hence,  all four tetrahedra are generic. This yields a generic transversal   triangulation $T'$ related to $T$ by generic transversal bistellar moves.

2.~In the second step, for each triangle $\Delta$ in $T'$ that intersects $D$ and is not contained in $\partial M$, choose a point $\Delta_f\in r_f\cap\mathring{\Delta}$ and perform the stellar subdivision on $\Delta$ with $\Delta_f$ as the new vertex. The  two bistellar moves that combine to this stellar subdivision involve only transversal tetrahedra. By adjusting the position of $\Delta_f$ one can achieve that  none of the  triangles arising in these moves  contains a defect vertex and none of the edges  intersect a defect line. With this, all of the tetrahedra become generic and we obtain a generic transversal  triangulation $T''$ related to $T'$ by generic transversal bistellar moves. 

3.~In the third step, for each edge $e$ in $T''$ that intersects $D$ and is not contained in  $\partial M$, choose a point $e_f\in r_f\cap\mathring{e}$  and perform a stellar subdivision on $e$ with $e_f$ as the new vertex. All tetrahedra arising in the bistellar moves that combine to this stellar subdivision
 are transversal. By adjusting the position of $e_f$, one can achieve that none of their triangles  contains a defect vertex and none of their edges intersects a defect line. Hence,  all tetrahedra in this sequence of bistellar moves are generic. 

This yields a generic  transversal  triangulation $T'''$ related to $T$ by generic transversal bistellar moves. The associated subdivisions of defect tetrahedra are shown in Figure \ref{fig:barycentric}.  We equip each new edge in $T'''$ that intersects $D$ with the induced orientation and choose an arbitrary orientation for the other new edges.  

4.~We then apply steps 1 to 3  to the triangulation $T'''$, but now with  additional vertices $t_i\in r_i\cap\mathring{t}$, $\Delta_i\in r_i\cap \mathring{\Delta}$, $e_i\in r_i\cap\mathring{e}$ in the region $r_i$ instead of $r_f$. The positions of these new vertices can again be chosen in such a way that this defines a finite sequence of  bistellar moves between generic transversal tetrahedra. 

The result is a generic transversal triangulation $S$  that coincides with $T$ on $\partial M$ by construction.  
We denote by $S_j$ the triangulated 3-manifold formed by all tetrahedra in $S$ that intersect a defect surface $\Sigma_j$. 
By construction, every vertex of $S_j$ and every edge or triangle of $S_j$ that does not intersect $\Sigma_j$ is contained in $\partial S_j$, and hence $S_j$ is a fine neighbourhood of $\Sigma_j$.  For each tetrahedron $t$ in the  triangulation $T$ that intersects  $\Sigma_j$,   the union  of the tetrahedra in its subdivision that are contained in a region $r_k$  is PL homeomorphic to  the closure of $(t\setminus D)\cap r_k$. These PL homeomorphisms and the identity maps for tetrahedra in $T$ that do not intersect $D$  glue to a PL homeomorphism from  the union of all tetrahedra in $S$  in $r_k$ to $\bar r_k$.  
\end{proof}

\begin{figure}
\begin{center}
\def\svgwidth{.99\columnwidth}
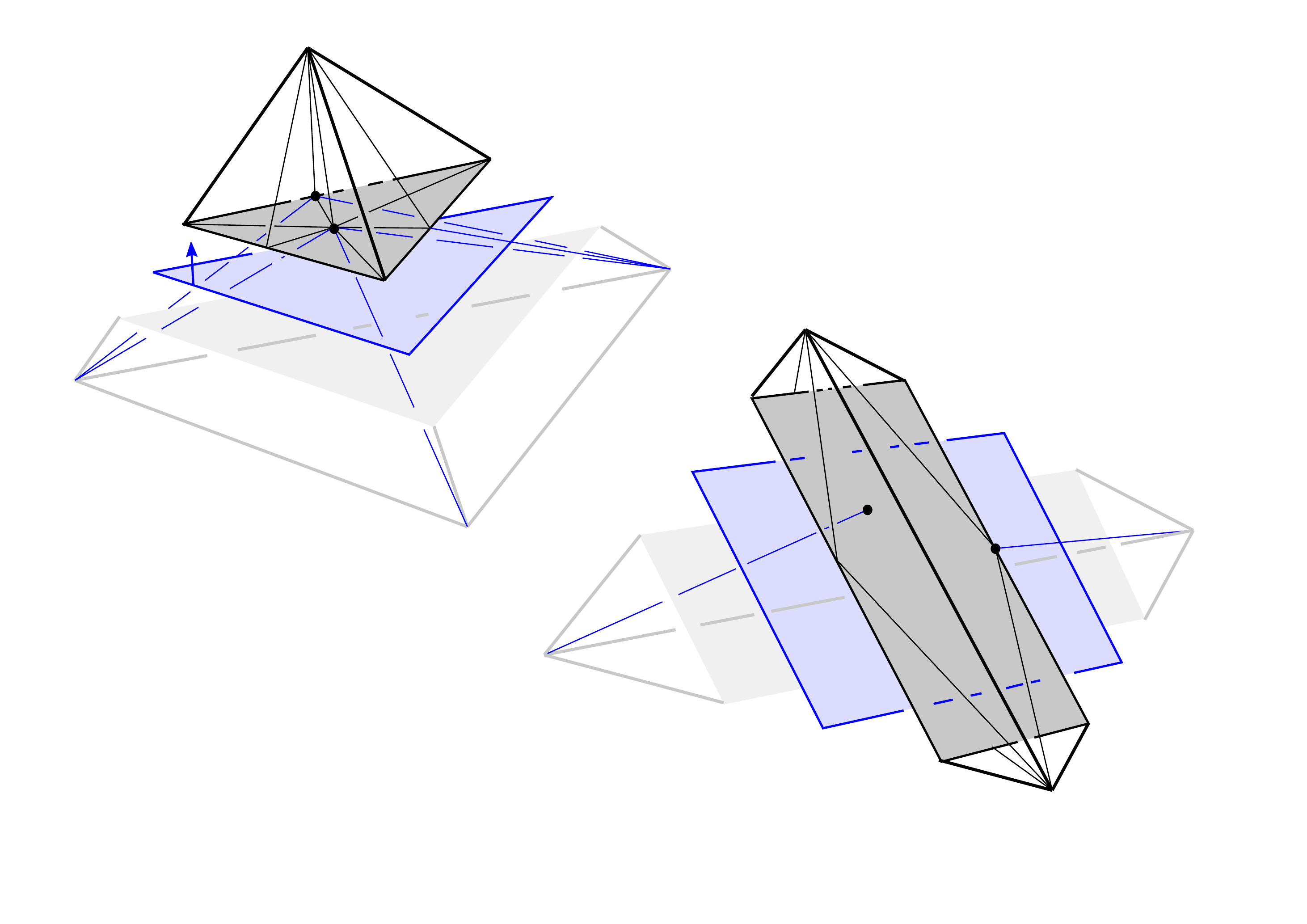 
\end{center}
$\quad$

\vspace{-2cm}
\caption{Subdivisions of tetrahedra intersecting the defect manifold $D$ with new vertices $t_f$, $\Delta_f$ and $e_f$ that span the surface $E_f$.  Edges in the  subdivision that intersect $D$ are drawn in blue, edges  that lie in the region  $r_f$ in black and edges in the region $r_i$ in grey.}
\label{fig:barycentric}
\end{figure}

 A  fine neighbourhood of a defect surface $D$ is shown in Example \ref{ex:surface} and Figure \ref{eq:prism}. Generic transversal triangulations that do not form fine neighbourhoods  of their defect surfaces are given in  Examples \ref{ex:tetrahedronsphere}, \ref{ex:genusg} and \ref{ex:knotcomplement}, see also Figures \ref{fig:handle} and \ref{fig:knotcomplement}. In particular, any fine neighbourhood of  $D$ must have topology $[0,1]\times D$. 

\begin{lemma}\label{lem:finenbtop}
Let $M$ be a triangulated 3-manifold that shells to  a fine neighbourhood of a defect surface $D$  by elementary shellings with tetrahedra that do not intersect $D$. Then $M$  is PL homeomorphic to $[0,1]\times D$.
\end{lemma}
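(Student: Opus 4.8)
The statement asserts that if a triangulated $3$-manifold $M$ shells down to a fine neighbourhood $N$ of a defect surface $D$ by elementary shellings whose removed tetrahedra do not meet $D$, then $M\cong [0,1]\times D$. The plan is to first identify the topology of a fine neighbourhood itself, and then argue that each elementary shelling in the sequence preserves the homeomorphism type, so that $M$ is homeomorphic to $N$.

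\textbf{Step 1: a fine neighbourhood is a product.} First I would show directly from the definition of a fine neighbourhood $N$ that $N\cong[0,1]\times D$. Recall that in $N$ we have $D\cap\partial N=\partial D$, every vertex of $N$ lies on $\partial N$, and every triangle or edge of $N$ not meeting $D$ lies on $\partial N$. Combined with the genericity and transversality conditions from Definition~\ref{def:gentrans}, each tetrahedron of $N$ meets $D$ either in a triangle (three edges at a vertex) or in a quadrilateral (two pairs of opposite edges), and in each case the tetrahedron minus $D$ has two components, one in each of the two adjacent regions. The subdivisions of such tetrahedra are precisely those drawn in Figure~\ref{fig:barycentric}: each defect tetrahedron is cut by $D$ into two cells, each PL homeomorphic to a prism over the polygon $D\cap t$. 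Since every vertex, and every non-defect edge and triangle, is in $\partial N$, the whole of $N$ is swept out by these prisms attached along $D$, with the two ``ends'' forming the two components of $\partial N\setminus\partial D$; a PL collar argument then gives an explicit PL homeomorphism $N\to[0,1]\times D$ sending $D$ to $\{1/2\}\times D$ and the two boundary sheets to $\{0\}\times D$ and $\{1\}\times D$. (This is exactly the picture realised concretely in the prism example referenced after the lemma.)

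\textbf{Step 2: shellings do not change the PL type.} An elementary shelling removes a tetrahedron $t=A\star B$ with $A\cap M=\partial A$ and $B\star\partial A\subset\partial M$; by Lemma~5.2 of \cite{L} this operation does not change the PL homeomorphism type of the manifold — it is a PL-deformation retract, equivalently an elementary collapse through $t$. Running the given finite sequence of shellings $M=M_0\to M_1\to\cdots\to M_k=N$, each step preserves the PL type, so $M$ is PL homeomorphic to $N$. Here the hypothesis that the removed tetrahedra do not intersect $D$ is what makes the statement meaningful: it guarantees that the defect surface $D\subset M$ is carried, unchanged as a submanifold, all the way down to the defect surface $D\subset N$, and in particular that the terminal object of the shelling sequence really is a \emph{fine neighbourhood of the same $D$}. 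Combining with Step~1, $M\cong N\cong[0,1]\times D$.

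\textbf{Main obstacle.} The routine part is Step~2 (it is immediate from the cited PL topology). The real content is Step~1: turning the purely combinatorial ``every vertex/non-defect cell is on the boundary'' condition into the global product structure. The subtlety is that $D$ is not a subcomplex of the triangulation, so one cannot directly invoke regular-neighbourhood theory from \cite[Ch.~3]{RS}; instead one must use the explicit local models of Figure~\ref{fig:barycentric} to build the homeomorphism cell by cell and check that the prisms glue consistently along $D$ and along their ``vertical'' faces to give a well-defined PL collar of $D$ filling all of $N$. Care is needed to verify that no tetrahedron of $N$ can fail to touch $D$ (else it would be an isolated tetrahedron with all vertices, edges and faces on $\partial N$, impossible in a connected $3$-manifold once $D\neq\emptyset$), so that $N$ is genuinely exhausted by the two prism collars and the product structure extends over all of $N$.
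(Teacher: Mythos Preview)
Your Step~2 is correct and is exactly what the paper does in its last line, invoking Theorem~\ref{th:pachshell}.

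Your Step~1, however, takes a different and less controlled route than the paper. You try to argue directly that a fine neighbourhood $N$ is a product by observing that each tetrahedron is cut by $D$ into two cells and then appealing to a ``PL collar argument'' to glue local product structures. Two issues: first, the pieces are not literally prisms over $D\cap t$ --- in the triangle case one piece is a small tetrahedron and the other a frustum, in the quadrilateral case both are wedges --- so while each is a ball, there is no canonical $[0,1]$-parametrisation, and you never explain why the local parametrisations agree on shared faces. Second, your reference to Figure~\ref{fig:barycentric} is off: that figure depicts the subdivision procedure from Lemma~\ref{lem:finetriang}, which inserts new vertices $t_f,\Delta_f,e_f,t_i,\Delta_i,e_i$; it does not depict the simple cut along $D$ you describe.

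The paper's approach is precisely to use that subdivision to resolve the gluing problem you leave open. One applies the stellar subdivisions of Lemma~\ref{lem:finetriang} to the fine neighbourhood $N$ itself (now including boundary edges and triangles). The new vertices span explicit PL surfaces $E_f$ and $E_i$ on either side of $D$, and in each original tetrahedron the region between $E_f\cap t$ and $E_i\cap t$ is manifestly $[0,1]\times(D\cap t)$ with the $[0,1]$-direction determined by the globally defined surfaces. These local products therefore glue automatically to $[0,1]\times D$. The remaining tetrahedra of the subdivision lie between $E_f,E_i$ and the original $\partial N$ and are removed by elementary shellings, so $N$ is PL homeomorphic to $[0,1]\times D$ by Theorem~\ref{th:pachshell}. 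Your direct argument could in principle be completed, but the subdivision is exactly the missing device that makes the product structure explicit and the gluing trivial.
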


\begin{proof} If $M$ is a fine neighbourhood of $D$, this follows by applying  the same subdivision procedure as in the proof of Lemma \ref{lem:finetriang}, but to every edge and triangle that intersects the defect surface $D$, including the ones in $\partial M$.  Denote by $E_f$ the 2d PL surface spanned by the vertices $t_f$, $\Delta_f$ and $e_f$,  by $E_i$ the 2d PL surface spanned by the new vertices $t_i$, $\Delta_i$ and $e_i$, as shown in Figure \ref{fig:barycentric}. For each tetrahedron $t$ in the original triangulation  with $t\cap D\neq \emptyset$  the tetrahedra in its subdivision that intersect $D$ form  the region between the surfaces $E_t\cap t$ and $E_i\cap t$ in Figure \ref{fig:barycentric} This region  is PL homeomorphic to $[0,1]\times (t\cap D)$.  
The PL homeomorphisms for the tetrahedra $t$ with $t\cap D\neq \emptyset$ glue to a PL homeomorphism that sends the union  of all tetrahedra in the subdivision that intersect $D$  to $[0,1]\times D$.  As $M$ is related to it by elementary shellings, it is PL homeomorphic to $[0,1]\times D$ by Theorem \ref{th:pachshell}.
For a 3-manifold $M$  that shells to a fine neighbourhood of $D$ with elementary shellings by tetrahedra that do not intersect $D$ the claim follows  with Theorem \ref{th:pachshell}.
\end{proof}

In the following, we will often project graphs on the boundary of a manifold $M$ with a defect surface $D\subset M$ to the defect surface $D$. This is achieved via  orientation preserving  PL homeomorphisms $\phi: M\to [0,1]\times D$. We denote by $p_D: [0,1]\times D\to  D$, $(t,d)\mapsto d$ and $p_t:[0,1]\times D\to [0,1]$, $(t,d)\mapsto t$ the projection maps for the product, which are PL maps by construction.

\begin{definition} \label{def:projection}Let $M$ be a triangulated 3-manifold with a defect surface $D\subset M$  and 
$\phi: M\to [0,1]\times D$ an orientation preserving PL homeomorphism with $p_D\circ \phi\vert_D=\mathrm{id}_D$. 
The {\bf  projections} induced by $\phi$ are the PL homeomorphisms
$P_i(\phi): \phi^\inv(\{i\}\times D)\to D$, $m\mapsto p_D\circ \phi(m)$ for $ i=0,1$.
\end{definition}

The diagrams obtained by projecting graphs on $\partial M$ to a defect surface $D\subset M$  depend on the choice of the underlying  PL homeomorphism. That their \emph{evaluations} are independent of this choice follows from the fact that different choices of PL homeomorphisms yield PL isotopic projection maps.

\begin{lemma} \label{lem:projiso}Let $\phi_1,\phi_2: M\to [0,1]\times D$ be orientation preserving PL homeomorphisms with $p_D\circ \phi_1\vert_D=p_D\circ \phi_2\vert_D=\mathrm{id}_D$. Then the projection maps  $P_i(\phi_1)$ and $P_i(\phi_2)$ are PL isotopic for $i=0,1$. 
\end{lemma}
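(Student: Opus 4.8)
The plan is to reduce the claim to a standard fact about PL homeomorphisms of products. First I would observe that $P_i(\phi_k) = p_D \circ \phi_k\vert_{\phi_k^{-1}(\{i\}\times D)}$ for $i=0,1$ and $k=1,2$, so the composite $f_i := P_i(\phi_2) \circ P_i(\phi_1)^{-1} : D \to D$ is an orientation preserving PL homeomorphism of $D$. The goal is to show $P_i(\phi_1)$ is PL isotopic to $P_i(\phi_2)$, which is equivalent to showing $f_i$ is PL isotopic to $\mathrm{id}_D$. I would prove the stronger statement that $f_i$ is isotopic to $\mathrm{id}_D$ \emph{through} the kind of isotopy one gets from the ambient homeomorphism $\psi := \phi_2 \circ \phi_1^{-1} : [0,1]\times D \to [0,1]\times D$, which satisfies $p_D \circ \psi\vert_D = \mathrm{id}_D$ by hypothesis on the $\phi_k$.

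The key steps, in order: (1) Set $\psi = \phi_2\circ\phi_1^{-1}$; this is an orientation preserving PL self-homeomorphism of $[0,1]\times D$ that restricts to the identity on $\{0\}\times D$ under the identification with $D$ via $p_D$ (and more precisely, $p_D\circ\psi$ restricted to $\{0\}\times D$ and $\{1\}\times D$ gives exactly $f_0$ and $f_1$ after the obvious identifications). (2) Define $H : [0,1]\times D \to D$ by $H(t,d) = p_D\big(\psi(t,d)\big)$. Since $\psi$ is a PL homeomorphism that preserves the product structure up to the projections in the sense fixed by the hypotheses, for each fixed $t$ the map $H(t,-) : D \to D$ is a PL homeomorphism; continuity in $t$ in the PL sense gives that $H$ is a PL homotopy, in fact a PL isotopy, between $H(0,-)$ and $H(1,-)$. (3) Identify $H(0,-)$ with $P_0(\phi_2)\circ P_0(\phi_1)^{-1}$ composed appropriately (or directly with $P_i(\phi_1)$ and $P_i(\phi_2)$ by tracking the definitions), and similarly for $H(1,-)$, to conclude the two projection maps at level $i$ are PL isotopic. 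One should run this argument separately for $i=0$ and $i=1$; by symmetry of $[0,1]$ they are the same.

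The main obstacle I expect is the PL regularity of the isotopy $H$: one must check that $H(t,-)$ is a PL homeomorphism of $D$ for each $t$ and that the family varies piecewise-linearly in $t$, rather than merely being a continuous family of homeomorphisms. This follows because $\psi$ is simplicial with respect to some common subdivision of $[0,1]\times D$ and the projection $p_D$ is itself PL, so $H$ is PL on a product cell structure; the fibers $H(t,-)$ are homeomorphisms because $\psi$ preserves the ``$[0,1]$-coordinate'' strata enough (each slice $\{t\}\times D$ maps homeomorphically onto a slice under the first coordinate of $\psi$, whose $D$-projection is then a homeomorphism). A clean way to sidestep delicate point-set issues is to first appeal to uniqueness-type results for product structures: any two PL product structures on $[0,1]\times D$ that agree on the boundary slices up to the identifications here differ by an ambient PL isotopy, and $H$ is precisely the trace of such an isotopy on a boundary slice. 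Either route gives that $P_i(\phi_1) \simeq P_i(\phi_2)$ by a PL isotopy, completing the proof.
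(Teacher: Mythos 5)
There is a genuine gap, in fact two. First, your map $H(t,d)=p_D(\psi(t,d))$ with $\psi=\phi_2\circ\phi_1^{-1}$ is not an isotopy: $\psi$ only preserves the boundary slices $\{0\}\times D$ and $\{1\}\times D$, not the intermediate slices $\{t\}\times D$, so $\psi(\{t\}\times D)$ is just some properly embedded surface in $[0,1]\times D$, and its $p_D$-projection need not be injective (it can fold over in the $D$-direction). Hence $H(t,-)$ is a PL map but in general not a homeomorphism, and your claimed fix ("each slice maps homeomorphically onto a slice") is false. The fallback you suggest, uniqueness of PL product structures on $[0,1]\times D$ up to ambient isotopy rel the ends, is essentially as deep as the lemma itself and is asserted without reference, so it cannot carry the proof. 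Second, even if $H$ were an isotopy, it would connect $H(0,-)=P_0(\phi_2)\circ P_0(\phi_1)^{-1}$ to $H(1,-)=P_1(\phi_2)\circ P_1(\phi_1)^{-1}$, i.e.\ the level-$0$ comparison map to the level-$1$ comparison map, not either of them to $\mathrm{id}_D$. Your step (1) claim that $\psi$ restricts to the identity on $\{0\}\times D$ misreads the hypothesis: $p_D\circ\phi_j|_D=\mathrm{id}_D$ concerns the defect surface $D$ sitting in the \emph{interior} of $M$ (at varying heights under $\phi_j$), not the boundary slices, and your construction never actually uses it to anchor anything at the identity.

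The paper avoids both problems by a different strategy: it invokes Epstein's theorem that an orientation preserving PL self-homeomorphism of a compact surface is PL isotopic to the identity iff it is homotopic to the identity, which reduces the task to producing a mere \emph{continuous homotopy} between $P_i(\phi_1)$ and $P_i(\phi_2)$ (no need for the intermediate maps to be homeomorphisms, or even PL). It then reparametrizes the $[0,1]$-coordinate of each $\phi_j$ so that $D$ sits at height $\tfrac12$ while keeping $\phi_j$ unchanged on the boundary slices, and slides from the boundary slice down to the level of $D$, where the hypothesis $p_D\circ\phi_j|_D=\mathrm{id}_D$ forces the two projections to agree. If you want to salvage your approach, you need both ingredients: some reduction from PL isotopy to homotopy (Epstein or an equivalent), and a homotopy that runs between a boundary slice and the interior surface $D$ rather than between the two boundary slices.
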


\begin{proof} By a well-known result of Epstein \cite[Th.~6.4]{E}, an orientation preserving PL homeomorphism from a 2-manifold with compact boundary components to itself is  
PL isotopic to the identity iff it is  homotopic to the identity. It is therefore sufficient to show that $P_i(\phi_2)$ and $P_i(\phi_1)$ are homotopic.  For this, we construct homeomorphisms $\phi'_i: M\to [0,1]\times D$ that coincide with $\phi_i$ on $\partial M_i:=\phi_1^\inv(\{i\}\times D)=\phi_2^\inv(\{i\}\times D)$ and  homotopies from $\phi'_2\vert_{\partial M_i}$ to $\phi'_1\vert_{\partial M_i}$.  For $i=0$ and $j=1,2$, we consider the continuous maps
\begin{align*}
F_j:[0,1]\times D\to [0,1], \;(t,d)\mapsto \begin{cases}\frac t {2 p_t(\phi_j(d))} & t\in[0, p_t(\phi_j(d))]\\
\frac {1-t}{2(1-p_t(\phi_j(d)))}+\frac {t-p_t(\phi_j(d))}{1-p_t(\phi_j(d))} & t\in[p_t(\phi_j(d)), 1]
\end{cases}
\end{align*}
and the homeomorphisms
$\phi_j': M\to [0,1]\times D$, $m\mapsto (F_j(\phi_j(m)), p_D(\phi_j(m)))$
that satisfy $\phi'_j(m)=\phi_j(m)$ for all $m\in \partial M_0\cup \partial M_1$, $p_D\circ \phi'(d)=d$ and $p_t\circ \phi'_j(d)=\tfrac 1 2$ for all $d\in D$. Then the continuous map
\begin{align*}
h:[0,1]\times \partial M_0\to D,\; (t,m)\mapsto p_D\circ \phi_2'\circ \phi'^\inv_1(\tfrac t 2 , p_D(\phi'_1(m)))
\end{align*}
satisfies $h(0,m)=P_0(\phi_2)(m)$ and $h(1,m)=P_0(\phi_1)(m)$ for all $m\in \partial M_0$ and is a homotopy from $P_0(\phi_1)$ to $P_0(\phi_2)$.  
The proof that $P_1(\phi_1)$ and $P_1(\phi_2)$ are homotopic is analogous.
\end{proof}

\subsection{State sums with defect discs}

To prove the triangulation independence of the state sum, we first show that the state sums of 3-balls $M$ that shell to fine neighbourhoods of defect 
discs $D$ are given as cyclic evaluations of certain polygon diagrams. 

These polygon diagrams are obtained by projecting the dual graphs of the boundary triangulations on $\partial M$ to the defect disc. As  $M$ is PL homeomorphic to $[0,1]\times D$ via an orientation preserving PL homeomorphism  $\phi: M\to [0,1]\times D$ by Lemma \ref{lem:finenbtop}, this projection is given by Definition \ref{def:projection}. Lemma \ref{lem:projiso} ensures that different choices of $\phi$ yield graphs on $D$ that are related by isotopies.

\begin{proposition}\label{th:topological invariance}
Let $M$ be a 3-manifold with defect data that shells to a fine neighbourhood 
 of  a defect disc $D$ by  shellings with tetrahedra that do not intersect $D$. 
Then for all labelings  $l_{\partial M}$, $b_{\partial M}$  as in Definition \ref{def:statesum}  
$$
\mathcal Z'(M, l_{\partial M}, b_{\partial M})=\ev(P),
$$
where 
$\ev(P)$ is
the cyclic evaluation of the polygon diagram $P$ obtained by
\begin{compactitem}
\item projecting the dual graph of the boundary triangulation on $\partial M$ to $D$,
\item labeling the edges and vertices of the resulting diagram with the data  assigned by $l_{\partial M}$ and $b_{\partial M}$.
\end{compactitem}
\end{proposition}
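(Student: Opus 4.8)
The strategy is an induction on the number of elementary shellings needed to reduce $M$ to a fine neighbourhood of $D$, where the base case and the inductive step are both handled by the gluing formula from Corollary \ref{cor:gluing}, combined with a direct computation of the state sum of a single tetrahedron. First I would treat the base case: $M$ is itself a fine neighbourhood of the defect disc $D$. Using Lemma \ref{lem:finenbtop}, $M$ is PL homeomorphic to $[0,1]\times D$ via an orientation preserving PL homeomorphism $\phi$, and Lemma \ref{lem:projiso} shows that the projected dual graph is well defined up to isotopy, hence $\ev(P)$ is unambiguous (using the isotopy invariance of evaluations established before Definition \ref{def:cyclictrans} and the invariance of polygon diagram evaluations under the moves \eqref{pic:rm2}--\eqref{pic:bimodulenat}). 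I would then expand the state sum \eqref{eq:rescstate} as a sum over interior labelings $l$ and interior morphism data $b_i$ of a product of generalised 6j symbols $\mathrm{6j}(t,l,b)$, one for each tetrahedron. Since $M$ is a fine neighbourhood, every tetrahedron intersects $D$ in a triangle or a quadrilateral as in Figure \ref{fig:dectet}, so each $\mathrm{6j}(t,l,b)$ is the cyclic evaluation of a small polygon diagram as in \eqref{fig:6jgeneral}. The key observation is that when one multiplies these diagrams and sums over the interior edge labels (which are the simple objects at internal edges of the triangulation) and over the internal morphism bases, the gluing identities of Theorem \ref{th:polyids} — specifically \eqref{pic:gluepoly} for gluing along internal triangles that carry a bimodule functor or an action functor, \eqref{pic:closepoly} for gluing around internal edges, and \eqref{pic:insert} for assembling the pieces coming from spherical-category data — collapse the product into the single polygon diagram $P$ obtained by projecting the dual graph of $\partial M$ onto $D$. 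The dimension factors $\dim l(e)^{\epsilon(e)}$ and the category dimension prefactors $\dim(\mac_r)^{-\Sigma\epsilon(v)}$ are exactly the weights appearing in those gluing identities, and the rescaling in \eqref{eq:rescstate} removes precisely the boundary contributions, so the bookkeeping matches.

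For the inductive step, suppose $M$ shells to a fine neighbourhood in $k+1$ steps, and let $M'$ be the manifold after the first shelling, so $M'$ shells in $k$ steps and $M = M' \cup_\Sigma t$ where $t$ is a single tetrahedron glued to $\partial M'$ along $\Sigma$ (a union of one, two, or three of its faces) and $t$ does not intersect $D$. I would apply Corollary \ref{cor:gluing} to write $\mathcal Z(M,l_{\partial M},b_{\partial M})$ as $\sum_{l_\Sigma} \mathcal Z(M',\cdot)\cdot \mathcal Z(t,\cdot)$, and after converting to the rescaled $\mathcal Z'$, observe that $\mathcal Z'(t,\cdot)$ is simply a single generalised 6j symbol (a tetrahedron with trivial defect data, i.e.\ the usual 6j symbol in the diagrammatic form of \eqref{eq:a6j} or \eqref{eq:6ja2}). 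By the inductive hypothesis $\mathcal Z'(M',\cdot)=\ev(P')$ for the polygon diagram $P'$ obtained by projecting the dual graph of $\partial M'$. The reverse shelling $M' \rightsquigarrow M$ changes the boundary triangulation of $\partial M'$ by a 1-3 move or a 2-2 move, and correspondingly changes $P'$ into $P$ by gluing in the small triangular diagram for $t$ and summing over $l_\Sigma$ and the morphism bases on the faces in $\Sigma$. This is again exactly an instance of the gluing identities \eqref{pic:gluepoly} and \eqref{pic:closepoly} (together with the spherical-category move \eqref{pic:insert} when $t$ carries only data from a spherical fusion category). A subtlety to check carefully here is that the projection of the dual graph of $\partial M$ is isotopic to the diagram obtained from the projection of the dual graph of $\partial M'$ by this local modification: this follows because one can choose the PL homeomorphisms $\phi:M\to[0,1]\times D$ and $\phi':M'\to[0,1]\times D$ compatibly (extending $\phi'$ across the added tetrahedron, which does not meet $D$), and then invoke Lemma \ref{lem:projiso}.

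\textbf{Main obstacle.} The principal difficulty is the bookkeeping in the base case: showing that the product of the per-tetrahedron polygon diagrams, once all internal summations are performed, reassembles into exactly $P$ with exactly the right global normalisation. Two things need care. First, the internal edges of a fine neighbourhood are of two kinds — those intersecting $D$ (whose duals become interior lines of polygon diagrams labeled by action functors) and those lying entirely in a region $r$ (whose duals become ordinary lines in a spherical-category subdiagram) — and the corresponding gluings use \eqref{pic:gluepoly}/\eqref{pic:closepoly} versus \eqref{pic:insert}; one must verify that performing the two types of gluing in any compatible order yields the same diagram, which is where isotopy invariance and the Reidemeister-type identities \eqref{pic:rm2}--\eqref{pic:hexagon} enter. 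Second, one must match the exponents $\epsilon(v),\epsilon(e)$ and the prefactor $\dim(\mac_r)^{-\Sigma_{v\in V_r}\epsilon(v)}$ against the $\dim(m)$, $\dim(i)$, $\dim(\mac)$ weights produced by repeated application of \eqref{pic:gluepoly}, \eqref{pic:closepoly}, \eqref{pic:insert}, and the 2-gon identity \eqref{pic:2gon} (the latter being needed to collapse the "bubble" configurations created by the subdivision in Figure \ref{fig:barycentric}). I expect this to be routine but tedious; the cleanest organization is to peel off the tetrahedra of the fine neighbourhood one at a time in a shelling order of $M$ itself (every fine neighbourhood of a disc is shellable, being homeomorphic to $[0,1]\times D$ with $D$ a disc), so that each step is a single application of Corollary \ref{cor:gluing} and a single 6j-symbol factor, reducing the base case to essentially the same inductive argument as the general step.
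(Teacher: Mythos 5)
Your overall architecture coincides with the paper's proof: a first induction that peels tetrahedra off a fine neighbourhood of the defect disc using \eqref{pic:gluepoly} and \eqref{pic:closepoly}, followed by a second induction over inverse shellings with tetrahedra disjoint from $D$. However, there is one genuine gap, precisely at the step that makes the base-case induction run: you justify peeling off one tetrahedron at a time by asserting that ``every fine neighbourhood of a disc is shellable, being homeomorphic to $[0,1]\times D$ with $D$ a disc''. Shellability is a property of the triangulation, not of the PL homeomorphism type, and there exist non-shellable triangulations of the 3-ball --- the paper itself notes this (citing Ziegler) right after Theorem \ref{th:pachshell}. So PL homeomorphism to $[0,1]\times D$ does not produce a shelling order, and without one your base case does not start. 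The correct argument, and the one the paper uses, is two-dimensional: the triangulation of the fine neighbourhood induces a cell decomposition of the 2-disc $D$ whose 2-cells are the intersections of the tetrahedra with $D$; every cell decomposition of a 2-disc is shellable (Sanderson), and each 2d shelling step lifts to the removal of exactly one tetrahedron of the fine neighbourhood. This lift also delivers the property your version would lack even if a 3d shelling existed: every intermediate complex is again a fine neighbourhood of a (smaller) defect disc, which is what allows the inductive hypothesis to be applied.

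Two further points of bookkeeping are left implicit in your write-up but do require an argument. In the base case the glued tetrahedron can only meet the smaller fine neighbourhood along one or two faces (a three-face gluing would force either the defect surface not to be a disc or an internal non-defect edge, contradicting the fine-neighbourhood property), and in the two-face case the shared edge necessarily intersects $D$, which is why \eqref{pic:closepoly} is the identity that absorbs the $\dim l(e)$ factor there. The three-face case arises only in the second induction, with $t$ disjoint from $D$: its dual edges land in the interior of the polygon, so the relevant identities are \eqref{pic:insert} and \eqref{pic:semisimpleids} rather than \eqref{pic:gluepoly}/\eqref{pic:closepoly}, the newly internal vertex contributes the factor $\dim\mac^{-1}$, and the resulting bubble is closed by identity \eqref{eq:2gonformula}, not by the 2-gon identity \eqref{pic:2gon} that you invoke.
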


\begin{proof}  
1.~We first suppose that $M$ is a fine neighbourhood of the defect disc $D$ and prove the claim by induction over the number of tetrahedra in the triangulation. For a single tetrahedron, this holds by definition of the generalised 6j symbol, see Figure \ref{fig:projecting}, and of the rescaled state sum. 

Any fine neighbourhood $M$ of $D$ with $n+1$ tetrahedra is obtained by gluing a single tetrahedron $t$ to a fine neighbourhood $M'$ of $D$ with $n$ tetrahedra.  
This follows, because  the triangulation of $M$ defines a cell complex structure on $D$ and 
 every cell decomposition of a disc is shellable, see for instance Sanderson \cite[Lemma 1]{S}. Shellings of the defect disc correspond to 3d shellings of its fine neighbourhood $M$.  
 
 The tetrahedron $t$ can be glued to $M'$ either along one or two faces. Gluing along three faces is not possible, as $t$ intersects $D$ either in three edges incident at a common vertex as in Figure \ref{fig:dectet} (a)  or in all except two opposite edges as in Figure \ref{fig:dectet} (b). The first case would yield a contradiction to the assumption that the defect surfaces in $M$ and $M'$ are both discs. In the second case there would be an internal  edge in $M$ common to two glued faces that does not intersect $D$, contradicting that
 $M$ is a fine neighbourhood of $D$.

 By Corollary \ref{cor:gluing}, formula \eqref{eq:rescstate} and the induction hypothesis, the rescaled state sum for $M$ is
\begin{align}\label{eq:indstate}
\mathcal Z'(M, l_{\partial M}, b_{\partial M})
=\sum_{l}\sum_{b_g}
\prod_{e\in E_g} \dim l(e) \ev(P')\, \mathrm{6j}(t,l,b),
\end{align}
where 
\begin{compactitem}
\item $\ev(P')$ denotes the cyclic evaluation of the  corresponding polygon diagram for $M'$,  
\item $E_{g}$ is the set of internal edges in $M$ that correspond to boundary edges of $t$ and $M'$,
\item the sum $\Sigma_{l}$ runs over all labelings of edges in $E_g$ with simple objects in the chosen sets of representatives,
\item  the sum $\Sigma_{b_g}$ runs over bases of the morphism spaces for glued faces of $t$ and $M'$. 
\end{compactitem}
We compute the sum in \eqref{eq:indstate} depending on the number of glued faces of $t$:
 
 (i) If $t$ is glued to $M'$ along a single face $f$, then $E_g=\emptyset$.  
Identity \eqref{pic:gluepoly}  implies that the sum in \eqref{eq:indstate} is given by
$$\mathcal Z'(M, l_{\partial M}, b_{\partial M})=\sum_{b_f} \ev(P')\, \mathrm{6j}(t,l,b)\stackrel{\eqref{pic:gluepoly}}=\ev(P).$$
(ii) If $t$ is glued to $M'$ along two faces $f,f'$ that share an edge $e$, then $E_g=\{e\}$ and $e$ intersects $D$.  Otherwise, $t$ would have to intersect 
$D$ in four edges, as in Figure \ref{fig:dectet} (b). As the intersections of the defect disc in $t$ with $f$ and $f'$ are both glued to the boundary of the defect disc in $M'$, 
the defect surface $D$ could not be a disc.

We apply  identity \eqref{pic:gluepoly}  to 
 glue $t$ on $M'$ along $f$. This yields a polygon diagram $P''$ with the morphisms for $f'$ assigned to adjacent sides of $P''$ that are separated by a vertex labeled $l(e)$. 
Identity \eqref{pic:closepoly} implies that the state sum 
 in \eqref{eq:indstate} is given by
$$
\mathcal Z'(M, l_{\partial M}, b_{\partial M})=\sum_{l(e)} \sum_{b_f, b_{f'}} \dim l(e)\, \ev(P')\, \mathrm{6j}(t,l,b)\stackrel{\eqref{pic:gluepoly}}=\sum_{l(e)} \sum_{b_{f'}} \dim l(e) \ev(P'') \stackrel{\eqref{pic:closepoly}}=\ev(P).
$$
This proves the claim for the case where $M$ is a fine neighbourhood of the defect disc.

2.~Suppose now that $M$ is obtained from a fine neighbourhood of the defect disc by inverse shellings with tetrahedra that do not intersect $D$. We prove the claim by induction over the number $n$ of tetrahedra in the triangulation that  do not intersect $D$. If $n=0$ the claim holds by 1.  

If $M$ contains $n+1$ tetrahedra that do not intersect $D$, then $M$ is obtained by gluing a tetrahedron $t$ that does not intersect $D$ to a defect 3-manifold $M'$ with $n$ tetrahedra that do not intersect $D$ and also satisfies the assumptions. If $t$  is contained in a region labeled by a spherical fusion category  $\mac$, then 
by Corollary \ref{cor:gluing} and the induction hypothesis, the rescaled state sum for $M$ is  given by
\begin{align}\label{eq:indstate2}
\mathcal Z'(M, l_{\partial M}, b_{\partial M})=
\sum_{l}\sum_{b_g}
\frac{ \prod_{e\in E_g} \dim l(e)}{\dim \mac^{v_g}}\, \ev(P')\, \mathrm{6j}(t,l,b),
\end{align}
where the notation is as in \eqref{eq:indstate} and $v_g\in\{0,1\}$ is the number of internal vertices in $M$ that correspond to boundary vertices of $t$ and $M'$.
We  compute the sum in \eqref{eq:indstate2} depending on the number of glued faces of $t$:

(i) If $t$ is glued to $M'$ along a single face $f$, then $E_g=\emptyset$ and $v_g=0$. 
Identity \eqref{pic:insert}  then implies that the state sum in \eqref{eq:indstate2} is given by
$$\mathcal Z'(M, l_{\partial M}, b_{\partial M})=\sum_{b_f} \ev(P')\, \mathrm{6j}(t,l,b)\stackrel{\eqref{pic:insert}}=\ev(P).$$
(ii) If $t$ is glued to $M'$ along two faces $f,f'$ that share an edge $e$, then $E_g=\{e\}$ and $v_g=0$. 
We use identity \eqref{pic:insert} to insert the 6j symbol for $t$ into the diagram $P'$. This yields a polygon diagram $P''$ with the morphisms for $f'$  assigned to vertices in the interior
 that are connected by an edge labeled $l(e)$. 
Applying the second identity in \eqref{pic:semisimpleids} then yields
$$
\mathcal Z'(M, l_{\partial M}, b_{\partial M})=\sum_{l(e)} \sum_{b_f, b_{f'}} \dim l(e)\, \ev(P')\, \mathrm{6j}(t,l,b)\stackrel{\eqref{pic:insert}}=\sum_{l(e)} \sum_{b_{f'}} \dim l(e) \ev(P'') \stackrel{\eqref{pic:semisimpleids}}=\ev(P).
$$
(iii) If $t$ is glued to $M'$ along three faces $f,f',f''$, then they share three edges $e=f\cap f'$, $e'=f\cap f''$ and $e''=f'\cap f''$, and there is exactly one vertex $v$ in $t$ that becomes internal after the gluing. Thus we have  $E_g=\{e,e',e''\}$ and  $v_g=1$.  
We first glue $t$ to $M'$ along $f$  with  identity \eqref{pic:insert}.  This eliminates the summation over $b_f$ in  \eqref{eq:indstate2} 
and  yields a polygon $P''$ with the morphisms for $f'$ assigned  to adjacent vertices in the interior that are connected by an edge labeled $l(e)$.  
Applying the second identity in \eqref{pic:semisimpleids} then eliminates the summation over $l(e)$ and  $b_{f'}$ and  the factor $\dim l(e)$ in \eqref{eq:indstate2}, as in (ii). This yields a polygon diagram $P'''$ that  differs from $P$ only in a disc in its interior, where one has
\begin{align}\label{eq:differenceii}
&\begin{tikzpicture}[scale=.3]
\node at (-3,0){$P'''$};
\draw[line width=.5pt] (0,0)--(0,-2) node[midway, anchor=west]{$i$};
\draw[line width=.5pt] (0,-2).. controls (-2,-4).. (0,-6) node[midway, anchor=east]{$l(e')$};
\draw[line width=.5pt] (0,-2).. controls (2,-4).. (0,-6) node[midway, anchor=west]{$l(e'')$};
\draw[line width=.5pt] (0,-6)--(0,-8) node[midway, anchor=west]{$i$};
\draw[fill=black] (0,-2) circle (.2) node[anchor=east]{$\gamma$};
\draw[fill=black] (0,-6) circle (.2) node[anchor=east]{$\gamma$};
\end{tikzpicture}
\qquad 
&\begin{tikzpicture}[scale=.3]
\node at (3,0){$P$};
\draw[line width=.5pt] (0,0)--(0,-8) node[midway, anchor=west]{$i$};
\end{tikzpicture}
\end{align}
with $\gamma\in \Hom(f'')$  and $i\in I_\mac$  labeling the remaining edge of $f''$.  Multiplying $\ev(P''')$ by $\dim l(e')\dim l(e'') \dim \mac^\inv$ and summing over $l(e'), l(e'')\in I_\mac$ and over  $\gamma\in \Hom(f'')$ then yields 
\begin{align*}
\mathcal Z'(M, l_{\partial M}, b_{\partial M})= \dim \mac^\inv \!\!\!\! \!\! \sum_{\gamma, l(e'), l(e'')} \!\!\!\!\!\!\ \dim l(e')\dim l(e'')\ev(P''')
\stackrel{\eqref{eq:2gonformula}}
=\ev(P).
\end{align*}
\end{proof}
 
Note that the evaluation of the diagram obtained by projecting the dual of the boundary triangulation on $M$ to the defect surface $D$ does not depend on the choice of the underlying PL homeomorphism $\phi: M\to [0,1]\times D$. By Lemma \ref{lem:projiso}  different choices of $\phi$ yield isotopic projection maps. Diagrams for spherical fusion categories that are related by PL isotopies have the same evaluation. This is well-known and  can be viewed as a special case of  \cite[Th.~1.12]{JS}, \cite[Th.~2.12]{BMS} for 2-category diagrams and \cite[Th.~3.9]{BMS}  for planar 2-category diagrams, see also the paragraphs after Example \ref{ex:2catdiagram} and Definition \ref{def:cyclictrans}. Together with identities \eqref{pic:rm2} to \eqref{pic:bimodulenat} this ensures that the evaluation of the resulting diagram is independent of $\phi$. 

As the evaluation of the polygon diagram in Proposition \ref{th:topological invariance} depends only on the  triangulation and labeling of $\partial M$ and on the defect data, we obtain

\begin{corollary} Let $M,M'$ be  triangulated 3-manifolds with defect data that satisfy the assumptions of Proposition \ref{th:topological invariance} and whose boundary triangulations coincide. Then for all labelings $l_{\partial M}$ of the boundary one has $\mathcal Z(M, l_{\partial M})=\mathcal Z(M', l_{\partial M})$.
\end{corollary}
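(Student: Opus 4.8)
The statement to prove is the final Corollary: if $M,M'$ are triangulated 3-manifolds with defect data satisfying the assumptions of Proposition \ref{th:topological invariance} (i.e.\ each shells to a fine neighbourhood of a defect disc by shellings with tetrahedra not meeting the disc), and their boundary triangulations coincide, then $\mathcal Z(M,l_{\partial M})=\mathcal Z(M',l_{\partial M})$ for every boundary labeling.

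The plan is essentially immediate once Proposition \ref{th:topological invariance} is in hand. First I would note that $\mathcal Z$ and $\mathcal Z'$ differ only by the prefactor in \eqref{eq:rescstate}, which is built purely from $l_{\partial M}$, the dimensions of simple objects on boundary edges, and the dimensions $\dim\mac_r$ at boundary vertices — data that is completely determined by the common boundary triangulation and its labeling. Hence it suffices to prove $\mathcal Z'(M,l_{\partial M},b_{\partial M})=\mathcal Z'(M',l_{\partial M},b_{\partial M})$ for all $b_{\partial M}$, since the conversion factor is identical for $M$ and $M'$. Second, by Proposition \ref{th:topological invariance}, both sides equal the cyclic evaluation $\ev(P)$, respectively $\ev(P')$, of the polygon diagram obtained by projecting the dual graph of the boundary triangulation onto the defect disc $D$ and labeling it with $l_{\partial M},b_{\partial M}$. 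Third, since the boundary triangulations of $M$ and $M'$ agree and the defect data (the categories, functors, natural transformations on $D$ and the adjacent regions) is the same, the combinatorial data entering the construction of $P$ and $P'$ coincides; the only freedom is the choice of orientation-preserving PL homeomorphism $\phi\colon M\to[0,1]\times D$ used to define the projection $P_i(\phi)$ in Definition \ref{def:projection}. By Lemma \ref{lem:projiso} any two such choices give PL isotopic projection maps, so $P$ and $P'$ differ only by a PL isotopy of the underlying diagram. Since evaluations of diagrams for spherical fusion categories are invariant under PL isotopy (\cite[Th.~1.12]{JS}, \cite[Th.~2.12]{BMS}, \cite[Th.~3.9]{BMS}) and, together with the Reidemeister-type identities \eqref{pic:rm2}--\eqref{pic:bimodulenat}, so are cyclic evaluations of the mixed/polygon diagrams, we get $\ev(P)=\ev(P')$. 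Combining the three steps yields $\mathcal Z(M,l_{\partial M})=\mathcal Z(M',l_{\partial M})$.

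I would write this up as: fix $b_{\partial M}$; apply \eqref{eq:rescstate} to reduce to $\mathcal Z'$; apply Proposition \ref{th:topological invariance} to both $M$ and $M'$; observe that the two resulting polygon diagrams are built from identical combinatorial and categorical data and hence, by Lemmas \ref{lem:projiso} and the isotopy invariance of (cyclic) evaluations recalled after Proposition \ref{th:topological invariance}, have equal cyclic evaluations; conclude. Since $\mathcal Z(M,l_{\partial M})$ is by definition the linear map $b_{\partial M}\mapsto\mathcal Z(M,l_{\partial M},b_{\partial M})$, equality for all $b_{\partial M}$ gives equality of the maps.

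The only genuinely delicate point — and the one I would be careful to spell out rather than wave at — is that Proposition \ref{th:topological invariance} is stated in a form (``$\ev(P)$ is the cyclic evaluation of \emph{the} polygon diagram obtained by projecting\dots'') that already presupposes well-definedness of $\ev(P)$ independent of the projection used; the Corollary is really exploiting the paragraph immediately following the Proposition, which makes that independence precise via Lemma \ref{lem:projiso} and isotopy invariance. So the main ``obstacle'' is purely expository: making clear that the Corollary is not a separate topological argument but exactly the observation that the right-hand side of Proposition \ref{th:topological invariance} is a function of $(\partial M, l_{\partial M}, b_{\partial M})$ and the defect data alone, hence agrees for $M$ and $M'$. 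No new estimate or construction is needed.
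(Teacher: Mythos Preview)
Your proposal is correct and matches the paper's approach exactly: the paper states the Corollary as an immediate consequence of Proposition \ref{th:topological invariance}, noting in the preceding paragraph that the evaluation of the polygon diagram depends only on the boundary triangulation, its labeling, and the defect data (with the independence from the choice of $\phi$ handled via Lemma \ref{lem:projiso} and isotopy invariance). Your write-up simply makes explicit the one-line deduction the paper leaves implicit.
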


In particular, this implies  that the state sum of any  generic transversal triangulated 3-manifold with defect data is invariant under generic transversal bistellar moves.  This follows with Corollary \ref{cor:gluing}, as the tetrahedra  in such moves define  triangulations  of a 3-ball
that satisfy the assumptions of Proposition \ref{th:topological invariance} and agree on its boundary.

\begin{corollary}\label{cor:pachner} Let $M$ be a generic and transversal  triangulated 3-manifold with defect data and $l_{\partial M}$ a labeling of its boundary. 
Then the state sum $\mathcal Z(M, l_{\partial M})$ from Definition \ref{def:statesum} is invariant under  generic transversal bistellar moves in the interior of $M$.
\end{corollary}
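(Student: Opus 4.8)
The plan is to reduce the statement to Proposition~\ref{th:topological invariance} via the gluing formula for state sums in Corollary~\ref{cor:gluing}. A generic transversal bistellar move in the interior of $M$ replaces a subcomplex $B\subset M$, consisting either of two tetrahedra sharing a face (for the $2$--$3$ move and its inverse) or of one tetrahedron (for the $1$--$4$ move and its inverse), by another subcomplex $B'$ with the same boundary triangulation $\partial B=\partial B'$. Since the move is required to be generic and transversal, all tetrahedra of $B$ and of $B'$ are generic transversal, so $B$ and $B'$ are generic transversal triangulated $3$-manifolds with defect data and $B$ is a closed $3$-ball. Writing $M=B\cup_{\partial B}N$ where $N$ is the complementary triangulated submanifold, and $M'=B'\cup_{\partial B'}N$ for the triangulation obtained after the move, Corollary~\ref{cor:gluing} gives
\begin{align*}
\mathcal Z(M,l_{\partial M},b_{\partial M})&=\sum_{l_{\partial B}}\mathcal Z(B,l_{\partial B},b_{\partial B})\cdot\mathcal Z(N,l_{\partial N},b_{\partial N}),\\
\mathcal Z(M',l_{\partial M},b_{\partial M})&=\sum_{l_{\partial B}}\mathcal Z(B',l_{\partial B},b_{\partial B})\cdot\mathcal Z(N,l_{\partial N},b_{\partial N}),
\end{align*}
so it suffices to show $\mathcal Z(B,l_{\partial B})=\mathcal Z(B',l_{\partial B})$ as linear maps on the tensor product of the boundary morphism spaces, for every boundary labeling, and similarly for the rescaled versions.

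First I would verify that $B$ and $B'$ satisfy the hypotheses of Proposition~\ref{th:topological invariance}. The key observation is that a $3$-ball carrying a generic transversal triangulation whose defect surface is a (possibly empty) disc shells, by elementary shellings with tetrahedra that do not meet the defect disc, to a fine neighbourhood of that disc: indeed the tetrahedra meeting the disc form, by the argument in the proof of Lemma~\ref{lem:finetriang} and Lemma~\ref{lem:finenbtop}, a fine neighbourhood, and the remaining tetrahedra (all lying in the regions) can be removed one at a time by elementary shellings since a cell decomposition of a ball, after removing a shellable sub-ball, remains shellable in the relevant sense. For the bistellar configurations this is immediate: in the $2$--$3$ and $1$--$4$ moves the defect surface inside $B$ (resp.\ $B'$) is either empty or a single disc meeting the boundary in a circle, because the move is transversal and generic, so each tetrahedron meets it in a triangle through one vertex or in a quadrilateral through two pairs of opposite edges. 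Hence both $B$ and $B'$ fall under Proposition~\ref{th:topological invariance}, which identifies $\mathcal Z'(B,l_{\partial B},b_{\partial B})$ with the cyclic evaluation of the polygon diagram obtained by projecting the dual graph of $\partial B$ onto the defect disc, and likewise for $B'$.

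The conclusion then follows because this polygon diagram depends only on the triangulation and labeling of the boundary $\partial B=\partial B'$ and on the defect data, not on the internal triangulation. More precisely, both $B$ and $B'$ are PL homeomorphic to $[0,1]\times D$ by Lemma~\ref{lem:finenbtop} (where $D$ is the common defect disc, or a disc if the defect surface is empty, in which case one treats $B$ as carrying a trivial defect disc and $\mathcal Z'$ reduces to the ordinary state sum by Corollary~\ref{cor:reducedstatesum}); by Lemma~\ref{lem:projiso} the two projections of $\partial B$ to $D$ coming from the two PL homeomorphisms are PL isotopic, and PL-isotopic diagrams for spherical fusion categories, together with the Reidemeister-type identities \eqref{pic:rm2}--\eqref{pic:bimodulenat} for the crossings in a polygon diagram, have equal cyclic evaluations. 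Therefore $\mathcal Z'(B,l_{\partial B},b_{\partial B})=\ev(P)=\mathcal Z'(B',l_{\partial B},b_{\partial B})$, and multiplying by the common boundary rescaling factor from \eqref{eq:rescstate} gives $\mathcal Z(B,l_{\partial B})=\mathcal Z(B',l_{\partial B})$. Substituting into the gluing formula yields $\mathcal Z(M,l_{\partial M})=\mathcal Z(M',l_{\partial M})$.

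The main obstacle I anticipate is the bookkeeping around the case where the defect surface inside $B$ is empty but $B$ is not a single tetrahedron: one must check that treating such a $B$ as a $3$-ball carrying a trivial defect disc, and invoking Proposition~\ref{th:topological invariance} together with Corollary~\ref{cor:reducedstatesum}, is legitimate — i.e.\ that inserting a trivial defect disc does not change the state sum and that the resulting triangulation still shells to a fine neighbourhood of that trivial disc by shellings avoiding it. For the bistellar configurations themselves this is straightforward, but stating it cleanly requires care; alternatively, for the purely-interior empty-defect case one can simply cite Theorem~\ref{th:pachinv} directly, since then $B$ and $B'$ are ordinary triangulated $3$-balls with coinciding boundary data and the state sum is the Turaev--Viro--Barrett--Westbury one. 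I would organise the proof to split into these two cases at the outset, handling the genuinely-defect case via Proposition~\ref{th:topological invariance} and the defect-free case via Theorem~\ref{th:pachinv}, and then combine with Corollary~\ref{cor:gluing}.
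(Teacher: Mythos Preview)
Your proposal is correct and follows the same route as the paper: cut out the $3$-ball $B$ (resp.\ $B'$) supporting the bistellar move, apply Proposition~\ref{th:topological invariance} to identify each rescaled state sum with the evaluation of a polygon diagram depending only on the common boundary data, and recombine via the gluing formula of Corollary~\ref{cor:gluing}. The paper's argument is the one-sentence version of exactly this; your additional care about the empty-defect case (handled either by inserting a trivial defect disc and invoking Corollary~\ref{cor:reducedstatesum}, or by falling back on Theorem~\ref{th:pachinv}) is appropriate and matches the paper's convention of treating defect-free tetrahedra as carrying trivial defect data. One caution: your general assertion that the complement of the fine neighbourhood in a triangulated $3$-ball is always shellable is false without further hypotheses (triangulated $3$-balls need not be shellable), but since you only need it for the explicit $\leq 5$-tetrahedron configurations of the bistellar moves, where it is trivially checked by hand, this does not affect the argument.
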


\begin{figure}
\def\svgwidth{.9\columnwidth}
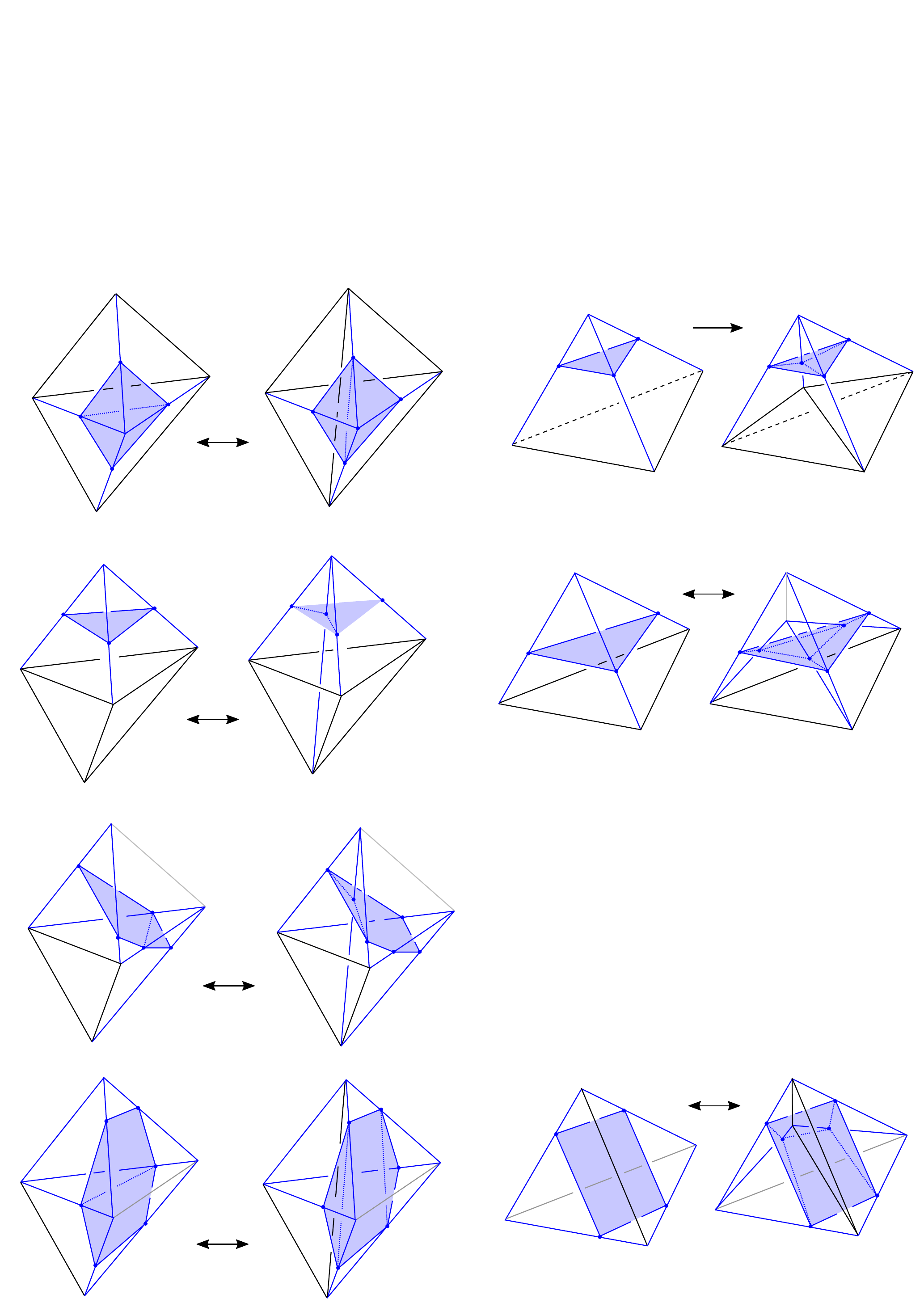
\caption{bistellar moves for tetrahedra without and with defect discs:\protect\newline
(a),(b):  encoding the associator of a spherical fusion category $\mac$,\protect\newline
(c)-(f):   encoding the natural isomorphism $c$ for a $\mac$-module category $\mam$,\protect\newline
(g)-(i):  encoding the natural isomorphism $b$ for a $(\mac,\mad)$-bimodule category $\mam$.
}
\label{fig:pachnerdef}
\end{figure}

All  generic transversal bistellar moves  for tetrahedra with defect surfaces are shown in Figure \ref{fig:pachnerdef}.  The moves for tetrahedra that also involve defect lines and defect vertices are analogous, but with the additional conditions that  defect vertices must not lie on faces  and defect edges must not intersect edges of the tetrahedra.

The bistellar invariance of state sums for 3-manifolds with defect surfaces has a similar interpretation to the one for Turaev-Viro-Barrett-Westbury state sums outlined at the end of Section \ref{subsec:PLsubsec}. The moves  in Figure \ref{fig:pachnerdef} (c) to (f) encode the pentagon relation \eqref{eq:pentagoncdef} for the structure  isomorphisms $c_{i,j,m}: (i\oo j)\rhd m\to i\rhd (j\rhd m)$ for a $\mac$-module category and equivalent relations obtained from it by taking inverses. The  moves in Figure \ref{fig:pachnerdef} (g) to (i) encode the pentagon relation \eqref{eq:pentagonbdef} and the invertibility of the structure isomorphisms $b_{c,m,d}: (c\rhd m)\lhd d\to c\rhd (m\lhd d)$ in a $(\mac,\mad)$-bimodule category.  
The corresponding moves for surfaces with line and point defects combine these pentagon relations with the pentagon relations \eqref{eq:pentagoncfunc}, \eqref{eq:pentagondfunc} for $\mac$-module and $\mad$-right module functors and the hexagon relation \eqref{eq:hexa} for $(\mac,\mad)$-bimodule functors.

As for a spherical fusion category, the bistellar invariance for manifolds with defects can  be formulated as generalised  Biedenharn-Elliott and orthogonality relations for  6j symbols, which generalise the diagrammatic identities  \eqref{eq:biedenharneliotsph} and \eqref{eq:orthogonalitysph}. However, it becomes cumbersome to list all of these relations.

\subsection{Triangulation independence}

In this section we  show  that  any two generic transversal triangulations of a  3-manifold with defect data yield the same state sum if they agree at its boundary. 
Note that this does not follow directly from  Corollary \ref{cor:pachner}, because it is not guaranteed a priori  that such triangulations are related by \emph{generic  transversal} bistellar moves. 
As a first step, we derive a counterpart  of Proposition \ref{th:topological invariance}  for general defect surfaces. 

We again consider 3-manifolds $M$  that shell to fine neighbourhoods of a defect surface $D$ and project the dual of the boundary triangulation on $\partial M$  to $D$  as in Definition \ref{def:projection} with an orientation preserving PL homeomorphism $\phi: M\to [0,1]\times D$. The only difference to Proposition \ref{th:topological invariance}  is that $D$ is no longer a disc and the resulting diagram on $D$ must be cut along the triangulation to obtain a polygon diagram.

\begin{proposition}\label{lem:polystatesum}
Let  $M$ be a triangulated 3-manifold with defect data that shells to a fine neighbourhood  of a defect surface $D$ by shellings of tetrahedra that do not intersect $D$. 
Let $P$  be a polygon diagram obtained by projecting the dual of the  triangulation on $\partial M$ to $D$ and  cutting $D$ along the triangulation  to a disc. 
 Then 
\begin{align*}
\mathcal Z'(M,  l_{\partial M}, b_{\partial M})=
\sum_{l}\sum_{b_{\Delta_i}} \mathrm{ev}(P)\prod_{e\in E_{i}} \dim l(e),\\[-6ex]
\end{align*}
where 
\begin{compactitem}
\item $E_{i}$ and $\Delta_i$ are the sets of internal edges and triangles of $M$ that 
 correspond to boundary vertices and edges in $P$, 
\item the sum is over all assignments of simple objects to  edges in $E_i$ and bases of the morphism spaces of  triangles in  $\Delta_i$. 
\end{compactitem}
\end{proposition}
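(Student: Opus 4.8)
The plan is to reduce Proposition \ref{lem:polystatesum} to the already-established Proposition \ref{th:topological invariance} by the following device: cutting the defect surface $D$ along (a subcomplex of) the boundary triangulation turns $D$ into a disc $\tilde D$, and correspondingly cuts $M$ into a 3-manifold $\tilde M$ which is a fine neighbourhood of $\tilde D$ — or, more precisely, shells to one via tetrahedra that do not meet $\tilde D$, since the shelling data of $M$ survives the cut. The gluing pattern that reconstructs $M$ from $\tilde M$ is exactly a boundary identification along the edges and triangles of the triangulation that were cut, i.e.\ along the internal edges in $E_i$ and internal triangles in $\Delta_i$ of the statement. Then Corollary \ref{cor:gluing} expresses $\mathcal Z'(M,l_{\partial M},b_{\partial M})$ as a sum over labelings $l$ of the cut edges $E_i$ and over dual bases $b_{\Delta_i}$ of the morphism spaces of the cut triangles $\Delta_i$ of the product $\mathcal Z'(\tilde M, \ldots)\prod_{e\in E_i}\dim l(e)$; and Proposition \ref{th:topological invariance} identifies $\mathcal Z'(\tilde M,\ldots)$ with $\mathrm{ev}(P)$, the cyclic evaluation of the polygon diagram obtained by projecting the dual of the boundary triangulation of $\tilde M$ to $\tilde D$. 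Since $P$ is by definition this projected, cut diagram, the two sides agree.

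\textbf{Key steps in order.} First I would make precise the cutting operation: choose a spanning graph $G\subset D^{1}\cup(\text{1-skeleton of }T\cap D)$ along which one cuts $D$ to a disc $\tilde D$, and lift this to a cut of $M$ along the union of internal triangles of $T$ whose intersection with $D$ lies over $G$, producing $\tilde M$ with an orientation-preserving PL homeomorphism $\tilde\phi\colon\tilde M\to[0,1]\times\tilde D$ inherited from $\phi\colon M\to[0,1]\times D$ (here Lemma \ref{lem:finenbtop} guarantees the product structure on $M$, and Lemma \ref{lem:projiso} that the choice of $\phi$ is immaterial for evaluations). Second, I would check that $\tilde M$ still shells to a fine neighbourhood of $\tilde D$ by shellings of tetrahedra not meeting $\tilde D$: the tetrahedra are unchanged by the cut, no tetrahedron straddles the cutting surface, and the shelling moves of $M$ that do not touch $D$ also do not touch $\tilde D$. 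Third, apply Proposition \ref{th:topological invariance} to $\tilde M$ to get $\mathcal Z'(\tilde M,l_{\partial\tilde M},b_{\partial\tilde M})=\mathrm{ev}(P)$, noting that the boundary triangulation and labeling of $\tilde M$ is precisely that of $\partial M$ together with two copies of the cut data. Fourth, apply Corollary \ref{cor:gluing} iteratively (once per pair of identified triangles, or in one step for the whole identification surface) to recover $\mathcal Z'(M,\ldots)$ as the claimed sum over $l$ on $E_i$ and over $b_{\Delta_i}$; the factors $\dim l(e)$ for $e\in E_i$ and the dual-basis summation are exactly the data appearing in Corollary \ref{cor:gluing}, and one must verify that no spurious $\dim\mathcal C$ factors at vertices appear, which holds because cutting along edges and triangles creates no new internal vertices beyond those already counted in $\mathcal Z'$ (that is, the vertices of $G$ become boundary vertices of $\tilde D$, consistent with the rescaled state sum $\mathcal Z'$ discarding boundary-vertex and boundary-edge dimension factors).

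\textbf{Main obstacle.} The delicate point is the bookkeeping of the cutting/gluing correspondence: one must verify that the set of internal edges and triangles of $M$ ``corresponding to boundary vertices and edges of $P$'' is exactly the set of simplices along which the cut is performed, that each such internal triangle arises from exactly two boundary triangles of $\tilde M$ with opposite induced orientations (so that Corollary \ref{cor:gluing} applies cleanly and the dual-basis sum is the correct one), and that the polygon diagram $P$ produced by ``project then cut'' coincides with the polygon diagram produced by ``cut then project'' up to the cyclic and isotopy equivalences that do not change $\mathrm{ev}(P)$ (invoking Lemma \ref{lem:cyclicinv}, Lemma \ref{lem:projiso}, and the diagrammatic identities \eqref{pic:rm2}--\eqref{pic:bimodulenat}). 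I also need to confirm that the choice of cutting graph $G$ does not affect the final sum — different choices give different but equivalent polygon diagrams with equal evaluation by Theorem \ref{th:polyids}, since two polygon presentations of the same surface are related by the cutting, gluing and folding identities established there. Once these combinatorial identifications are pinned down, the rest is a direct assembly of Corollary \ref{cor:gluing} and Proposition \ref{th:topological invariance}.
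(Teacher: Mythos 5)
Your reduction is exactly the paper's argument \emph{in the base case}, where $M$ is itself a fine neighbourhood of $D$: there every edge and triangle not meeting $D$ lies in $\partial M$, so cutting $D$ along the triangulation lifts to a cut of $M$ along a properly embedded union of internal triangles, the result is a ball that is a fine neighbourhood of a defect disc, Proposition \ref{th:topological invariance} applies, and the comparison of state sums is the trivial bookkeeping you describe. The gap is that you apply this one-shot reduction to the \emph{general} $M$, which may contain additional tetrahedra not meeting $D$ attached by inverse shellings. For such an $M$ the lifted cut surface need not be properly embedded: the ``outer'' edge of a cut triangle (the edge not meeting $D$) and even its vertices can be \emph{internal} cells of $M$, because inverse shellings by tetrahedra disjoint from $D$ can close up the link around them. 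Then your $\tilde M$ is a cut along a surface whose boundary lies partly in the interior of $M$, and two things break. First, the claimed product structure $\tilde\phi\colon\tilde M\to[0,1]\times\tilde D$ and the verification that $\tilde M$ satisfies the hypotheses of Proposition \ref{th:topological invariance} are no longer the routine statements you assert. Second, and more concretely, the bookkeeping for the regluing fails: an internal edge of $M$ of the type $e_3$ above, and any internal vertex of $M$ lying on a cut triangle, contribute factors $\dim l(e_3)$ resp.\ $\dim(\mac)^{-1}$ to $\mathcal Z'(M)$, but after the cut they become boundary cells of $\tilde M$ and contribute nothing to $\mathcal Z'(\tilde M)$; since they are not in $E_i\cup\Delta_i$, your identity $\mathcal Z'(M)=\sum_{l,b_{\Delta_i}}\mathcal Z'(\tilde M)\prod_{e\in E_i}\dim l(e)$ is not justified. (Relatedly, Corollary \ref{cor:gluing} is formulated for gluing two manifolds along an entire boundary component, not for a self-gluing along a subsurface whose boundary is partly interior.)

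The paper closes exactly this hole by a different organisation: the cutting argument is used only when $M$ is a fine neighbourhood, and the general case is handled by induction on the number of tetrahedra not meeting $D$. Each such tetrahedron is attached via Corollary \ref{cor:gluing} and absorbed into the polygon evaluation using \eqref{pic:insert}, \eqref{pic:semisimpleids} and \eqref{eq:2gonformula}, as in cases 2.(i)--(iii) of the proof of Proposition \ref{th:topological invariance}; the only new ingredient is that one may first have to re-cut the polygon with \eqref{pic:gluepoly} so that the edges along which the tetrahedron is glued do not lie on $\partial P'$, which is possible because those edges never lie over $\partial D$. If you want to salvage your approach, you would have to either restrict it to the fine-neighbourhood case and then supply this inductive absorption step, or prove separately that the extra tetrahedra can be arranged (or removed) so that the cut surface is properly embedded and all internal cells of $M$ that change status under the cut are accounted for --- which amounts to redoing the paper's induction in disguise.
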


\begin{proof} 
The claim  follows by induction over the number $n$  of tetrahedra in $M$ that do not intersect $D$. 

1.~For $n=0$,  $M$ is a fine neighbourhood of $D$. Cutting $D$ along the triangulation to a disc corresponds to cutting $M$ to a 3-ball $B$ that is a fine neighbourhood of a defect disc. 
Edges in $E_i$ correspond to  boundary edges of $B$  and elements of $\Delta_i$ to boundary triangles in $B$.   Every labeling of internal edges in $M$  defines a labeling $l$ of  $E_i$ and a labeling $l'$ of the internal edges of $B$. 

The state sum for $M$ is given by formula \eqref{eq:state sum}. 
By Proposition \ref{th:topological invariance}, summing over the labeling $l'$ and the labelings of internal triangles of $B$  yields the state sum for $B$, which is the evaluation of $P$. 
As every edge in $E_i$ is internal in $M$, but  in the boundary of $B$, the state sum for $M$ is obtained by multiplying this expression with $\Pi_{e\in E_{i}} \dim l(e)$
and summing over the labelings of edges in $E_i$ and morphism spaces for triangles in $\Delta_i$.

2.~Suppose that  $M$ contains $n+1$ tetrahedra that do not intersect $D$. 
As $M$ shells to a fine neighbourhood of $D$, it is obtained by gluing a tetrahedron $t$  that does not intersect $D$ to triangulated manifold  $M'$ that also satisfies the assumptions and has $n$ tetrahedra that do not intersect $D$.  
If $t$ is contained in the region labeled by a spherical fusion category $\mac$, then by the induction hypothesis and 
Corollary \ref{cor:gluing} the state sum of $M$ is given by formula  \eqref{eq:indstate2} in the proof of Proposition \ref{th:topological invariance}. 

It remains to show that  \eqref{eq:indstate2} yields  $\ev(P)$. 
If $t$ is glued to $M'$ along a single face, this follows as in 2.(i) in the proof of Proposition \ref{th:topological invariance}. 
 If $t$ is glued to $M'$ along two faces $f,f'$ that share an edge $e=f\cap f''$ or along three faces $f,f', f''$ that share edges $e=f\cap f'$, $e'=f\cap f''$ and $e''=f'\cap f''$, 
  we can assume without loss of generality that the edges in the polygon diagram  $P'$ for $M'$  that correspond to $e$, $e'$, $e''$ do not intersect  $\partial P'$. This is possible, because the assumptions imply that they do not intersect a side of $P'$ that is contained in $\partial D$. If they intersect  sides of $P'$ that are in the interior of  $D$, we 
can apply identity \eqref{pic:gluepoly} to cut   $P'$, glue it back together in such a way that these edges do not intersect $\partial P'$ and reverse this transformation after the gluing of $t$.
The claim then  follows as in 2.(ii) and 2.(iii) in the proof of Proposition \ref{th:topological invariance}. \end{proof}

After summation over the objects and morphisms at its boundary, the evaluation of the polygon diagram in Proposition \ref{lem:polystatesum} is again independent of the choice of the  PL homeomorphism $\phi: M\to [0,1]\times D$ that defines the projection of the boundary graph on $D$. Lemma \ref{lem:projiso} ensures that different choices for $\phi$ yield PL isotopic projection maps and hence diagrams for spherical fusion categories with the same evaluations. Together with identities   \eqref{pic:rm2} to \eqref{pic:bimodulenat} 
and identities \eqref{pic:gluepoly} and \eqref{pic:closepoly} this ensures the polygon evaluations for different choice of $\phi$ are equal. 
As the latter  are precisely the cutting and gluing identities that relate different polygon presentations of surfaces, see for instance \cite[Ch.~6]{Lee} and \cite[Ch.~6.38 and 6.40]{ST}, it also follows that  all ways of cutting the surface in Proposition \ref{lem:polystatesum} 
yield the same value for the state sum. 

Note also that Proposition \ref{lem:polystatesum}  allows one to  drop the requirement that a fine triangulation is generic in the interior of $M$.
For a triangulation that is non-generic in the interior of $M$, a small displacement of the defect surfaces, defect lines and defect vertices yields a generic triangulation. As the cyclic evaluations of the associated polygon diagrams coincide, all such displacements have the same state sum.

It remains to show that the state sum from Proposition \ref{lem:polystatesum}  does not depend on the choice of the triangulation in the interior of $M$.  
This follows again from identities \eqref{pic:gluepoly} and \eqref{pic:closepoly}.

\begin{lemma} \label{lem:subdivide}
Let $M,M'$ be triangulated 3-manifolds with defect data that shell to fine neighbourhoods of a defect surface $D$ with shellings by tetrahedra that do not intersect $D$. Suppose that the triangulations of $M,M'$ coincide on the boundary. Then for all labelings $l_{\partial M}$ and $b_{\partial M}$ 
$$
\mathcal Z(M,l_{\partial M}, b_{\partial M})=\mathcal Z(M',l_{\partial M}, b_{\partial M}).
$$
\end{lemma}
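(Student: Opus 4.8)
The strategy is to reduce the statement to the polygon identities of Theorem \ref{th:polyids} via the two representations furnished by Proposition \ref{lem:polystatesum}. By that proposition, both $\mathcal Z'(M,l_{\partial M},b_{\partial M})$ and $\mathcal Z'(M',l_{\partial M},b_{\partial M})$ are given by
\begin{align*}
\mathcal Z'(M,l_{\partial M},b_{\partial M})=\sum_{l}\sum_{b_{\Delta_i}}\mathrm{ev}(P)\prod_{e\in E_i}\dim l(e),\qquad
\mathcal Z'(M',l_{\partial M},b_{\partial M})=\sum_{l'}\sum_{b_{\Delta'_i}}\mathrm{ev}(P')\prod_{e\in E'_i}\dim l'(e),
\end{align*}
where $P$ and $P'$ are polygon diagrams obtained by projecting the dual of the boundary triangulation (which is the \emph{same} for $M$ and $M'$) to $D$ via orientation preserving PL homeomorphisms $\phi\colon M\to [0,1]\times D$ and $\phi'\colon M'\to[0,1]\times D$, and then cutting $D$ along its triangulation to a disc. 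Since $\mathcal Z(M,l_{\partial M},b_{\partial M})$ and $\mathcal Z'(M,l_{\partial M},b_{\partial M})$ differ only by the boundary dimension factors in \eqref{eq:rescstate}, which depend solely on the common boundary triangulation and labeling, it suffices to prove equality of the rescaled state sums.

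\textbf{Key steps.} First I would observe that the projected diagram on the \emph{uncut} surface $D$ is, up to PL isotopy, independent of the choice of PL homeomorphism: by Lemma \ref{lem:projiso} the projection maps $P_1(\phi)$ and $P_1(\phi')$ are PL isotopic, and diagrams for spherical fusion categories related by PL isotopies of the plane (more precisely of $D$) have equal evaluations by \cite[Th.~3.9]{BMS}, combined with the crossing and sliding identities \eqref{pic:rm2}--\eqref{pic:bimodulenat} for the mixed data on $D$. So the data living on $D$ before cutting is canonically associated to the common boundary triangulation, the common labeling, and the defect data. Second, I would argue that the right-hand sums are precisely the operation of ``gluing the polygon $P$ back into the closed surface diagram on $D$'': summing $\mathrm{ev}(P)\prod_{e\in E_i}\dim l(e)$ over the internal-edge labels $l$ and internal-triangle bases $b_{\Delta_i}$ that sit at $\partial P$ is exactly the iterated application of the side-gluing identity \eqref{pic:gluepoly} and, where a cut vertex reappears in the interior, the vertex-gluing identity \eqref{pic:closepoly}; these identities identify the summed quantity with the cyclic evaluation of the diagram obtained by re-gluing the polygon, i.e.\ with an invariant of the uncut diagram on $D$. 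Running the same argument for $P'$ gives the same invariant. Third — and this is where the geometry of the cut matters — I would note that $P$ and $P'$ may be cuts of the uncut $D$-diagram along \emph{different} triangulations (the boundary triangulations agree, but the interior triangulations need not), hence along different spanning systems of arcs in $D$; the identities \eqref{pic:gluepoly} and \eqref{pic:closepoly} are exactly the elementary transformations relating polygon presentations of a fixed surface (see the remark after Theorem \ref{th:polyids} and \cite[Ch.~6]{Lee}, \cite[Ch.~6.38 and 6.40]{ST}), so any two such cuts produce polygon diagrams whose summed evaluations agree. Assembling: both rescaled state sums equal the re-glued-diagram invariant of the uncut $D$-diagram, hence they are equal.

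\textbf{Main obstacle.} The delicate point is the bookkeeping in the reduction to the closed surface diagram: one must check that the internal edges in $E_i$ and internal triangles in $\Delta_i$ appearing in Proposition \ref{lem:polystatesum} really are in bijection with, respectively, the boundary vertices and boundary sides of $P$ in a way compatible with the pairwise identification of sides of the polygon under cutting, so that the summations $\sum_l\sum_{b_{\Delta_i}}$ match exactly the summations prescribed on the right-hand sides of \eqref{pic:gluepoly} and \eqref{pic:closepoly}; and that when the interior triangulations of $M$ and $M'$ differ, the two cut-systems are related by a finite sequence of those elementary transformations (cutting and re-gluing, folding and unfolding) rather than something more exotic. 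Both of these are essentially combinatorial-topological facts about triangulations of surfaces and their dual polygon presentations, so I expect them to go through, but they require care rather than a one-line argument. Everything else — the independence from $\phi$, the passage between $\mathcal Z$ and $\mathcal Z'$, and the application of the polygon identities themselves — is routine given the machinery already established.
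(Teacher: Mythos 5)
Your reduction is the same as the paper's: by Proposition \ref{lem:polystatesum} and Lemma \ref{lem:projiso}, both rescaled state sums are summed evaluations of (up to isotopy) the same decorated diagram on $D$, cut to a disc along two possibly different arc systems coming from the two interior triangulations, and passing between $\mathcal Z$ and $\mathcal Z'$ is harmless since the boundary data agree. The gap lies in how you compare the two cut systems. First, the quantity you call ``the re-glued-diagram invariant of the uncut $D$-diagram'' is not a defined object: the identities \eqref{pic:gluepoly} and \eqref{pic:closepoly} only ever relate summed evaluations of polygon (disc) diagrams to evaluations of other polygon diagrams; when $D$ is not a disc, regluing all cut sides never terminates in a polygon diagram, so this ``invariant'' can only mean the summed quantity itself, whose independence of the cut is precisely what the lemma asserts --- as phrased, the assembling step is circular. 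Second, your operative claim --- that the two embedded cut systems are connected by a finite sequence of elementary transformations, each realizable generically with respect to the fixed decorated diagram and each preserving the summed evaluation --- is exactly the hard content and is left as an expectation. The classical cut-and-paste theorem you cite concerns abstract polygon presentations (identification words), not embedded arc systems relative to a fixed embedded graph; moreover a side of your polygons crosses many dual edges and defect lines and carries several summed labels, so even a single gluing move requires an iteration of \eqref{pic:gluepoly} interleaved with \eqref{pic:closepoly} at the intermediate object-labeled points. Neither of these upgrades is routine.

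The paper sidesteps both issues with a common-refinement argument: it cuts $P$ and $P'$ all the way down to the faces of the graphs $\Gamma,\Gamma'\subset D$ obtained by intersecting $D$ with the two triangulations, perturbs vertices so that $\Gamma$ and $\Gamma'$ intersect transversally and generically, forms the superimposed graph $\Gamma''$ with four-valent vertices at the intersection points, and then evaluates the total sum over all labels on $\Gamma''$ in two orders: summing first over the labels on $\Gamma'$ reglues those cuts and returns the $\Gamma$-expression for the state sum of $M$, while summing first over the labels on $\Gamma$ returns the $\Gamma'$-expression for $M'$. This uses \eqref{pic:gluepoly} and \eqref{pic:closepoly} only along arcs that are nicely positioned by construction and needs no classification of cut systems up to moves. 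If you wish to keep your route, you must prove the decorated, embedded version of the elementary-transformation theorem (with genericity relative to the diagram) together with the iterated gluing identity for decorated sides; done honestly, that effort essentially reproduces the common-refinement argument, which is the shorter path.
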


\begin{proof}
By Proposition \ref{lem:polystatesum} the rescaled state sums for $M,M'$ are obtained by  projecting the dual graph of the boundary triangulation on $\partial M$ to $D$,  cutting $D$ along the triangulations to discs and evaluating the resulting  polygon diagrams $P,P'$. With identities \eqref{pic:gluepoly} and \eqref{pic:closepoly}  we can cut the polygon diagrams $P,P'$ into smaller pieces that are the intersections of $P,P'$ with the tetrahedra in the two  triangulations. 

More precisely, one obtains embedded graphs $\Gamma,\Gamma'$  on $D$, whose vertices, edges and faces are the intersections of $D$ with the edges, triangles and tetrahedra in the triangulations.   We denote by the same letters the associated graphs  on $P,P'$ and call  vertices or edges of $\Gamma,\Gamma'$ internal, if they are not contained in $\partial D$ and boundary otherwise.   By adjusting the triangulations, we can assume that $\Gamma,\Gamma'$ are \emph{generic}, meaning that
(i) defect vertices  and vertices labeled by  spherical fusion categories 
 are in the interior of faces of $\Gamma,\Gamma'$,
(ii) defect edges and edges labeled by spherical fusion categories intersect the edges of $\Gamma,\Gamma'$ transversally in the interior. 

With the labelings of the defects and the triangulations every face of $\Gamma$ and $\Gamma'$ becomes a polygon diagram. 
By Proposition \ref{lem:polystatesum},   \eqref{pic:gluepoly} and \eqref{pic:closepoly}, the rescaled state sums for $M,M'$  are
  obtained by multiplying the evaluations of these polygon diagrams for $\Gamma,\Gamma'$, rescaling with the dimensions of the simple objects at internal vertices and summing  over all  labelings of  internal vertices and over bases of the morphism spaces at internal edges.
 
Using again identities \eqref{pic:gluepoly} and \eqref{pic:closepoly}, we cut these polygon diagrams into smaller pieces that correspond to a common subdivision of $\Gamma$ and $\Gamma'$.  By slightly displacing the vertices of $M,M'$  one can achieve that the graphs $\Gamma,\Gamma'$ remain generic  and that  every  point $v''\in (\Gamma\cap \Gamma')\setminus \partial D$ is a transversal intersection point of an edge $e$ of $\Gamma$ and an edge $e'$  of $\Gamma'$, while all boundary edges and vertices of $\Gamma,\Gamma'$ remain fixed.  If the displacements of vertices  are sufficiently small, this does not change the evaluations of the polygon diagrams for $\Gamma,\Gamma'$ and hence preserves the state sums.

Let $\Gamma''$ be the embedded graph  on $D$
obtained by superimposing $\Gamma$ and $\Gamma'$ and inserting a new four-valent vertex at each intersection point $v''=e\cap e'$.  Then every edge of $\Gamma''$ is either on an edge of $\Gamma$ or of $\Gamma'$. Every vertex of $\Gamma''$ is either a vertex of $\Gamma$, of $\Gamma'$ or an intersection point of an edge of $\Gamma$ and an edge of $\Gamma'$.
  Summing the  product of polygon  evaluations for  $\Gamma''$ over the labelings at those  internal  edges and vertices  that lie on $\Gamma'$ 
  yields the corresponding  product of  polygon evaluations for $\Gamma$. Summing it instead over  the labelings at the internal  edges and vertices that lie on $\Gamma$ gives the corresponding  product of  polygon evaluations for $\Gamma'$. By combining these two summations one obtains that the state sums are equal. 
\end{proof}

By combining Lemma \ref{lem:subdivide} with the triangulation independence of Turaev-Viro-Barrett-Westbury state sums for spherical fusion categories, we can now show that the state sums for two generic transversal triangulations of a 3-manifold $M$ with defect data coincide, whenever their triangulations coincide at the boundary.

\begin{theorem}\label{th:fulltopinv} Let $T,T'$ be generic transversal triangulations of a 3-manifold $M$ with defect data that coincide on $\partial M$.  Then for all boundary labelings $l_{\partial M}$  and $b_{\partial M}$   the state sums for  $T$ and $T'$ are equal:
$$
\mathcal Z(M,l_{\partial M}, b_{\partial M},T)=\mathcal Z(M, l_{\partial M}, b_{\partial M}, T').
$$
\end{theorem}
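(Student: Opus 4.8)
The plan is to reduce the statement to the already-established triangulation independence results for manifolds that shell to fine neighbourhoods of defect surfaces, together with the triangulation independence of the ordinary Turaev-Viro-Barrett-Westbury state sum. First I would apply Lemma \ref{lem:finetriang} to both $T$ and $T'$. This produces generic transversal triangulations $S,S'$, obtained from $T,T'$ by finite sequences of generic transversal bistellar moves, such that $S$ and $S'$ each decompose $M$ into a union of fine neighbourhoods of the defect surfaces $\Sigma_j\subset D$ together with $3$-manifolds $R_k$ contained in the closures $\bar r_k$ of the regions. By Corollary \ref{cor:pachner}, the state sums of $S$ and $T$ agree, and likewise those of $S'$ and $T'$; since $S$ and $S'$ both restrict to $T=T'$ on $\partial M$, it suffices to prove $\mathcal Z(M,l_{\partial M},b_{\partial M},S)=\mathcal Z(M,l_{\partial M},b_{\partial M},S')$.

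Next I would use Corollary \ref{cor:gluing} to cut both $\mathcal Z(\cdot,S)$ and $\mathcal Z(\cdot,S')$ along the boundary surfaces that separate the fine neighbourhoods from the region pieces, writing each state sum as a sum over labelings of these separating surfaces of a product of the state sums of the fine neighbourhoods $S_j,S_j'$ and of the region pieces $R_k,R_k'$. The region pieces are triangulated $3$-manifolds \emph{without} defects, labelled by a single spherical fusion category, so by Corollary \ref{cor:reducedstatesum} their state sums are ordinary Turaev-Viro-Barrett-Westbury state sums; by Theorem \ref{th:pachinv} these depend only on the boundary triangulation and boundary labeling. For the fine-neighbourhood pieces $S_j$ and $S_j'$: both shell to a fine neighbourhood of $\Sigma_j$ by shellings with tetrahedra that do not intersect $\Sigma_j$ (indeed they \emph{are} such fine neighbourhoods, or differ from one by inverse shellings of region tetrahedra absorbed in the cutting step), so Lemma \ref{lem:subdivide} applies and gives $\mathcal Z(S_j,\cdot)=\mathcal Z(S_j',\cdot)$ whenever the boundary triangulations and labelings agree. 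The one point requiring care here is to arrange that the cutting surfaces for $S$ and for $S'$ carry \emph{the same} induced triangulation, so that the matching hypotheses of Lemma \ref{lem:subdivide} and Theorem \ref{th:pachinv} are met; this can be ensured by choosing, in the application of Lemma \ref{lem:finetriang} to $T$ and to $T'$, the same auxiliary barycentre points and hence the same subdivision of each defect tetrahedron, or alternatively by a further common subdivision of the cutting surfaces followed by Corollary \ref{cor:pachner} and Theorem \ref{th:pachinv} to reconcile the two.

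Once all the pieces match termwise, combining them via Corollary \ref{cor:gluing} yields $\mathcal Z(M,l_{\partial M},b_{\partial M},S)=\mathcal Z(M,l_{\partial M},b_{\partial M},S')$ and hence the theorem. I would also remark at the end that, because the cyclic evaluations of the polygon diagrams appearing in Proposition \ref{lem:polystatesum} are insensitive to the choice of PL homeomorphism $\phi:M_j\to[0,1]\times\Sigma_j$ (Lemma \ref{lem:projiso}) and to the choice of how to cut the surface to a disc (identities \eqref{pic:gluepoly} and \eqref{pic:closepoly}), no auxiliary choices creep into the final value. The main obstacle I anticipate is not conceptual but organisational: keeping the bookkeeping of the decomposition consistent between $S$ and $S'$, i.e.\ guaranteeing that the separating surfaces inherit compatible triangulations so that the three cited triangulation-independence inputs (Theorem \ref{th:pachinv} for region pieces, Lemma \ref{lem:subdivide} for fine neighbourhoods, Corollary \ref{cor:pachner} for the passage $T\rightsquigarrow S$) can be applied simultaneously and their outputs glued back together without mismatch.
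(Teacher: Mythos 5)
Your overall strategy is the same as the paper's: pass from $T,T'$ to the refined triangulations $S,S'$ of Lemma \ref{lem:finetriang} via Corollary \ref{cor:pachner}, cut along the interfaces between the fine neighbourhoods $D_i,D_i'$ of the defect surfaces and the region pieces $R_k,R_k'$ using Corollary \ref{cor:gluing}, and then invoke Lemma \ref{lem:subdivide} for the fine neighbourhoods and Theorem \ref{th:pachinv} for the defect-free pieces. But the step you flag as ``organisational'' is exactly where the real work lies, and neither of your two proposed fixes closes it. The first fix cannot work: the subdivision of Lemma \ref{lem:finetriang} is performed cell by cell on the tetrahedra, triangles and edges of $T$ (respectively $T'$) that meet the defect surface, and since $T$ and $T'$ have different defect tetrahedra, the induced triangulations of the interfaces $\partial D_i\setminus\partial M$ and $\partial D_i'\setminus\partial M$ are combinatorially different no matter how the auxiliary interior points are chosen; choosing ``the same barycentres'' is not even meaningful across two different triangulations. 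The second fix is not an argument: once $M$ has been cut, Lemma \ref{lem:subdivide} and Theorem \ref{th:pachinv} only apply \emph{after} the boundary triangulations of the corresponding pieces already coincide, and Corollary \ref{cor:pachner} governs generic transversal bistellar moves in the interior of $M$, not a procedure for retriangulating a chosen interior cutting surface coherently on both of the adjacent pieces while controlling the glued state sum.

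The paper closes this gap with a concrete mechanism you have not supplied. Since $D_i$ and $D_i'$ are both PL homeomorphic to $[0,1]\times\Sigma_i$ and their boundary triangulations agree on $\partial D\subset\partial M$, each component of $\partial D_i'\setminus\partial M$ can be transformed into the corresponding component of $\partial D_i\setminus\partial M$ by elementary shellings and inverse shellings with tetrahedra that do not intersect any defect surface, and the \emph{same} shellings are performed simultaneously on the adjacent boundary component of $R_k'$. Each such paired move inserts (or removes) a tetrahedron $t$ labeled by a spherical fusion category on one side of the interface and the oppositely oriented tetrahedron $\bar t$ on the other; after summing over the labels at the interface with the appropriate dimension factors, the product of their 6j symbols is trivial by the orthogonality relation \eqref{eq:orthogonalitysph} together with \eqref{eq:dimformula}, so the total state sum is unchanged. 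Only after this reconciliation can one assume the interface triangulations coincide and apply Lemma \ref{lem:subdivide} and Theorem \ref{th:pachinv} termwise before regluing with Corollary \ref{cor:gluing}. Without this (or an equivalent) argument, your proof has a genuine gap at its central step.
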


\begin{proof} By Lemma \ref{lem:finetriang} there are  finite sequences of  bistellar moves  between generic  transversal  tetrahedra that transform the triangulations $T,T'$ into triangulations $S,S'$ that satisfy (i)-(iii) in Lemma \ref{lem:finetriang}. By Corollary \ref{cor:pachner} one has $\mathcal Z(M,l_{\partial M}, b_{\partial M},T)=\mathcal Z(M,l_{\partial M}, b_{\partial M},S)$ and $\mathcal Z(M,l_{\partial M}, b_{\partial M},T')=\mathcal Z(M,l_{\partial M}, b_{\partial M},S')$. 
To show that $\mathcal Z(M,l_{\partial M}, b_{\partial M},S)=\mathcal Z(M,l_{\partial M}, b_{\partial M},S')$, we
denote by $\Sigma_i$ the connected components of the defect manifold $D=\amalg_{i=1}^n \Sigma_i$ and by $r_k$ the regions of $M$, which are the connected components of $M\setminus D=\amalg_{k=1}^m r_k$.

 By Lemma \ref{lem:finetriang} (ii)  gluing all tetrahedra in $S, S'$ that are contained in a region $r_k$ yields  triangulated  3-manifolds $R_k, R'_k$ that are PL homeomorphic to $\bar r_k$.  
 By Lemma \ref{lem:finetriang} (ii) gluing all tetrahedra in $S, S'$ that intersect $\Sigma_i$ yields fine neighbourhoods $D_i, D'_i$ of $\Sigma_i$.
 This implies that all boundary triangles in $D_i, D'_i$ that are internal triangles of $M$ are glued to boundary triangles in one of the 3-manifolds $R_k, R'_k$.
 More specifically, each connected component of $\partial D_i\setminus \partial M$ coincides with a unique boundary component of  some $R_k$,  up to the fact that they have  opposite orientations. 
 As $\partial D\subset \partial M$, all boundary triangles of $D_i$ and $D'_i$ that intersect $\Sigma_i$ are contained in $\partial M$ and hence coincide. 
 
As  $D_i$ and $D'_i$ are both PL homeomorphic  to  $[0,1]\times \Sigma_i$, their boundaries are PL homeomorphic. 
As their boundary triangulations agree on $\partial D\subset \partial M$, this allows one to transform each connected component of $\partial D'_i\setminus \partial M$ into the corresponding  connected component of  $\partial D_i\setminus \partial M$ by a finite number  shellings  or inverse shellings with tetrahedra that do not intersect any defect surface. Performing the same  shellings on the associated boundary component of $R'_k$   transforms it into the corresponding boundary component of  $R_k$.

For each labeling of the edges and triangles, performing  such a simultaneous  shelling or inverse shelling at $D'_i$ and $R'_k$ amounts to multiplying or dividing the state sum of $M'$ by the 6j symbol of a  tetrahedron $t$  labeled by a spherical fusion category, with the 6j symbol of the tetrahedron $\bar t$ with the same labeling, but the opposite orientation, and with  dimension factors associated to their edges.   The orthogonality relation \eqref{eq:orthogonalitysph} and identity \eqref{eq:dimformula} ensure that  this does not change the state sum. 

Hence, we can assume without loss of generality that the triangulations of all 3-manifolds $D'_i$ and $D_i$ coincide on the boundary and that the same holds for all  triangulated 3-manifolds $R_k$ and $R'_k$.  
  Under this assumption, the state sums of $D_i$ and $D'_i$  agree by Lemma \ref{lem:subdivide} for
 all labelings. 
 The state sums of $R_k$ and $R'_k$ are the usual Turaev-Viro-Barrett-Westbury state sums for a  spherical fusion category and are equal by Theorem \ref{th:pachinv}. 
 With Corollary \ref{cor:gluing} this yields 
 \begin{align*}
\mathcal Z(M, l_{\partial M}, b_{\partial M}, S)&=\sum_{l_{\partial}}\sum_{b_\partial}  \prod_{i=1}^n\mathcal Z(D_i, l_{\partial D_i}, b_{\partial D_i})\prod_{k=1}^m  \mathcal Z(R_k, l_{\partial R_k}, b_{\partial R_k})\\
&=\sum_{l_{\partial}}\sum_{b_\partial}  \prod_{i=1}^n\mathcal Z(D'_i, l_{\partial D'_i}, b_{\partial D'_i})\prod_{k=1}^m  \mathcal Z(R'_k, l_{\partial R'_k}, b_{\partial R'_k})=\mathcal Z(M, l_{\partial M}, b_{\partial M}, S'),
 \end{align*}
 where the sums are over all labelings $l_\partial$ of internal edges in $M$ that are boundary edges of  $D_i$, $D'_i$ and over dual bases of the morphism spaces for internal triangles  in $M$ that are boundary triangles of  $D_i$, $D'_i$. 
\end{proof}

Theorem \ref{th:fulltopinv}  extends the definition of the state sum for a defect manifold to triangulations that are non-generic or non-transversal in the interior of $M$.  
Together with Proposition \ref{lem:polystatesum} it shows that slightly perturbing the defect surface, defect lines or defect vertices in the interior of $M$ does not change the value of the state sum. This allows one to define the state sum of a non-generic triangulation as the state sum of a small generic deformation.
Similarly, any triangulation of $M$ with non-transversal tetrahedra in the interior can be refined 
 by  stellar subdivisions to a triangulation that is transversal. 
 One can then perturb the defect surfaces, lines and vertices  to make it generic. 
  By Theorem \ref{th:fulltopinv}  all ways of doing so yield the same  state sum.

\begin{definition} Let $M$ be a 3-manifold with defect data and $T$ a triangulation of $M$ whose boundary triangles are transversal and generic. Then for all boundary labelings $l_{\partial M}$,  $b$   the state sum of $M$ is  defined as
\begin{align}
\mathcal Z(M,l_{\partial M},b_{\partial M}, T):=\mathcal Z(M,l_{\partial M}, b_{\partial M}, T'),
\end{align}
for any generic transversal triangulation $T'$ that coincides with $T$ on $\partial M$.
\end{definition}

\begin{corollary}\label{cor:generalinvariant} Let $M$ be a 3-manifold with defect data and $T,T'$ triangulations of $M$ that coincide on $\partial M$ and whose boundary triangles are transversal and generic.
Then their state sums coincide for all boundary  labelings $l_{\partial M}$,  $b_{\partial M}$. 
In particular, if $M$ is closed, its state sum does not depend on the choice of the triangulation.
\end{corollary}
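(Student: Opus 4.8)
The plan is to derive Corollary~\ref{cor:generalinvariant} directly from Theorem~\ref{th:fulltopinv} together with the definition of $\mathcal Z(M,l_{\partial M},b_{\partial M},T)$ for triangulations that are merely transversal and generic \emph{on the boundary}. First I would fix a $3$-manifold $M$ with defect data and two triangulations $T,T'$ that agree on $\partial M$ and whose boundary triangles are transversal and generic. By the definition of the state sum for such triangulations, there are generic transversal triangulations $S$ and $S'$ of $M$ that coincide with $T$ and $T'$ respectively on $\partial M$, and $\mathcal Z(M,l_{\partial M},b_{\partial M},T)=\mathcal Z(M,l_{\partial M},b_{\partial M},S)$, $\mathcal Z(M,l_{\partial M},b_{\partial M},T')=\mathcal Z(M,l_{\partial M},b_{\partial M},S')$. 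Since $T$ and $T'$ agree on $\partial M$, the triangulations $S$ and $S'$ also agree on $\partial M$. The main point is then simply an application of Theorem~\ref{th:fulltopinv} to $S$ and $S'$, which yields $\mathcal Z(M,l_{\partial M},b_{\partial M},S)=\mathcal Z(M,l_{\partial M},b_{\partial M},S')$, and hence $\mathcal Z(M,l_{\partial M},b_{\partial M},T)=\mathcal Z(M,l_{\partial M},b_{\partial M},T')$.

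One subtlety I would want to address carefully is that the definition of $\mathcal Z(M,l_{\partial M},b_{\partial M},T)$ for a boundary-generic-transversal $T$ must be well posed in the first place, i.e.\ independent of the choice of the generic transversal refinement $S$. This is exactly where Theorem~\ref{th:fulltopinv} enters again: if $S_1,S_2$ are two generic transversal triangulations agreeing with $T$ on $\partial M$, then they agree with each other on $\partial M$, so their state sums are equal. I would make this observation explicitly, since the corollary statement implicitly relies on it, and it is the only place where anything nontrivial happens --- the rest is bookkeeping about which triangulations agree on $\partial M$.

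For the final sentence, I would specialise to the closed case: if $M$ is closed, then $\partial M=\emptyset$, so any two triangulations of $M$ trivially agree on $\partial M$, and moreover every triangulation is vacuously transversal and generic on its (empty) boundary. Hence the preceding paragraph applies to \emph{any} pair of triangulations of $M$, giving equality of state sums. Since $\Delta_{\partial M}=\emptyset$ there are no boundary labelings to quantify over, so $\mathcal Z(M)$ is a well-defined complex number depending only on $M$ with its defect data. I expect no real obstacle here; the only thing to be attentive to is stating clearly that ``transversal and generic on the boundary'' is automatically satisfied when the boundary is empty, so that the hypotheses of the corollary impose nothing in the closed case.

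In summary, the proof is: reduce to generic transversal triangulations via the definition, invoke Theorem~\ref{th:fulltopinv} once for the well-definedness of the extended state sum and once more for the comparison of $T$ and $T'$, and then read off the closed case as the special case $\partial M=\emptyset$. I would keep the argument to a few lines, as essentially all the work has already been done in Theorem~\ref{th:fulltopinv} and in the sentences immediately preceding the corollary in the paper.
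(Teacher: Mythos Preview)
Your proposal is correct and matches the paper's approach exactly: the corollary is stated in the paper without proof, as an immediate consequence of the preceding definition (which extends the state sum via any generic transversal refinement agreeing on the boundary) together with Theorem~\ref{th:fulltopinv}. Your explicit attention to the well-definedness of the extended state sum and the specialisation to $\partial M=\emptyset$ simply spells out what the paper leaves implicit.
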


In principle the dependence  of the state sum on boundary triangulations  and their labeling could be removed in the same way as  for  Turaev-Viro-Barrett-Westbury state sums without defects.  To obtain a topological quantum field theory from the latter one considers 
the vector spaces $V_{\partial M}(t,l)=\otimes_{f\in \Delta_{t}} \Hom(f,l)$ for a given triangulation $t$ of $\partial M$ and a labeling $l$ with simple objects. The state sum of a triangulated cyclinder $\partial M\times[0,1]$  then defines a linear map $\mathcal Z(\partial M\times[0,1], l): V_{\partial M}(t_0,l_0)\to V_{\partial M}(t_1,l_1)$, where $t_i$ is the boundary triangulation of $\partial M\times \{i\}$ and $l_i$ its labeling. By taking the projective limit of the system of all such state sums, one obtains the vector space that the associated TQFT assigns to $\partial M$, see  \cite[Sec.~13.3.2]{TVbook}. 

It is clear that an analogous procedure could be applied to state sum models with defects and seems plausible that this would result in a defect TQFT in the sense of \cite{CMS,CRS} by Carqueville et al.  For the spherical fusion categories $\mathrm{Vec}_G$ from Example \ref{ex:vectgomega} and oriented surfaces of genus $g\geq 1$  cylinders with  defects are considered in Example \ref{ex:surface}.

\section{Examples of state sums}
\label{sec:examples}

In this section, we  compute the state sums for some examples of defect 3-manifolds. We first consider state sums with defect surfaces, but  without defect lines or defect vertices. We  then show how to recover ribbon invariants from defect lines and vertices on trivial defect surfaces.

\subsection{State sums with defect surfaces}

In the following, we assume that all triangulations are generic and transversal at the boundary. 
Our first example is a defect sphere without defect lines or vertices embedded inside a 3-ball.  We suppose that the embedding is \emph{unknotted} in the sense of Zeeman \cite{Z}. This means that there is a PL homeomorphism from the 3-ball to itself that  sends the defect sphere to the boundary of a tetrahedron, see also Footnote 1  \cite{Z}.

\begin{example} \label{ex:tetrahedronsphere} Let $M$ be a 3-ball with an unknotted defect sphere  in its interior. Suppose the boundary of $M$ is labeled by a spherical fusion category $\mac$ and the defect sphere by 
 a $(\mac, \mad)$-bimodule category  $\mam$ with a bimodule trace.  Then for all boundary labelings $l_{\partial M}$, $b_{\partial M}$ the state sum of $M$ is
 \begin{align}\label{eq:bublle}
\mathcal Z(M, l_{\partial M},b_{\partial M})=\frac{\dim\mam}{\dim \mad}\cdot \mathcal Z(M', l_{\partial M}, b_{\partial M}), 
\end{align}
where $M'$ is a triangulated 3-ball labeled by $\mac$ with the same  boundary triangulation, but without defects. 
\end{example}

\begin{proof} As the defect sphere is unknotted, there is a PL homeomorphism from $M$ to itself that sends $D$ to the boundary of a tetrahedron.
By Theorem \ref{th:fulltopinv}, Corollary \ref{cor:generalinvariant} and because 
 different boundary triangulations  are related by elementary shellings and inverse shellings, it is sufficient to prove the claim for the triangulation
 \begin{center}
\def\svgwidth{.52\columnwidth}
\begingroup%
  \makeatletter%
  \providecommand\color[2][]{%
    \errmessage{(Inkscape) Color is used for the text in Inkscape, but the package 'color.sty' is not loaded}%
    \renewcommand\color[2][]{}%
  }%
  \providecommand\transparent[1]{%
    \errmessage{(Inkscape) Transparency is used (non-zero) for the text in Inkscape, but the package 'transparent.sty' is not loaded}%
    \renewcommand\transparent[1]{}%
  }%
  \providecommand\rotatebox[2]{#2}%
  \newcommand*\fsize{\dimexpr\f@size pt\relax}%
  \newcommand*\lineheight[1]{\fontsize{\fsize}{#1\fsize}\selectfont}%
  \ifx\svgwidth\undefined%
    \setlength{\unitlength}{841.88976378bp}%
    \ifx\svgscale\undefined%
      \relax%
    \else%
      \setlength{\unitlength}{\unitlength * \real{\svgscale}}%
    \fi%
  \else%
    \setlength{\unitlength}{\svgwidth}%
  \fi%
  \global\let\svgwidth\undefined%
  \global\let\svgscale\undefined%
  \makeatother%
  \begin{picture}(1,0.70707071)%
    \lineheight{1}%
    \setlength\tabcolsep{0pt}%
    \put(0,0){\includegraphics[width=\unitlength,page=1]{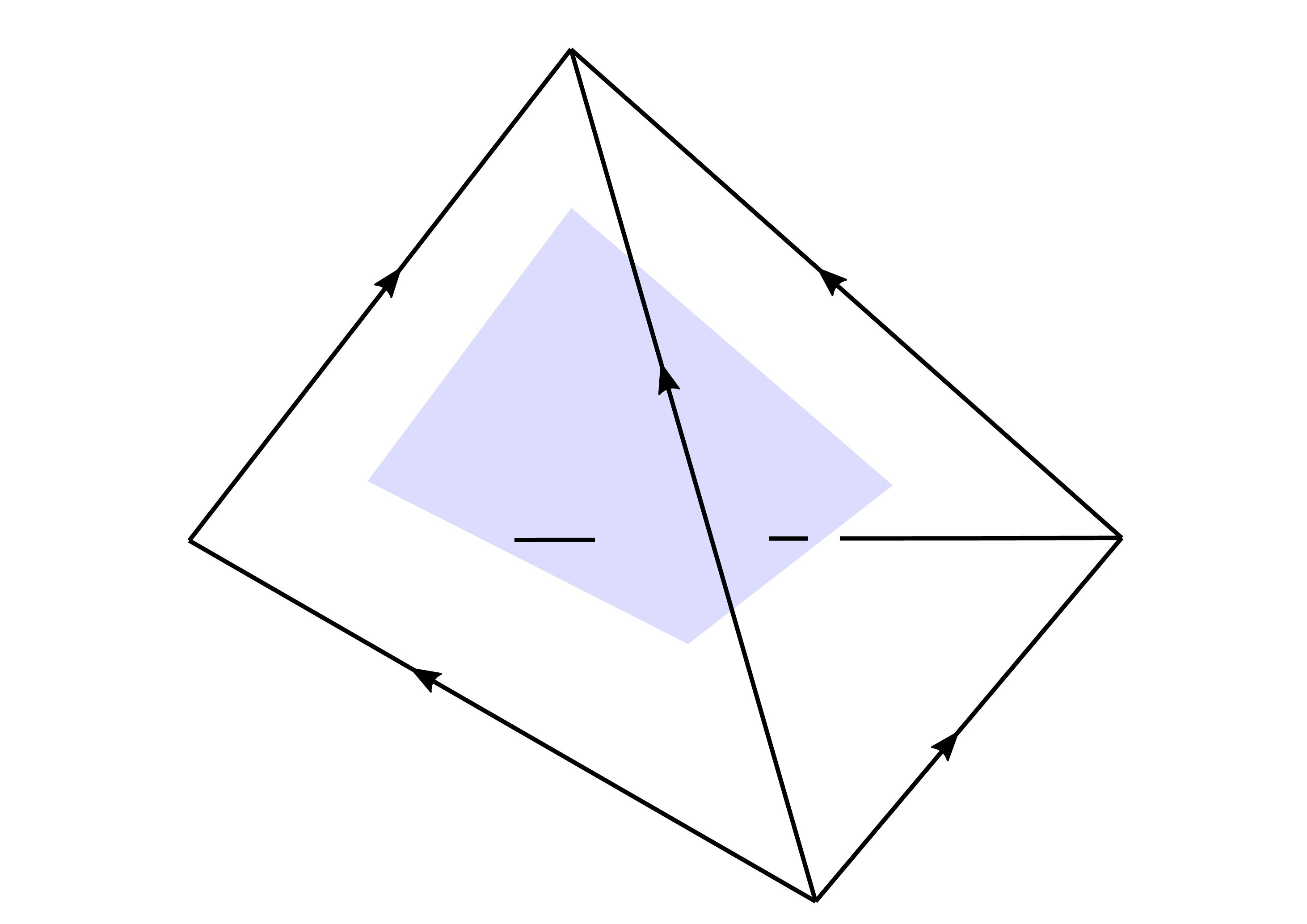}}%
    \put(0.26367634,0.50648594){\color[rgb]{0,0,0}\makebox(0,0)[lt]{\lineheight{1.25}\smash{\begin{tabular}[t]{l}$j$\end{tabular}}}}%
    \put(0.59486256,0.23648755){\color[rgb]{0,0,0}\makebox(0,0)[lt]{\lineheight{1.25}\smash{\begin{tabular}[t]{l}{\color{blue}$\mam$}\end{tabular}}}}%
    \put(0,0){\includegraphics[width=\unitlength,page=2]{bubblenolabel1.pdf}}%
    \put(0.29108658,0.15472135){\color[rgb]{0,0,0}\makebox(0,0)[lt]{\lineheight{1.25}\smash{\begin{tabular}[t]{l}$i$\end{tabular}}}}%
    \put(0.76638249,0.14320713){\color[rgb]{0,0,0}\makebox(0,0)[lt]{\lineheight{1.25}\smash{\begin{tabular}[t]{l}$k$\end{tabular}}}}%
    \put(0.64125719,0.50877013){\color[rgb]{0,0,0}\makebox(0,0)[lt]{\lineheight{1.25}\smash{\begin{tabular}[t]{l}$l$\end{tabular}}}}%
    \put(0.5232638,0.42722071){\color[rgb]{0,0,0}\makebox(0,0)[lt]{\lineheight{1.25}\smash{\begin{tabular}[t]{l}$h$\end{tabular}}}}%
    \put(0.26005202,0.25307339){\color[rgb]{0,0,0}\makebox(0,0)[lt]{\lineheight{1.25}\smash{\begin{tabular}[t]{l}$g$\end{tabular}}}}%
    \put(0.39230115,0.56379015){\color[rgb]{0,0,0}\makebox(0,0)[lt]{\lineheight{1.25}\smash{\begin{tabular}[t]{l}{\color{blue}$q$}\end{tabular}}}}%
    \put(0.48652017,0.17697801){\color[rgb]{0,0,0}\makebox(0,0)[lt]{\lineheight{1.25}\smash{\begin{tabular}[t]{l}{\color{blue}$m$}\end{tabular}}}}%
    \put(0.57197427,0.38526146){\color[rgb]{0,0,0}\makebox(0,0)[lt]{\lineheight{1.25}\smash{\begin{tabular}[t]{l}{\color{blue}$n$}\end{tabular}}}}%
    \put(0.34652456,0.39670563){\color[rgb]{0,0,0}\makebox(0,0)[lt]{\lineheight{1.25}\smash{\begin{tabular}[t]{l}{\color{blue}$p$}\end{tabular}}}}%
  \end{picture}%
\endgroup%
 
\end{center}

We evaluate the state sum by gluing the polygon diagrams for the four tetrahedra in the triangulation. Gluing the bottom tetrahedron to the three tetrahedra above   with identity \eqref{pic:gluepoly} yields for the rescaled state sum 
\begin{align*}
&\begin{tikzpicture}[scale=.24]
\node at (-28,8) {$\mathcal Z'(M, l_{\partial M},b_{\partial M})=\frac 1 {\dim \mad}
\sum_{\substack{ {m,n,p,q}\\{\alpha,\beta,\gamma}}} \dim m\dim n\dim p\dim q$};
\draw[line width=1.5pt, color=blue, dotted] (-6,8) node[anchor=east]{$p$}--(6,8) node[anchor=west]{$n$};
\draw[line width=1.5pt, color=blue, dotted] (-6,8)--(0,0);
\draw[line width=1.5pt, color=blue, dotted] (6,8)--(0,0);
\draw[line width=1.5pt, color=blue] (-6,8)--(0,16) node[anchor=south]{$m$};
\draw[line width=1.5pt, color=blue] (6,8)--(0,16);
\draw[line width=1.5pt, color=blue] (-12,0)node[anchor=east]{$m$}--(-6,8);
\draw[line width=1.5pt, color=blue] (-12,0)--(0,0) node[anchor=north]{$q$};
\draw[line width=1.5pt, color=blue] (12,0) node[anchor=west]{$m$}--(6,8);
\draw[line width=1.5pt, color=blue] (12,0)--(0,0);
\draw[line width=.5pt] (0,5)--(0,10) node[pos=.3, anchor=east]{$g$} node[sloped, pos=.3, allow upside down]{\arrowOut};
\draw[line width=.5pt] (-3,12)--(0,10)node[pos=.5, anchor=north]{$i$} node[sloped, pos=.5, allow upside down]{\arrowOut};
\draw[line width=.5pt] (0,10)--(3,12) node[pos=.5, anchor=north]{$k$} node[sloped, pos=.5, allow upside down]{\arrowOut};
\draw[line width=.5pt] (-6,3)--(0,5) node[pos=.7, anchor=south]{$j$} node[sloped, pos=.7, allow upside down]{\arrowOut};
\draw[line width=.5pt] (-6,3)--(-9,4) node[pos=.5, anchor=south]{$i$} node[sloped, pos=.5, allow upside down]{\arrowOut};
\draw[line width=.5pt] (-6,0)--(-6,3) node[pos=.5, anchor=east]{$h$} node[sloped, pos=.5, allow upside down]{\arrowOut};
\draw[line width=.5pt] (0,5)--(6,3) node[pos=.3, anchor=south]{$l$} node[sloped, pos=.3, allow upside down]{\arrowOut};
\draw[line width=.5pt] (9,4)--(6,3) node[pos=.5, anchor=south]{$k$} node[sloped, pos=.5, allow upside down]{\arrowOut};
\draw[line width=.5pt] (6,3)--(6,0) node[pos=.5, anchor=east]{$h$} node[sloped, pos=.5, allow upside down]{\arrowOut};
\draw[fill=black] (0,10) circle (.2) node[anchor=south]{$\lambda$};
\draw[fill=black] (0,5) circle (.2) node[anchor=north]{$\mu$};
\draw[fill=black] (-6,3) circle (.2) node[anchor=south]{$\nu$};
\draw[fill=black] (6,3) circle (.2) node[anchor=south]{$\rho$};
\draw[fill=blue, color=blue] (-3,12) circle (.2) node[anchor=east]{$\alpha\;$};
\draw[fill=blue, color=blue] (3,12) circle (.2) node[anchor=west]{$\;\beta$};
\draw[fill=blue, color=blue] (-9,4) circle (.2) node[anchor=east]{$\alpha\;$};
\draw[fill=blue, color=blue] (9,4) circle (.2) node[anchor=west]{$\;\beta$};
\draw[fill=blue, color=blue] (-6,0) circle (.2) node[anchor=north]{$\;\gamma$};
\draw[fill=blue, color=blue] (6,0) circle (.2) node[anchor=north]{$\;\gamma$};
\end{tikzpicture}
\end{align*}
Applying identity \eqref{pic:closepoly} to eliminate the  summation over $\alpha$, $p$ yields
\begin{align*}
&\begin{tikzpicture}[scale=.24]
\node at (-30,0) {$\mathcal Z'(M, l_{\partial M},b)=\frac 1 {\dim \mad}\sum_{\substack{ {m,n,q}\\{\beta,\gamma}}} \dim m\dim n\dim q$};
\draw[line width=1.5pt, color=blue] (-10,0) node[anchor=east]{$m$}--(0,10) node[anchor=south]{$n$};
\draw[line width=1.5pt, color=blue] (10,0) node[anchor=west]{$m$}--(0,10);
\draw[line width=1.5pt, color=blue] (-10,0)--(0,-10) node[anchor=north]{$q$};
\draw[line width=1.5pt, color=blue] (10,0)--(0,-10);
\draw[line width=.5pt](5,5)--(5,0) node[pos=.5, anchor=west]{$k$} node[sloped, pos=.5, allow upside down]{\arrowOut};
\draw[line width=.5pt](5,0)--(5,-5) node[pos=.5, anchor=west]{$h$} node[sloped, pos=.5, allow upside down]{\arrowOut};
\draw[line width=.5pt](0,0)--(5,0)node[pos=.5, anchor=south]{$l$} node[sloped, pos=.5, allow upside down]{\arrowOut};
\draw[line width=.5pt](-5,-2)--(0,0) node[pos=.5, anchor=north]{$j$} node[sloped, pos=.5, allow upside down]{\arrowOut};
\draw[line width=.5pt](0,0)--(-5,2) node[pos=.5, anchor=south]{$g$} node[sloped, pos=.5, allow upside down]{\arrowOut};
\draw[line width=.5pt](-5,-2)--(-5,2) node[pos=.7, anchor=west]{$i$} node[sloped, pos=.7, allow upside down]{\arrowOut};
\draw[line width=.5pt](-5,-5)--(-5,-2) node[pos=.5, anchor=west]{$h$} node[sloped, pos=.5, allow upside down]{\arrowOut};
\draw[line width=.5pt](-5,2)--(-5,5) node[pos=.5, anchor=west]{$k$} node[sloped, pos=.5, allow upside down]{\arrowOut};
\draw[fill=blue, color=blue] (-5,5) circle (.2) node[anchor=south east]{$\;\beta$};
\draw[fill=blue, color=blue] (5,5) circle (.2) node[anchor=south west]{$\;\beta$};
\draw[fill=blue, color=blue] (-5,-5) circle (.2) node[anchor=north east]{$\;\gamma$};
\draw[fill=blue, color=blue] (5,-5) circle (.2) node[anchor=north west]{$\;\gamma$};
\draw[fill=black] (-5,2) circle (.2) node[anchor=east]{$\;\lambda$};
\draw[fill=black] (-5,-2) circle (.2) node[anchor=south east]{$\;\nu$};
\draw[fill=black] (0,0) circle (.2) node[anchor=south]{$\;\mu$};
\draw[fill=black] (5,0) circle (.2) node[anchor=west]{$\;\rho$};
\end{tikzpicture}
\end{align*}
and applying identity \eqref{pic:closepoly} to eliminate the summation over $\beta,n$ gives 
\begin{align*}
&\begin{tikzpicture}[scale=.24]
\node at (-28,0) {$\mathcal Z'(M, l_{\partial M},b_{\partial M})=\frac 1 {\dim \mad}\sum_{m,q,\gamma} \dim m\dim q$};
\draw[line width=1.5pt, color=blue] (10,0) node[anchor=west]{$m$}.. controls (2,-10) and (-2,-10)..(-10,0) node[anchor=east]{$q$};
\draw[line width=1.5pt, color=blue] (10,0) .. controls (2,10) and (-2,10)..(-10,0) ;
\draw[line width=.5pt] (0,7.5)--(0,4) node[pos=.5, anchor=west]{$h$} node[sloped, pos=.5, allow upside down]{\arrowOut};
\draw[line width=.5pt] (0,-4)--(0,-7.5) node[pos=.5, anchor=west]{$h$} node[sloped, pos=.5, allow upside down]{\arrowOut};
\draw[line width=.5pt] (0,4)--(3,-1) node[pos=.5, anchor=south west]{$i$} node[sloped, pos=.5, allow upside down]{\arrowOut};
\draw[line width=.5pt] (0,4)--(-3,1) node[pos=.5, anchor=south east]{$j$} node[sloped, pos=.5, allow upside down]{\arrowOut};
\draw[line width=.5pt] (-3,1)--(3,-1) node[pos=.5, anchor=south]{$g$} node[sloped, pos=.5, allow upside down]{\arrowOut};
\draw[line width=.5pt] (3,-1)--(0,-4) node[pos=.5, anchor=north west]{$k$} node[sloped, pos=.5, allow upside down]{\arrowOut};
\draw[line width=.5pt] (-3,1)--(0,-4) node[pos=.5, anchor=north east]{$l$} node[sloped, pos=.5, allow upside down]{\arrowOut};
\draw[fill=blue, color=blue] (0,7.5) circle (.2) node[anchor=south]{$\gamma$};
\draw[fill=blue, color=blue] (0,-7.5) circle (.2) node[anchor=north]{$\gamma$};
\draw[fill=black] (0,4) circle (.2) node[anchor=west]{$\;\nu$};
\draw[fill=black] (0,-4) circle (.2) node[anchor=west]{$\;\rho$};
\draw[fill=black] (-3,1) circle (.2) node[anchor=east]{$\mu\;$};
\draw[fill=black] (3,-1) circle (.2) node[anchor=west]{$\;\lambda$};
\end{tikzpicture}
\end{align*}
Applying identity \eqref{pic:2gon} to this diagram yields the result
\begin{align*}
\quad\begin{tikzpicture}[scale=.23]
\node at (-15,0) {$\mathcal Z'(M, l_{\partial M},b_{\partial M})=\frac{\dim \mam}{\dim\mad}\quad$};
\draw[line width=.5pt] (0,7.5)--(0,4) node[pos=.5, anchor=west]{$h$} node[sloped, pos=.5, allow upside down]{\arrowOut};
\draw[line width=.5pt] (0,-4)--(0,-7.5) node[pos=.5, anchor=west]{$h$} node[sloped, pos=.5, allow upside down]{\arrowOut};
\draw[line width=.5pt] (0,4)--(3,-1) node[pos=.5, anchor=south west]{$i$} node[sloped, pos=.5, allow upside down]{\arrowOut};
\draw[line width=.5pt] (0,4)--(-3,1) node[pos=.5, anchor=south east]{$j$} node[sloped, pos=.5, allow upside down]{\arrowOut};
\draw[line width=.5pt] (-3,1)--(3,-1) node[pos=.5, anchor=south]{$g$} node[sloped, pos=.5, allow upside down]{\arrowOut};
\draw[line width=.5pt] (3,-1)--(0,-4) node[pos=.5, anchor=north west]{$k$} node[sloped, pos=.5, allow upside down]{\arrowOut};
\draw[line width=.5pt] (-3,1)--(0,-4) node[pos=.5, anchor=north east]{$l$} node[sloped, pos=.5, allow upside down]{\arrowOut};
\draw[line width=.5pt] (0,7.5).. controls (0,10.5) and (-6,10.5)..(-6,7.5);
\draw[line width=.5pt] (-6,-7.5)--(-6,7.5) node[sloped, pos=.5, allow upside down]{\arrowOut};
\draw[line width=.5pt] (0,-7.5).. controls (0,-10.5) and (-6,-10.5)..(-6,-7.5);
\draw[fill=black] (0,4) circle (.2) node[anchor=west]{$\;\nu$};
\draw[fill=black] (0,-4) circle (.2) node[anchor=west]{$\;\rho$};
\draw[fill=black] (-3,1) circle (.2) node[anchor=east]{$\mu\;$};
\draw[fill=black] (3,-1) circle (.2) node[anchor=west]{$\;\lambda$};
\node at (14,0) {$=\frac{\dim \mam}{\dim \mad}\cdot \mathcal Z'(M', l_{\partial M},b_{\partial M}).$};
\end{tikzpicture}\\[-8ex]
\end{align*}
\end{proof}

Note that  Example \ref{ex:tetrahedronsphere}  yields the  usual Turaev-Viro-Barrett-Westbury invariant for a 3-ball without defects,
if one chooses for $\mam$ a spherical fusion category $\mac=\mad$ as a bimodule category over itself. 

State sums with defect surfaces of genus $g\geq 1$ are more difficult to compute.  We therefore restrict attention to the spherical fusion category $\mac=\mathrm{Vec}_G$ for a finite group $G$ from Example \ref{ex:vectgomega},  equipped  with  a trivial 3-cocycle and the standard spherical structure. 
The associated Turaev-Viro-Barrett-Westbury state sums are a special case of  the Dijkgraaf-Witten models from \cite{DW}, see also  Altschuler and Coste \cite{AC}.  

By Example \ref{ex:vectgomega} simple objects in $\mathrm{Vec}_G$ correspond to group elements $g\in G$. For simplicity, we denote them by $g$ instead of $\delta^g$ in the following. One has $\dim(g)=1$ for all $g\in G$ and $\dim(\mac)=|G|$ as well as
\begin{align}\label{homvecg0}
 \Hom_\mac(k, i\oo j)\cong \Hom_\mac(i\oo j, k)\cong \delta_k(ij)\C\qquad\qquad\forall i,j,k\in G.
 \end{align} 
By Examples \ref{ex:modvectgomega} and \ref{ex:tracevect}, every finite transitive $G\times G'^{op}$-set $X$ defines an  indecomposable  semisimple $(\mathrm{Vec}_G, \mathrm{Vec}_{G'})$-bimodule category $\mam$ with a bimodule trace, whose simple objects are in bijection  with elements of $X$. In the following, we only consider the case, where the cocycle defining $\psi \in H^2(L,\C^{\times})$ from Example  \ref{ex:modvectgomega} is trivial.  One  has $\dim(x)=1$ for all $x\in X$ and $\dim(\mam)=|X|$ as well as  
\begin{align}\label{eq:homvecg}
&\Hom_\mam(x, g\rhd y)\cong \Hom_\mam (g\rhd y, x) \cong \delta_x(g\rhd y) \C\\  
&\Hom_\mam(x,  y\lhd g')\cong \Hom_\mam (y\lhd g', x) \cong \delta_x( y\lhd g') \C \qquad \forall x,y\in X,\, g\in G, \, g'\in G'.\nonumber
\end{align} 
Hence, any 6j symbol vanishes, unless for each triangle with edges labeled by  $x, y\in X$ and by $g\in G$ or $g'\in G'$, the objects $x$ and $y$ are related by the action of $g$ or $g'$. Similarly,  for any triangle with labeled  elements of $G$ or $G'$, the oriented product of the group elements must be trivial by \eqref{homvecg0}. 
Morphism labels for triangles are just numbers in $\C$, and all sums over bases of morphism spaces become trivial.

\begin{example}  \label{ex:surface}Let  $\mac=\mathrm{Vec}_G$ and $\mad=\mathrm{Vec}_{G'}$  for finite groups $G,G'$ and $\mam$ the indecomposable $(\mac,\mad)$-bimodule category with bimodule trace defined by a finite transitive  $G\times G'^{op}$-set $X$. 

Let $\Sigma$ be a closed oriented triangulated surface of genus $g\geq 1$ and $M=[0,1]\times\Sigma$ the fine neighbourhood of $\Sigma$
obtained by replacing each triangle in $\Sigma$ by a prism as in Figure \ref{eq:prism}, with a defect surface $\{\tfrac 1 2\}\times\Sigma$ labeled by  $\mam$ and boundary components $\{1\}\times\Sigma$ and $\{0\}\times\Sigma$ labeled by $\mac$ and $\mad$.  Suppose that all boundary triangles with non-trivial morphism spaces are labeled with identity morphisms. 

Then the rescaled state sum of $M$ is 
\begin{align*}
 &\mathcal Z'(M, l_{\partial M}, b_{\partial M})=\begin{cases}
|X^{\rho(\pi_1(\Sigma))}|   & \text{if the oriented product of the group elements in each triangle}\\[-.5ex]
&  \text{on $\{0\}\times\Sigma$ and $\{1\}\times\Sigma$ is trivial,}\\[+1ex]
0 & \text{else},
\end{cases}
\end{align*}
where  $\rho: \pi_1(\Sigma)\to G\times G'$ is the group homomorphism defined by the labels on $\{0\}\times\Sigma$ and $\{1\}\times\Sigma$ and
$X^{\rho(\pi_1(\Sigma))}$ the fixed point set for the induced action of $\pi_1(\Sigma)$. 

In particular:
\begin{itemize}
\item {\bf trivial defect surface:} If $G=G'=X$ with the standard $G\times G^{op}$-set structure, then 
$\mathcal Z'(M, l_{\partial M},b_{\partial M})=0$ unless the oriented product of the group elements in each triangle on $\{0\}\times\Sigma$ and $\{1\}\times\Sigma$ is trivial and the associated group homomorphisms $\rho_0,\rho_1: \pi_1(\Sigma)\to G$ are conjugated. Then
$\mathcal Z'(M, l_{\partial M},b_{\partial M})=|\mathrm{Stab}(\rho_0)|$, where $\mathrm{Stab}(\rho_0)$ is the stabiliser of $\rho_0$ for the conjugation action of $G$ on $\Hom_{\mathrm{Grp}}(\pi_1(\Sigma), G)$.

\item {\bf trivial $G\times G'^{op}$-set:} For a trivial $G\times G'$-set $X=\{\bullet\}$, one has  
$\mathcal Z'(M, l_{\partial M}, b_{\partial M})=1$, whenever
the oriented product of the group elements in each triangle on $\{0\}\times\Sigma$ and $\{1\}\times\Sigma$ is trivial.

\item {\bf normal subgroup:} If $X=G\times G'^{op}/N$ for a normal subgroup $N\subset G\times G'^{op}$ one has $\mathrm{Stab}(m)=\mathrm{Stab}(m')=N$ for all $m,m'\in X$. This implies  
$\mathcal Z'(M, l_{\partial M},b_{\partial M})=|X|$, whenever 
the oriented product of the group elements in each triangle on $\{0\}\times\Sigma$ and $\{1\}\times\Sigma$ is trivial and the labelings of $\{1\}\times\Sigma$  and $\{0\}\times\Sigma$  define a group homomorphism $\rho: \pi_1(\Sigma)\to N$. 
\end{itemize}
\end{example}

\begin{proof}
  Orient the top and bottom triangle of each prism as the associated triangle of $\Sigma$,  the vertical edges of each prism according to the orientation of $\Sigma$. Subdivide each prism into three tetrahedra as  in Figure \ref{eq:prism}. Label its top and bottom edges  with 
 elements of $G$ and $G'$   and the vertical  edges with elements of $X$.
  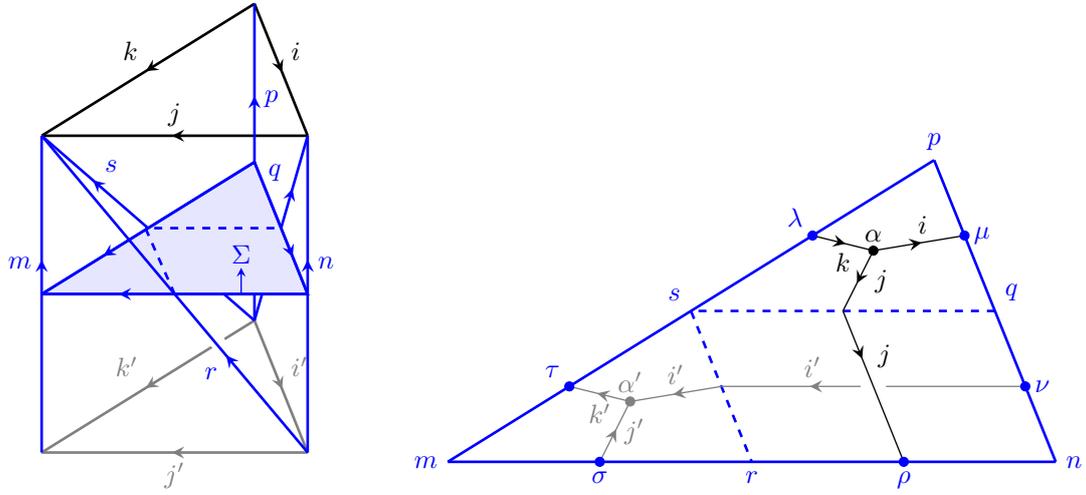
\begin{figure}
\begin{center}
\begin{tikzpicture}[scale=.35]
\draw[color=gray, line width=1pt] (5,0)--(-5,0) node[sloped, pos=0.5, allow upside down]{\arrowIn} node[pos=.5, anchor=north]{$j'$};
\draw[color=gray, line width=1pt] (3,5)--(5,0) node[sloped, pos=0.5, allow upside down]{\arrowIn} node[pos=.5, anchor=south west]{$i'$};
\draw[color=gray, line width=1pt] (3,5)--(-5,0) node[sloped, pos=0.5, allow upside down]{\arrowIn} node[pos=.5, anchor=south east]{$k'$};
\draw[color=black, line width=1pt] (5,12)--(-5,12) node[sloped, pos=0.5, allow upside down]{\arrowIn} node[pos=.5, anchor=south]{$j$};
\draw[color=black, line width=1pt] (3,17)--(5,12) node[sloped, pos=0.5, allow upside down]{\arrowIn} node[pos=.5, anchor=south west]{$i$};
\draw[color=black, line width=1pt] (3,17)--(-5,12) node[sloped, pos=0.5, allow upside down]{\arrowIn} node[pos=.5, anchor=south east]{$k$};
\draw[color=blue, line width=1pt] (-5,0)--(-5,12) node[sloped, pos=0.6, allow upside down]{\arrowIn} node[pos=.6, anchor=east]{$m$};
\draw[color=blue, line width=1pt] (5,0)--(5,12) node[sloped, pos=0.6, allow upside down]{\arrowIn} node[pos=.6, anchor=west]{$n$};
\draw[color=blue, line width=1pt] (3,5)--(3,17) node[sloped, pos=0.7, allow upside down]{\arrowIn} node[pos=.7, anchor=west]{$p$};
\draw[line width=1pt, color=blue] (3,5)--(-5,12) node[sloped, pos=0.75, allow upside down]{\arrowIn} node[pos=.75, anchor=south west]{$s$};
\draw[line width=1pt, color=blue] (3,5)--(5,12) node[sloped, pos=0.7, allow upside down]{\arrowIn} node[pos=.7, anchor=south east]{$q$};
\draw[draw=none, fill=white] (1.6,4.2) circle (.3);
\draw[color=blue, line width=1pt, fill=white] (-5,6)--(5,6)--(3,11)--(-5,6);
\draw[draw=none, fill=blue, fill opacity=.1] (-5,6)--(5,6)--(3,11)--(-5,6);
\node at (2.5,7.5) [color=blue]{$\Sigma$};
\draw[line width=.5pt, color=blue, -stealth] (2.5,6)--(2.5,7);
\draw[line width=1pt, color=blue] (5,6)--(-5,6) node[sloped, pos=0.7, allow upside down]{\arrowIn};
\draw[line width=1pt, color=blue] (3,11)--(-5,6) node[sloped, pos=0.7, allow upside down]{\arrowIn};
\draw[line width=1pt, color=blue] (3,11)--(5,6) node[sloped, pos=0.7, allow upside down]{\arrowIn};
\draw[line width=1pt, color=blue] (5,0)--(-5,12) node[sloped, pos=0.3, allow upside down]{\arrowIn} node[pos=.3, anchor=north east]{$r$};
\draw[line width=1pt, dashed, color=blue] (0,6)--(-1.1,8.5);
\draw[line width=1pt, dashed, color=blue] (3.75,8.5)--(-1.1,8.5);
\end{tikzpicture}
\qquad
\begin{tikzpicture}[scale=.4]
\draw[color=blue, line width=1pt] (10,0) node[anchor=west]{$n$}--(-10,0) node[anchor=east]{$m$}; 
\draw[color=blue, line width=1pt] (6,10) node[anchor=south]{$p$}--(10,0); 
\draw[color=blue, line width=1pt] (6,10)--(-10,0); 
\draw[color=blue, line width=1pt, dashed] (8,5) node[anchor=south west]{$q$}--(-2,5) node[anchor=south east] {$s$} --(0,0) node[anchor=north]{$r$};
\draw[line width=.5pt, color=gray] (9,2.5)--(-1,2.5) node[sloped, pos=0.7, allow upside down]{\arrowIn} node[pos=.7, anchor=south]{$i'$};
\draw[color=white, fill=white] (4,2.5) circle(.4);
\draw[line width=.5pt, color=black] (3,5)--(5,0) node[sloped, pos=0.3, allow upside down]{\arrowIn} node[pos=.3, anchor=west]{$\;j$};
\draw[line width=.5pt,color=black] (4,7)--(3,5) node[sloped, pos=0.5, allow upside down]{\arrowIn} node[pos=.5, anchor=west]{$\;j$};
\draw[line width=.5pt, color=black] (4,7)--(7,7.5) node[sloped, pos=0.5, allow upside down]{\arrowIn} node[pos=.5, anchor=south]{$\;i$};
\draw[line width=.5pt, color=black] (2,7.5)--(4,7) node[sloped, pos=0.5, allow upside down]{\arrowIn} node[pos=.5, anchor=north]{$k$};
\draw[color=black, fill=black] (4,7) circle (.15) node[anchor=south]{$\alpha$};
\draw[line width=.5pt, color=gray] (-1,2.5)--(-4,2)  node[sloped, pos=0.5, allow upside down]{\arrowIn} node[pos=.5, anchor=south]{$i'$};
\draw[line width=.5pt, color=gray] (-5,0)--(-4,2)  node[sloped, pos=0.5, allow upside down]{\arrowIn} node[pos=.5, anchor=west]{$j'$} ;
\draw[line width=.5pt, color=gray] (-4,2)--(-6,2.5)  node[sloped, pos=0.5, allow upside down]{\arrowIn} node[pos=.5, anchor=north]{$k'$};
\draw[color=gray, fill=gray] (-4,2) circle (.15) node[anchor=south]{$\alpha'$};
\draw[color=blue, fill=blue] (2,7.5) circle (.15) node[anchor=south east] {$\lambda$};
\draw[color=blue, fill=blue] (7,7.5) circle (.15) node[anchor=west] {$\mu$};
\draw[color=blue, fill=blue] (9,2.5) circle (.15) node[anchor=west] {$\nu$};
\draw[color=blue, fill=blue] (5,0) circle (.15) node[anchor=north] {$\rho$};
\draw[color=blue, fill=blue] (-5,0) circle (.15) node[anchor=north] {$\sigma$};
\draw[color=blue, fill=blue] (-6,2.5) circle (.15) node[anchor=south east] {$\tau$};
\end{tikzpicture}
\end{center}
\caption{A labeled triangular prism and the  associated polygon diagram.}
\label{eq:prism}
\end{figure}

\begin{figure}
\begin{center}
\def\svgwidth{.75\columnwidth}
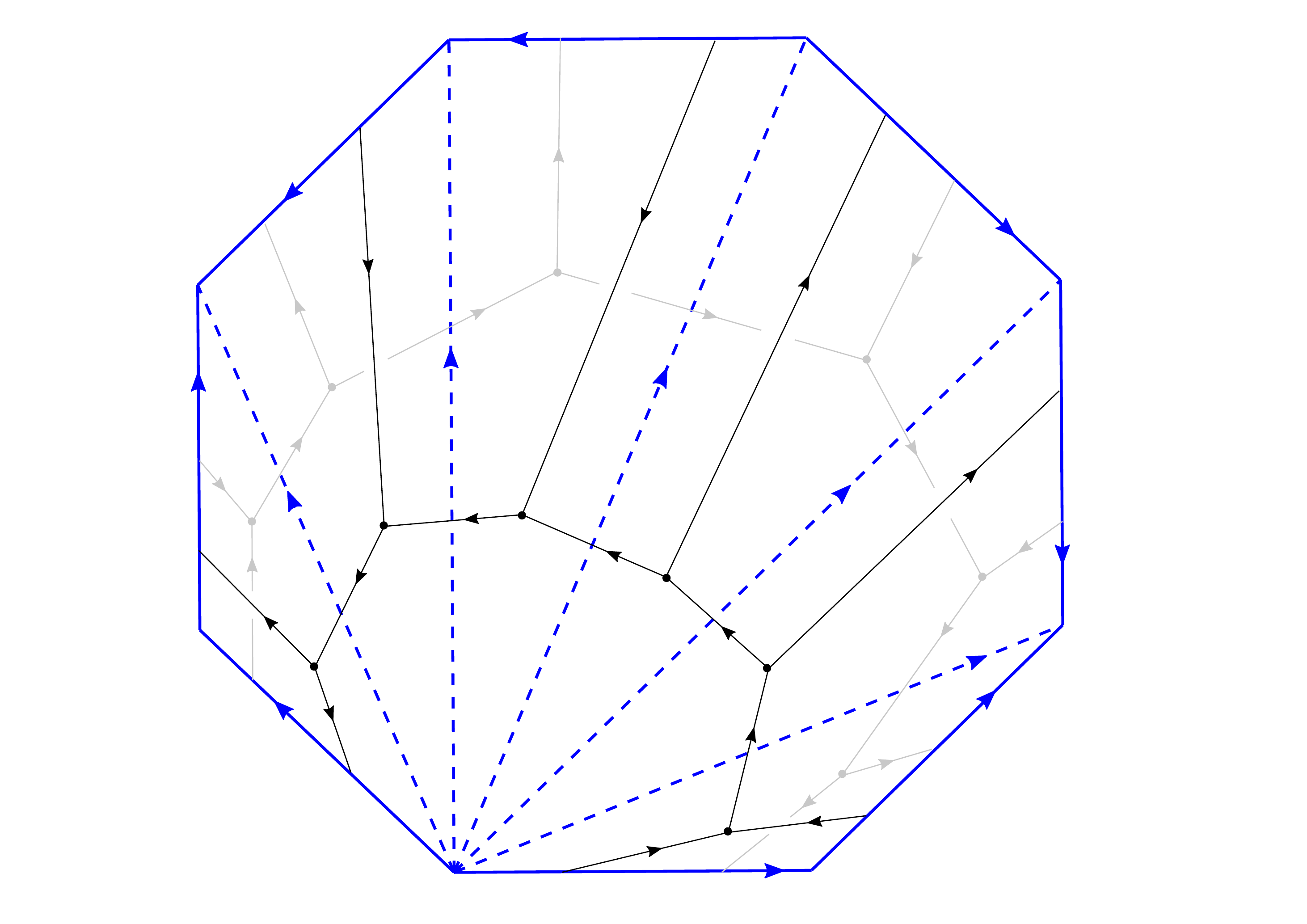 
\end{center}
\caption{Polygon diagram $P$ for $M=[0,1]\times\Sigma$ for triangulated defect surface $\Sigma$ of genus 2. \newline
$\bullet$ The orientation of the blue lines indicates the orientation of the triangulation on $\Sigma$. \newline
$\bullet$ The group elements on the unlabeled edges are determined by the labels.\newline 
$\bullet$ $\mathrm{ev}(P)=0$ unless $[a_1, b_1]\cdots [a_g, b_g]=1$ and $[a'_1,b'_1]\cdots [a'_g,b'_g]=1$ and $(a_i,a'_i), (b_i, b'_i)\in \mathrm{Stab}(m)$ for $i=1,2$.}
\label{fig:polygonsurface}
\end{figure}

By Proposition \ref{lem:polystatesum} the state sum of each labeled  prism is given by the evaluation of  the  polygon diagram in Figure \ref{eq:prism}. Also by Proposition \ref{lem:polystatesum}, the state sum for $M$ is given by the evaluation of any polygon diagram $P$ obtained by  gluing the polygons of the prisms to a disc and summing over the objects at its boundary. We can assume without loss of generality that $P$  is a $4g$-gon with every vertex labeled by  $m\in X$, as in Figure \ref{fig:polygonsurface}. This yields
$$\mathcal Z'(M, l_{\partial M},b_{\partial M})=\sum_{m\in X} \mathrm{ev}(P).$$
By identity \eqref{homvecg0},  the evaluation of $P$ vanishes, unless at each  vertex of $P$ that involves only edges labeled by $G$ or $G'$,  the oriented product of these group elements  is trivial.  By choosing maximal trees in the graphs labeled by $G$ and $G'$, one can express the group elements on the trees in terms of the  $2g$ group elements on edges that intersect $\partial P$, where $g$ is the genus of $\Sigma$. The labels on these $2g$ edges  then define
 a group homomorphism $\rho: \pi_1(\Sigma)\to G\times G'$, as shown in Figure \ref{fig:polygonsurface}.

By identity \eqref{eq:homvecg} the evaluation  $\mathrm{ev}(P)$ vanishes unless the group elements $(a_i,a'_i), (b_i, b'_i)\in G\times G'^{op}$ on each side of $P$ are contained in the stabiliser  of $m\in X$ or, equivalently, 
the group homomorphism $\rho$ takes values in $\mathrm{Stab}(m)$.
In that case, one has $\mathrm{ev}(P)=1$. 
Summation over all elements $m\in X$ then yields the result. 
\end{proof}

\begin{figure}
\begin{center}
\def\svgwidth{.98\columnwidth}
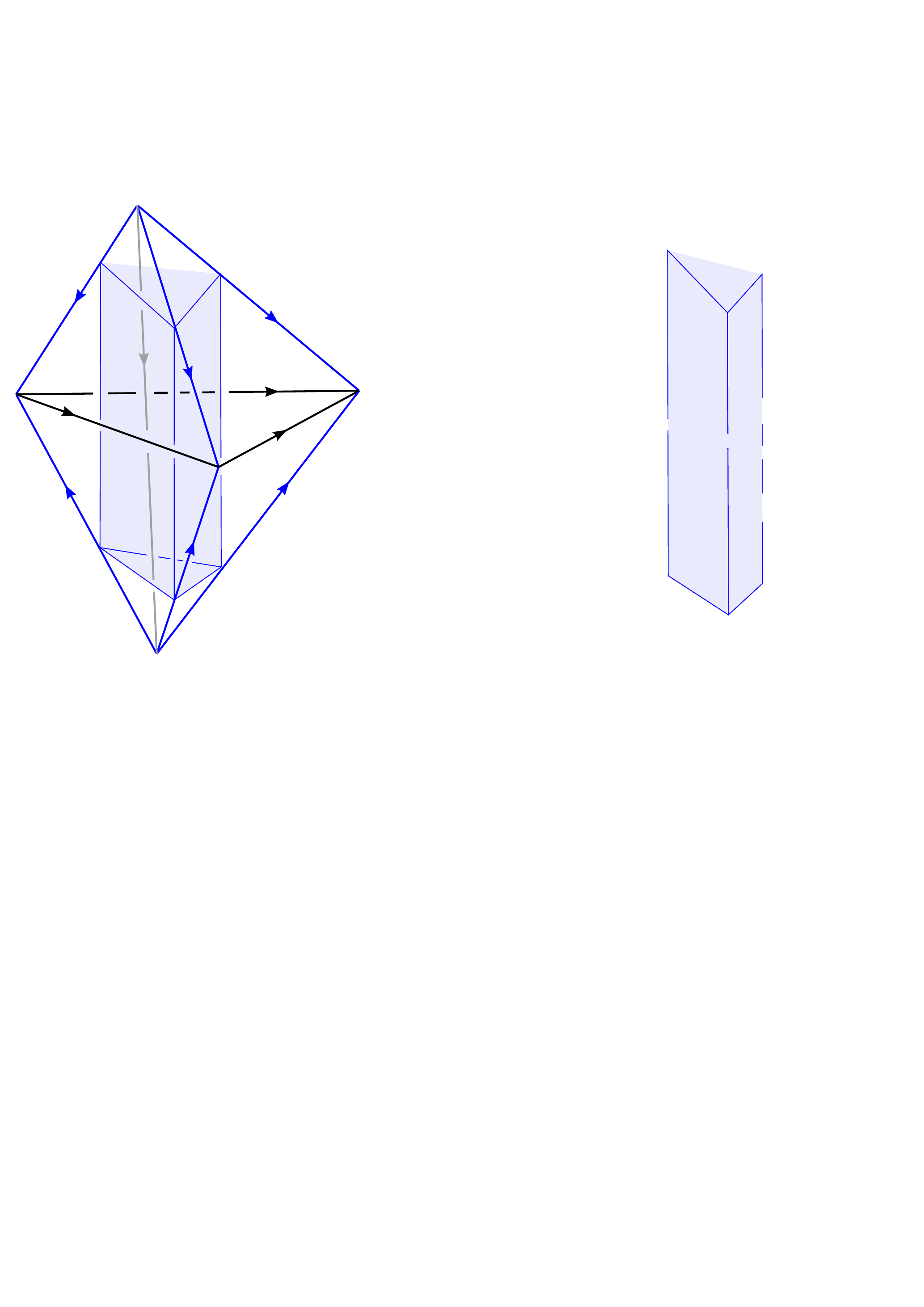 
 \end{center}
\caption{Gluing a surface of genus $g=2$ inside a 3-ball.}
 \label{fig:handle}
\end{figure}

Example \ref{ex:surface} gives intuition how the data for the spherical fusion categories interacts with the defect data in state sums of defect 3-manifolds.  The next example shows that  state sums for defect surfaces in the interior of a 3-ball  detect the genus of the defect surface.

\begin{example} \label{ex:genusg}Let  $\mac=\mathrm{Vec}_G$ and $\mad=\mathrm{Vec}_{G'}$ for finite groups $G,G'$ and
 $\mam$ the $(\mac,\mad)$-bimodule category with bimodule trace  defined by a finite transitive  $G\times G'^{op}$-set $X$. 
 Let $M$ be the triangulated 3-ball from Figure \ref{fig:handle} with an embedded defect surface of genus $g\geq 1$ in its interior. Then 
\begin{align}\label{eq:genussurf}
\mathcal Z(M, l_{\partial M},b_{\partial M})= \frac {|X|\cdot  |\mathrm{Stab}_G|^g\cdot  |\mathrm{Stab}_{G'}|^g}  {|G'|}  \cdot \mathcal Z(M', l_{\partial M}, b_{\partial M}),
\end{align}
where $M'$ is the 3-ball  with the same boundary triangulation and labeling, but without defects and $|\mathrm{Stab}_G|$, $|\mathrm{Stab}_{G'}|$ are the cardinalities of the stabilisers for the $G$- and $G'$-actions on $X$.
\end{example}

\begin{proof}
The triangulated defect 3-manifold in Figure \ref{fig:handle} is constructed as follows:
\begin{compactitem}
\item take $2g$ triangular double-pyramids formed by three tetrahedra each,  with a cylinder labeled by $\mam$ around the interior edge labeled by $\mad$,
\item glue them pairwise along  adjacent top and bottom faces to form $g$ rectangular double-pyramids,
\item glue the four bottom faces of each rectangular double-pyramid to the four top faces of the next one, 
\item glue a rectangular pyramid subdivided into four tetrahedra with defect triangles to the top and the bottom of the resulting polyhedron. 
\end{compactitem}
If the edges are labeled by group elements as in Figure \ref{fig:handle}, then
the edges labeled $ i,j,k,l, w_s,x_s,y_s, z_s\in G$  for $s=1,2$ are boundary edges.  The edges labeled by  $d_r,d'_r\in G'$, by $c_r\in G$ for $r=1,...,g$ and by $m_s,n_s,p_s,q_s, t,b\in X$  for $s=1,...,g+1$ are internal, and their labels are summed over.  Conditions \eqref{homvecg0} and \eqref{eq:homvecg} on the  morphism spaces imply that the state sum  is zero unless the following conditions are met:
\begin{itemize}
\item boundary edges of top and bottom pyramid:
\begin{align}
\label{eq:topcond}
&x_s=y_s\cdot i=w_s\cdot j,\quad z_s= y_s\cdot k=w_s\cdot l & &s\in\{1,2\},
\end{align}
\item defect edges of top and bottom pyramid:
\begin{align}\label{eq:topint}
&m_1=x_1^\inv\rhd t, & &n_1=y_1^\inv \rhd t, & &p_1=w_1^\inv\rhd t, & &q_1=z_1^\inv\rhd t,\\
&m_{g+1}=x_2^\inv\rhd b, & &n_{g+1}=y_2^\inv \rhd b, & &p_{g+1}=w_2^\inv\rhd b, & &q_{g+1}=z_2^\inv\rhd b,\nonumber
\end{align}
\item  double-pyramids: for $r\in \{1,...,g\}$
\begin{align}\label{eq:doublep}
&n_r=i\rhd m_r=k\rhd q_r, \quad p_r=c_r\rhd n_r=j\rhd m_r=l\rhd q_r,\\
&m_{r+1}=m_r\lhd d_r, \quad n_{r+1}=n_r\lhd d_r=n_r\lhd d'_r, \quad p_{r+1}=p_r\lhd d_r=p_r\lhd d'_r, \quad q_{r+1}=q_r\lhd d_r=q_r\lhd d'_r.\nonumber
\end{align}
\end{itemize}
Condition \eqref{eq:topcond} just states that the state sum is zero, unless the oriented product over the group elements of each boundary triangle is trivial. The first condition in \eqref{eq:topint} and the second condition in \eqref{eq:doublep} imply 
\begin{align} \label{eq:iter}
&m_{r+1}=x_1^\inv\rhd t\lhd d_1\cdots d_r & &q_{r+1}=w_1^\inv\rhd t\lhd d'_1\cdots d'_r\\
&n_{r+1}=y_1^\inv\rhd t\lhd d_1\cdots d_r=y_1^\inv\rhd t\lhd d'_1\cdots d'_r & 
&p_{r+1}=z_1^\inv\rhd t\lhd d_1\cdots d_r=z_1^\inv\rhd t\lhd d'_1\cdots d'_r\nonumber
\end{align}
for all $r\in\{1,...,g\}$.  This expresses  $m_s, n_s, p_s, q_s\in X$ as functions of $t\in X$, $d_1,...,d_g, d'_1,....,d'_n\in G'$ and of the group elements on the boundary edges and eliminates the summation over $m_s, n_s, p_s, q_s$. The equations in the second  line of \eqref{eq:iter} are satisfied simultaneously iff for all $r\in \{1,...,.g\}$
\begin{align}\label{eq:dcond}
d_1'\cdots d'_rd_r^\inv\cdots d_1^\inv\in \mathrm{Stab}_{G'}(t)=\{g\in G'\mid t\lhd g'=t\}.
\end{align}
Combining \eqref{eq:iter} with the second line in \eqref{eq:topint} yields
\begin{align}\label{eq:bexp}
b=w_2w_1^\inv\rhd t\lhd d_1\cdots d_r=x_2x_1^\inv\rhd t\lhd d_1\cdots d_r=y_2y_1^\inv\rhd t\lhd d_1\cdots d_r=z_2z_1^\inv\rhd t\lhd d_1\cdots d_r.
\end{align}
As \eqref{eq:topcond} implies $x_2x_1^\inv=y_2y_1^\inv=w_2w_1^\inv=z_2z_1^\inv$, equation \eqref{eq:bexp} just expresses $b$ in terms of $t$ and $d_1,...,d_r$ and thus eliminates the summation over $b$ from the state sum.  The first line in \eqref{eq:doublep} is then equivalent to \eqref{eq:iter}, \eqref{eq:topcond} and the additional condition
\begin{align}\label{eq:crcond}
w_1c_r y_1^\inv\in \mathrm{Stab}_G(t)=\{g\in G\mid g\rhd t=t\}\quad\forall r\in\{1,...,g\}.
\end{align}
 For fixed $t$ and $d_1,...,d_g$, the summations over $d'_1,...,d'_g$ and  $c_1,...,c_g$ 
contribute  factors $|\mathrm{Stab}_{G'}(t)|^g$ and $|\mathrm{Stab}_G(t)|^g$, respectively, to the state sum. As $X$ is a transitive $G\times G'^{op}$-set, all stabilisers have the same cardinality, and the summation over $t$ contributes a factor $|X|$.  
As each of the $g+1$ internal  vertices with incident edges from $\mad$  contributes a factor $\dim(\mad)^\inv=|G'|^\inv$ and the summation over  $d_1,...,d_g$ yields a factor $\dim(\mad)^g=|G'|^g$ we obtain
\begin{align*}
\mathcal Z'(M, l_{\partial M}, b_{\partial M})
&=
 \frac{|X| |\mathrm{Stab}_G|^g |\mathrm{Stab}_{G'}|^g}{|G'| } C,
\end{align*}
where $C$ is a factor that depends only on the labeling of the boundary triangles.
For $\mam=\mac=\mad=\mathrm{Vec}_G$ as a bimodule category over itself,  one has $|X|=|G'|=|G|$, $|\mathrm{Stab}_G|=|\mathrm{Stab}_{G'}|=1$. 
 \end{proof}
 
It is instructive to compare  Examples  \ref{ex:tetrahedronsphere} and  \ref{ex:genusg}.  If one  restricts the data in Example \ref{ex:tetrahedronsphere} to the bimodule category given by a finite $G\times G'^{op}$-set $X$ from Example \ref{ex:genusg}, then formula \eqref{eq:bublle} for the state sum of a 3-ball with a defect sphere becomes
$
\mathcal Z(M, l_{\partial M},b_{\partial M})= |X|/ |G'|  \cdot \mathcal Z(M', l_{\partial M}, b_{\partial M})$,
which is formula \eqref{eq:genussurf} for $g=0$.

 Examples \ref{ex:tetrahedronsphere} to \ref{ex:genusg} show that the state sums detect properties of the bimodule categories labeling a defect surface and properties of the surface. The next example shows that the state sum models with defects  depend on the \emph{embeddings} of defect surfaces  into the  3-manifold, and not just on their topology.  

This is manifest even for very simple non-trivial defect data. 
 By taking a tubular neighbourhood of a knot $K$ embedded into $S^3$ and labeling it with the $\mathrm{Vec}_G$-module category defined by a trivial $G$-set, one obtains a triangulation of the knot complement with trivial data at the boundary. The associated state sum gives the number of conjugacy classes of group homomorphisms $\rho:\pi_1(S^3\setminus K)\to G$ and hence is  sensitive to the embedding of the toroidal defect surface.

 \begin{example}\label{ex:knotcomplement} Let $K\subset S^3$ be a knot, realised as a subgraph of the dual cell complex of 
 a triangulation $T$ of 
  $S^3$. Let $M$ be the triangulated 3-manifold with defect data constructed as follows: 
  \begin{enumerate}
\item   Refine $T$ to a triangulation $T'$ by 
applying a 1-4  move to each tetrahedron $t$  in $T$ with $t\cap K\neq \emptyset$ and a stellar move to each triangle $\Delta$ in $T$ with $\Delta \cap K\neq \emptyset$, such that $K$ connects the midpoints of two faces of $t$ with the vertex in its interior, as shown in Figure \ref{fig:knotcomplement}.  

\item Form a tubular neighbourhood of $K$ by  inserting a defect plane as in Figure \ref{fig:dectet} (b) into each tetrahedron of $T'$ that contains an  edge in $K$ and a defect plane as in  Figure \ref{fig:dectet} (a) into each tetrahedron of $T'$ that contains a vertex, but not an edge of $K$, as shown in Figure \ref{fig:knotcomplement}. 
 
 \item Label the region of $T'$ that does not contain $K$ with  $\mac=\mathrm{Vec}_G$ for a finite group $G$ and the region containing  $K$ with the spherical fusion category $\mad=\mathrm{Vec}_{\{1\}}=\mathrm{Vect}_\C$. Label  the defect surface with the  $(\mac,\mad)$-bimodule category $\mam$ defined by the trivial $G$-set $X=\{\bullet\}$. 
 \end{enumerate}
 Then the state sum of  $M$ is the number of  conjugacy classes of group homomorphisms from the fundamental group $\pi_1(S^3\setminus K)$  into $G$ 
 $$
 \mathcal Z(M)=|\Hom_{\mathrm{Grp}}(\pi_1(S^3\setminus K), G)/G|.
 $$
 \end{example}

 \begin{figure}
\begin{center}
\def\svgwidth{.5\columnwidth}
\begingroup%
  \makeatletter%
  \providecommand\color[2][]{%
    \errmessage{(Inkscape) Color is used for the text in Inkscape, but the package 'color.sty' is not loaded}%
    \renewcommand\color[2][]{}%
  }%
  \providecommand\transparent[1]{%
    \errmessage{(Inkscape) Transparency is used (non-zero) for the text in Inkscape, but the package 'transparent.sty' is not loaded}%
    \renewcommand\transparent[1]{}%
  }%
  \providecommand\rotatebox[2]{#2}%
  \newcommand*\fsize{\dimexpr\f@size pt\relax}%
  \newcommand*\lineheight[1]{\fontsize{\fsize}{#1\fsize}\selectfont}%
  \ifx\svgwidth\undefined%
    \setlength{\unitlength}{841.88976378bp}%
    \ifx\svgscale\undefined%
      \relax%
    \else%
      \setlength{\unitlength}{\unitlength * \real{\svgscale}}%
    \fi%
  \else%
    \setlength{\unitlength}{\svgwidth}%
  \fi%
  \global\let\svgwidth\undefined%
  \global\let\svgscale\undefined%
  \makeatother%
  \begin{picture}(1,0.70707071)%
    \lineheight{1}%
    \setlength\tabcolsep{0pt}%
    \put(0,0){\includegraphics[width=\unitlength,page=1]{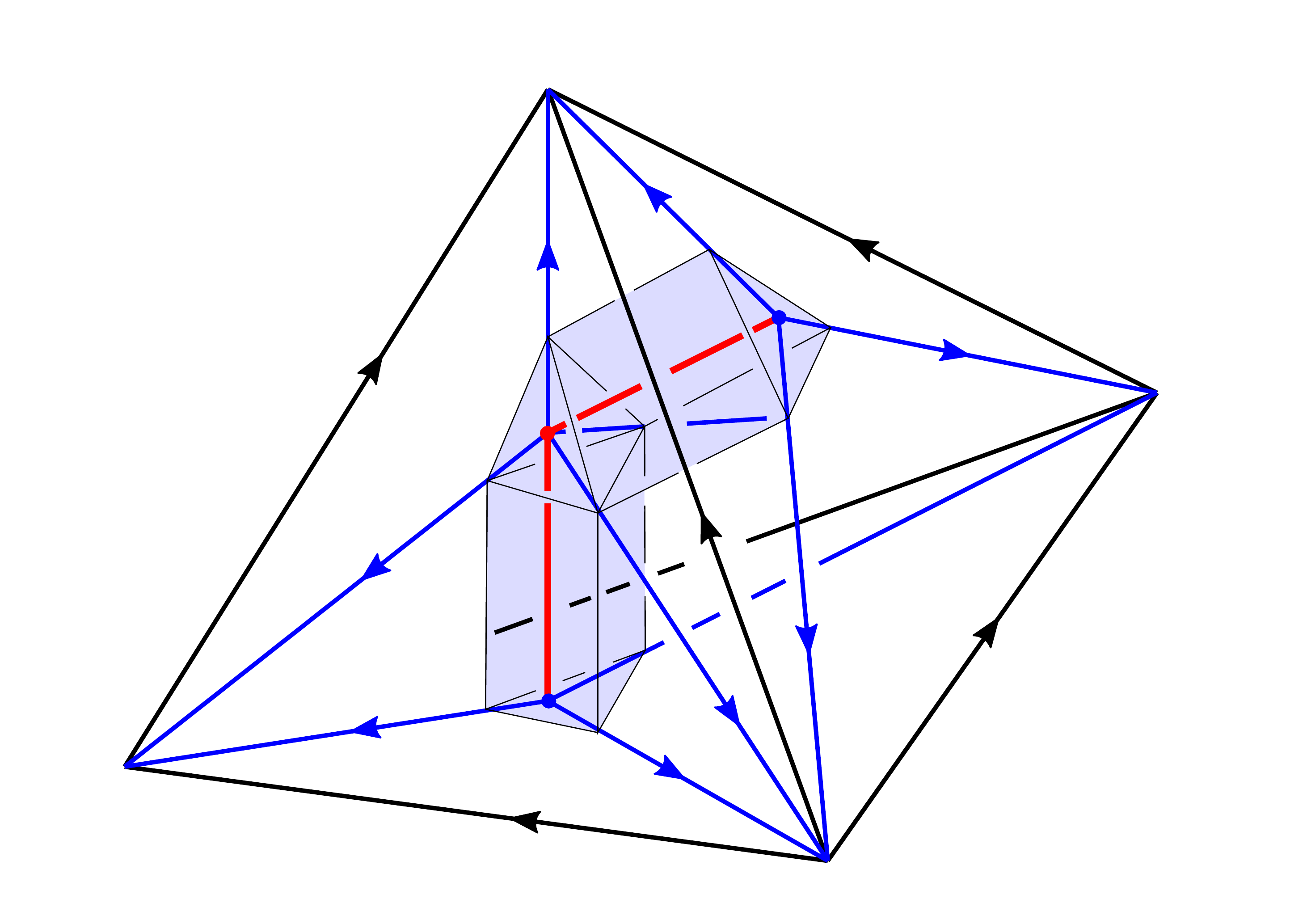}}%
    \put(0.22731229,0.41739418){\makebox(0,0)[lt]{\lineheight{1.25}\smash{\begin{tabular}[t]{l}$g_1$\end{tabular}}}}%
    \put(0,0){\includegraphics[width=\unitlength,page=2]{knotcomplement.pdf}}%
    \put(0.27176903,0.21752652){\makebox(0,0)[lt]{\lineheight{1.25}\smash{\begin{tabular}[t]{l}$g_2$\end{tabular}}}}%
    \put(0.66851509,0.53513605){\makebox(0,0)[lt]{\lineheight{1.25}\smash{\begin{tabular}[t]{l}$g_3$\end{tabular}}}}%
    \put(0.5489374,0.32551421){\makebox(0,0)[lt]{\lineheight{1.25}\smash{\begin{tabular}[t]{l}$g_5$\end{tabular}}}}%
    \put(0.38245638,0.04474622){\makebox(0,0)[lt]{\lineheight{1.25}\smash{\begin{tabular}[t]{l}$g_4$\end{tabular}}}}%
    \put(0.77301185,0.21392694){\makebox(0,0)[lt]{\lineheight{1.25}\smash{\begin{tabular}[t]{l}$g_6$\end{tabular}}}}%
    \put(0,0){\includegraphics[width=\unitlength,page=3]{knotcomplement.pdf}}%
  \end{picture}%
\endgroup%
 
 \end{center}
\caption{A piece of a knot (red)  in the dual complex of a triangulation and the associated defect surface. The state sum vanishes unless $g_3=g_1g_2$ and $g_5=g_1g_4$.}
 \label{fig:knotcomplement}
\end{figure}

 \begin{proof} Denote by $r_\mac$ the region labeled by $\mac=\mathrm{Vec}_G$.
  By subdividing the triangulation as in Lemma \ref{lem:finetriang}, 
 we can assume that each  vertex, edge or triangle in $T'$ that contained in  $r_\mac$  is part of a tetrahedron contained in $r_\mac$. This does not change the state sum by Theorem \ref{th:fulltopinv}. 
The union of the tetrahedra  in $r_\mac$ is then PL homeomorphic to the knot complement $S^3\setminus K'$, where $K'$ is a solid torus $S^1\times D^2$ embedded as a tubular neighbourhood of $K$. This defines a triangulation $T''$ of $S^3\setminus K'$ labeled by $\mathrm{Vec}_G$.

The remaining tetrahedra are as in Figure \ref{fig:knotcomplement}. By construction, all red edges in Figure \ref{fig:knotcomplement} are labeled with the simple object $\C$ of $\mathrm{Vect}_\C$, all blue edges by the simple object $\bullet$ in $\mam$, while the black edges are labeled with elements of $G$.   Each triangle $\Delta$ with $\Delta\cap t\neq \emptyset$ has two edges labeled by $\bullet$, one edge labeled by a group element $g\in G$ and is assigned the morphism space $\Hom(\bullet, \bullet)\cong \C$ for any $g\in G$. The triangles $\Delta$ in Figure \ref{fig:knotcomplement} with $\Delta\cap D=\emptyset$ are part of tetrahedra contained in $r_\mac$. Their labeling is determined by the labeling of $T''$, and their morphism spaces are $\Hom(\Delta)\cong \C$ if the oriented product of the group elements on their edges
 is trivial and $\Hom(\Delta)\cong \{0\}$ else.  
 
 Let $l: E''\to G$ be a labeling  of the oriented edges of $T''$ with elements of $G$.  
 As each triangle in $T'$ occurs in exactly two tetrahedra and all morphism spaces are zero- or one-dimensional,  the product of all generalised 6j symbols in $T'$ is one if the oriented product of the group elements on each triangle $\Delta$ in $T''$ vanishes and zero else. It follows that the state sum is  proportional to  the number of labelings $l$ that satisfy this condition
\begin{align}
\label{eq:statesumknotcomp}
\mathcal Z(M)=\frac { |\{l: E''\to G\mid \Pi_{e\in f}l(e)=1\;\forall f\in \Delta'' \}|} {|G|^{|V''|}},
\end{align}
 where $V''$, $E''$ and $\Delta''$ are the sets of vertices, edges and triangles of $T''$.
 
 Every such  labeling $l$  defines a group homomorphism $\rho_l:\pi_1(S^3\setminus K)\to G$, and every group homomorphism $\rho:\pi_1(S^3\setminus K)\to G$ is obtained from such a labeling.  The group homomorphisms $\rho_l,\rho_{l'}$ for two labelings $l,l': E''\to G$ are conjugate  iff there is a map $f: V''\to G$, $v\mapsto g_v$ such that
$l'(e)=g_{t(e)}\cdot l(e)\cdot g_{s(e)}$  for all $e\in E''$, where $s(e)$ and $t(e)$ denote the starting and target vertex of  $e$. 
This shows that $\mathcal Z(M)$ is the number of conjugacy classes of group homomorphisms $\rho:\pi_1(S^3\setminus K)\to G$. 
 \end{proof}
 
 Although the data in Example \ref{ex:knotcomplement} is quite trivial, it shows that the state sum with defects can distinguish non-isotopic embeddings of surfaces. It would be interesting to  investigate this further and to see if it can distinguish  knotted embeddings of  spheres.

\subsection{Ribbon invariants from line defects}

In this section we show how defect lines on trivial defect surfaces give rise to ribbon invariants.  These surfaces are labeled by a spherical fusion category $\mac$ as a $(\mac,\mac)$-bimodule category.   As $\mac$ is a spherical fusion category, its center $\mathcal Z(\mac)$ is a  ribbon category, see for instance \cite[Lemma 10.1]{TVr}. By Example \ref{ex:modulefuncs}, the monoidal category $\End_\mac(\mac)$  is  monoidally equivalent to $\mathcal Z(\mac)$. 
Under this monoidal equivalence, an object $(a, \sigma_{-,a})$ of $\mathcal Z(\mac)$  corresponds to the $(\mac,\mac)$-bimodule functor $F=a\oo -:\mac\to\mac$ with coherence data
\begin{align}\label{eq:coeherencecenter}
&s^F_{c,c'}=a_{c,a,c'}\circ (\sigma_{c,a}^\inv\oo 1_{c'})\circ a_{a,c,c'}^\inv: a\oo ( c\oo c')\to (a\oo c)\oo c'\to (c\oo a)\oo c'\to c\oo (a\oo c'),\\ 
&t^F_{c,c'}=a_{a,c,c'}: (a\oo c)\oo c'\to a\oo (c\oo c')\nonumber
\end{align}
and a morphism $\alpha: a\to b$ in $\mathcal Z(\mac)$  to the $(\mac,\mac)$-bimodule natural transformation $\nu=\alpha\oo -: a\oo -\Rightarrow b\oo -$ from Example \ref{ex:modulefuncs}. 

Any diagram representing a ribbon tangle for $\mathcal Z(\mac)$ can be realised as a defect graph on a trivial defect disc. This amounts to replacing
over- and undercrossings in the diagram  with defect vertices labeled by (inverse) braidings
and maxima and minima  by defect vertices labeled with the (co)evaluations in $\mathcal Z(\mac)$.
Every  diagram $D$ for a $(0,0)$-ribbon tangle  then defines a morphism $\alpha\in \End_{\mathcal Z(\mac)}(e)$ and
a  bimodule natural transformation $\nu=\alpha\oo -:\id_{\mac}\Rightarrow\id_\mac$  with
 $\nu_x=\tr(\alpha) 1_x=\mathrm{ev}(D)1_x: x\to x$ for every  $x\in I_\mac$.

\begin{example}\label{ex:ribbonplane} Let $\mac$ be a spherical fusion category and $t$ a tetrahedron as in Figure \ref{fig:dectet} (b) with a trivial defect surface that contains a diagram $D$ for a  $(0,0)$-ribbon tangle  labeled with  $\mathcal Z(\mac)$. 
Then one has
$$
\mathcal Z'(t, l_{\partial M}, b_{\partial M})=\mathrm{ev}(D) \cdot \mathrm{6j}(t', l_{\partial M}, b_{\partial M}),
$$
where $t'$ is the tetrahedron with the same labels and without the defects and $\mathrm{ev}(D)\in\C$ the evaluation of $D$.
\end{example}

\begin{proof} The 6j symbol of the defect tetrahedron $t$ is obtained from the associated polygon diagram
\begin{align*}
\begin{tikzpicture}[scale=.3, baseline=(current bounding box.center)]
\draw[line width=1pt, dotted] (-4,4)--(4,4)node[anchor=west]{$q$} --(4,-4) node[anchor=west]{$n$}--(-4,-4)node[anchor=east]{$m$}--(-4,4) node[anchor=east]{$p$};
\draw[line width=.5pt, color=black] (0,4)--(0,-4) node[pos=.2, anchor=west]{$j$}node[sloped, pos=.2, allow upside down]{\arrowOut};
\draw[color=white, fill=white] (0,0) circle (1.8); 
\draw[line width=1pt, color=cyan, dashed] (0,0) node[anchor=south] {$D$} circle (1.5); 
\draw[fill=white, color=white] (1.5,0) circle (.4);
\draw[fill=white, color=white] (-1.5,0) circle (.4);
\draw[line width=.5pt, color=black] (4,0)--(-4,0) node[pos=.8, anchor=north]{$i$}node[sloped, pos=.8, allow upside down]{\arrowOut};
\draw[color=black, fill=black] (0,4) circle (.2) node[anchor=south]{$\alpha$};
\draw[color=black, fill=black] (4,0) circle (.2) node[anchor=west]{$\beta$};
\draw[color=black, fill=black] (0,-4) circle (.2) node[anchor=north]{$\gamma$};
\draw[color=black, fill=black] (-4,0) circle (.2) node[anchor=east]{$\delta$};
\end{tikzpicture}
\end{align*}
where the dashed circle labeled $D$ contains the diagram $D$. 
This diagram superimposes the 6j symbol for  $t'$  and the  diagram  $D$.
By \eqref{eq:coeherencecenter},  overcrossings of the line labeled $i$ over lines in $D$ correspond to half-braidings $\sigma_{i,a}^{\pm 1}$, while overcrossings of lines in $D$ over $j$ correspond to associators  in $\mac$. The line labeled $i$ describes the functor $i\oo -: \mac\to\mac$ and the line labeled $j$ the functor $-\oo j:\mac\to\mac$. Identities \eqref{pic:rm2} to \eqref{pic:bimodulenat}  allow one to separate the two diagrams and yield the result. 
\end{proof}

Example \ref{ex:ribbonplane} shows that ribbon invariants can be realised as line and point defects on a trivial defect plane. However, 
this involves ribbon diagrams  rather than ribbons in three-dimensional space, and the braidings are external input, namely  defect data labeling vertices. We now show how ribbon invariants are obtained from line defects in a 3-ball that are not confined to a plane and where the braidings  arise from the   state sum.

\begin{example} \label{ex:braiding}Let $\mac$ be spherical fusion category  as a $(\mac,\mac)$-bimodule category and  $F=a\oo -:\mac\to\mac$ and $G=b\oo -:\mac\to \mac$  the bimodule functors  for  $(a, \sigma_{-,a}),(b, \sigma_{-,b})\in\mathcal \Ob \mathcal Z(\mac)$ with coherence data as in
\eqref{eq:coeherencecenter}.
We consider the following three tetrahedra labeled with objects  in $I_\mac$: 
\begin{align*}
\begin{tikzpicture}[scale=.5]
\draw[line width=1pt, color=black] (-5,0)--(0,4)  node[sloped, pos=0.4, allow upside down]{\arrowIn} node[pos=.4, anchor=south east]{$p$};
\draw[line width=1pt, color=black] (5,0)--(0,4)  node[sloped, pos=0.4, allow upside down]{\arrowIn} node[pos=.4, anchor=south west]{$q$};
\draw[line width=1pt, color=black] (-5,0)--(0,-4)  node[sloped, pos=0.4, allow upside down]{\arrowIn} node[pos=.4, anchor=north east]{$m$};
\draw[line width=1pt, color=black] (5,0)--(0,-4)  node[sloped, pos=0.4, allow upside down]{\arrowIn} node[pos=.4, anchor=north west]{$n$};
\draw[line width=1pt, color=black] (-5,0)--(-.5,0)  node[sloped, pos=0.5, allow upside down]{\arrowIn} node[pos=.5, anchor=south]{$j$};
\draw[line width=1pt, color=black] (.5,0)--(5,0)  ;
\draw[draw=none, fill=white] (-2,0) circle(.3);
\draw[draw=none, fill=white] (2,0) circle(.3);
\draw[draw=none, fill=white] (0,1) circle(.3);
\draw[line width=1pt, dotted] (-2,2.4)--(-2,-2.4)--(2,-2.4)--(2,2.4)--(-2,2.4);
\draw[draw=none, fill=white] (0,2.4) circle(.5);
\draw[draw=none, fill=white] (0,-2.4) circle(.5);
\draw[color=cyan, line width=1pt] (2, 1)--(-2,1) node[sloped, pos=0.7, allow upside down]{\arrowIn} node[pos=.7, anchor=south]{$F$};
\draw[draw=none, fill=white] (0,1) circle(.3);
\draw[line width=1pt, color=black] (0,-4)--(0,4)  node[sloped, pos=0.3, allow upside down]{\arrowIn} node[pos=.3, anchor=east]{$i$};
\draw[line width=.5pt, -stealth] (2,-1)--(2.5,-1.5);
\end{tikzpicture}
\qquad
\begin{tikzpicture}[scale=.5]
\draw[line width=1pt, color=black] (0,-4)--(0,4)  node[sloped, pos=0.3, allow upside down]{\arrowIn} node[pos=.3, anchor=east]{$k$};
\draw[draw=none, fill=white] (0,2.4) circle(.3);
\draw[draw=none, fill=white] (0,-2.4) circle(.3);
\draw[line width=1pt, dotted] (-2,2.4)--(-2,-2.4)--(2,-2.4)--(2,2.4)--(-2,2.4);
\draw[draw=none, fill=white] (-2,0) circle(.3);
\draw[draw=none, fill=white] (2,0) circle(.3);
\draw[line width=1pt, color=black] (-5,0)--(0,4)  node[sloped, pos=0.4, allow upside down]{\arrowIn} node[pos=.4, anchor=south east]{$p$};
\draw[line width=1pt, color=black] (5,0)--(0,4)  node[sloped, pos=0.4, allow upside down]{\arrowIn} node[pos=.4, anchor=south west]{$q$};
\draw[line width=1pt, color=black] (-5,0)--(0,-4)  node[sloped, pos=0.4, allow upside down]{\arrowIn} node[pos=.4, anchor=north east]{$m$};
\draw[line width=1pt, color=black] (5,0)--(0,-4)  node[sloped, pos=0.4, allow upside down]{\arrowIn} node[pos=.4, anchor=north west]{$n$};
\draw[draw=none, fill=white] (0,0) circle(.3);
\draw[line width=1pt, color=black] (-5,0)--(5,0)  node[sloped, pos=0.2, allow upside down]{\arrowIn} node[pos=.2, anchor=south]{$j$};
\draw[line width=.5pt, -stealth] (2,1)--(2.5,1.5);
\end{tikzpicture}
\qquad
\begin{tikzpicture}[scale=.5]
\draw[line width=1pt, color=black] (-5,0) --(0,4)  node[sloped, pos=0.4, allow upside down]{\arrowIn} node[pos=.4, anchor=south east]{$p$};
\draw[line width=1pt, color=black] (5,0)--(0,4)  node[sloped, pos=0.4, allow upside down]{\arrowIn} node[pos=.4, anchor=south west]{$q$};
\draw[line width=1pt, color=black] (-5,0)--(0,-4)   node[sloped, pos=0.4, allow upside down]{\arrowIn} node[pos=.4, anchor=north east]{$m$};
\draw[line width=1pt, color=black] (5,0)--(0,-4)  node[sloped, pos=0.4, allow upside down]{\arrowIn} node[pos=.4, anchor=north west]{$n$};
\draw[line width=1pt, color=black] (-5,0)--(-.5,0)  node[sloped, pos=0.5, allow upside down]{\arrowIn} node[pos=.5, anchor=south]{$l$};
\draw[line width=1pt, color=black] (.5,0)--(5,0)  ;
\draw[draw=none, fill=white] (-2,0) circle(.3);
\draw[draw=none, fill=white] (2,0) circle(.3);
\draw[draw=none, fill=white] (1,0) circle(.3);
\draw[line width=1pt, dotted] (-2,2.4)--(-2,-2.4)--(2,-2.4)--(2,2.4)--(-2,2.4);
\draw[draw=none, fill=white] (0,2.4) circle(.5);
\draw[draw=none, fill=white] (0,-2.4) circle(.5);
\draw[color=magenta, line width=1pt] (1,2.4)--(1,-2.4) node[pos=.7, anchor=west]{$G$} node[sloped, pos=0.7, allow upside down]{\arrowIn} ;
\draw[line width=1pt, color=black] (0,-4)--(0,4)  node[sloped, pos=0.3, allow upside down]{\arrowIn} node[pos=.3, anchor=east]{$k$};
\draw[line width=.5pt, -stealth] (2,-1)--(2.5,-1.5);
\end{tikzpicture}
\end{align*}
Glue the first tetrahedron  to the second along the two  faces labeled with  $j,m,n$ and $j, p, q$ and the second to the third along the two  faces labeled with  $k,m,p$ and $k,n,q$.  This yields a triangulated 3-ball $M$ with  a one-holed torus as a trivial defect surface. 
Then the rescaled state sum  $\mathcal Z'(M, l_{\partial M}, b_{\partial M})$ is the 6j symbol of the  tetrahedron
\begin{align*}
\begin{tikzpicture}[scale=.5]
\draw[line width=1pt, color=black] (-5,0)--(0,4)  node[sloped, pos=0.4, allow upside down]{\arrowIn} node[pos=.4, anchor=south east]{$p$};
\draw[line width=1pt, color=black] (5,0)--(0,4)  node[sloped, pos=0.4, allow upside down]{\arrowIn} node[pos=.4, anchor=south west]{$q$};
\draw[line width=1pt, color=black] (-5,0)--(0,-4)  node[sloped, pos=0.4, allow upside down]{\arrowIn} node[pos=.4, anchor=north east]{$m$};
\draw[line width=1pt, color=black] (5,0)--(0,-4)  node[sloped, pos=0.4, allow upside down]{\arrowIn} node[pos=.4, anchor=north west]{$n$};
\draw[line width=1pt, color=black] (-5,0)--(-.5,0)  node[sloped, pos=0.5, allow upside down]{\arrowIn} node[pos=.5, anchor=south]{$l$};
\draw[line width=1pt, color=black] (.5,0)--(5,0)  ;
\draw[draw=none, fill=white] (-2,0) circle(.3);
\draw[draw=none, fill=white] (2,0) circle(.3);
\draw[draw=none, fill=white] (0,1) circle(.3);
\draw[draw=none, fill=white] (1,0) circle(.3);
\draw[line width=1pt, dotted] (-2,2.4)--(-2,-2.4)--(2,-2.4)--(2,2.4)--(-2,2.4);
\draw[draw=none, fill=white] (0,2.4) circle(.5);
\draw[draw=none, fill=white] (0,-2.4) circle(.5);
\draw[color=magenta, line width=1pt] (1,2.4)--(1,-2.4) node[pos=.7, anchor=west]{$G$} node[sloped, pos=0.7, allow upside down]{\arrowIn} ;
\draw[draw=none, fill=white] (1,1) circle(.3);
\draw[color=cyan, line width=1pt] (2, 1)--(-2,1) node[pos=.7, anchor=south]{$F$}node[sloped, pos=.7, allow upside down]{\arrowIn} ;
\draw[draw=none, fill=white] (0,1) circle(.3);
\draw[line width=1pt, color=black] (0,-4)--(0,4)  node[sloped, pos=.3, allow upside down]{\arrowIn} node[pos=.3, anchor=east]{$i$};
\draw[line width=.5pt, -stealth] (2,-1)--(2.5,-1.5);
\begin{scope}[shift={(12,0)}, scale=.8]
\node at (-6,0)[anchor=east]{$=$};
\draw[line width=1pt, color=magenta] (0,3) ..controls (-6,2) and (-6,0) .. (2,-1) node[midway, anchor=east]{$b$};
\draw[color=white, fill=white]  (-1.4,-.4) circle (.3);
\draw[color=white, fill=white]  (-2.4,-.2) circle (.3);
\draw[line width=.5pt, color=black] (-1,5)--(1,5) node[anchor=west]{$p$};
\draw[line width=.5pt] (0,5)--(0,3); 
\draw[line width=.5pt, color=black] (-1,-5)--(1,-5) node[anchor=west]{$p$};
\draw[line width=.5pt] (0,-5)--(0,-3); 
\draw[line width=.5pt] (0,3)--(2,-1) node[midway, anchor=south west] {$l$};
\draw[line width=.5pt] (0,3)--(-2,1) node[midway, anchor=east] {$q$};
\draw[line width=.5pt] (-2,1)--(2,-1) node[midway, anchor=south] {$n$};
\draw[line width=.5pt] (0,-3)--(2,-1) node[midway, anchor=north west] {$m$};
\draw[line width=.5pt] (0,-3)--(-2,1) node[midway, anchor=north east] {$i$};
\draw[line width=1pt, color=cyan] (-2,1) ..controls (-3,-2).. (0,-3) node[midway, anchor=east]{$a$};
\draw[color=black, fill=black] (0,3) circle (.2) node[anchor=west]{$\;\nu$};
\draw[color=black, fill=black] (2,-1) circle (.2) node[anchor=west]{$\;\mu$};
\draw[color=black, fill=black] (-2,1) circle (.2) node[anchor=east]{$\beta\;$};
\draw[color=black, fill=black] (0,-3) circle (.2) node[anchor=west]{$\;\delta$};
\end{scope}
\end{tikzpicture}
\end{align*}
where the crossing of the lines labeled $F$ and $G$ on the defect surface is assigned the $(\mac,\mac)$-bimodule natural transformation $\nu=\sigma_{a,b}^\inv\oo -: (b\oo a)\rhd -\Rightarrow (a\oo b)\rhd -$.
\end{example}

\begin{proof} The polygon diagram and the associated 6j symbols for the first tetrahedron are given by
\begin{align*}
\begin{tikzpicture}[scale=.4, baseline=(current bounding box.center)]
\draw[line width=1pt, dotted] (-4,4)--(4,4)node[anchor=south west]{$q$} --(4,-4) node[anchor=north west]{$n$}--(-4,-4)node[anchor=north east]{$m$}--(-4,4) node[anchor=south east]{$p$};
\draw[line width=.5pt, color=gray] (0,4)--(0,-4) node[pos=.2, anchor=west]{$j$}node[sloped, pos=.2, allow upside down]{\arrowOut};
\draw[fill=white, color=white] (0,-.5) circle (1.2);
\draw[line width=.5pt, color=black] (4,0) .. controls (0,-1.5) ..  (-4,0) node[pos=.8, anchor=north]{$i$}node[sloped, pos=.8, allow upside down]{\arrowOut};
\draw[line width=1pt, color=cyan, dashed] (4,0)--(-4,0) node[pos=.8, anchor=south]{$F$}node[sloped, pos=.8, allow upside down]{\arrowIn};
\draw[color=black, fill=black] (0,4) circle (.2) node[anchor=south]{$\alpha$};
\draw[color=black, fill=black] (4,0) circle (.2) node[anchor=west]{$\beta$};
\draw[color=black, fill=black] (0,-4) circle (.2) node[anchor=north]{$\gamma$};
\draw[color=black, fill=black] (-4,0) circle (.2) node[anchor=east]{$\delta$};
\end{tikzpicture}
\qquad
\begin{tikzpicture}[scale=.4, baseline=(current bounding box.center)]
\draw[line width=1.5pt, color=black] (-1,5)--(1,5) node[anchor=west]{$p$};
\draw[line width=1.5pt, color=black] (-1,-5)--(1,-5) node[anchor=west]{$p$};
\draw[line width=1.5pt, color=black] (0,5)--(0,-5);
\draw[line width=.5pt, color=gray] (0,3)  .. controls (-3,2) and (-3,0) .. (0,-1)  node[midway, anchor=east]{$j$};
\draw[color=white, fill=white] (-2,0) circle (.4);
\draw[color=white, fill=white] (-2.2,.2) circle (.4);
\draw[line width=.5pt, color=black] (0,1)  .. controls (-3,0) and (-3,-2) .. (0,-3)  node[midway, anchor= east]{$i$};
\draw[line width=1pt, color=cyan, dashed] (0,1)  .. controls (-5,0) and (-5,-2) .. (0,-3)  node[midway, anchor=south east]{$F$};
\node at (0,2)[anchor=west, color=black] {$q$};
\node at (0,0)[anchor=west, color=black] {$n$};
\node at (0,-2)[anchor=west, color=black] {$m$};
\draw[color=black, fill=black] (0,3) circle (.2) node[anchor=west]{$\;\alpha$};
\draw[color=black, fill=black] (0,1) circle (.2) node[anchor=west]{$\;\beta$};
\draw[color=black, fill=black] (0,-1) circle (.2) node[anchor=west]{$\;\gamma$};
\draw[color=black, fill=black] (0,-3) circle (.2) node[anchor=west]{$\;\delta$};
\end{tikzpicture}
\qquad
\begin{tikzpicture}[scale=.4, baseline=(current bounding box.center)]
\draw[line width=.5pt, color=black] (-1,5)--(1,5) node[anchor=west]{$p$};
\draw[line width=.5pt] (0,5)--(0,3); 
\draw[line width=.5pt] (0,-5)--(0,-3); 
\draw[line width=.5pt, color=black] (-1,-5)--(1,-5) node[anchor=west]{$p$};
\draw[line width=.5pt] (0,3)--(-2,1) node[midway, anchor=south east]{$q$};
\draw[line width=.5pt] (0,3)--(2,-1) node[midway, anchor=south west] {$j$};
\draw[line width=.5pt] (-2,1)--(2,-1) node[midway, anchor=south]{$n$};
\draw[line width=.5pt] (-2,1)--(0,-3) node[midway, anchor=north east]{$i$};  
\draw[line width=1pt, color=cyan] (-2,1) ..controls (-3,-2).. (0,-3) node[midway, anchor=east]{$a$};
\draw[line width=.5pt] (2,-1)--(0,-3) node[midway, anchor=north west]{$m$};    
\draw[color=black, fill=black] (0,3) circle (.2) node[anchor=west]{$\;\alpha$};
\draw[color=black, fill=black] (-2,1) circle (.2) node[anchor=east]{$\beta\;$};
\draw[color=black, fill=black] (2,-1) circle (.2) node[anchor=west]{$\;\gamma$};
\draw[color=black, fill=black] (0,-3) circle (.2) node[anchor=west]{$\;\delta$};
\end{tikzpicture}
\end{align*}
where the first diagram for the 6j symbol is  in the diagrammatic notation of the previous sections and the second the usual diagram for a spherical fusion category as in \cite{TV, BW}, with the objects from $\mathcal Z(\mac)$ highlighted in colour. It is obtained from the second diagram by inserting the functor $-\oo j:\mac\to\mac$ for the  line labeled $j$ and the functor  $F(i\oo-)=a\oo (i\oo -):\mac\to\mac$ for the lines with $F$ and $i$. 

The polygon diagram and 6j symbols for the second tetrahedron are given by
\begin{align*}
\begin{tikzpicture}[scale=.4, baseline=(current bounding box.center)]
\draw[line width=1pt, dotted] (-4,4)--(4,4)node[anchor=south west]{$m$} --(4,-4) node[anchor=north west]{$n$}--(-4,-4)node[anchor=north east]{$q$}--(-4,4) node[anchor=south east]{$p$};
\draw[line width=.5pt, color=gray] (4,0)--(-4,0) node[pos=.8, anchor=north]{$j$}node[sloped, pos=.8, allow upside down]{\arrowOut};
\draw[fill=white, color=white] (0,0) circle (.4);
\draw[line width=.5pt, color=black] (0,4)--(0,-4) node[pos=.8, anchor=west]{$k$}node[sloped, pos=.8, allow upside down]{\arrowOut};
\draw[color=black, fill=black] (0,4) circle (.2) node[anchor=south]{$\rho$};
\draw[color=black, fill=black] (4,0) circle (.2) node[anchor=west]{$\gamma$};
\draw[color=black, fill=black] (0,-4) circle (.2) node[anchor=north]{$\tau$};
\draw[color=black, fill=black] (-4,0) circle (.2) node[anchor=east]{$\alpha$};
\end{tikzpicture}
\qquad
\begin{tikzpicture}[scale=.4, baseline=(current bounding box.center)]
\draw[line width=1.5pt, color=black] (-1,5)--(1,5) node[anchor=west]{$p$};
\draw[line width=1.5pt, color=black] (-1,-5)--(1,-5) node[anchor=west]{$p$};
\draw[line width=1.5pt, color=black] (0,5)--(0,-5);
\draw[line width=.5pt, color=gray] (0,1)  .. controls (-3,0) and (-3,-2) .. (0,-3)  node[midway, anchor= east]{$j$};
\draw[color=white, fill=white] (-2,0) circle (.4);
\draw[line width=.5pt, color=black] (0,3)  .. controls (-3,2) and (-3,0) .. (0,-1)  node[midway, anchor=east]{$k$};
\node at (0,2)[anchor=west, color=black] {$m$};
\node at (0,0)[anchor=west, color=black] {$n$};
\node at (0,-2)[anchor=west, color=black] {$q$};
\draw[color=black, fill=black] (0,3) circle (.2) node[anchor=west]{$\;\rho$};
\draw[color=black, fill=black] (0,1) circle (.2) node[anchor=west]{$\;\gamma$};
\draw[color=black, fill=black] (0,-1) circle (.2) node[anchor=west]{$\;\tau$};
\draw[color=black, fill=black] (0,-3) circle (.2) node[anchor=west]{$\;\alpha$};
\end{tikzpicture}
\qquad
\begin{tikzpicture}[scale=.4, baseline=(current bounding box.center)]
\draw[line width=.5pt, color=black] (-1,5)--(1,5) node[anchor=west]{$p$};
\draw[line width=.5pt] (0,5)--(0,3); 
\draw[line width=.5pt] (0,-5)--(0,-3); 
\draw[line width=.5pt, color=black] (-1,-5)--(1,-5) node[anchor=west]{$p$};
\draw[line width=.5pt] (0,3)--(2,1) node[midway, anchor=south west]{$m$};
\draw[line width=.5pt] (0,3)--(-2,-1) node[midway, anchor=south east] {$k$};
\draw[line width=.5pt] (-2,-1)--(2,1) node[midway, anchor=south]{$n$};
\draw[line width=.5pt] (2,1)--(0,-3) node[midway, anchor=north west]{$j$};  
\draw[line width=.5pt] (-2,-1)--(0,-3) node[midway, anchor=north east]{$q$};    
\draw[color=black, fill=black] (0,3) circle (.2) node[anchor=west]{$\;\rho$};
\draw[color=black, fill=black] (2,1) circle (.2) node[anchor=west]{$\gamma\;$};
\draw[color=black, fill=black] (-2,-1) circle (.2) node[anchor=east]{$\;\tau$};
\draw[color=black, fill=black] (0,-3) circle (.2) node[anchor=west]{$\;\alpha$};
\end{tikzpicture}
\end{align*}
Note that here the polygon diagram is viewed from the back, not the front, since the trivial defect plane is oriented in the opposite direction. Again, the second diagram for the 6j symbol is the standard diagram for a spherical fusion category obtained by inserting the functors $k\oo-: \mac\to\mac$ and $-\oo j:\mac\to\mac$ for the black line labeled $k$ and the grey line labeled $j$ in the second diagram.

The polygon diagram and the 6j symbols for the third tetrahedron are given by
\begin{align*}
\begin{tikzpicture}[scale=.4, baseline=(current bounding box.center)]
\draw[line width=1pt, dotted] (-4,4)--(4,4)node[anchor=south west]{$q$} --(4,-4) node[anchor=north west]{$n$}--(-4,-4)node[anchor=north east]{$m$}--(-4,4) node[anchor=south east]{$p$};
\draw[line width=.5pt, color=gray] (0,4).. controls (1.5,0)..(0,-4) node[pos=.8, anchor=west]{$l$}node[sloped, pos=.8, allow upside down]{\arrowOut};
\draw[line width=1pt, color=magenta, dashed] (0,4)--(0,-4) node[pos=.8, anchor=east]{$G$}node[sloped, pos=.8, allow upside down]{\arrowIn};
\draw[fill=white, color=white] (.5,0) circle (.9);
\draw[line width=.5pt, color=black] (4,0)--(-4,0) node[pos=.8, anchor=south]{$k$}node[sloped, pos=.8, allow upside down]{\arrowOut};
\draw[color=black, fill=black] (0,4) circle (.2) node[anchor=south]{$\nu$};
\draw[color=black, fill=black] (4,0) circle (.2) node[anchor=west]{$\tau$};
\draw[color=black, fill=black] (0,-4) circle (.2) node[anchor=north]{$\mu$};
\draw[color=black, fill=black] (-4,0) circle (.2) node[anchor=east]{$\rho$};
\end{tikzpicture}
\qquad
\begin{tikzpicture}[scale=.4, baseline=(current bounding box.center)]
\draw[line width=1.5pt, color=black] (-1,5)--(1,5) node[anchor=west]{$p$};
\draw[line width=1.5pt, color=black] (-1,-5)--(1,-5) node[anchor=west]{$p$};
\draw[line width=1.5pt, color=black] (0,5)--(0,-5);
\draw[line width=1pt, color=magenta, dashed] (0,3)  .. controls (-5,2) and (-5,0) .. (0,-1)  node[midway, anchor= east]{$G$};
\draw[line width=.5pt, color=gray] (0,3)  .. controls (-3,2) and (-3,0) .. (0,-1)  node[midway, anchor= east]{$l$};
\draw[color=white, fill=white] (-2,0) circle (.4);
\draw[color=white, fill=white] (-2.2,-.5) circle (.4);
\draw[line width=.5pt, color=black] (0,1)  .. controls (-3,0) and (-3,-2) .. (0,-3)  node[midway, anchor= east]{$k$};
\node at (0,2)[anchor=west, color=black] {$q$};
\node at (0,0)[anchor=west, color=black] {$n$};
\node at (0,-2)[anchor=west, color=black] {$m$};
\draw[color=black, fill=black] (0,3) circle (.2) node[anchor=west]{$\;\nu$};
\draw[color=black, fill=black] (0,1) circle (.2) node[anchor=west]{$\;\tau$};
\draw[color=black, fill=black] (0,-1) circle (.2) node[anchor=west]{$\;\mu$};
\draw[color=black, fill=black] (0,-3) circle (.2) node[anchor=west]{$\;\rho$};
\end{tikzpicture}
\qquad
\begin{tikzpicture}[scale=.4, baseline=(current bounding box.center)]
\draw[line width=1pt, color=magenta] (0,3) ..controls (-6,2) and (-6,0) .. (2,-1) node[midway, anchor=east]{$b$};
\draw[color=white, fill=white] (-1.3,-.5) circle (.4);
\draw[line width=.5pt, color=black] (-1,5)--(1,5) node[anchor=west]{$p$};
\draw[line width=.5pt] (0,5)--(0,3); 
\draw[line width=.5pt] (0,-5)--(0,-3); 
\draw[line width=.5pt, color=black] (-1,-5)--(1,-5) node[anchor=west]{$p$};
\draw[line width=.5pt] (0,3)--(-2,1) node[midway, anchor=east]{$q$};
\draw[line width=.5pt] (0,3)--(2,-1) node[midway, anchor=south west] {$l$};
\draw[line width=.5pt] (-2,1)--(2,-1) node[midway, anchor=south]{$n$};
\draw[line width=.5pt] (-2,1)--(0,-3) node[midway, anchor=north east]{$k$};  
\draw[line width=.5pt] (2,-1)--(0,-3) node[midway, anchor=north west]{$m$};    
\draw[color=black, fill=black] (0,3) circle (.2) node[anchor=west]{$\;\nu$};
\draw[color=black, fill=black] (-2,1) circle (.2) node[anchor=east]{$\tau\;$};
\draw[color=black, fill=black] (2,-1) circle (.2) node[anchor=west]{$\;\mu$};
\draw[color=black, fill=black] (0,-3) circle (.2) node[anchor=west]{$\;\rho$};
\end{tikzpicture}
\end{align*}
Here, the second 6j symbol is obtained from the first by inserting the functor $k\oo -:\mac\to \mac$ for the black line labeled $k$ and the functor $G(-\oo l)=b\oo (-\oo l):\mac\to\mac$ for the
lines labeled $G$ and $l$. The crossing in the second 6j symbol stands for the morphism $\sigma^\inv_{k,b}: b\oo k\to k\oo b$ associated with $b$.  

The rescaled state sum is obtained by multiplying the three 6j symbols and the dimensions of the internal edges, summing over the labels $j,k\in I_\mac$  and over the morphisms $\alpha,\gamma,\rho,\tau$ assigned to the glued faces
\begin{align*}
&\begin{tikzpicture}[scale=.4, baseline=(current bounding box.center)]
\node at (-5,0) [anchor=east]{$\mathcal Z'(M, l_{\partial M}, b_{\partial M})=\sum_{\substack{{j,k}\\{\alpha,\gamma,\rho,\tau}}} \dim(j)\dim(k)$};
\begin{scope}[shift={(0,0)}]
\draw[line width=.5pt, color=black] (-1,5)--(1,5) node[anchor=west]{$p$};
\draw[line width=.5pt] (0,5)--(0,3); 
\draw[line width=.5pt] (0,-5)--(0,-3); 
\draw[line width=.5pt, color=black] (-1,-5)--(1,-5) node[anchor=west]{$p$};
\draw[line width=.5pt] (0,3)--(-2,1) node[midway, anchor=south east]{$q$};
\draw[line width=.5pt] (0,3)--(2,-1) node[midway, anchor=south west] {$j$};
\draw[line width=.5pt] (-2,1)--(2,-1) node[midway, anchor=south]{$n$};
\draw[line width=.5pt] (-2,1)--(0,-3) node[midway, anchor=north east]{$i$};  
\draw[line width=1pt, color=cyan] (-2,1) ..controls (-3,-2).. (0,-3) node[midway, anchor=east]{$a$};
\draw[line width=.5pt] (2,-1)--(0,-3) node[midway, anchor=north west]{$m$};    
\draw[color=black, fill=black] (0,3) circle (.2) node[anchor=west]{$\;\alpha$};
\draw[color=black, fill=black] (-2,1) circle (.2) node[anchor=east]{$\beta\;$};
\draw[color=black, fill=black] (2,-1) circle (.2) node[anchor=west]{$\;\gamma$};
\draw[color=black, fill=black] (0,-3) circle (.2) node[anchor=west]{$\;\delta$};
\end{scope}
\begin{scope}[shift={(8,0)}]
\draw[line width=.5pt, color=black] (-1,5)--(1,5) node[anchor=west]{$p$};
\draw[line width=.5pt] (0,5)--(0,3); 
\draw[line width=.5pt] (0,-5)--(0,-3); 
\draw[line width=.5pt, color=black] (-1,-5)--(1,-5) node[anchor=west]{$p$};
\draw[line width=.5pt] (0,3)--(2,1) node[midway, anchor=south west]{$m$};
\draw[line width=.5pt] (0,3)--(-2,-1) node[midway, anchor=south east] {$k$};
\draw[line width=.5pt] (-2,-1)--(2,1) node[midway, anchor=south]{$n$};
\draw[line width=.5pt] (2,1)--(0,-3) node[midway, anchor=north west]{$j$};  
\draw[line width=.5pt] (-2,-1)--(0,-3) node[midway, anchor=north east]{$q$};    
\draw[color=black, fill=black] (0,3) circle (.2) node[anchor=west]{$\;\rho$};
\draw[color=black, fill=black] (2,1) circle (.2) node[anchor=west]{$\gamma\;$};
\draw[color=black, fill=black] (-2,-1) circle (.2) node[anchor=east]{$\;\tau$};
\draw[color=black, fill=black] (0,-3) circle (.2) node[anchor=west]{$\;\alpha$};
\end{scope}
\begin{scope}[shift={(17,0)}]
\draw[line width=1pt, color=magenta] (0,3) ..controls (-6,2) and (-6,0) .. (2,-1) node[midway, anchor=east]{$b$};
\draw[color=white, fill=white] (-1.3,-.5) circle (.4);
\draw[line width=.5pt, color=black] (-1,5)--(1,5) node[anchor=west]{$p$};
\draw[line width=.5pt] (0,5)--(0,3); 
\draw[line width=.5pt] (0,-5)--(0,-3); 
\draw[line width=.5pt, color=black] (-1,-5)--(1,-5) node[anchor=west]{$p$};
\draw[line width=.5pt] (0,3)--(-2,1) node[midway, anchor=east]{$q$};
\draw[line width=.5pt] (0,3)--(2,-1) node[midway, anchor=south west] {$l$};
\draw[line width=.5pt] (-2,1)--(2,-1) node[midway, anchor=south]{$n$};
\draw[line width=.5pt] (-2,1)--(0,-3) node[midway, anchor=north east]{$k$};  
\draw[line width=.5pt] (2,-1)--(0,-3) node[midway, anchor=north west]{$m$};    
\draw[color=black, fill=black] (0,3) circle (.2) node[anchor=west]{$\;\nu$};
\draw[color=black, fill=black] (-2,1) circle (.2) node[anchor=east]{$\tau\;$};
\draw[color=black, fill=black] (2,-1) circle (.2) node[anchor=west]{$\;\mu$};
\draw[color=black, fill=black] (0,-3) circle (.2) node[anchor=west]{$\;\rho$};
\end{scope}
\end{tikzpicture}
\end{align*}
To  compute this state sum we first use   \eqref{pic:gluepoly} to glue the polygon diagrams along the faces labeled by the objects $k,m,p$ and $q,j,p$  and the morphisms $\rho$ and $\alpha$. 
 We then  simplify the resulting expressions with  \eqref{eq:traceout}  and  apply  \eqref{eq:snake} together with the naturality of the half-braiding. This gives
\begin{align*}
&\begin{tikzpicture}[scale=.4, baseline=(current bounding box.center)]
\draw[line width=1pt, color=magenta] (0,3) ..controls (-6,2) and (-6,0) .. (2,-1) node[midway, anchor=east]{$b$};
\draw[color=white, fill=white] (-2,-.2) circle(.4);
\node at (-3,-3) [anchor=east]{$\mathcal Z'(M, l_{\partial M}, b_{\partial M})\stackrel{\eqref{pic:gluepoly}}=\sum_{\substack{{j,k}\\{\gamma,\tau}}} \dim(j)$};
\node at (-3,-4)[anchor=east]{$\dim(k)$};
\draw[line width=.5pt, color=black] (-1,5)--(1,5) node[anchor=west]{$p$};
\draw[line width=.5pt] (0,5)--(0,3); 
\draw[line width=.5pt] (0,-11)--(0,-13); 
\draw[line width=.5pt, color=black] (-1,-13)--(1,-13) node[anchor=west]{$p$};
\draw[line width=.5pt] (0,3)--(-2,1) node[midway, anchor=east]{$q$};
\draw[line width=.5pt] (0,3)--(2,-1) node[midway, anchor=south west] {$l$};
\draw[line width=.5pt] (-2,1)--(2,-1) node[midway, anchor=south]{$n$};
\draw[line width=.5pt] (-2,1)--(-2,-5) node[midway, anchor=north east]{$k$};  
\draw[line width=.5pt] (2,-1)--(2,-3) node[midway, anchor=west]{$m$};  
\draw[line width=.5pt] (2,-3)--(-2,-5) node[midway, anchor=south]{$n$};    
\draw[line width=.5pt] (-2,-5)--(-2,-7) node[midway, anchor=east]{$q$};  
\draw[line width=.5pt] (2,-3)--(2,-9) node[midway, anchor=west]{$j$};  
\draw[line width=.5pt] (-2,-7)--(2,-9) node[midway, anchor=south]{$n$};  
\draw[line width=.5pt] (0,-11)--(2,-9) node[midway, anchor=north west]{$m$};  
\draw[line width=.5pt] (0,-11)--(-2,-7) node[midway, anchor=north east]{$i$};  
\draw[line width=1pt, color=cyan] (-2,-7) ..controls (-3,-10).. (0,-11) node[midway, anchor=east]{$a$};
\draw[color=black, fill=black] (0,3) circle (.2) node[anchor=west]{$\;\nu$};
\draw[color=black, fill=black] (-2,1) circle (.2) node[anchor=east]{$\tau\;$};
\draw[color=black, fill=black] (2,-1) circle (.2) node[anchor=west]{$\;\mu$};
\draw[color=black, fill=black] (-2,-5) circle (.2) node[anchor=east]{$\tau\;$};
\draw[color=black, fill=black] (2,-3) circle (.2) node[anchor=west]{$\;\gamma$};
\draw[color=black, fill=black] (-2,-7) circle (.2) node[anchor=east]{$\;\beta$};
\draw[color=black, fill=black] (2,-9) circle (.2) node[anchor=west]{$\;\gamma$};
\draw[color=black, fill=black] (0,-11) circle (.2) node[anchor=west]{$\;\delta$};
\begin{scope}[shift={(13,0)}]
\draw[line width=1pt, color=magenta] (0,3) ..controls (-6,2) and (-6,0) .. (2,-1) node[midway, anchor=east]{$b$};
\draw[color=white, fill=white] (-2,-.2) circle(.4);
\node at (-3,-3) [anchor=east]{$\stackrel{\eqref{eq:traceout}}=\sum_{k,\tau} \dim(k)$};
\draw[line width=.5pt, color=black] (-1,5)--(1,5) node[anchor=west]{$p$};
\draw[line width=.5pt] (0,5)--(0,3); 
\draw[line width=.5pt] (0,-11)--(0,-13); 
\draw[line width=.5pt, color=black] (-1,-13)--(1,-13) node[anchor=west]{$p$};
\draw[line width=.5pt] (0,3)--(-2,1) node[midway, anchor=east]{$q$};
\draw[line width=.5pt] (0,3)--(2,-1) node[midway, anchor=south west] {$l$};
\draw[line width=.5pt] (-2,1)--(2,-1) node[midway, anchor=south]{$n$};
\draw[line width=.5pt] (-2,1)--(-2,-5) node[midway, anchor=north east]{$k$};  
\draw[line width=.5pt] (2,-1)--(0,-11) node[midway, anchor=west]{$m$};  
\draw[line width=.5pt] (-2,-5)--(-2,-7) node[midway, anchor=east]{$q$};   
\draw[line width=.5pt] (0,-11)--(-2,-7) node[midway, anchor=north east]{$i$}; 
 \draw[line width=.5pt] (-2,-7).. controls (-1,-8) and (0,-8).. (0,-7);
 \draw[line width=.5pt] (-2,-5).. controls (-1,-4) and (0,-4).. (0,-5); 
 \draw[line width=.5pt] (0,-7)--(0,-5) node[midway, anchor=east]{$n$} node[sloped, pos=.5, allow upside down]{\arrowOut};
\draw[line width=1pt, color=cyan] (-2,-7) ..controls (-3,-10).. (0,-11) node[midway, anchor=east]{$a$};
\draw[color=black, fill=black] (0,3) circle (.2) node[anchor=west]{$\;\nu$};
\draw[color=black, fill=black] (-2,1) circle (.2) node[anchor=east]{$\tau\;$};
\draw[color=black, fill=black] (2,-1) circle (.2) node[anchor=west]{$\;\mu$};
\draw[color=black, fill=black] (-2,-5) circle (.2) node[anchor=east]{$\tau\;$};
\draw[color=black, fill=black] (-2,-7) circle (.2) node[anchor=east]{$\;\beta$};
\draw[color=black, fill=black] (0,-11) circle (.2) node[anchor=west]{$\;\delta$};
\end{scope}
\begin{scope}[shift={(24.5,0)}]
\draw[line width=1pt, color=magenta] (0,3) ..controls (-6,2) and (-6,0) .. (2,-1) node[midway, anchor=east]{$b$};
\draw[color=white, fill=white] (-1.4,-.4) circle(.4);
\draw[color=white, fill=white] (.4,-.8) circle(.4);
\node at (-3,-3) [anchor=east]{$\stackrel{\eqref{eq:snake}}=\sum_{k,\tau} \dim(k)$};
\draw[line width=.5pt, color=black] (-1,5)--(1,5) node[anchor=west]{$p$};
\draw[line width=.5pt] (0,5)--(0,3); 
\draw[line width=.5pt] (0,-11)--(0,-13); 
\draw[line width=.5pt, color=black] (-1,-13)--(1,-13) node[anchor=west]{$p$};
\draw[line width=.5pt] (0,3)--(-2,-2) node[midway, anchor=east]{$q$};
\draw[line width=.5pt] (0,3)--(2,-1) node[midway, anchor=south west] {$l$};
\draw[line width=.5pt] (-2,-2).. controls (-.2,-5) and (.2,2).. (2,-1) node[midway, anchor=north west]{$n$} node[sloped, pos=.5, allow upside down]{\arrowOut};
\draw[line width=.5pt] (-2,-2)--(-2,-5) node[pos=.3, anchor=north east]{$k$};  
\draw[line width=.5pt] (2,-1)--(0,-11) node[midway, anchor=west]{$m$};  
\draw[line width=.5pt] (-2,-5)--(-2,-7) node[midway, anchor=east]{$q$};   
\draw[line width=.5pt] (0,-11)--(-2,-7) node[midway, anchor=north east]{$i$}; 
 \draw[line width=.5pt] (-2,-7).. controls (-1,-8) and (0,-8).. (0,-7);
 \draw[line width=.5pt] (-2,-5).. controls (-1,-4) and (0,-4).. (0,-5); 
 \draw[line width=.5pt] (0,-7)--(0,-5) node[midway, anchor=east]{$n$} node[sloped, pos=.5, allow upside down]{\arrowOut};
\draw[line width=1pt, color=cyan] (-2,-7) ..controls (-3,-10).. (0,-11) node[midway, anchor=east]{$a$};
\draw[color=black, fill=black] (0,3) circle (.2) node[anchor=west]{$\;\nu$};
\draw[color=black, fill=black] (-2,-2) circle (.2) node[anchor=east]{$\tau\;$};
\draw[color=black, fill=black] (2,-1) circle (.2) node[anchor=west]{$\;\mu$};
\draw[color=black, fill=black] (-2,-5) circle (.2) node[anchor=east]{$\tau\;$};
\draw[color=black, fill=black] (-2,-7) circle (.2) node[anchor=east]{$\;\beta$};
\draw[color=black, fill=black] (0,-11) circle (.2) node[anchor=west]{$\;\delta$};
\end{scope}
\end{tikzpicture}
\end{align*}
Applying again identities \eqref{eq:traceout}  and  \eqref{eq:snake} together with the naturality of the half-braiding   yields
\begin{align*}
\begin{tikzpicture}[scale=.4, baseline=(current bounding box.center)]
\node at (-5,0) [anchor=east]{$\mathcal Z'(M, l_{\partial M}, b_{\partial M})\stackrel{\eqref{eq:traceout}}=$};
\begin{scope}[scale=.7, shift={(0,2)}]
\draw[line width=1pt, color=magenta] (0,3) ..controls (-6,2) and (-6,0) .. (2,-1) node[midway, anchor=east]{$b$};
\draw[color=white, fill=white] (-1.7,-.3) circle(.4);
\draw[color=white, fill=white] (.2,-.7) circle(.4);
\draw[line width=.5pt, color=black] (-1.5,5)--(1.5,5) node[anchor=west]{$p$};
\draw[line width=.5pt] (0,5)--(0,3); 
\draw[line width=.5pt, color=black] (-1.5,-9)--(1.5,-9) node[anchor=west]{$p$};
\draw[line width=.5pt] (0,-7)--(0,-9); 
\draw[line width=.5pt] (0,3)--(2,-1) node[midway, anchor=south west] {$l$};
\draw[line width=.5pt] (0,3)--(-3,-3) node[pos=.3, anchor=east]{$q$};
\draw[line width=.5pt] (0,-7)--(2,-1) node[midway, anchor=north west]{$m$};
\draw[line width=.5pt]  (-3,-3).. controls (-1,-6.5) and  (0,2.5) ..  (2,-1) node[pos=.5, anchor=east]{$n$} node[sloped, pos=.5, allow upside down]{\arrowOut};
\draw[line width=.5pt] (0,-7)--(-3,-3) node[pos=.3, anchor=south west]{$i$}; 
\draw[line width=1pt, color=cyan] (-3,-3) ..controls (-4,-6).. (0,-7) node[midway, anchor=east]{$a$};
\draw[color=black, fill=black] (0,3) circle (.28) node[anchor=west]{$\;\nu$};
\draw[color=black, fill=black] (2,-1) circle (.28) node[anchor=west]{$\;\mu$};
\draw[color=black, fill=black] (-3,-3) circle (.28) node[anchor=east]{$\beta\;$};
\draw[color=black, fill=black] (0,-7) circle (.28) node[anchor=west]{$\;\delta$};
\end{scope}
\begin{scope}[shift={(11,0)}]
\node at (-6,0)[anchor=east]{$\stackrel{\eqref{eq:snake}}=$};
\draw[line width=1pt, color=magenta] (0,3) ..controls (-6,2) and (-6,0) .. (2,-1) node[midway, anchor=east]{$b$};
\draw[color=white, fill=white]  (-1.4,-.4) circle (.3);
\draw[color=white, fill=white]  (-2.4,-.2) circle (.3);
\draw[line width=.5pt, color=black] (-1,5)--(1,5) node[anchor=west]{$p$};
\draw[line width=.5pt] (0,5)--(0,3); 
\draw[line width=.5pt, color=black] (-1,-5)--(1,-5) node[anchor=west]{$p$};
\draw[line width=.5pt] (0,-5)--(0,-3); 
\draw[line width=.5pt] (0,3)--(2,-1) node[midway, anchor=south west] {$l$};
\draw[line width=.5pt] (0,3)--(-2,1) node[midway, anchor=east] {$q$};
\draw[line width=.5pt] (-2,1)--(2,-1) node[midway, anchor=south] {$n$};
\draw[line width=.5pt] (0,-3)--(2,-1) node[midway, anchor=north west] {$m$};
\draw[line width=.5pt] (0,-3)--(-2,1) node[midway, anchor=north east] {$i$};
\draw[line width=1pt, color=cyan] (-2,1) ..controls (-3,-2).. (0,-3) node[midway, anchor=east]{$a$};
\draw[color=black, fill=black] (0,3) circle (.2) node[anchor=west]{$\;\nu$};
\draw[color=black, fill=black] (2,-1) circle (.2) node[anchor=west]{$\;\mu$};
\draw[color=black, fill=black] (-2,1) circle (.2) node[anchor=east]{$\beta\;$};
\draw[color=black, fill=black] (0,-3) circle (.2) node[anchor=west]{$\;\delta$};
\end{scope}
\end{tikzpicture}
\end{align*}
\end{proof}

Example \ref{ex:braiding} allows one to construct any  ribbon link invariant for the centre $\mathcal Z(\mac)$ of a spherical fusion category $\mac$ from state sums that involve only defect surfaces and defect lines. This is achieved by considering a  ribbon diagram and realising each crossing as in Example \ref{ex:braiding}.

\begin{example}\label{ex:tangleex} Let $\mac$ be a spherical fusion category with center $\mathcal Z(\mac)$ and $L$ a ribbon link whose connected components are labeled by objects of $\mathcal Z(\mac)$.  Let $M$ be the following triangulated 3-manifold with defect data:
\begin{enumerate}
\item  Realise a  ribbon diagram $D_L$ for $L$  as a  
PL diagram  that is the horizontal and vertical composite of the following squares  and the ones obtained by rotating them, reversing orientation in the third square and replacing an over- by an undercrossing in the last square 
\begin{align*}
\begin{tikzpicture}[scale=.3]
\draw[line width=1pt, dotted] (-4,4)--(4,4) --(4,-4)--(-4,-4)--(-4,4);
\end{tikzpicture}
\qquad
\begin{tikzpicture}[scale=.3]
\draw[line width=1pt, dotted] (-4,4)--(4,4) --(4,-4)--(-4,-4)--(-4,4);
\draw[line width=1pt, color=cyan, style=dashed] (4,0)--(-4,0) node[pos=.5, anchor=north] {$a$} node[sloped, pos=.5, allow upside down]{\arrowIn};
\end{tikzpicture}
\qquad
\begin{tikzpicture}[scale=.3]
\draw[line width=1pt, dotted] (-4,4)--(4,4) --(4,-4)--(-4,-4)--(-4,4);
\draw[line width=1pt, color=cyan, style=dashed] (0,4)--(0,0)--(-4,0) node[pos=.6, anchor=north] {$a$} node[sloped, pos=.6, allow upside down]{\arrowIn};
\end{tikzpicture}
\qquad
\begin{tikzpicture}[scale=.3]
\draw[line width=1pt, dotted] (-4,4)--(4,4) --(4,-4)--(-4,-4)--(-4,4);
\draw[line width=1pt, color=magenta, style=dashed] (0,4)--(0,-4) node[pos=.8, anchor=west] {$b$} node[sloped, pos=.8, allow upside down]{\arrowIn};
\draw[fill=white, color=white] (0,0) circle (1.2);
\draw[line width=1pt, color=cyan, style=dashed] (4,0)--(-4,0) node[pos=.7, anchor=north] {$a$} node[sloped, pos=.7, allow upside down]{\arrowIn};
\end{tikzpicture}
 \end{align*}

\item  Assign to a  dashed line with label $(a,\sigma_{-,a})$ in $\mathcal Z(\mac)$   the $(\mac,\mac)$-bimodule functor $F_a=a\oo -: \mac\to\mac$  with coherence data as in \eqref{eq:coeherencecenter}, to a crossing with labels $a,b$  the bimodule natural transformation given by  $\sigma_{a,b}^{\pm 1}$, as in Example \ref{ex:braiding}, and to the midpoint of the second  and  third diagram either the identity or  the (co)units  of the  adjunction $F_a^l\dashv F_a$. 

\item Draw solid vertical and horizontal lines in each square that connect the midpoints of the boundary edges and end to the left of the dashed lines, viewed in the direction of their orientation. Let the vertical  line cross under and the horizontal  line over all other lines of the diagram. 

\item 
Label each vertex of the square and each solid line  by a simple object of $\mac$, such that the labels match at the vertices and edges where the squares are glued to form the  diagram $D_L$. Label each  endpoint of lines at the boundary  by a morphism in the appropriate morphism space in $\mac$
\begin{align*}
&\begin{tikzpicture}[scale=.3, baseline=(current bounding box.center)]
\draw[line width=1pt, dotted] (-4,4)--(4,4)node[anchor=south west]{$q$} --(4,-4) node[anchor=north west]{$n$}--(-4,-4)node[anchor=north east]{$m$}--(-4,4) node[anchor=south east]{$p$};
\draw[line width=.5pt, color=black] (0,4)--(0,-4) node[pos=.2, anchor=west]{$j$}node[sloped, pos=.2, allow upside down]{\arrowOut};
\draw[fill=white, color=white] (0,0) circle (1.2);
\draw[line width=.5pt, color=black] (4,0) --(-4,0) node[pos=.8, anchor=north]{$i$}node[sloped, pos=.8, allow upside down]{\arrowOut};
\draw[color=black, fill=black] (0,4) circle (.2) node[anchor=south]{$\alpha$};
\draw[color=black, fill=black] (4,0) circle (.2) node[anchor=west]{$\beta$};
\draw[color=black, fill=black] (0,-4) circle (.2) node[anchor=north]{$\gamma$};
\draw[color=black, fill=black] (-4,0) circle (.2) node[anchor=east]{$\delta$};
\end{tikzpicture}
&
&\begin{tikzpicture}[scale=.3, baseline=(current bounding box.center)]
\draw[line width=1pt, dotted] (-4,4)--(4,4)node[anchor=south west]{$q$} --(4,-4) node[anchor=north west]{$n$}--(-4,-4)node[anchor=north east]{$m$}--(-4,4) node[anchor=south east]{$p$};
\draw[line width=.5pt, color=black] (0,4)--(0,-4) node[pos=.2, anchor=west]{$j$}node[sloped, pos=.2, allow upside down]{\arrowOut};
\draw[fill=white, color=white] (0,-.5) circle (1.2);
\draw[line width=.5pt, color=black] (4,0) .. controls (0,-1.5) ..  (-4,0) node[pos=.8, anchor=north]{$i$}node[sloped, pos=.8, allow upside down]{\arrowOut};
\draw[line width=1pt, color=cyan, dashed] (4,0)--(-4,0) node[pos=.8, anchor=south]{$a$}node[sloped, pos=.8, allow upside down]{\arrowIn};
\draw[color=black, fill=black] (0,4) circle (.2) node[anchor=south]{$\alpha$};
\draw[color=black, fill=black] (4,0) circle (.2) node[anchor=west]{$\beta$};
\draw[color=black, fill=black] (0,-4) circle (.2) node[anchor=north]{$\gamma$};
\draw[color=black, fill=black] (-4,0) circle (.2) node[anchor=east]{$\delta$};
\end{tikzpicture}
&
&\begin{tikzpicture}[scale=.3, baseline=(current bounding box.center)]
\draw[line width=1pt, dotted] (-4,4)--(4,4)node[anchor=south west]{$q$} --(4,-4) node[anchor=north west]{$n$}--(-4,-4)node[anchor=north east]{$m$}--(-4,4) node[anchor=south east]{$p$};
\draw[line width=.5pt, color=black] (0,4)--(0,-4) node[pos=.2, anchor=west]{$j$}node[sloped, pos=.2, allow upside down]{\arrowOut};
\draw[fill=white, color=white] (0,0) circle (1.2);
\draw[line width=.5pt, color=black] (4,0) --(-4,0) node[pos=.8, anchor=north]{$i$}node[sloped, pos=.8, allow upside down]{\arrowOut};
\draw[line width=1pt, color=cyan, dashed] (0,4).. controls (-1,1).. (-4,0) node[pos=.8, anchor=south]{$a$}node[sloped, pos=.8, allow upside down]{\arrowIn};
\draw[color=black, fill=black] (0,4) circle (.2) node[anchor=south]{$\alpha$};
\draw[color=black, fill=black] (4,0) circle (.2) node[anchor=west]{$\beta$};
\draw[color=black, fill=black] (0,-4) circle (.2) node[anchor=north]{$\gamma$};
\draw[color=black, fill=black] (-4,0) circle (.2) node[anchor=east]{$\delta$};
\end{tikzpicture}
&
&\begin{tikzpicture}[scale=.3, baseline=(current bounding box.center)]
\draw[line width=1pt, color=magenta, dashed] (0,4)--(0,-4) node[pos=.8, anchor=east]{$b$}node[sloped, pos=.8, allow upside down]{\arrowIn};
\draw[line width=1pt, dotted] (-4,4)--(4,4)node[anchor=south west]{$q$} --(4,-4) node[anchor=north west]{$n$}--(-4,-4)node[anchor=north east]{$m$}--(-4,4) node[anchor=south east]{$p$};
\draw[line width=.5pt, color=black] (0,4).. controls (1.5,0) ..(0,-4) node[pos=.2, anchor=west]{$j$}node[sloped, pos=.2, allow upside down]{\arrowOut};
\draw[fill=white, color=white] (.5,-.5) circle (1.2);
\draw[line width=.5pt, color=black] (4,0) .. controls (0,-1.5) ..  (-4,0) node[pos=.8, anchor=north]{$i$}node[sloped, pos=.8, allow upside down]{\arrowOut};
\draw[line width=1pt, color=cyan, dashed] (4,0)--(-4,0) node[pos=.8, anchor=south]{$a$}node[sloped, pos=.8, allow upside down]{\arrowIn};
\draw[color=black, fill=black] (0,4) circle (.2) node[anchor=south]{$\alpha$};
\draw[color=black, fill=black] (4,0) circle (.2) node[anchor=west]{$\beta$};
\draw[color=black, fill=black] (0,-4) circle (.2) node[anchor=north]{$\gamma$};
\draw[color=black, fill=black] (-4,0) circle (.2) node[anchor=east]{$\delta$};
\end{tikzpicture}                                             
\end{align*}

\item Assign to  each  square in 4.~a defect tetrahedron for the $(\mac,\mac)$-bimodule category $\mac$ whose 6j symbol is the polygon diagram in 4.~,  as in Figure \ref{fig:basictets}. Construct the last defect tetrahedron in Figure \ref{fig:basictets} as in Example \ref{ex:braiding} by gluing three tetrahedra with defect planes without crossings.

\item The polygon diagrams glue to a diagram that superimposes $D_L$ with additional edges labeled by objects of $\mac$, whose endpoints  are labeled by morphisms in $\mac$. This defines a gluing pattern for the associated tetrahedra, as   in Figure \ref{fig:braidingtet}, that yields  a triangulated 3-ball $M$ with a ribbon link $L'$ in the interior.
\end{enumerate}
The state sum of $M$ is  
\begin{align*}
\mathcal Z(M, l_{\partial M}, b_{\partial M})=\mathrm{ev}(D_L)\cdot\mathcal Z(M', l_{\partial M}, b_{\partial M}) ,
\end{align*}
where $\mathcal Z(M', l_{\partial M}, b_{\partial M})$ is the  state sum of the 3-ball $M'$ with the same labels, but without  defects.
\end{example}

\begin{figure}
\begin{align*}
&\begin{tikzpicture}[scale=.5]
\draw[line width=1pt, color=black] (-5,0)--(0,4)  node[sloped, pos=0.4, allow upside down]{\arrowIn} node[pos=.4, anchor=south east]{$p$};
\draw[line width=1pt, color=black] (5,0)--(0,4)  node[sloped, pos=0.4, allow upside down]{\arrowIn} node[pos=.4, anchor=south west]{$q$};
\draw[line width=1pt, color=black] (-5,0)--(0,-4)  node[sloped, pos=0.4, allow upside down]{\arrowIn} node[pos=.4, anchor=north east]{$m$};
\draw[line width=1pt, color=black] (5,0)--(0,-4)  node[sloped, pos=0.4, allow upside down]{\arrowIn} node[pos=.4, anchor=north west]{$n$};
\draw[line width=1pt, color=black] (-5,0)--(-.5,0)  node[sloped, pos=0.5, allow upside down]{\arrowIn} node[pos=.5, anchor=south]{$j$};
\draw[line width=1pt, color=black] (.5,0)--(5,0)  ;
\draw[draw=none, fill=white] (-2,0) circle(.3);
\draw[draw=none, fill=white] (2,0) circle(.3);
\draw[draw=none, fill=white] (0,1) circle(.3);
\draw[line width=1pt, dotted] (-2,2.4)--(-2,-2.4)--(2,-2.4)--(2,2.4)--(-2,2.4);
\draw[draw=none, fill=white] (0,2.4) circle(.5);
\draw[draw=none, fill=white] (0,-2.4) circle(.5);
\draw[draw=none, fill=white] (0,1) circle(.3);
\draw[line width=1pt, color=black] (0,-4)--(0,4)  node[sloped, pos=0.3, allow upside down]{\arrowIn} node[pos=.3, anchor=east]{$i$};
\draw[line width=.5pt, -stealth] (2,-1)--(2.5,-1.5);
\begin{scope}[shift={(12,0)}, scale=.7]
\draw[line width=1pt, dotted] (-4,4)--(4,4)node[anchor=south west]{$q$} --(4,-4) node[anchor=north west]{$n$}--(-4,-4)node[anchor=north east]{$m$}--(-4,4) node[anchor=south east]{$p$};
\draw[line width=.5pt, color=black] (0,4)--(0,-4) node[pos=.2, anchor=west]{$j$}node[sloped, pos=.2, allow upside down]{\arrowOut};
\draw[fill=white, color=white] (0,0) circle (1.2);
\draw[line width=.5pt, color=black] (4,0) --(-4,0) node[pos=.8, anchor=north]{$i$}node[sloped, pos=.8, allow upside down]{\arrowOut};
\draw[color=black, fill=black] (0,4) circle (.2) node[anchor=south]{$\alpha$};
\draw[color=black, fill=black] (4,0) circle (.2) node[anchor=west]{$\beta$};
\draw[color=black, fill=black] (0,-4) circle (.2) node[anchor=north]{$\gamma$};
\draw[color=black, fill=black] (-4,0) circle (.2) node[anchor=east]{$\delta$};
\end{scope}
\end{tikzpicture}
\\
&\begin{tikzpicture}[scale=.5]
\draw[line width=1pt, color=black] (-5,0)--(0,4)  node[sloped, pos=0.4, allow upside down]{\arrowIn} node[pos=.4, anchor=south east]{$p$};
\draw[line width=1pt, color=black] (5,0)--(0,4)  node[sloped, pos=0.4, allow upside down]{\arrowIn} node[pos=.4, anchor=south west]{$q$};
\draw[line width=1pt, color=black] (-5,0)--(0,-4)  node[sloped, pos=0.4, allow upside down]{\arrowIn} node[pos=.4, anchor=north east]{$m$};
\draw[line width=1pt, color=black] (5,0)--(0,-4)  node[sloped, pos=0.4, allow upside down]{\arrowIn} node[pos=.4, anchor=north west]{$n$};
\draw[line width=1pt, color=black] (-5,0)--(-.5,0)  node[sloped, pos=0.5, allow upside down]{\arrowIn} node[pos=.5, anchor=south]{$j$};
\draw[line width=1pt, color=black] (.5,0)--(5,0)  ;
\draw[draw=none, fill=white] (-2,0) circle(.3);
\draw[draw=none, fill=white] (2,0) circle(.3);
\draw[draw=none, fill=white] (0,1) circle(.3);
\draw[line width=1pt, dotted] (-2,2.4)--(-2,-2.4)--(2,-2.4)--(2,2.4)--(-2,2.4);
\draw[draw=none, fill=white] (0,2.4) circle(.5);
\draw[draw=none, fill=white] (0,-2.4) circle(.5);
\draw[color=cyan, line width=1pt] (2, 1)--(-2,1) node[sloped, pos=0.7, allow upside down]{\arrowIn} node[pos=.7, anchor=south]{$F_a$};
\draw[draw=none, fill=white] (0,1) circle(.3);
\draw[line width=1pt, color=black] (0,-4)--(0,4)  node[sloped, pos=0.3, allow upside down]{\arrowIn} node[pos=.3, anchor=east]{$i$};
\draw[line width=.5pt, -stealth] (2,-1)--(2.5,-1.5);
\begin{scope}[shift={(12,0)}, scale=.7]
\draw[line width=1pt, dotted] (-4,4)--(4,4)node[anchor=south west]{$q$} --(4,-4) node[anchor=north west]{$n$}--(-4,-4)node[anchor=north east]{$m$}--(-4,4) node[anchor=south east]{$p$};
\draw[line width=.5pt, color=black] (0,4)--(0,-4) node[pos=.2, anchor=west]{$j$}node[sloped, pos=.2, allow upside down]{\arrowOut};
\draw[fill=white, color=white] (0,-.5) circle (1.2);
\draw[line width=.5pt, color=black] (4,0) .. controls (0,-1.5) ..  (-4,0) node[pos=.8, anchor=north]{$i$}node[sloped, pos=.8, allow upside down]{\arrowOut};
\draw[line width=1pt, color=cyan, dashed] (4,0)--(-4,0) node[pos=.8, anchor=south]{$a$}node[sloped, pos=.8, allow upside down]{\arrowIn};
\draw[color=black, fill=black] (0,4) circle (.2) node[anchor=south]{$\alpha$};
\draw[color=black, fill=black] (4,0) circle (.2) node[anchor=west]{$\beta$};
\draw[color=black, fill=black] (0,-4) circle (.2) node[anchor=north]{$\gamma$};
\draw[color=black, fill=black] (-4,0) circle (.2) node[anchor=east]{$\delta$};
\end{scope}
\end{tikzpicture}
\\
&\begin{tikzpicture}[scale=.5]
\draw[line width=1pt, color=black] (-5,0)--(0,4)  node[sloped, pos=0.4, allow upside down]{\arrowIn} node[pos=.4, anchor=south east]{$p$};
\draw[line width=1pt, color=black] (5,0)--(0,4)  node[sloped, pos=0.4, allow upside down]{\arrowIn} node[pos=.4, anchor=south west]{$q$};
\draw[line width=1pt, color=black] (-5,0)--(0,-4)  node[sloped, pos=0.4, allow upside down]{\arrowIn} node[pos=.4, anchor=north east]{$m$};
\draw[line width=1pt, color=black] (5,0)--(0,-4)  node[sloped, pos=0.4, allow upside down]{\arrowIn} node[pos=.4, anchor=north west]{$n$};
\draw[line width=1pt, color=black] (-5,0)--(-.5,0)  node[sloped, pos=0.5, allow upside down]{\arrowIn} node[pos=.5, anchor=south]{$j$};
\draw[line width=1pt, color=black] (.5,0)--(5,0)  ;
\draw[draw=none, fill=white] (-2,0) circle(.3);
\draw[draw=none, fill=white] (2,0) circle(.3);
\draw[draw=none, fill=white] (0,1) circle(.3);
\draw[line width=1pt, dotted] (-2,2.4)--(-2,-2.4)--(2,-2.4)--(2,2.4)--(-2,2.4);
\draw[draw=none, fill=white] (0,2.4) circle(.5);
\draw[draw=none, fill=white] (0,-2.4) circle(.5);
\draw[color=cyan, line width=1pt] (1, 1)--(-2,1) node[sloped, pos=0.7, allow upside down]{\arrowIn} node[pos=.7, anchor=south]{$F_a$};
\draw[color=cyan, line width=1pt] (1, 2.4)--(1,1) node[sloped, pos=0.7, allow upside down]{\arrowIn};
\draw[draw=none, fill=white] (0,1) circle(.3);
\draw[line width=1pt, color=black] (0,-4)--(0,4)  node[sloped, pos=0.3, allow upside down]{\arrowIn} node[pos=.3, anchor=east]{$i$};
\draw[line width=.5pt, -stealth] (2,-1)--(2.5,-1.5);
\begin{scope}[shift={(12,0)}, scale=.7]
 \draw[line width=1pt, dotted] (-4,4)--(4,4)node[anchor=south west]{$q$} --(4,-4) node[anchor=north west]{$n$}--(-4,-4)node[anchor=north east]{$m$}--(-4,4) node[anchor=south east]{$p$};
\draw[line width=.5pt, color=black] (0,4)--(0,-4) node[pos=.2, anchor=west]{$j$}node[sloped, pos=.2, allow upside down]{\arrowOut};
\draw[fill=white, color=white] (0,0) circle (1.2);
\draw[line width=.5pt, color=black] (4,0) --(-4,0) node[pos=.8, anchor=north]{$i$}node[sloped, pos=.8, allow upside down]{\arrowOut};
\draw[line width=1pt, color=cyan, dashed] (0,4).. controls (-1,1).. (-4,0) node[pos=.8, anchor=south]{$a$}node[sloped, pos=.8, allow upside down]{\arrowIn};
\draw[color=black, fill=black] (0,4) circle (.2) node[anchor=south]{$\alpha$};
\draw[color=black, fill=black] (4,0) circle (.2) node[anchor=west]{$\beta$};
\draw[color=black, fill=black] (0,-4) circle (.2) node[anchor=north]{$\gamma$};
\draw[color=black, fill=black] (-4,0) circle (.2) node[anchor=east]{$\delta$};
\end{scope}
\end{tikzpicture}
\\
&\begin{tikzpicture}[scale=.5]
\draw[line width=1pt, color=black] (-5,0)--(0,4)  node[sloped, pos=0.4, allow upside down]{\arrowIn} node[pos=.4, anchor=south east]{$p$};
\draw[line width=1pt, color=black] (5,0)--(0,4)  node[sloped, pos=0.4, allow upside down]{\arrowIn} node[pos=.4, anchor=south west]{$q$};
\draw[line width=1pt, color=black] (-5,0)--(0,-4)  node[sloped, pos=0.4, allow upside down]{\arrowIn} node[pos=.4, anchor=north east]{$m$};
\draw[line width=1pt, color=black] (5,0)--(0,-4)  node[sloped, pos=0.4, allow upside down]{\arrowIn} node[pos=.4, anchor=north west]{$n$};
\draw[line width=1pt, color=black] (-5,0)--(-.5,0)  node[sloped, pos=0.5, allow upside down]{\arrowIn} node[pos=.5, anchor=south]{$j$};
\draw[line width=1pt, color=black] (.5,0)--(5,0)  ;
\draw[draw=none, fill=white] (-2,0) circle(.3);
\draw[draw=none, fill=white] (2,0) circle(.3);
\draw[draw=none, fill=white] (0,1) circle(.3);
\draw[draw=none, fill=white] (1,0) circle(.3);
\draw[line width=1pt, dotted] (-2,2.4)--(-2,-2.4)--(2,-2.4)--(2,2.4)--(-2,2.4);
\draw[draw=none, fill=white] (0,2.4) circle(.5);
\draw[draw=none, fill=white] (0,-2.4) circle(.5);
\draw[color=magenta, line width=1pt] (1,2.4)--(1,-2.4) node[pos=.7, anchor=west]{$F_b$} node[sloped, pos=0.7, allow upside down]{\arrowIn} ;
\draw[draw=none, fill=white] (1,1) circle(.3);
\draw[color=cyan, line width=1pt] (2, 1)--(-2,1) node[pos=.7, anchor=south]{$F_a$}node[sloped, pos=.7, allow upside down]{\arrowIn} ;
\draw[draw=none, fill=white] (0,1) circle(.3);
\draw[line width=1pt, color=black] (0,-4)--(0,4)  node[sloped, pos=.3, allow upside down]{\arrowIn} node[pos=.3, anchor=east]{$i$};
\draw[line width=.5pt, -stealth] (2,-1)--(2.5,-1.5);
\begin{scope}[shift={(12,0)}, scale=.7]
\draw[line width=1pt, color=magenta, dashed] (0,4)--(0,-4) node[pos=.8, anchor=east]{$b$}node[sloped, pos=.8, allow upside down]{\arrowIn};
\draw[line width=1pt, dotted] (-4,4)--(4,4)node[anchor=south west]{$q$} --(4,-4) node[anchor=north west]{$n$}--(-4,-4)node[anchor=north east]{$m$}--(-4,4) node[anchor=south east]{$p$};
\draw[line width=.5pt, color=black] (0,4).. controls (1.5,0) ..(0,-4) node[pos=.2, anchor=west]{$j$}node[sloped, pos=.2, allow upside down]{\arrowOut};
\draw[fill=white, color=white] (.5,-.5) circle (1.2);
\draw[line width=.5pt, color=black] (4,0) .. controls (0,-1.5) ..  (-4,0) node[pos=.8, anchor=north]{$i$}node[sloped, pos=.8, allow upside down]{\arrowOut};
\draw[line width=1pt, color=cyan, dashed] (4,0)--(-4,0) node[pos=.8, anchor=south]{$a$}node[sloped, pos=.8, allow upside down]{\arrowIn};
\draw[color=black, fill=black] (0,4) circle (.2) node[anchor=south]{$\alpha$};
\draw[color=black, fill=black] (4,0) circle (.2) node[anchor=west]{$\beta$};
\draw[color=black, fill=black] (0,-4) circle (.2) node[anchor=north]{$\gamma$};
\draw[color=black, fill=black] (-4,0) circle (.2) node[anchor=east]{$\delta$};
\end{scope}
\end{tikzpicture}
\end{align*}
\caption{Defect tetrahedra assigned to the square diagrams from Example \ref{ex:tangleex}. The last tetrahedron is obtained by gluing three tetrahedra without crossings as in Example \ref{ex:braiding}.}
 \label{fig:basictets}
\end{figure}
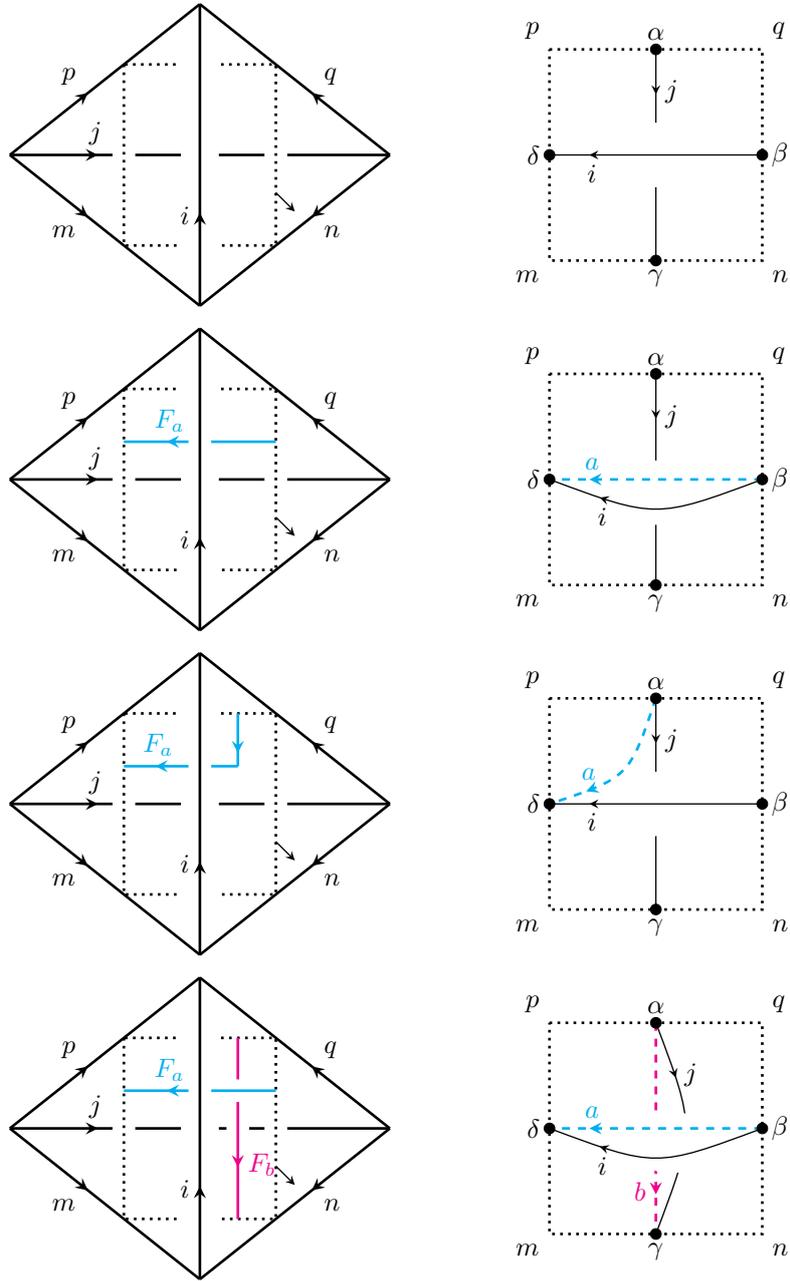

\begin{figure}
\begin{center}
\def\svgwidth{.8\columnwidth}
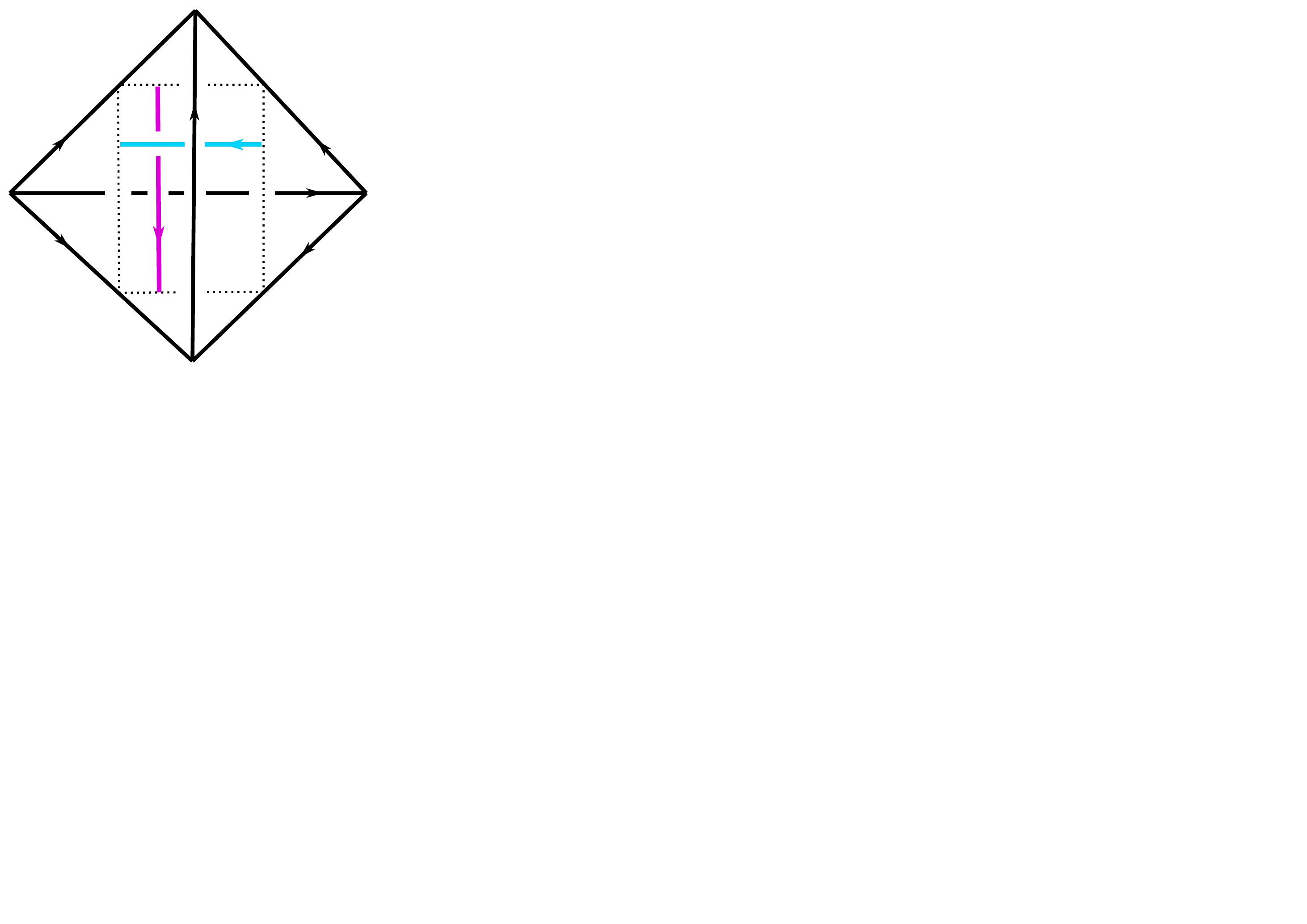 
 \end{center}
\caption{Gluing four defect tetrahedra and the associated polygon diagram.}
 \label{fig:braidingtet}
\end{figure}

\begin{proof} As the dimension factors for boundary edges and vertices coincide for $M$ and $M'$, it is sufficient to consider the rescaled state sums. 
By Proposition \ref{th:topological invariance} and Example \ref{ex:braiding}, the rescaled state sum for $M$ is the evaluation of the polygon diagram obtained by gluing all defect squares. This is the ribbon diagram $D_L$, superimposed with a graph labeled by  $\mac$,  which  is the dual of the triangulation on $\partial M$. 

The evaluation of this polygon diagram is computed as in Example \ref{ex:ribbonplane}. By \eqref{eq:coeherencecenter}   overcrossings of solid lines over lines in $T$ correspond to half-braidings of objects of $\mathcal Z(\mac)$ with objects in $\mac$, while undercrossings correspond to associators  in $\mac$.  As all vertices involve only solid lines,  one can use  identities \eqref{pic:rm2} to \eqref{pic:bimodulenat} to separate the graph labeled by $\mac$ and the 
 diagram  $D_L$. This yields the product of  the rescaled state sum $\mathcal Z'(M',l_{\partial M}, b_{\partial M})$ and the  evaluation of $D_L$.
\end{proof}

Example \ref{ex:tangleex} constructs a specific example of a triangulated  3-ball with a ribbon link as a defect. An analogous factorisation of the state sum is obtained for any  ribbon link labeled by $\mathcal Z(\mac)$   in any triangulated 3-ball labeled by $\mac$. This follows from the triangulation independence of the state sum and the fact that it is invariant under $\Delta$-moves, which encode ambient isotopies of ribbon links.

\begin{proposition}\label{prop:ribbonball} Let $M$ be a triangulated 3-ball labeled by a spherical fusion category $\mac$.  Let $L$ be a ribbon link in the interior of $M$ that is blackboard-framed with respect to a disc bisecting $M$ and labeled by $\mathcal Z(\mac)$. Then the state sum is given by
$$
\mathcal Z(M,l_{\partial M}, b_{\partial M})=\mathcal Z(M', l_{\partial M}, b_{\partial M})\cdot \mathrm{ev}(D_L),
$$
where $\mathcal Z(M', l_{\partial M}, b_{\partial M})$ is the state sum of the 3-ball $M'$  with the same labels, but  without the link, and $D_L$ a ribbon diagram obtained by projecting $L$ on the disc.
\end{proposition}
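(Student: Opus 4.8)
The plan is to reduce the statement to a combination of three ingredients already established: full triangulation independence (Theorem~\ref{th:fulltopinv} and Corollary~\ref{cor:generalinvariant}), the explicit computation for the elementary building blocks carried out in Examples~\ref{ex:braiding} and~\ref{ex:tangleex}, and the invariance of the state sum under the moves that implement ambient isotopy of ribbon links. First I would fix a bisecting disc $\Sigma\subset M$ with $\partial\Sigma\subset\partial M$, so that $M=M_-\cup_\Sigma M_+$ with each $M_\pm$ a half-ball, and arrange (by Lemma~\ref{lem:finetriang} and Corollary~\ref{cor:pachner}) a generic transversal triangulation adapted to $\Sigma$ and to a blackboard-framed position of $L$ with respect to $\Sigma$. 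The idea is that the projection $D_L$ of $L$ on $\Sigma$ is a ribbon diagram, and $L$ itself is obtained (up to ambient isotopy) by pushing the over-strands slightly to the $M_+$ side and the under-strands to the $M_-$ side of $\Sigma$; this realises $L$ as a defect graph on the trivial defect surface $\Sigma$ labelled by $\mathcal C$ as a $(\mathcal C,\mathcal C)$-bimodule category, with line segments labelled by the corresponding bimodule functors $F_a=a\oo-$ from Example~\ref{ex:modulefuncs}, 4.\ and crossings labelled by (inverse) braidings exactly as in Example~\ref{ex:braiding}.

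Next I would cut $M$ along the triangulation into tetrahedra and invoke Corollary~\ref{cor:gluing} together with Proposition~\ref{th:topological invariance} to express $\mathcal Z'(M,l_{\partial M},b_{\partial M})$ as the cyclic evaluation of the polygon diagram obtained by projecting the dual of the boundary triangulation, superimposed with the diagram $D_L$ on $\Sigma$. The crucial point, which is already proved in the local computations of Examples~\ref{ex:ribbonplane}, \ref{ex:braiding} and~\ref{ex:tangleex}, is that every defect vertex of this superimposed diagram involves only the ``solid'' lines labelled by objects of $\mathcal C$ (the duals of the boundary triangulation), never the dashed lines labelled by bimodule functors; consequently identities~\eqref{pic:rm2}--\eqref{pic:bimodulenat} let one slide the entire $\mathcal C$-graph across the diagram $D_L$ and separate the two. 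This separation uses~\eqref{eq:coeherencecenter}: crossings of a solid line over a dashed line are half-braidings $\sigma^{\pm1}_{-,a}$, and crossings under are associators in $\mathcal C$, so that after untangling the $\mathcal C$-graph contributes the polygon evaluation equal to $\mathcal Z'(M',l_{\partial M},b_{\partial M})$ and the dashed part contributes $\mathrm{ev}(D_L)$, the evaluation of the $(0,0)$-ribbon tangle in $\mathcal Z(\mathcal C)$. Since the dimension factors for boundary edges and vertices are the same for $M$ and $M'$, passing from $\mathcal Z'$ back to $\mathcal Z$ via~\eqref{eq:rescstate} preserves the factorisation.

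Finally I would show the right-hand side is independent of the auxiliary choices, i.e.\ of the bisecting disc, the blackboard framing, and the triangulation. Independence of the triangulation is Corollary~\ref{cor:generalinvariant}. Independence of $D_L$ up to the ribbon-tangle relations for $\mathcal Z(\mathcal C)$ follows because two diagrams $D_L$, $D_L'$ obtained from ambient-isotopic framed links are related by planar isotopy and the Reidemeister-type moves for ribbon tangles, and each such move is realised on the triangulated manifold by a finite sequence of generic transversal bistellar moves, stellar subdivisions and $\Delta$-moves on the defect graph; invariance of the state sum under all of these is Corollary~\ref{cor:pachner} together with Theorem~\ref{th:fulltopinv}. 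It then suffices to note that $\mathrm{ev}$ is precisely the Reshetikhin--Turaev ribbon functor for $\mathcal Z(\mathcal C)$, hence invariant under these moves, so both sides of the claimed identity transform the same way and the equation is well-posed.

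I expect the main obstacle to be the book-keeping in the separation step: one must verify carefully that after positioning $L$ blackboard-framed with respect to $\Sigma$, every crossing and every cup/cap of $D_L$ can be enclosed in a small ball meeting $\Sigma$ in a disc, so that the local model is exactly one of the defect tetrahedra of Figure~\ref{fig:basictets}, and that gluing these local models reproduces globally the superimposed polygon diagram with no extra coherence crossings between dashed lines. Once the local-to-global gluing is controlled by Corollary~\ref{cor:gluing} and identities~\eqref{pic:rm2}--\eqref{pic:bimodulenat}, \eqref{pic:gluepoly}--\eqref{pic:insert}, the rest is a direct consequence of the results already in hand; the isotopy-invariance of $\mathrm{ev}(D_L)$ is standard and need only be cited.
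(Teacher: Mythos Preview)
Your separation argument and the appeal to Examples~\ref{ex:braiding} and~\ref{ex:tangleex} are essentially correct once the link is in the standard position of Example~\ref{ex:tangleex}, but there is a genuine gap in how you get it there. Corollary~\ref{cor:pachner} and Theorem~\ref{th:fulltopinv} establish invariance of the state sum under changes of the \emph{triangulation} of a \emph{fixed} defect 3-manifold; they say nothing about changing the defect data itself. A $\Delta$-move on the link $L$ is not a bistellar move on the triangulation --- it alters the defect lines (and in fact the underlying defect surface, since at a crossing the trivial defect surface is a one-holed torus as in Example~\ref{ex:braiding}, not the bisecting disc $\Sigma$). So your sentence ``invariance of the state sum under all of these is Corollary~\ref{cor:pachner} together with Theorem~\ref{th:fulltopinv}'' does not go through, and without it you cannot pass from an arbitrary blackboard-framed $L$ in an arbitrary triangulation of $M$ to the specific model of Example~\ref{ex:tangleex}.

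The paper closes this gap by a separate local argument (its Step~1): given a blackboard-framed $\Delta$-move with triangle $\Delta$, refine the triangulation so that the tetrahedra meeting $\Delta$ form a 3-ball $B$ bisected by a defect disc extending $\Delta$; apply Proposition~\ref{th:topological invariance} to $B$ both before and after the move; the two resulting polygon diagrams differ only by a $\Delta$-move on a ribbon segment inside the polygon, and identities~\eqref{pic:rm2}--\eqref{pic:natfuncconst} show their evaluations agree. With this $\Delta$-move invariance in hand, one transports $L$ to the link $L'$ of Example~\ref{ex:tangleex}, invokes that Example, and finishes via shellings and Theorem~\ref{th:fulltopinv}. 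Your outline becomes correct if you insert this argument in place of the appeal to Corollary~\ref{cor:pachner} and Theorem~\ref{th:fulltopinv} for link isotopy.
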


\begin{proof} 1.~We first show that the state sum is invariant under PL ambient isotopy of the ribbon link. By a result of Graeub \cite{Gr}, see also Burde and Zieschang \cite[Prop.~1.10]{BZ}
and Yetter \cite[Sec.~2.2]{Y}, two PL links in a 3-ball are ambient isotopic iff they are related by  a finite sequence of $\Delta$-moves. 
A $\Delta$-move 
places a triangle on the link that intersects it only  in one side and replaces this side by the other two sides of the triangle. 
\begin{center}
\begin{tikzpicture}[scale=.4]
\draw[line width=1pt] (0,1)--(5,1);
\draw[line width=1pt] (0,1)--(-1,.5);
\draw[line width=1pt] (5,1)--(6,.5);
\draw[line width=1pt, style=dashed] (0,1)--(2.5,3.5);
\draw[line width=1pt, style=dashed] (5,1)--(2.5,3.5);
\draw[line width=1pt, -stealth] (7.5,2)--(9.5,2) node[midway, anchor=south]{$\Delta$};
\begin{scope}[shift={(12,0)}]
\draw[line width=1pt] (0,1)--(-1,.5);
\draw[line width=1pt] (5,1)--(6,.5);
\draw[line width=1pt, ] (0,1)--(2.5,3.5);
\draw[line width=1pt, ] (5,1)--(2.5,3.5);
\draw[line width=1pt, style=dashed] (0,1)--(5,1);
\end{scope}
\end{tikzpicture}
\end{center}

An analogous result holds for ambient isotopies of blackboard-framed  ribbon links if one restricts attention to $\Delta$-moves that respect the framing, that is, keep the links blackboard framed  and can be realised without introducing kinks, see for instance \cite[Sec.~2.2]{Y}. Thus it is sufficient to show that the state sum is invariant under $\Delta$-moves that respect the blackboard framing of $L$.

Consider a blackboard framed $\Delta$-move with a triangle $\Delta$.  By Corollary \ref{cor:pachner} and Lemma \ref{lem:finetriang}, suppose without loss of generality  that the tetrahedra intersecting $\Delta$ form a fine neighbourhood of $\Delta$ and that  gluing all tetrahedra that intersect $\Delta$ yields a triangulated 3-ball $B$ bisected by a defect plane that extends $\Delta$.

By Proposition \ref{th:topological invariance} the state sums for $B$ before and after the $\Delta$-move are obtained by projecting the dual of its boundary triangulation on the defect plane and evaluating the resulting polygon diagram. This yields ribbon diagrams that superimpose a ribbon segment labeled by $\mathcal Z(\mac)$ with the boundary diagrams labeled by $\mac$. The polygon diagrams before and after the $\Delta$-move are related by a $\Delta$-move on the ribbon segment in the polygon. Identities \eqref{pic:rm2} to \eqref{pic:natfuncconst}  imply that the evaluations of these polygon diagrams are equal.  As $M\setminus B$ is not affected by the $\Delta$-move,  the state sum of $M$ is invariant under the $\Delta$-move by Corollary \ref{cor:gluing}. 

2.~By applying 2d isotopies to the ribbon diagram $D_L$, one can transform it into a diagram $D'$ as in Example \ref{ex:tangleex} and construct a triangulated 3-ball $N$ with a ribbon link $L'$ in the interior that projects to $D'=D_{L'}$.  By Example \ref{ex:tangleex} one has
$\mathcal Z(N, l_{\partial N}, b_{\partial N})=\mathcal Z(N', l_{\partial N}, b_{\partial N})\cdot \mathrm{ev}(D_{L'})$, where $N'$ is the 3-ball  without $L'$.  

As the boundary triangulations of $M$ and $N$ are related by elementary shellings and inverse shellings with tetrahedra that do not intersect $L$ and $L'$, it is sufficient to show that  $\mathcal Z(M, l_{\partial M}, b_{\partial M})=\mathcal Z(N, l_{\partial N}, b_{\partial N})$, if the labeled boundary triangulations for $M$ and $N$ agree. 
As the link diagrams $D_L$ and $D_{L'}$ are isotopic, one has $\mathrm{ev}(D_{L})=\mathrm{ev}(D_{L'})$ and the links $L$ and $L'$ that project to $D_L$ and $D_{L'}$ are ambient isotopic. This allows one to transform $L$ into $L'$ by a finite sequence of $\Delta$-moves. The first part of the proof implies that this does not affect the state sum. By the triangulation independence of the usual Turaev-Viro-Barrett-Westbury state sum, one has $\mathcal Z(M', l_{\partial M}, b_{\partial M})=\mathcal Z(N', l_{\partial N}, b_{\partial N})$, and with
Theorem \ref{th:fulltopinv} this yields
$$\mathcal Z(M, l_{\partial M}, b_{\partial M})\stackrel{1.}=\mathcal Z(N, l_{\partial N}, b_{\partial N})\stackrel{\ref{ex:tangleex}}=\mathrm{ev}(D_{L'})\cdot \mathcal Z(N', l_{\partial N}, b_{\partial N})\stackrel{\ref{th:fulltopinv}}=\mathrm{ev}(D_{L})\cdot \mathcal Z(M', l_{\partial M}, b_{\partial M}).$$
\end{proof}

Proposition \ref{prop:ribbonball} can be generalised  to  ribbon tangles realised as defect graphs on trivial defect surfaces. One can also generalise it to ribbon links embedded  in more general 3-manifolds. Embedding a ribbon link $L$ labeled by $\mathcal Z(\mac)$ in a single tetrahedron labeled by $\mac$ and gluing it to the  tetrahedron with the opposite orientation  along all four faces yields the state sum for a 3-sphere  $L$  in the interior
$$\mathcal Z(S^3,L)=\frac{\mathrm{ev}(D_L)} {\dim \mac}.$$
This coincides with Turaev and Virelizier's  state sum graph TQFT  for a 3-sphere containing a ribbon link in  \cite[Th.~16.1]{TVbook}.  In fact, we expect that our state sums with defect lines and vertices on trivial defect surfaces always coincide with the corresponding  graph TQFTs with ribbons from \cite[Ch.~16]{TVbook} and \cite{TVr}.  It would  be nice to show this in full generality and  to compare with the results by Balsam and Kirillov in  \cite{KB,B} that also involve ribbons labeled by $\mathcal Z(\mac)$.

We also expect that our state sum is  related  to the modular functor constructed from bimodule categories over finite tensor categories by Fuchs et al.~\cite{FSS} if one restricts the latter to our categorical data. However,  is not clear to us  how our construction is related to the work \cite{LY} by Lee and Yetter, which also constructs triangulation independent state sums with defect surfaces. However, the defect surfaces in \cite{LY} are labeled with different categorical data, namely  bicategories equipped with spherical duals and certain 2-functors. They  should be thought of as defects with additional structure, according to the authors. We believe that this is a different type of defect that cannot be related directly to our construction.

The examples in this section show that the  state sum models with defects give relevant information about defect surfaces and are easy to compute for simple categorical data. It would be feasible to investigate further examples that involve defect lines and points on non-trivial defect surfaces,  at least for spherical fusion categories $\mac=\mathrm{Vec}_G$ and $\mad=\mathrm{Vec}_{G'}$ and bimodule categories defined by finite transitive $G\times G'^{op}$-sets. In this case, the categorical defect data for defect lines and defect points  can be worked out explicitly to describe defects of all codimensions for Dijkgraaf-Witten theory \cite{DW}. It would also be interesting to investigate knotted defect surfaces beyond the torus and with less trivial categorical data.

With the results by Balsam and Kirillov \cite{BK,Kr}  on the relation between  Turaev-Viro-Barrett-Westbury state sums, Levin-Wen models \cite{LW} and Kitaev's quantum double models \cite{Kit}, it should also be possible to  apply our state sums with defects to those models and to make contact 
with the  work \cite{KK}  by Kitaev and Kong.
In the long run it would be desirable to determine if  our state sum model with defects defines a defect TQFT in the sense of  the works \cite{CMS,CRS2} by Carqueville et al.~and to relate it to the works \cite{CRS,domwall} by Carqueville et al.~and by Koppen et al.~on defects  and domain walls in the context of Reshetikhin-Turaev TQFTs. We leave these questions for future work.

\subsection*{Acknowledgements} 
This article originated from discussions with John W.~Barrett. We had originally planned to work on this project together, but this became impossible due to the Covid-19 pandemic and its impact on  research and teaching. I am extremely grateful to him for helpful discussions, encouragement and comments on drafts of this article. 
I am also  grateful to Gregor Schaumann for  remarks and discussions on bimodule categories and bimodule traces,  to Andreas Knauf for discussions  and  comments on drafts of this article and to the referee for helpful comments and suggestions.

\end{document}